\theoremstyle{definition}
\newtheorem{dfn}{Definition}[section]
\newtheorem{rem}[dfn]{Remark}
\newtheorem*{prob}{Problem}
\newtheorem{exple}[dfn]{Example}
\theoremstyle{plain}
\newtheorem{thm}[dfn]{Theorem}
\newtheorem{prop}[dfn]{Proposition}
\newtheorem{lem}[dfn]{Lemma}
\newtheorem{cor}[dfn]{Corollary}
\newcommand{\K}{\mathbb{K}}
\newcommand{\Z}{\mathbb{Z}}
\newcommand{\tDer}{\mathrm{tDer}}
\newcommand{\taut}{\mathrm{tAut}}
\newcommand{\Lie}{\operatorname{Lie}}
\newcommand{\gn}{{(g,n+1)}}
\newcommand{\KV}{\operatorname{KV}}
\newcommand{\pa}{\partial}
\newcommand{\bch}{\mathrm{bch}}
\newcommand{\gr}{\mathrm{gr}}
\newcommand{\ad}{\mathrm{ad}}
\newcommand{\alt}{\mathrm{Alt}}
\newcommand{\rot}{\mathrm{rot}}
\title{The Goldman-Turaev Lie bialgebra and the Kashiwara-Vergne problem in higher genera}
\author{Anton Alekseev\thanks{Department of Mathematics, University of Geneva, 
7-9 rue du Conseil G\'en\'eral, 1211 Geneva 4, Switzerland \texttt{e-mail:anton.alekseev@unige.ch}}, 
Nariya Kawazumi\thanks{Department of Mathematical Sciences, University of Tokyo, 3-8-1 Komaba, Meguro-ku, Tokyo 153-8914, Japan \texttt{e-mail:kawazumi@ms.u-tokyo.ac.jp}},
Yusuke Kuno\thanks{Department of Mathematics, Tsuda University, 2-1-1 Tsuda-machi, Kodaira-shi, Tokyo 187-8577, Japan \texttt{e-mail:kunotti@tsuda.ac.jp}} 
and Florian Naef\thanks{School of Mathematics, Trinity College Dublin, 17 Westland Row, Dublin, Ireland \texttt{e-mail:naeff@tcd.ie}}}
\date{}							
\begin{document}

\maketitle

\begin{abstract}
For a compact oriented surface $\Sigma$ of genus $g$ with $n+1$ boundary components, the space $\mathfrak{g}(\Sigma)$ spanned by free homotopy classes of loops in $\Sigma$ carries the structure of a Lie bialgebra.
The Lie bracket was defined by Goldman \cite{Go86}, and it depends only on the orientation of the surface.
The Lie cobracket was originally defined by Turaev \cite{Tu91}, and the version used in this paper depends on the choice of framing on $\Sigma$.
The Lie bialgebra $\mathfrak{g}(\Sigma)$ has a natural decreasing filtration such that the Goldman bracket and Turaev cobracket have degree $(-2)$.

In this paper, we address the following Goldman-Turaev (GT) formality problem: construct a Lie bialgebra homomorphism $\theta$ from $\mathfrak{g}(\Sigma)$ to its associated graded ${\rm gr}\, \mathfrak{g}(\Sigma)$ such that ${\rm gr} \, \theta = {\rm id}$.
In order to solve it, we define a family of higher genus Kashiwara-Vergne (KV) problems for an element  $F\in {\rm Aut}(L)$, where $L$ is a free Lie algebra.
In the case of $g=0$ and $n=2$, it is the classical KV problem from Lie theory \cite{KV78, AT12}.
For $g>0$, these KV problems are new.

Our main results are as follows.
An element $F \in {\rm Aut}(L)$ induces a GT formality map if and only if it is a solution of the KV problem. 
Solving the KV problem reduces to two important cases: $g=0, n=2$ which admits solutions by \cite{AT12}, and $g=1, n=0$ for which we construct solutions in terms of certain elliptic associators following Enriquez \cite{Enriquez}.
By combining these two results, we obtain a proof of the GT formality for every $g$ and $n$, with the exception of some framings for $g=1$ in which case the GT formality actually does not hold.

Furthermore, we introduce pro-unipotent groups ${\rm KV}$ and ${\rm KRV}$ which act on the space of solutions of the KV problem freely and transitively.
The groups ${\rm KV}$ also acts by automorphisms of (the completion of) the Lie bialgebra $\mathfrak{g}(\Sigma)$. There are injective group homomorphisms ${\rm GT}_1 \to {\rm KV}, {\rm GRT}_1 \to {\rm KRV}$ from Grothendieck-Teichm\"uller groups which depend on the choice of a pair of pants decomposition of $\Sigma$. 

As part of our study, we prove a uniqueness theorem for non-commutative divergence cocycles on the group algebra of a free group which is of independent value. 
As a topological application of our results, we show that the Johnson obstruction given by the Turaev cobracket coincides with the one given by the Enomoto-Satoh trace.
\end{abstract}

\section{Introduction}
\label{sec:intro}

\subsection{Formality and homomorphic expansions}

One often encounters algebraic structures (associative algebras, Hopf algebras, Lie algebras, categories of some class, etc.) equipped with a natural filtration compatible with structure morphisms.
Given such a structure $V$, one can construct a (completed) associated graded ${\rm gr}\, V$ of $V$.
The grading of ${\rm gr}\, V$ gives rise to a filtration of ${\rm gr}\, V$, and the {\em formality problem} arises: 
\[
\text{
Is (the completion of) $V$ isomorphic to ${\rm gr}\, V$?
}
\]
Even if the formality of $V$ holds, in general there is no canonical choice for an isomorphism between $V$ and ${\rm gr}\, V$.
Following the terminology of Bar-Natan and Dancso \cite{BND13}, each choice
\[
\theta \colon V \overset{\cong}{\to} {\rm gr}\, V
\]
with the property ${\rm gr}\, \theta = {\rm id}$ is called a {\em homomorphic expansion}. Furthermore, the automorphism groups ${\rm Aut}(V)$ and ${\rm Aut}({\rm gr}\, V)$ act freely and transitively on the set of homomorphic expansions, 
\[
{\rm Aut}({\rm gr}\, V)
\curvearrowright
\{ \text{homomorphic expansions} \} 
\curvearrowleft
{\rm Aut}(V),
\]
and each choice of $\theta$ induces a group isomorphism ${\rm Aut}(V) \cong {\rm Aut}({\rm gr}\, V)$.

By nature, the associated graded ${\rm gr}\, V$ only captures the lowest order term of the original algebraic structure.
Hence, the formality is expected to lead to better understanding of the original (more complicated) structure $V$.
Not only that, the significance of the concepts of formality and homomorphic expansions stems from the fact that many important mathematical objects fit into this framework: expansions for free groups \cite{Bou71, Lin93, Ka05} which generalize the classical Magnus expansion \cite{Mag35, MKS},
Drinfeld associators and their connection with parenthesized tangles \cite{Dr1, Dr2, BN97, LM96}, and universal finite type invariants for various knotted objects \cite{BND13, BND16, BND17}.

This paper is about formality phenomena in 2-dimensional topology.
The algebraic structure that we will focus on is a Lie bialgebra associated with a (framed) oriented surface, called the {\em Goldman-Turaev Lie bialgebra}. 
Generalizing our previous work \cite{genus0} on surfaces of genus zero, we will establish formality of the Goldman-Turaev Lie bialgebra for surfaces of higher genus with boundary.
A key ingredient in \cite{genus0} was a link between the formality of the Goldman-Turaev Lie bialgebra for surfaces of genus zero and the Kashiwara-Vergne (KV) problem from Lie theory \cite{KV78, AT12}.
Based on this, we introduce
{\em generalized KV problems} for each framed oriented surface with boundary. We show that in most cases (with some notable exceptions in genus one) these new KV problems admit solutions. Last but not least, we explain how solutions of KV problems imply the formality of the Goldman-Turaev Lie bialgebras.

In the rest of this section, we give background on the Goldman-Turaev formality and state our main results.

\subsection{Goldman-Turaev Lie bialgebras and formality problem}
Let $\mathbb{K}$ be a field of characteristic zero 
and let $\Sigma$ be a compact connected oriented 2-manifold of genus $g$ with $n+1$ boundary components, where $g$ and $n$ are non-negative integers.
We label the boundary components by the set $\{ 0, 1, \dots, n\}$ and choose the basepoint $*$ on the boundary component with label $0$. 
For $\pi = \pi_1(\Sigma)$ the fundamental group of $\Sigma$,
the space
$$
\mathfrak{g}(\Sigma) = |\K \pi| := \mathbb{K} \pi/[\mathbb{K}\pi, \mathbb{K}\pi],
$$
where $[\K \pi, \K \pi]$ is the subspace spanned by commutators in $\K\pi$,
is isomorphic to the $\mathbb{K}$-span of homotopy classes of free loops in $\Sigma$.
We denote by $a \mapsto |a|$ the natural projection $\mathbb{K} \pi \to \mathfrak{g}(\Sigma)$.
In \cite{Go86}, Goldman showed that the space $\mathfrak{g}(\Sigma)$ carries the natural structure of a Lie algebra defined in terms of intersections of curves representing elements of $\pi$.

Under the Goldman bracket $[\cdot, \cdot]_{\rm Goldman} \colon \mathfrak{g}(\Sigma) \otimes \mathfrak{g}(\Sigma) \to \mathfrak{g}(\Sigma)$, the homotopy class of a trivial loop ${\bf 1} \in \mathfrak{g}(\Sigma)$ is a central element. Hence, the quotient space 
$$
\tilde{\mathfrak{g}}(\Sigma)=\mathfrak{g}(\Sigma)/\mathbb{K} {\bf 1}
$$
is also a Lie algebra.
In \cite{Tu91}, Turaev showed that $\tilde{\mathfrak{g}}(\Sigma)$ carries the canonical structure of a Lie bialgebra where the Lie bracket is (induced by) the Goldman bracket $[\cdot, \cdot]_{\rm Goldman}$ and the Lie cobracket $\delta_{\rm Turaev} \colon \tilde{\mathfrak{g}}(\Sigma) \to \tilde{\mathfrak{g}}(\Sigma) \otimes \tilde{\mathfrak{g}}(\Sigma)$ is defined in terms of self-intersections of curves on $\Sigma$, see Fig. \ref{fig:GTstructure} for sample calculations of the Goldman bracket and Turaev cobracket.

\begin{figure}
\begin{center}
\input{fig_GTstructure.tex}
\end{center}
\caption{The Goldman bracket and Turaev cobracket.}
\label{fig:GTstructure}
\end{figure}

Let $f\colon T\Sigma \overset{\cong}{\to} \Sigma \times \mathbb{R}^2$ be a framing on $\Sigma$ (a trivialization of the tangent bundle of $\Sigma$). To each immersed free loop $\alpha$ in $\Sigma$, the framing assigns a rotation number,
$$
\alpha \mapsto {\rm rot}^f(\alpha) \in \mathbb{Z}.
$$
A choice of framing defines a Lie bialgebra structure on $\mathfrak{g}(\Sigma)$ with Lie bracket the Goldman bracket and the Lie cobracket $\delta^f_{\rm Turaev} \colon \mathfrak{g}(\Sigma) \to \mathfrak{g}(\Sigma) \otimes \mathfrak{g}(\Sigma)$ which depends on $f$.
This lift of Turaev's Lie cobracket comes from Furuta's observation on a $1$-cocycle of the mapping class group \cite[\S 4]{Mo97}.
For every $f$, this Lie bialgebra structure descends to the canonical Lie bialgebra structure on $\tilde{\mathfrak{g}}(\Sigma)$. 

The Goldman-Turaev (GT) Lie bialgebra $(\mathfrak{g}(\Sigma), [\cdot, \cdot]_{\rm Goldman}, \delta^f_{\rm Turaev})$ carries a natural filtration. In the case of  $n=0$, it can be described as follows: the group algebra $\mathbb{K}\pi$ carries a natural decreasing filtration by powers of the augmentation ideal which descends to a filtration on $\mathfrak{g}(\Sigma)$  (for the general case, see Section~\ref{sec:filt}). Under this filtration, the Goldman bracket and Turaev cobracket  have filtration degree $(-2)$. 

Denote by $\widehat{\mathfrak{g}}(\Sigma)$ the completion of $\mathfrak{g}(\Sigma)$ with respect to the filtration described above. For every framing $f$, the Lie bialgebra structure  $(\mathfrak{g}(\Sigma), [\cdot, \cdot]_{\rm Goldman}, \delta^f_{\rm Turaev})$  extends to  $\widehat{\mathfrak{g}}(\Sigma)$. The associated graded vector space ${\rm gr} \, \widehat{\mathfrak{g}}(\Sigma)$ carries the induced Lie bracket $[ \cdot, \cdot]_{\rm gr}$ and the induced Lie cobracket $\delta^f_{\rm gr}$. Both these operations are of degree $(-2)$.

The vector space ${\rm gr} \, \widehat{\mathfrak{g}}(\Sigma)$ admits a nice explicit description as follows: let
$H:= H_1(\Sigma, \mathbb{K})$
be the first homology group of $\Sigma$ and $A=\widehat{T}(H)$ the completed free associative algebra spanned by $H$. Then,
$$
{\rm gr} \, \widehat{\mathfrak{g}}(\Sigma) \cong |A| := A/[A,A],
$$
where $[A, A] \subset A$ is the (closed) subspace spanned by commutators. 
To be more precise, when both $g$ and $n$ are positive, one needs to replace $H$ with its associated graded space with respect to a certain filtration; see Section~\ref{sec:filt}.
One can also identify ${\rm gr} \, \widehat{\mathfrak{g}}(\Sigma)$ with the completed graded vector space spanned by cyclic words in $H$.
Furthermore, the Lie bialgebra structure on ${\rm gr} \, \widehat{\mathfrak{g}}(\Sigma)$ is identical to the necklace Lie bialgebra structure \cite{BLeB, Ginzburg, Schedler} associated to a certain quiver determined by $g$ and $n$.

The following question naturally arises in the framework that we have described above:

\begin{prob}[Formality problem for Goldman-Turaev Lie bialgebras]
Find a filtration preserving Lie bialgebra isomorphism
$\theta\colon \widehat{\mathfrak{g}}(\Sigma) \to {\rm gr} \, \widehat{\mathfrak{g}}(\Sigma)$
such that its associated graded map is the identity: ${\rm gr} \, \theta= {\rm id}$.
\end{prob}

This question can be addressed for any framing $f$ on $\Sigma$. A positive solution of the GT formality problem for some $f$ implies a positive solution for the canonical GT Lie bialgebra structure on the completion of $\tilde{\mathfrak{g}}(\Sigma)$. 

\subsection{Known results} \label{subsec:known_results}

We briefly describe some of the known results concerning the GT formality. 

First, we need some notation. Among the maps $\theta\colon  \widehat{\mathfrak{g}}(\Sigma) \to {\rm gr}\, \widehat{\mathfrak{g}}(\Sigma)$ there is a class which admits an easy description.
The key is to notice that $\pi$ is a free group of finite rank.
It is well-known that $A = \widehat{T}(H)$ can be identified 
with the associated graded of $\K\pi$ as a (completed) Hopf algebra: $A = {\rm gr}\, \K \pi$.
Let $\theta \colon \widehat{\mathbb{K}\pi} \to A$ be a homomorphism of filtered completed Hopf algebras such that 
$$
\theta(\gamma ) = 1 + [\gamma] + \cdots
$$
for any $\gamma \in \pi$, where $[\gamma] \in H$ is the homology class of $\gamma$ and $\cdots$ stand for higher degree terms. Then, $\theta$ induces an isomorphism of filtered vector spaces $\widehat{\mathfrak{g}}(\Sigma) \to {\rm gr}\, \widehat{\mathfrak{g}}(\Sigma)$ (which we denote by the same letter).

As the rank of $\pi$ is $2g+n$,
one can choose a free generating set $\alpha_i, \beta_i, i=1, \dots, g$ and $\gamma_j, j=1, \dots, n$ of $\pi$ represented by simple curves which only intersect at the basepoint $*$ such that $\gamma_j$ is a loop around the boundary component with label $j$ and the element 
$$
\gamma_0 = 
\prod_{i=1}^g \alpha_i \beta_i {\alpha_i}^{-1} {\beta_i}^{-1} \prod_{j=1}^n \gamma_j
$$
corresponds to the boundary component with label $0$ (with suitable orientation).
Denote by $[\alpha_i]=x_i, [\beta_i]=y_i, i=1, \dots, g$ and $[\gamma_j]=z_j, j=1, \dots, n$ the corresponding generators of $H$.
Then, there is a unique Hopf algebra homomorphism $\theta_{\rm exp} \colon \widehat{\K \pi} \to A$ such that $\theta_{\exp}(\alpha_i) = e^{x_i}, 
\theta_{\rm exp}(\beta_i) = e^{y_i}, \theta_{\rm exp}(\gamma_j)=e^{z_j}$. All other maps $\theta$ can be represented as 
$$
\theta = \theta_F := F \circ \theta_{\rm exp},
$$
where $F \in {\rm Aut}^+(L)$ is an automorphism of the free Lie algebra $L$ with generators $x_i, y_i, z_j$ which induces a Hopf algebra automorphism of $A$ and whose associated graded is the identity.
Our aim will be to encode the properties of the map $\theta_F$ in terms of the properties of the automorphism $F$.

\subsubsection*{Formality for Goldman bracket}

Before studying the GT formality, one can address the following easier question:

\begin{prob}[Formality problem for Goldman Lie algebras]
Find a filtration preserving Lie algebra isomorphism
$\theta\colon \widehat{\mathfrak{g}}(\Sigma) \to {\rm gr} \, \widehat{\mathfrak{g}}(\Sigma)$
such that its associated graded map is the identity: ${\rm gr} \, \theta= {\rm id}$.
\end{prob}

This problem admits a very nice solution which plays an important role in our study:

\begin{thm}   \label{thm:intro_Goldman}
Let $F \in {\rm Aut}^+(L)$ be such that for each $j \in \{1,\ldots,n \}$ there exists a group-like element $f_j \in A$ satisfying $F(z_j) = {f_j}^{-1}z_j f_j$, and 
$$
\theta_F(\gamma_0) = \exp \big(\sum_{i=1}^g [x_i, y_i] + \sum_{j=1}^n z_j \big).
$$
Then, the map $\theta_F$ solves the formality problem for the Goldman Lie algebra of the surface of genus $g$ with $n+1$ boundary components.
\end{thm}

This theorem has several proofs available in the literature: Kawazumi and Kuno \cite{KK14} establish it by viewing the map $\theta_F$ in terms of relative cohomology of Hopf algebras and by interpreting the Goldman bracket using cup products.  The cup product interpretation was also used by Hain \cite{Hain17} to give a proof based on the theory of mixed Hodge structures. In \cite{MT13}, Massuyeau and Turaev give a proof using the theory of non-degenerate Fox pairings. In \cite{Naef}, Naef gives a proof using moment maps in non-commutative Poisson geometry.

\subsubsection*{Goldman-Turaev formality in genus zero}

There are several proofs of the GT formality for genus zero surfaces in the literature. In \cite{Mas15}, Massuyeau establishes it using the Kontsevich integral. In \cite{AN17}, Alekseev and Naef gave a proof over $\mathbb{C}$ using the Knizhnik-Zamolodchikov connection. 

Recall the idea of the proof that we gave in \cite{genus0}. 
We denote by  $a \mapsto |a|$  the natural projection $A \to |A| = A/[A, A]$, and recall that the group ${\rm Aut}^+(L)$ carries a 1-cocycle
$$
{\sf j} \colon {\rm Aut}^+(L) \to |A|.
$$
This cocycle is a non-commutative analogue of the log-Jacobian 1-cocycle in differential geometry. 
The main result of \cite{genus0} is the following theorem:

\begin{thm}
Suppose $g=0$.
Let $F \in {\rm Aut}^+(L)$ such that for each $j \in \{1,\ldots,n \}$ there exists a group-like element $f_j \in A$ satisfying $F(z_j) = {f_j}^{-1}z_j f_j$, 
and 
$$
\theta_F(\gamma_0) =  \exp( z_1 + \cdots + z_n),
\quad
{\sf j}(F) =\sum_{j=1}^n |h(z_j)| - |h(z_1 + \dots + z_n)|,
$$
where $h(s) \in s\mathbb{K}[[s]]$ is a formal power series in one variable (the Duflo function).
Then, the map $\theta_F$ solves the formality problem for Goldman-Turaev Lie bialgebras for a surface of genus zero with $n+1$ boundary components.
\end{thm}

For $n=2$, the conditions on $F$ coincide with the Kashiwara-Vergne (KV) problem in Lie theory \cite{KV78, AT12}. By \cite{AT12}, the KV problem admits solutions parametrized by Drinfeld associators. Furthermore, solutions of the problem for $n>2$ are easily constructed from solutions of the $n=2$ problem.

\subsubsection*{Higher genus Goldman-Turaev formality}

The first solution of the higher genus GT formality problem was announced in \cite{short} and appeared in the previous verison of this paper (arXiv:1804.09566v2).
It uses the higher genus version of the KV problem, and it represents a nontrivial extension to higher genera of the methods of \cite{genus0}. Another proof of the GT formality was given by Hain \cite{Hain_GT} using the theory of mixed Hodge structures.

\subsection{Structure of the paper and main results}

We describe the structure of the paper and give a brief account of the main results.

\vspace{1em}
In Section~\ref{sec:GTL}, we define the Goldman bracket and Turaev cobracket, and we recall their main properties.
In fact, it is more convenient to work with their refinements, the double bracket (in the sense of van den Bergh)
$$
\kappa \colon \mathbb{K}\pi \otimes \mathbb{K}\pi \to \mathbb{K}\pi \otimes \mathbb{K}\pi
$$
and the coaction maps (which depend on the choice of framing on the surface)
$$
\mu^f_r \colon \mathbb{K}\pi \to |\mathbb{K}\pi| \otimes \mathbb{K}\pi, \hskip 0.3cm
\mu^f_l \colon \mathbb{K}\pi \to \mathbb{K}\pi \otimes |\mathbb{K}\pi|.
$$
These operations are defined in a similar manner to $[\cdot, \cdot]_{\rm Goldman}$ and $\delta^f_{\rm Turaev}$ (that is, in terms of intersections and self-intersections of curves) but for based loops.
The advantage of using these maps is that they obey interesting product formulas, and hence they are uniquely determined by their values on generators of $\mathbb{K}\pi$.
An important property of the map $\kappa$ is that it naturally induces a map
\[
|\K \pi| \to {\rm Der}(\K \pi),
\quad
|a| \mapsto \{|a|, \cdot \}_{\kappa}
\]
which can be regarded as a based loop extension of the adjoint action of the Goldman bracket.
Section \ref{sec:GTL} does not contain new results, and its purpose is to fix notation and to serve as a review for convenience of the reader.

\vspace{1em}
In Section~\ref{sec:filt}, we describe the weight filtration on $\mathbb{K}\pi$, its completion $\widehat{\mathbb{K}\pi}$ and its associated graded $A = {\rm gr} \, \mathbb{K} \pi$.
When $n =0$, this filtration coincides with the powers of the augmentation ideal of $\K\pi$.
The Goldman bracket, the Turaev cobracket and the operations $\kappa, \mu^f_r, \mu^f_l$ give rise to the corresponding graded operations on $A$ and $|A|$. The material of Section~\ref{sec:filt} is rather standard. It is needed to fix notation and to establish a setup for the Goldman-Turaev formality problem.

\vspace{1em}
In Section~\ref{sec:div}, we develop a theory of non-commutative divergence and log-Jacobian cocycles.
It can be read as a separate text without relation to the topology of surfaces.  

Denote by $\Gamma$ a finitely generated free group.
The non-commutative divergence on the group algebra $\mathbb{K} \Gamma$ is a certain Lie algebra 1-cocycle
$$
{\sf Div}\colon {\rm Der}(\mathbb{K} \Gamma) \to |\mathbb{K} \Gamma|^{\otimes 2}
$$
defined on the Lie algebra of derivations of $\mathbb{K}\Gamma$.
A priori, the cocycle ${\sf Div}$ depends on the choice of generating set $\gamma=\{ \gamma_1, \dots, \gamma_n\}$ (this is similar to dependence of the ordinary divergence on the coordinate system).
Hence, we are using the notation ${\sf Div}_\gamma$.
It naturally extends to the $1$-cocycle
\[
{\sf Div}_{\gamma} \colon {\rm Der}(\widehat{\K\Gamma}) \to |\widehat{\K\Gamma}|^{\otimes 2}
\]
defined on the Lie algebra of derivations of the completed group algebra $\widehat{\K\Gamma}$.
The main result of Section~\ref{sec:div} establishes an (almost) uniqueness of the non-commutative divergence (for more details, see Theorem~\ref{thm:div_independent}):
\begin{thm}      \label{thm:intro1}
For a finitely generated free group $\Gamma$, the non-commutative divergence cocycle ${\sf Div}_{\gamma}$ is independent of the generating set $\gamma$ modulo the lattice $\Lambda \cong H_1(\Gamma, \mathbb{Z})$.
\end{thm}
This result can be viewed as a non-commutative counterpart of the uniqueness of left or right-invariant Haar measures on locally compact topological groups.
The group algebra $\K\Gamma$ has a Hopf algebra structure with the coproduct $\Delta(x) = x\otimes x$ and antipode $\iota(x) = x^{-1}$ for $x \in \Gamma$, which naturally extends to the completion $\widehat{\K\Gamma}$.
This defines a Lie subalgebra ${\rm Der}(\widehat{\K\Gamma}, \Delta) \subset {\rm Der}(\widehat{\K\Gamma})$ of Hopf derivations of $\widehat{\K\Gamma}$.
When restricted to ${\rm Der}(\widehat{\K\Gamma}, \Delta)$, the cocycle ${\sf Div}_{\gamma}$ takes the form
\[
{\sf Div}_{\gamma} = \tilde{\Delta} \circ {\sf div}_{\gamma},
\]
where ${\sf div}_{\gamma} \colon {\rm Der}(\widehat{\K\Gamma}, \Delta) \to |\widehat{\K\Gamma}|$ is a certain Lie algebra $1$-cocycle and $\tilde{\Delta}\colon |\widehat{\K\Gamma}| \to |\widehat{\K\Gamma}|^{\otimes 2}$ is the map induced from the twisted coproduct $({\rm id}\otimes \iota)\circ \Delta$. 
The cocycle ${\sf div}_{\gamma}$ integrates to the group $1$-cocycle ${\sf j}_{\gamma}\colon {\rm Aut}^+(\widehat{\K\Gamma}, \Delta) \to |\widehat{\K\Gamma}|$ on the group of Hopf algebra automorphisms of $\widehat{\K\Gamma}$ which serves as a non-commutative counterpart of the log-Jacobian cocycle.
It gives rise to the corresponding graded $1$-cocycle
\[
{\sf j}_{\rm gr}\colon {\rm Aut}^+(L) \to |A|.
\]

\vspace{1em}
In Section~\ref{sec:Turaev}, we establish a link between the topologically defined operations on $\mathbb{K}\pi$, where $\pi$ is a fundamental group of an oriented surface, and the non-commutative divergence map.
To simplify the presentation, in the rest of the introduction we assume that the surface $\Sigma$ has only one boundary component, namely $n=0$.
The first main result of this section is as follows (see Theorem~\ref{thm:Div^f}):
\begin{thm}       \label{thm:intro2}
    Let $\pi=\pi_1(\Sigma)$.
    Then, for each framing $f$ on $\Sigma$ there is a canonical Lie algebra 1-cocycle ${\sf Div}^f \colon {\rm Der}(\widehat{\mathbb{K}\pi}) \to |\widehat{\mathbb{K} \pi}|^{\otimes 2}$ which is cohomologous to ${\sf Div}_{\gamma}$.
\end{thm}
This result eliminates an ambiguity in Theorem \ref{thm:intro1} by adding topological information on the framing of the surface.
When restricted to ${\rm Der}(\widehat{\K\pi},\Delta)$, the cocycle ${\sf Div}^f$ takes the form ${\sf Div}^f = \tilde{\Delta} \circ {\sf div}^f$ where ${\sf div}^f \colon {\rm Der}(\widehat{\K\pi}, \Delta) \to |\widehat{\K\pi}|$ is a certain Lie algebra $1$-cocycle.
The $1$-cocycle ${\sf div}^f$ integrates to a canonical group $1$-cocycle
\[
{\sf j}^f\colon {\rm Aut}^+(\widehat{\K\pi}, \Delta) \to |\widehat{\K\pi}|
\]

The canonical cocycle ${\sf Div}^f$ is used in the following theorem (for a more precise statement, see Theorem~\ref{prop:delta=c_sigma}):
\begin{thm}      \label{thm:intro3}
Let $f$ be a framing on $\Sigma$.
Then,
\begin{equation} \label{eq:intro_key}
\delta^f_{\rm Turaev}(|a|) = {\sf Div}^f( \{ |a|, \cdot \}_{\kappa}).
\end{equation}
\end{thm}

The equation \eqref{eq:intro_key} is of crucial importance for constructions in the paper.
While $\kappa$ and $\delta^f_{\rm Turaev}$ have topological definitions in terms of intersections and self-intersections of curves,
the canonical cocycle ${\sf Div}^f$ does not have a direct topological definition.

Theorem~\ref{thm:intro3} follows from a more detailed statement (Theorem~\ref{thm:mu=ch}) which expresses the coaction maps
$\mu^f_r, \mu^f_l$ in terms of the 1-cocycle ${\sf Div}^f$ and the double bracket $ \kappa$. 
To prove this statement, we develop some machinery to construct an algebraic counterpart to the coaction maps $\mu^f_r, \mu^f_l$. 
The treatment in full generality (that is, the case of arbitrary $g, n$ and framing) is involved, and technical parts in Sections~\ref{sec:div} and \ref{sec:Turaev} are devoted to this purpose.
We make use of the multiplicative property of the coaction maps and their algebraic counterpart, and the argument is reduced to comparing these maps on generators of $\pi$. 

\vspace{1em}
In Section~\ref{sec:expansions+KV}, we establish a link between the Goldman-Turaev formality and a certain algebraic problem (the Kashiwara-Vergne problem). 
As in Section~\ref{subsec:known_results}, we let $\alpha_1, \dots, \alpha_g, \beta_1, \dots, \beta_g$ be a generating set of $\pi=\pi_1(\Sigma)$ such that $\gamma_0 = \prod_{i=1}^g \alpha_i \beta_i {\alpha_i}^{-1} {\beta_i}^{-1}$ corresponds to the loop around the boundary of $\Sigma$.
The homology classes $x_i = [\alpha_i], y_i = [\beta_i] \in H$ constitute a generating set for the free assosiative algebra $A = \widehat{T}(H)$ and the corresponding free Lie algebra $L$.
Denote by $\theta_{\rm exp}\colon \widehat{\mathbb{K}\pi} \to A$ the map defined on generators by exponential formulas $\theta_{\exp}(\alpha_i) = e^{x_i}, \theta_{\exp}(\beta_i) = e^{y_i}$.
We define elements ${\bf r}, {\bf p}^f \in |A|$ by 
$$
{\bf r} = \sum_{i=1}^g |r(x_i) + r(y_i)|,
\qquad 
{\bf p}^f = \sum_{i=1}^g |{\rm rot}^f(\alpha_i) y_i - {\rm rot}^f(\beta_i) x_i|.
$$
Here, $r(s) = \log((e^s-1)/s)$.
The element ${\bf p}^f$ encodes information on the framing.
Also, define elements $\xi, \omega \in L$ by 
\[
\xi = \log \big( \prod_{i=1}^g [e^{x_i}, e^{y_i}] \big),
\qquad \omega = \sum_{i=1}^g [x_i, y_i].
\]
(Notice that $\xi = \log \theta_{\exp}(\gamma_0)$.)

\begin{dfn}     \label{dfn:KV_intro}
    An element $F \in {\rm Aut}^+(L)$ is a solution of the Kashiwara-Vergne (KV) problem associated to the surface $\Sigma$ with framing $f$ if
\[
\begin{array}{ll}
{\rm KV}\, {\rm I} :& F(\xi) = \omega, \\
{\rm KV}\, {\rm II}:&  \text{${\sf j}_{\rm gr}(F) + F({\bf r} - {\bf p}^f) = |h (\omega)|$ for some $h \in s^2\mathbb{K}[[s]]$}.
\end{array}
\]
\end{dfn}

For a more general form of this definition which applies to surfaces with many boundary components, see Definition \ref{dfn:KV_problem}. In particular, in the case of a sphere with three boundary components and the framing ${\rm rot}^f(\gamma_1)={\rm rot}^f(\gamma_2)=-1$, one obtains the classical KV problem from Lie theory in the formulation of \cite{AT12}. This observation explains the terminology.

As Theorem~\ref{thm:intro_Goldman} shows, the first equation ${\rm KV}\, {\rm I}$ for $F$ implies that $\theta_F = F \circ \theta_{\exp}$ solves the formality problem for the Goldman bracket.
It turns out that assuming ${\rm KV}\, {\rm I}$ the second equation ${\rm KV}\, {\rm II}$ corresponds to the formality of the Turaev cobracket (Proposition~\ref{prop:homomorphic=center}).
Here, the formula~\eqref{eq:intro_key} plays a significant role.
The importance of the two equations ${\rm KV}\, {\rm I}$ and ${\rm KV}\, {\rm II}$ combined stems from the following result:
\begin{thm}     \label{thm:intro4}
Let $\theta_F =F \circ \theta_{\rm exp}\colon \widehat{\mathbb{K} \pi} \to A$ with   
$F \in {\rm Aut}^+(L)$. Then, $\theta_F$ induces an isomorphism of Lie bialgebras
$$
\theta_F \colon (\mathfrak{g}(\Sigma), [\cdot, \cdot]_{\rm Goldman}, \delta^f_{\rm Turaev}) \cong
({\rm gr}\,  \mathfrak{g}(\Sigma), [\cdot, \cdot]_{\rm gr}, \delta^f_{\rm gr}) 
$$
if and only if the element $F$ is a solution of the Kashiwara-Vergne problem for the surface $\Sigma$ with framing $f$ (composed with conjugation by a group-like element in $A$).
\end{thm}

For a more precise statement of Theorem \ref{thm:intro4}, see Theorem \ref{thm:GThomomorphic} in the body of the paper.

\begin{rem}
If $\Sigma$ has more than one boundary component, we actually need to work with automorphisms of $L$ {\em tangential} to $z_j$, namely $F \in {\rm Aut}^+(L)$ such that $F(z_j)$ is conjugate to $z_j$ by a group-like element in $A$, as in Theorem~\ref{thm:intro_Goldman}.
Accordingly, we will need to introduce various tangential objects (automorphisms, derivations and $1$-cocycles on them), and we use these to formulate the KV problem for any framed surface with nonempty boundary.
\end{rem}

\vspace{1em}
In Section~\ref{sec:solveKV}, we discuss solutions of the KV problem. The main result of this section (and of the paper) is as follows:
\begin{thm} \label{thm:intro5}
Let $\Sigma$ be a surface of genus $g \geq 1$ with $n+1$ boundary components and framing $f$. The corresponding Kashiwara-Vergne problem admits solutions if and only if

\begin{enumerate}
\item[$(i)$] $g\geq 2$ and $f$ is an arbitrary framing on $\Sigma$;

\item[$(ii)$] $g=1$ and ${\rm rot}^f(\gamma_j) \neq -1$ for at least one $j \in \{1, \ldots, n\}$;

\item[$(iii)$] $g=1, {\rm rot}^f(\gamma_j) = -1$ for all $j \in \{1, \dots, n\}$ and ${\rm rot}^f(\alpha_1) ={\rm rot}^f(\beta_1)=0$.
\end{enumerate}
\end{thm}

For a more precise statement of this result, see Theorem~\ref{thm:KVsolve}. Note that the genus zero KV problem has been settled in \cite{AT12, AET}. 
The strategy of the proof of Theorem~\ref{thm:intro5} is as follows: it turns out that one can reduce the higher genus KV problem for arbitrary $g$ and $n$ to two special cases. The first one is the case of $g=0$ and $n=2$ (this is the classical KV problem mentioned above), and the second one is the case of $g=1$ and $n=1$. This latter case admits  solutions in terms of special elliptic associators defined by Enriquez \cite{Enriquez}.

\vspace{1em}
In Section~\ref{sec:uniqueness}, we consider symmetries of the KV problem. 
We will denote the set of solutions of the KV problem with framing $f$ by ${\rm SolKV}^f$.
The following definition is a somewhat simplified version of Definition \ref{dfn:KV_KRV}:
\begin{dfn}
For a surface $\Sigma$ of genus $g$ with one boundary component equipped with a framing $f$ we define the groups
$$
\begin{array}{lll}
{\rm KV}^f & = & \{ G \in {\rm Aut}^+(L);
\text{$G(\xi) = \xi$, ${\sf j}^f(G) = |h(\xi)|$ for some $h\in s^2\mathbb{K}[[s]]$} \}, \\
{\rm KRV}^f & = & \{ G \in {\rm Aut}^+(L);
\text{$G(\omega) = \omega$, ${\sf j}^f_{\rm gr}(G) =  |h(\omega)|$ for some $h\in s^2\mathbb{K}[[s]]$} \}.
\end{array}
$$
Here, we regard ${\sf j}^f$ as a $1$-cocycle ${\rm Aut}^+(L) \to |A|$ through the isomorphisms $|\widehat{\K \pi}| \cong |A|$ and ${\rm Aut}^+(\widehat{\K \pi}, \Delta) \cong {\rm Aut}^+(L)$ induced from the map $\theta_{\exp}$.
The map ${\sf j}^f_{\rm gr}$ is the associated graded map of ${\sf j}^f$. For $n=0$, it is independent of framing and coincides with ${\sf j}_{\rm gr}$.
\end{dfn}
The following theorem explains the relation between solutions of the KV problem and the Kashiwara-Vergne groups ${\rm KV}^f, {\rm KRV}^f$:
\begin{thm}      \label{thm:intro6}
   Assume that the set of solutions ${\rm SolKV}^f$ of the Kashiwara-Vergne problem for a surface $\Sigma$ with framing $f$ is nonempty. Then, it carries a free and transitive action of the group ${\rm KV}^f$ by right translations and of the group ${\rm KRV}^f$ by left translations:
   \[
{\rm KRV}^f
\curvearrowright
{\rm SolKV}^f
\curvearrowleft
{\rm KV}^f.
\]
Every element $F \in {\rm SolKV}^f$ defines a group isomorphism ${\rm KRV}^f \to {\rm KV}^f$ given by formula $G \mapsto F^{-1}GF$.
\end{thm}

For a more precise statement of Theorem \ref{thm:intro6}, see Theorem \ref{thm:KV_KRV}.
The Kashiwara-Vergne groups serve as automorphism groups of topologically defined objects:
\begin{thm}      \label{thm:intro7}
   The groups ${\rm KV}^f$ and ${\rm KRV}^f$ are isomorphic to automorphism groups of the following natural structures:
   $$
{\rm KV}^f \cong {\rm Aut}^+(\widehat{\mathbb{K}\pi}, \Delta, \kappa, \delta^f_{\rm Turaev}), \hskip 0.3cm
{\rm KRV}^f \cong {\rm Aut}^+(L, \kappa_{\rm gr}, \delta_{\rm gr}).
$$
Here ${\rm Aut}^+(\widehat{\mathbb{K}\pi}, \Delta, \kappa, \delta^f_{\rm Turaev})$ is the group of positive Hopf automorphisms of the completed group algebra $\widehat{\mathbb{K} \pi}$ preserving the double bracket $\kappa$ on $\widehat{\mathbb{K} \pi}$ and the cobracket $\delta^f_{\rm Turaev}$ on $|\widehat{\mathbb{K} \pi}|$, and ${\rm Aut}^+(L, \kappa_{\rm gr}, \delta_{\rm gr})$ corresponds to the graded versions of all the structures.
\end{thm}
For a more precise statement of Theorem \ref{thm:intro7}, see  Theorem \ref{thm:KRV_acts} in the main body of the paper. In general, not much is known about the structure of the groups ${\rm KV}^f$ and ${\rm KRV}^f$. However, the following is true:
\begin{thm}      \label{thm:intro8}
 There is a family of injective group homomorphisms from Grothendieck-Teichm\"uller groups to Kashiwara-Vergne groups parametrized by trivalent oriented rooted trees $T$:
 $$
\nu^T\colon {\rm GRT}_1 \to {\rm KRV}^f, \hskip 0.3cm
\tilde{\nu}^T\colon {\rm GT}_1 \to {\rm KV}^f.
 $$
\end{thm}
For a more detailed statement of Theorem \ref{thm:intro8} including the conditions on trivalent trees $T$, see Theorem \ref{thm:treeGTKV}.

\vspace{1em}
In Section~\ref{sec:Johnson_obstruction}, we give an application of the study of the Goldman-Turaev formality to surface automorphisms.
Let $\Sigma$ be a surface of genus $g \geq 1$ with one boundary component, i.e., $n=0$.
Denote by $\mathcal{I}$ the Torelli group of $\Sigma$ (that is, the subgroup of the mapping class group acting trivially on $H=H_1(\Sigma)$).
Fix a framing $f$ on $\Sigma$ and let $\mathcal{I}^f \subset \mathcal{I}$ be the framed Torelli group (that is, the subgroup of the Torelli group preserving $f$).

The group $\mathcal{I}$ carries a decreasing filtration called the Johnson filtration $\mathcal{I}(k)$, and the framed Torelli group $\mathcal{I}^f$ (for any framing $f$) contains its second term  $\mathcal{I}(2) \subset \mathcal{I}^f$. The Torelli group $\mathcal{I}$ naturally acts on the fundamental group $\pi = \pi_1(\Sigma)$ which carries the weight filtration (in the case of $n=0$, this filtration coincides with the lower central series of $\pi$).
Taking the associated graded of the action of $\mathcal{I}$ on $\pi$ under these two filtrations gives rise to a graded Lie algebra homomorphism called the Johnson homomorphism:
$$
\tau = (\tau_k)_{k \ge 1}: {\rm Lie}_{\rm gr}(\mathcal{I}) \to {\rm Der}^+_\omega(L).
$$
Here, $L$ is the free Lie algebra spanned by $H$, $\omega=\sum_{i=1}^g [x_i, y_i]$ is the canonical element corresponding to the boundary loop, and ${\rm Der}^+_{\omega}(L)$ is the Lie algebra of positive derivations annihilating $\omega$.

Following the pioneering work of Morita \cite{MoAQ}, Enomoto and Satoh \cite{ES} introduced the trace map
$$
{\sf ES}: {\rm Der}^+_\omega(L) \to |A|
$$
with the property that ${\sf ES} \circ \tau=0$ on the degree $\ge 2$ part of ${\rm Lie}_{\rm gr}(\mathcal{I})$. Hence, the image of the Johnson homomorphism is contained in the kernel of the Enomoto-Satoh trace
\begin{equation}   \label{eq:intro_tau_ES}
{\rm im}(\tau_{k}) \subset {\rm ker}({\sf ES}_k), \quad k \ge 2.
\end{equation}
In this way, the nonvanishing Enomoto-Satoh trace is an obstruction for a derivation to be in the image of the Johnson homomorphism.

Since $\mathcal{I}^f$ is a subgroup of the mapping class group, its action preserves intersections and self-intersections of curves on $\Sigma$. Then, it is (relatively) easy to establish the following facts: for each $\varphi \in \mathcal{I}^f$
we have $\varphi(\xi) = \xi$ and 
\begin{equation} \label{eq:intro_j^f}
{\sf j}^f(\varphi)=0.
\end{equation}
In particular, this implies that $\mathcal{I}^f \subset {\rm KV}^f$.
Furthermore, it is easy to see that for $\varphi \in \mathcal{I}^f(k)$
(that is, $\varphi$ is of degree $k$ under the Johnson filtration) vanishing of the lowest degree part of ${\sf j}^f(\varphi)$ is equivalent to vanishing of the Enomoto-Satoh trace on the graded derivation associated to $\varphi$. This observation yields a new proof of equation~\eqref{eq:intro_tau_ES}. And one can conclude that potentially equation~\eqref{eq:intro_j^f} is a strengthening of equation~\eqref{eq:intro_tau_ES}.

As an application of our results, we show that unfortunately this is not the case:
\begin{thm}       \label{thm:intro9}
    Equation \eqref{eq:intro_j^f} is equivalent to the Enomoto-Satoh obstruction \eqref{eq:intro_tau_ES}.
\end{thm}
This result clarifies the topological meaning of the Enomoto-Satoh obstruction. Namely, it precisely captures the fact that any diffeomorphism of the framed surface $\Sigma$ preserves the loop operations $\kappa, \mu^f_r$ and $\mu^f_l$.
For a more precise statement of Theorem~\ref{thm:intro9}, see Theorem~\ref{thm:JohnsonES} and Theorem~\ref{thm:grKer=Kergr}.

\vspace{1em}
In Appendix, we give technical details on the facts that we use in the main body of the text.
Among other things, in Appendix~\ref{subsec:bra_kappa_gr} we give a purely algebraic proof of the result describing the center of the graded Lie algebra $(\widehat{\mathfrak{g}}(\Sigma), [\cdot, \cdot]_{\rm gr})$. A similar result (with a different proof) was first established in \cite{CBEG07}. By formality of Goldman Lie algebras, this result implies the description of the center of the completed Goldman Lie algebra.

\vskip 1em
\noindent \textbf{Acknowledgements.}
We are grateful to R. Hain and V. Turaev for interesting remarks and suggestions.
Research of A.A. was supported in part by the grant MODFLAT of the European Reseacrh Council (ERC), by the grants number 200400 and 208235 of the Swiss National Science Foundation
(SNSF) and by the NCCR SwissMAP of the SNSF.
F.N. was supported by the Early Postdoc.Mobility grant of the SNSF.
N.K. was supported in part by the grant JSPS KAKENHI 15H03617, 20H00115 and 22H01120.
Y.K. was supported in part by the grant JSPS KAKENHI 26800044, 18K03308 and 23K03121.

\tableofcontents

\subsection*{Convention.}

Here we collect our convention used in the rest of the paper.

\begin{itemize}
\item 
A {\em filtration} on a $\K$-vector space $V$ is meant a decreasing filtration $\cdots V(-1) \supset V(0) \supset V(1) \supset \cdots$ of $V$ indexed by integers which is bounded below by $V$, i.e., $V = V(l)$ for some $l \in \Z$.
The filtration $\{ V(m) \}_m$ is called {\em complete} if the natural map $V \to \widehat{V}:=\varprojlim_m V/V(m)$ is an isomorphism.
The space $\widehat{V}$ is called the {\em completion} of $V$ and is equipped with a complete filtration defined by the kernel of the projections $\widehat{V} \to V/V(m)$, $m \in \Z$.
The {\em associated graded} of a filtered vector space $V$ is defined to be the complete vector space
\[
{\rm gr}\, V := \prod_k V(k)/V(k+1)
\]
which is filtered by the subspaces $\prod_{m \ge k} V(k)/V(k+1)$, $m \in \Z$.

\item
If $V$ and $W$ are complete $\K$-vector spaces, the tensor product $V \otimes W$ is naturally filtered: the $m$th term of the filtration is the sum of the supspaces $V(k) \otimes W(l)$, where $k$ and $l$ run through integers such that $k + l = m$.
The corresponding completion of $V \otimes W$, usually denoted by $V \widehat{\otimes} W$, is called the {\em complete tensor product} of $V$ and $W$.
To simplify the notation, we always use the symbol $\otimes$ instead of $\widehat{\otimes}$ for complete tensor product.
    \item Let $V$ and $W$ be (complete) $\K$-vector spaces and $v\in V, w\in W$.
We denote
\[
(v\otimes w)^{\circ} := w\otimes v \in W\otimes V.
\]

\item 
Let $V$ be a (finite dimensional) $\K$-vector space.
The complete tensor algebra generated by $V$ is defined to be $\widehat{T}(V) = \prod_{k=0}^{\infty} H^{\otimes k}$. If $V$ is graded, then $\widehat{T}(V)$ is naturally graded.

\item 
We denote by $A(z_1,\ldots,z_n)$ the completed free associative algebra generated by indeterminates $z_1, \ldots, z_n$.
It is identified with the ring $\K\langle \langle z_1, \ldots, z_n \rangle \rangle$ of formal power series in $z_1, \ldots, z_n$ and also with the complete tensor algebra $\widehat{T} ( \bigoplus_{i=1}^n \K z_i )$.
We denote by $L(z_1,\ldots,z_n)$ the completed free Lie algebra generated by $z_1, \ldots, z_n$.
\end{itemize}

\begin{itemize}
\item
Let $\mathfrak{A}$ be a (topological) associative $\K$-algebra with unit.
We denote by $[\mathfrak{A},\mathfrak{A}]$ the closure of the $\K$-span of the elements of the form $ab-ba$ with $a,b\in \mathfrak{A}$.
The {\em trace space} of $\mathfrak{A}$ is defined to be the $\K$-linear space
\[
|\mathfrak{A}|:=\mathfrak{A}/[\mathfrak{A},\mathfrak{A}].
\]
We use the symbols $|\cdot |$ or ${\rm Tr}$ for the natural projection $\mathfrak{A}\to |\mathfrak{A}|$.
We use the following shorthand notation for the class of the unit:
\[
{\bf 1} := |1| = {\rm Tr}(1) \in |\mathfrak{A}|.
\]
Adopting this notation, the underlying space of the Goldman-Turaev Lie bialgebra is given by $\mathfrak{g}(\Sigma) = | \K \pi|$.

The trace space of a (complete) tensor product is computed as follows: $|\mathfrak{A} \otimes \mathfrak{B}| = |\mathfrak{A}| \otimes |\mathfrak{B}|$.
To see this, notice that $[a\otimes b, a' \otimes b'] = [a,a'] \otimes bb' + aa' \otimes [b,b']$ for any $a,a'\in \mathfrak{A}$ and $b,b' \in \mathfrak{B}$.
Hence the projection $a\otimes b \mapsto |a|\otimes |b|$ induces a map $|\mathfrak{A} \otimes \mathfrak{B}| \to |\mathfrak{A}| \otimes |\mathfrak{B}|$.
Since $[a,a'] \otimes b = [a\otimes b, a' \otimes 1]$ and $a \otimes [b,b'] = [a\otimes b, 1\otimes b']$, this map has a well-defined inverse $|\mathfrak{A}| \otimes |\mathfrak{B}| \to |\mathfrak{A}\otimes \mathfrak{B}|, |a|\otimes |b| \mapsto |a\otimes b|$.
\end{itemize}

In our study, the theory of double brackets in the sense of van den Bergh \cite{vdB08} plays a key role.
Here we briefly outline basic constructions to fix our convention.

\begin{dfn}
\label{dfn:db}
Let $\mathfrak{A}$ be an associative $\K$-algebra with unit. 
A $\K$-linear map $\Pi\colon \mathfrak{A}\otimes \mathfrak{A}\to \mathfrak{A}\otimes \mathfrak{A}$ is called a \emph{double bracket} on $\mathfrak{A}$ if for any $a,b,c\in \mathfrak{A}$,
\begin{align}
\Pi(a,bc) & = \Pi(a,b) (1\otimes c) + (b\otimes 1) \Pi(a,c), \label{eq:a,bc} \\
\Pi(ab,c) & = \Pi(a,c) (b\otimes 1) + (1\otimes a) \Pi(b,c). \label{eq:ab,c}
\end{align}
\end{dfn}

The following notation of Sweedler type is sometimes convenient:
\[
\Pi(a,b)=\Pi(a,b)'\otimes \Pi(a,b)''.
\]
Given a double bracket $\Pi$, we can consider the following auxiliary operations.

\begin{enumerate}
\item[(i)]
The map
\[
(a,b) \mapsto \Pi(a,b)'\Pi(a,b)''=:\{|a|,b\}_{\Pi} \in \mathfrak{A}
\]
descends to a map from $|\mathfrak{A}|\otimes \mathfrak{A}$, and is a derivation in the second variable.
Namely,
\[
\{|a|,b_1b_2\}_{\Pi}=\{|a|,b_1\}_\Pi \, b_2+b_1 \, \{|a|,b_2\}_\Pi,
\quad
b_1,b_2\in \mathfrak{A}.
\]
This can be summarized in the following map:
$$
\sigma^\Pi\colon |\mathfrak{A}| \to {\rm Der}(\mathfrak{A}),
\quad |a| \mapsto
\sigma^{\Pi}(|a|) = \{ |a|, \cdot \}_{\Pi}.
$$

\item[(ii)]
The map
\[
(a,b) \mapsto \Pi(a,b)''\Pi(a,b)'=:\{a,|b|\}_{\Pi} \in \mathfrak{A}
\]
descends to a map from $\mathfrak{A}\otimes |\mathfrak{A}|$, and is a derivation in the first variable.
Namely,
\[
\{a_1a_2,|b|\}_{\Pi} =\{a_1,|b|\}_\Pi \, a_2+ a_1\, \{ a_2,|b|\}_\Pi,
\quad
a_1,a_2\in \mathfrak{A}.
\]

\item[(iii)]
The maps (i) and (ii) descend to the same map $|\mathfrak{A}|\otimes |\mathfrak{A}|\to |\mathfrak{A}|$, given by the formula
\[
|a|\otimes |b| \mapsto \{|a|,|b|\}_{\Pi}:=
|\{|a|,b\}_{\Pi}|=|\{a,|b|\}_{\Pi}| \in |\mathfrak{A}|.
\]
\end{enumerate}

All the three constructions above are called the \emph{bracket} associated with $\Pi$.

\section{Goldman-Turaev Lie bialgebra}
\label{sec:GTL}

In this section, we define the Goldman-Turaev Lie bialgebra and its upgrades.
In the presentation, we mainly follow \cite{Go86}, \cite{Tu91}, \cite{KK15}, \cite{MT14}. When we need to modify the proofs, we explain the necessary changes.

Let $\Sigma$ be a compact oriented surface (that is, a smooth connected oriented 2-manifold) with nonempty boundary $\partial \Sigma$.
Fix a basepoint $*\in \pa \Sigma$.
We consider the group algebra $\K\pi$ of the fundamental group
$\pi:=\pi_1(\Sigma,*)$, as well as its quotient $\K$-vector space
\[
|\K\pi| =\K\pi/[\K\pi,\K\pi].
\]
The space $|\K \pi|$ is denoted by $\mathfrak{g}(\Sigma)$ in Introduction.
Another description of $|\K \pi|$ is as follows.
Let $\hat{\pi}$ be the set of homotopy classes of free loops in $\Sigma$:
\[
\hat{\pi} := \{ \text{free loops in $\Sigma$} \}/ \text{homotopy}.
\]
Forgetting the basepoint of a  loop, we obtain a map $\pi_1(\Sigma,p) \to \hat{\pi}, \gamma \mapsto |\gamma|$ for any $p\in \Sigma$.
Then the $\K$-linear extension of the map $\pi=\pi_1(\Sigma,*)\to \hat{\pi}$ induces a $\K$-linear isomorphism
\[
|\K \pi |\cong \K\hat{\pi}.
\]
In what follows, we often use this identification without mentioning explicitly.
In particular, ${\bf 1} \in |\mathbb{K}\pi|$ corresponds to the homotopy class of a constant loop. 

The Goldman bracket and Turaev cobracket are topologically defined operations on the space $|\K\pi|$. More precisely, 
the Goldman bracket is a map
$$
[\cdot, \cdot]_{\rm Goldman}\colon |\K \pi| \otimes |\K\pi| \to |\K \pi|.
$$
The Turaev cobracket was originally defined on the quotient $|\K \pi|/\K {\bf 1}$:
$$
\delta_{\rm Turaev}\colon |\K\pi|/\K{\bf 1} \to (|\K \pi|/\K {\bf 1}) \otimes (|\K\pi|/\K{\bf 1}).
$$
It turns out that a choice of framing $f\colon T\Sigma \to \Sigma \times \mathbb{R}^2$ allows us to lift the Turaev cobracket to a map
 $$
 \delta^f = \delta^f_{\rm Turaev}\colon |\K \pi| \to |\K \pi| \otimes |\K\pi|
 $$
 which depends on $f$. For every framing, $(|\K\pi|, [\cdot, \cdot]_{\rm Goldman}, \delta^f_{\rm Turaev})$ is an involutive Lie bialgebra, as was established by Chas \cite{Cha04} for the original Turaev cobracket.
 
 It is actually more convenient to work with operations defined directly on the group algebra $\K \pi$ rather than on its quotient $|\K \pi|$. The natural lifts of the Goldman bracket and Turaev cobracket are the double bracket (in the sense of van den Bergh)
$$
\kappa\colon \K \pi \otimes \K \pi \to \K \pi \otimes \K \pi
$$ 
and the coaction maps
$$
\mu^f_r \colon \K \pi \to |\K\pi| \otimes \K \pi,
\hspace{2em}
\mu^f_l \colon \K\pi \to \K \pi \otimes |\K\pi|
$$ 
which depend on the framing $f$. We refer to these maps as {\em upgrades} of the Goldman-Turaev operations.
The Goldman bracket and Turaev cobracket can be easily recovered from the maps $\kappa$, $\mu^f_r$ and $\mu^f_l$. The great advantage of working with $\kappa$, $\mu^f_r$ and $\mu^f_l$ is the fact that they are completely defined by their values on generators of $\pi$. This is in contrast with the situation of the Goldman bracket and Turaev cobracket. Indeed, we are not aware of a convenient to handle generating system for the Goldman Lie algebra.

All the maps listed above are defined using the notions of intersections and self-intersections of curves.
We say that two immersed curves in $\Sigma$ are \emph{generic} or \emph{in general position} if their intersections consist of finitely many transverse double points.
Likewise, an immersed curve is said to be \emph{generic} if its self-intersections consist of a finite number of transverse double points.

We fix the following notation: for an ordered pair of linearly independent tangent vectors $(v_1, v_2)$ at some $p\in \Sigma$, let $\varepsilon(v_1,v_2):=+1$
if $(v_1,v_2)$ is a positive basis for $T_p\Sigma$,
and $\varepsilon(v_1,v_2):=-1$ otherwise.

\subsection{Goldman bracket and  double bracket $\kappa$}
\label{subsec:bra}

In this subsection, we recall the definition and properties of the Goldman bracket $[ \cdot, \cdot]_{\rm Goldman}$ and of its upgrade, the double bracket $\kappa$.

Let $\alpha$ and $\beta$ be free loops on $\Sigma$ in general position.
For each intersection $p\in \alpha\cap \beta$,
let $\dot{\alpha}_p, \dot{\beta}_p\in T_p\Sigma$ be the tangent vectors of $\alpha$ and $\beta$ at $p$, respectively.
Let $\alpha_p\in \pi_1(\Sigma,p)$ be the loop $\alpha$ based at $p$ and define $\beta_p$ in the same way.
Then the concatenation $\alpha_p \beta_p\in \pi_1(\Sigma,p)$ and its projection $|\alpha_p\beta_p|\in \hat{\pi}$ are well defined.
The \emph{Goldman bracket} of $\alpha$ and $\beta$ is defined by the formula
\begin{equation}
\label{eq:Gbra}
[\alpha,\beta]_{\rm Goldman}:=\sum_{p\in \alpha\cap \beta}
\varepsilon(\dot{\alpha}_p,\dot{\beta}_p)\, |\alpha_p\beta_p|
\in \K \hat{\pi} \cong |\K \pi|.
\end{equation}
Figure \ref{fig:Goldman} illustrates a computation of the Goldman bracket, where $\alpha$ and $\beta$ have only one transverse intersection.
As shown in \cite[Theorem 5.3]{Go86},
the $\K$-bilinear extension of this formula defines a Lie bracket on $|\K \pi|$.
That is, $[\cdot,\cdot]=[\cdot,\cdot]_{\rm Goldman}$ depends only on homotopy classes of loops and satisfies
\begin{itemize}
\item
the skew-symmetry condition:
$
[y,x]=-[x,y]
$
for any $x,y\in |\K\pi|$,
\item
the Jacobi identity:
$
[x,[y,z]]+[y,[z,x]]+[z,[x,y]]=0
$
for any $x,y,z\in |\K\pi|$.
\end{itemize}

\begin{figure}
\begin{center}
\input{fig_Goldman.tex}
\end{center}
\caption{The Goldman bracket.}
\label{fig:Goldman}
\end{figure}

In order to introduce an upgrade of the Goldman bracket, we choose an orientation preserving embedding $\nu \colon [0,1] \to \pa \Sigma$ with $\nu(1)=*$, and let $\bullet:=\nu(0)$.
The path $\nu$ defines an isomorphism of groups
\begin{equation*}
\pi=\pi_1(\Sigma,*) \overset{\cong}{\to} \pi_1(\Sigma,\bullet),
\quad
\gamma \mapsto \nu \gamma \overline{\nu},
\end{equation*}
through which we identify $\pi$ with $\pi_1(\Sigma,\bullet)$.
Here, $\overline{\nu}$ is the inverse path of $\nu$.

Let $\alpha$ and $\beta$ be generic loops based at $\bullet$ and $*$, respectively.
For each $p\in \alpha\cap \beta$, let $\alpha_{\bullet p}$ be the path from $\bullet$ to $p$ along $\alpha$, and define $\alpha_{p\bullet}$, $\beta_{*p}$ and $\beta_{p*}$ in a similar way.
Then the concatenations $\beta_{*p}\alpha_{p\bullet}\nu$ and $\overline{\nu}\alpha_{\bullet p}\beta_{p*}\in \pi$ are defined.
Set
\begin{equation}
\label{eq:kaab}
\kappa(\alpha,\beta):=
\sum_{p\in \alpha \cap \beta}
\varepsilon(\dot{\alpha}_p,\dot{\beta}_p)\, 
\beta_{*p}\alpha_{p\bullet}\nu \otimes \overline{\nu}\alpha_{\bullet p}\beta_{p*}
\in \K \pi \otimes \K \pi.
\end{equation}
Figure \ref{fig:kappa} shows a computation of $\kappa$, where $\alpha$ and $\beta$ intersect in one transverse double point.
Extending $\K$-bilinearly, we obtain a map
\[
\kappa\colon \K\pi \otimes \K\pi \to \K\pi \otimes \K\pi.
\]

\begin{figure}
\begin{center}
\input{fig_kappa.tex}
\end{center}
\caption{The operation $\kappa$}
\label{fig:kappa}
\end{figure}

\begin{prop}
\label{prop:kdb}
The map $\kappa$ is a double bracket on $\K\pi$ in the sense of Definition \ref{dfn:db}.
Furthermore, for any $a,b\in \K\pi$, we have
\begin{equation}
\label{eq:yxxy}
\kappa(b,a)=-\kappa(a,b)^{\circ}
+a\otimes b+b\otimes a-ab\otimes 1-1\otimes ba.
\end{equation}
\end{prop}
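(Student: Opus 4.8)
The statement has two parts: first that $\kappa$ is a double bracket, and second the symmetry relation \eqref{eq:yxxy}. My plan is to verify both properties directly from the defining formula \eqref{eq:kaab}, reducing to the case of group-like generators and then checking how both sides transform under concatenation of loops.

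For the double bracket axioms \eqref{eq:a,bc} and \eqref{eq:ab,c}, I would argue by bilinearity and reduce to the case where $a,b,c$ are represented by loops (elements of $\pi$). The key geometric input is that if one replaces a loop $\beta$ by a concatenation $\beta_1\beta_2$, then the intersection points of $\alpha$ with $\beta_1\beta_2$ are precisely the disjoint union of those of $\alpha$ with $\beta_1$ and those of $\alpha$ with $\beta_2$ (after a generic perturbation), and the sign $\varepsilon(\dot\alpha_p,\dot\beta_p)$ at each point is unchanged. The combinatorial task is then to track how the paths $\beta_{*p}\alpha_{p\bullet}\nu$ and $\overline\nu\alpha_{\bullet p}\beta_{p*}$ decompose: at an intersection lying on the $\beta_1$-part, the portion $\beta_{*p}$ runs along $\beta_1$ up to $p$ while $\beta_{p*}$ runs from $p$ through the rest of $\beta_1$ and all of $\beta_2$, and dually for points on the $\beta_2$-part. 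Carrying out this bookkeeping should produce exactly the right-multiplication $(1\otimes c)$ and left-multiplication $(b\otimes 1)$ factors in \eqref{eq:a,bc}; the relation \eqref{eq:ab,c} follows by the analogous argument applied to the first variable.

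For the symmetry relation \eqref{eq:yxxy}, the plan is to compute $\kappa(v,u)$ by summing over the same set of intersection points $\alpha\cap\beta$ but now with the roles of the two loops interchanged. The sign flips because $\varepsilon(\dot\beta_p,\dot\alpha_p)=-\varepsilon(\dot\alpha_p,\dot\beta_p)$, which accounts for the leading term $-\kappa(u,v)^\circ$: indeed the two tensor legs get swapped by the $\circ$ operation and the orientation sign changes. The correction terms $u\otimes v+v\otimes u-uv\otimes 1-1\otimes vu$ arise from the subtlety that, in defining $\kappa(u,v)$ versus $\kappa(v,u)$, the basepoint arrangement (the auxiliary path $\nu$ from $\bullet$ to $*$) is not symmetric: one loop is based at $\bullet$ and the other at $*$, and interchanging their roles requires re-inserting or deleting the segment $\nu$ at each intersection point. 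I expect these boundary corrections to be precisely the discrepancy between the naive antisymmetrization and the honest value, and I would extract them by carefully comparing the concatenated paths at a single intersection point in both orderings.

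The main obstacle will be the second part: getting the correction terms in \eqref{eq:yxxy} exactly right. The double bracket axioms are essentially a formal consequence of how intersection points behave under concatenation, so they are bookkeeping. The symmetry relation, by contrast, mixes the genuine orientation-reversal sign with the non-symmetric basepoint convention encoded by $\nu$, and the extra terms $u\otimes v+v\otimes u-uv\otimes 1-1\otimes vu$ are exactly the kind of ``base-point artifacts'' that are easy to mis-sign. The cleanest way I can see to pin them down is to test the formula on a single transverse intersection of two simple loops, compute both $\kappa(u,v)$ and $\kappa(v,u)$ explicitly as elements of $\K\pi\otimes\K\pi$, and verify that their difference matches the claimed expression; once the signs and the placement of $\nu$ are fixed in this model case, the general identity follows by the same local analysis summed over all intersection points.
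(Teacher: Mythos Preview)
The paper itself gives no detailed proof here: it simply cites \cite[Lemma 4.3.1]{KK15} and \cite{MT14} for the double bracket axioms and \cite[Lemma 7.2]{MT14} for \eqref{eq:yxxy}. Your plan for the first part (reduce to loops and track how intersection points split under concatenation) is exactly what those references do and is fine.

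For \eqref{eq:yxxy} your proposal has a real gap. You say the correction $u\otimes v+v\otimes u-uv\otimes 1-1\otimes vu$ comes from ``re-inserting or deleting the segment $\nu$ at each intersection point'', and that after checking a one-intersection model the general case follows ``by the same local analysis summed over all intersection points''. But these four terms are independent of how many times $\alpha$ and $\beta$ meet; if they arose locally at each $p\in\alpha\cap\beta$ they would scale with that number. In fact each existing interior intersection $p$ contributes identically to $\kappa(v,u)$ and to $-\kappa(u,v)^{\circ}$ (same point, sign flipped by $\varepsilon(\dot\beta_p,\dot\alpha_p)=-\varepsilon(\dot\alpha_p,\dot\beta_p)$, tensor factors swapped), so the correction does not live at those points at all. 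It is a boundary phenomenon: to compute $\kappa(v,u)$ one must re-base $\beta$ at $\bullet$ and $\alpha$ at $*$, and the resulting rearrangement of the four strands near $\nu$ creates new intersections whose total contribution is exactly the four correction terms---this happens once, regardless of what $\alpha$ and $\beta$ do in the interior, and it occurs even when the original $\alpha$ and $\beta$ are disjoint (compare $\kappa(\alpha_i,\gamma_j)=0$ with $\kappa(\gamma_j,\alpha_i)\neq 0$ in Proposition~\ref{prop:kapvalue}). An alternative, purely algebraic route is to verify that both sides of \eqref{eq:yxxy} satisfy the same product rules \eqref{eq:a,bc}, \eqref{eq:ab,c} in each variable (the correction is designed precisely so that this holds), which reduces the identity to a single pair of group elements.
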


\begin{proof}
For the first statement, see \cite[Lemma 4.3.1]{KK15} and \cite[\S 7]{MT14}.
For convenience of the reader, we verify the property \eqref{eq:ab,c} for $\Pi = \kappa$. Let $\alpha, \beta$ be loops based at $\bullet$ and $\gamma$ a loop based at $*$. We assume that all intersections are in general position. Then,
\begin{align*}
\kappa(\alpha \beta, \gamma) = &
\textstyle\sum_{p \in \alpha \cap \gamma} \varepsilon(\dot{\alpha}_p, \dot{\gamma}_p) \,
(\gamma_{*p} \alpha_{p \bullet} \nu)(\overline{\nu} \beta \nu) \otimes \overline{\nu} \alpha_{\bullet p} \gamma_{p *} \\
& + \textstyle\sum_{q \in \beta \cap \gamma} \varepsilon(\dot{\beta}_q, \dot{\gamma}_q) \,
\gamma_{*q}\beta_{q\bullet} \nu \otimes (\overline{\nu} \alpha \nu)(\overline{\nu} \beta_{\bullet q} \gamma_{q *}) \\
= & \ \kappa(\alpha, \gamma) (\beta \otimes 1) + (1 \otimes \alpha) \kappa(\beta, \gamma),
\end{align*}
as required. Here in the first and the second line we inserted a trivial loop $\nu \overline{\nu}$ based at $\bullet$ to make the calculation more transparent. The proof of the property \eqref{eq:a,bc} is similar to the above.

For the second statement, see the proof of Lemma~7.2 in \cite{MT14}.
\end{proof}

\begin{cor}
\label{cor:kuv}
Let $a_1,\ldots,a_l,b_1,\ldots,b_m\in \K\pi$.
Then,
\[
\kappa(a_1\cdots a_l, b_1\cdots b_m)=
\sum_{i,j} 
(b_1\cdots b_{j-1} \otimes a_1\cdots a_{i-1})
\kappa(a_i,b_j)
(a_{i+1}\cdots a_l \otimes b_{j+1}\cdots b_m).
\]
\end{cor}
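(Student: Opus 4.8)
The asserted identity is a purely formal consequence of the two derivation axioms \eqref{eq:a,bc} and \eqref{eq:ab,c} satisfied by $\kappa$ (Proposition \ref{prop:kdb}), and the plan is to iterate them one argument at a time and then recombine. First I would expand in the second argument only, establishing by induction on $m$ the single-variable Leibniz expansion
\[
\kappa(a, v_1\cdots v_m) = \sum_{j=1}^m (v_1\cdots v_{j-1}\otimes 1)\,\kappa(a,v_j)\,(1\otimes v_{j+1}\cdots v_m),
\]
valid for any $a\in\K\pi$. The base case $m=1$ is trivial, and the inductive step writes $v_1\cdots v_m = v_1\,(v_2\cdots v_m)$ and applies \eqref{eq:a,bc}; the point to track is that the prefix $v_1\cdots v_{j-1}$ accumulates in the \emph{left} tensor slot while the suffix $v_{j+1}\cdots v_m$ accumulates in the \emph{right} tensor slot.

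Symmetrically, iterating \eqref{eq:ab,c} in the first argument gives, by induction on $l$,
\[
\kappa(u_1\cdots u_l, c) = \sum_{i=1}^l (1\otimes u_1\cdots u_{i-1})\,\kappa(u_i, c)\,(u_{i+1}\cdots u_l\otimes 1),
\]
where now the prefix of $u$'s lands in the \emph{right} slot and the suffix in the \emph{left} slot, i.e.\ on the opposite sides compared with the second-argument expansion. Applying the first identity with $a = u_1\cdots u_l$, and then the second identity to each factor $\kappa(u_1\cdots u_l, v_j)$, yields a double sum whose general term is
\[
(v_1\cdots v_{j-1}\otimes 1)(1\otimes u_1\cdots u_{i-1})\,\kappa(u_i,v_j)\,(u_{i+1}\cdots u_l\otimes 1)(1\otimes v_{j+1}\cdots v_m).
\]

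It then remains to merge the outer multipliers using the componentwise product on $\K\pi\otimes\K\pi$ fixed in the Convention: both merges have the form $(p\otimes 1)(1\otimes q) = p\otimes q$, namely $(v_1\cdots v_{j-1}\otimes 1)(1\otimes u_1\cdots u_{i-1}) = v_1\cdots v_{j-1}\otimes u_1\cdots u_{i-1}$ on the left and $(u_{i+1}\cdots u_l\otimes 1)(1\otimes v_{j+1}\cdots v_m) = u_{i+1}\cdots u_l\otimes v_{j+1}\cdots v_m$ on the right, giving
\[
(v_1\cdots v_{j-1}\otimes u_1\cdots u_{i-1})\,\kappa(u_i,v_j)\,(u_{i+1}\cdots u_l\otimes v_{j+1}\cdots v_m),
\]
which is exactly the asserted formula. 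There is no genuine obstacle here; the only thing to watch is the bookkeeping of which of the two tensor slots each prefix and suffix enters — the second-argument factors and the first-argument factors go to opposite slots — so that the merging step produces the symmetric pattern $v\text{-prefix}\otimes u\text{-prefix}$ on the left and $u\text{-suffix}\otimes v\text{-suffix}$ on the right. I would verify this placement explicitly on the small case $l=m=2$ before committing to the general form.
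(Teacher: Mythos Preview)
Your proof is correct and follows exactly the approach indicated in the paper, which simply states that the corollary is a direct consequence of \eqref{eq:a,bc} and \eqref{eq:ab,c}; you have supplied the inductive details the paper omits. The bookkeeping of tensor slots is accurate.
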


\begin{proof}
This is a direct consequence of \eqref{eq:a,bc} and \eqref{eq:ab,c}.
\end{proof}

Applying the construction explained after Definition \ref{dfn:db} to the double bracket $\kappa$, we obtain the bracket $\{\cdot,\cdot \}_{\kappa}\colon |\K\pi|\otimes |\K\pi|\to |\K\pi|$ which equals the Goldman bracket.
Another bracket operation $\{\cdot, \cdot \}_{\kappa}\colon |\K\pi|\otimes \K\pi \to \K\pi$ coincides with the action $\sigma$ in \cite[\S 3.2]{KK14}.
More explicitly, 
$$
\sigma = \sigma^{\kappa} \colon |\mathbb{K} \pi| \to {\rm Der}(\mathbb{K}\pi)
$$
maps a free loop $\alpha$ to the derivation $\sigma(\alpha) = \{ \alpha,\cdot\}_{\kappa}$. If $\beta\in \pi$ is represented by an immersed based loop in general position with $\alpha$, then
\begin{equation} \label{eq:actionsigma}
\sigma(\alpha)(\beta) = \{ \alpha,\beta\}_{\kappa} =
\sum_{p\in \alpha\cap \beta} \varepsilon(\dot{\alpha}_p,\dot{\beta}_p) \,
\beta_{*p}\alpha_p \beta_{p*}.
\end{equation}
Yet another bracket operation $\{\cdot, \cdot \}_{\kappa} \colon \K \pi \otimes |\K \pi| \to |\K \pi|$ is related to the previous one by the 
following skew-symmetric property: for any $a, b \in \K \pi$,
\begin{equation} \label{eq:kappauvvu}
\{ a, |b| \}_{\kappa} = - \{ |b|, a \}_{\kappa}.
\end{equation}
This follows by applying the multiplication of $\K \pi$ to equation \eqref{eq:yxxy}.

\begin{rem}
\begin{enumerate}
    \item[(i)]
Papakyriakopoulos \cite{Papa} and Turaev \cite{Tu78} independently introduced essentially the same operations as the map $\kappa$.
    \item[(ii)]
The sign convention in formula \eqref{eq:kaab} is different from \cite{genus0} and \cite{KK15}.
\end{enumerate}
\end{rem}

We end this section with the following formula which we will use later.

\begin{prop}[Lemma 7.4 in \cite{MT13}, see also equation (5.4) in \cite{KK16}] \label{prop:ksabc}
For any $a, b, c \in \K \pi$, we have
\begin{equation*} 
\kappa(\sigma(|a|)(b), c) + \kappa(b, \sigma(|a|)(c)) = \sigma(|a|) (\kappa(b,c)).
\end{equation*}
\end{prop}

\subsection{Turaev cobracket and its upgrade $\delta^f$}
\label{subsec:cob}

The original version \cite{Tu91} of the Turaev cobracket $\delta_{\rm Turaev}$ is a Lie cobracket on the quotient space $|\K\pi|/\K {\bf 1}$, where ${\bf 1}\in \hat{\pi}$ is the class of a constant loop. In this section, we introduce a natural lift of $\delta_{\rm Turaev}$ to the space $|\K\pi|$.
Our definition of the lift $\delta^f = \delta^f_{\rm Turaev}$ is motivated by Furuta's observation \cite[\S 4]{Mo97}, and it depends on the choice of framing.
Here, a \emph{framing} on $\Sigma$ is (the homotopy class of) a trivialization of the tangent bundle of $\Sigma$, $f \colon T\Sigma \overset{\cong}{\to} \Sigma\times \mathbb{R}^2$.
Since by assumptions $\partial \Sigma$ is nonempty, the tangent bundle $T\Sigma$ is trivial and framings on $\Sigma$ always exist.

Let $\hat{\pi}^+$ be the set of regular homotopy classes of immersed free loops on $\Sigma$. 
Recall that two immersed curves $\alpha_0$ and $\alpha_1$ are {\em regularly homotopic} if there exists a homotopy $\{ \alpha_s\}_{s \in [0,1]}$ between them such that for each $s \in [0, 1]$ the curve $\alpha_s$ is an immersed curve.
Given a framing $f$, one can define the rotation number function on $\hat{\pi}^+$:
\[
\rot^f \colon \hat{\pi}^+ \to \Z.
\]

\begin{rem}
\label{rem:framing}
The set of homotopy classes of framings on $\Sigma$ is a torsor under the action of $H^1(\Sigma,\Z)$.
This action can be seen through the injective assignment $f\mapsto \rot^f$. 
Namely, if $f$ and $f'$ are framings, then there exists a unique cohomology class $\chi\in H^1(\Sigma,\Z)\cong {\rm Hom}(\pi,\Z)$ such that for any $\gamma\in \hat{\pi}^+$,
\begin{equation}
\label{eq:ff'chi}
\rot^{f'}(\gamma)=\rot^f(\gamma) + \chi(\gamma).
\end{equation}
\end{rem}

\begin{figure}
\begin{center}
{\unitlength 0.1in%
\begin{picture}(37.0000,8.7000)(2.0000,-12.7000)%
%
\special{pn 8}%
\special{pa 2300 1200}%
\special{pa 2300 400}%
\special{fp}%
%
\special{pn 8}%
\special{pa 3800 1200}%
\special{pa 3800 900}%
\special{fp}%
%
\special{pn 8}%
\special{pa 3800 400}%
\special{pa 3800 700}%
\special{fp}%
%
\special{pn 8}%
\special{ar 3700 900 100 200 4.7123890 6.2831853}%
%
\special{pn 8}%
\special{ar 3700 800 100 100 3.1415927 4.7123890}%
%
\special{pn 8}%
\special{ar 3700 800 100 100 1.5707963 3.1415927}%
%
\special{pn 8}%
\special{ar 3700 700 100 200 6.2831853 1.5707963}%
%
\special{pn 8}%
\special{pa 800 1200}%
\special{pa 800 900}%
\special{fp}%
%
\special{pn 8}%
\special{pa 800 400}%
\special{pa 800 700}%
\special{fp}%
%
\special{pn 8}%
\special{ar 900 900 100 200 3.1415927 4.7123890}%
%
\special{pn 8}%
\special{ar 900 800 100 100 4.7123890 6.2831853}%
%
\special{pn 8}%
\special{ar 900 800 100 100 6.2831853 1.5707963}%
%
\special{pn 8}%
\special{ar 900 700 100 200 1.5707963 3.1415927}%
%
\special{pn 8}%
\special{pa 800 1000}%
\special{pa 825 1100}%
\special{fp}%
\special{pa 800 1000}%
\special{pa 775 1100}%
\special{fp}%
%
\special{pn 8}%
\special{pa 2300 1000}%
\special{pa 2325 1100}%
\special{fp}%
\special{pa 2300 1000}%
\special{pa 2275 1100}%
\special{fp}%
%
\special{pn 8}%
\special{pa 3800 1000}%
\special{pa 3825 1100}%
\special{fp}%
\special{pa 3800 1000}%
\special{pa 3775 1100}%
\special{fp}%
\put(24.0000,-6.0000){\makebox(0,0)[lb]{$\alpha$}}%
\put(39.0000,-6.0000){\makebox(0,0)[lb]{$\alpha^+$}}%
\put(9.0000,-6.0000){\makebox(0,0)[lb]{$\alpha^-$}}%
\put(32.0000,-14.0000){\makebox(0,0)[lb]{{\small ${\rm rot}^f(\alpha^+) = {\rm rot}^f(\alpha) +1$}}}%
\put(2.0000,-14.0000){\makebox(0,0)[lb]{{\small ${\rm rot}^f(\alpha^-) = {\rm rot}^f(\alpha) -1$}}}%
%
\special{pn 8}%
\special{pn 8}%
\special{pa 2000 800}%
\special{pa 1937 800}%
\special{fp}%
\special{pa 1906 800}%
\special{pa 1898 800}%
\special{fp}%
\special{pa 1868 800}%
\special{pa 1805 800}%
\special{fp}%
\special{pa 1774 800}%
\special{pa 1766 800}%
\special{fp}%
\special{pa 1736 800}%
\special{pa 1673 800}%
\special{fp}%
\special{pa 1642 800}%
\special{pa 1634 800}%
\special{fp}%
\special{pa 1604 800}%
\special{pa 1541 800}%
\special{fp}%
\special{pa 1510 800}%
\special{pa 1502 800}%
\special{fp}%
\special{pa 1472 800}%
\special{pa 1409 800}%
\special{fp}%
\special{pa 1379 800}%
\special{pa 1371 800}%
\special{fp}%
\special{pa 1340 800}%
\special{pa 1300 800}%
\special{fp}%
\special{sh 1}%
\special{pa 1300 800}%
\special{pa 1367 820}%
\special{pa 1353 800}%
\special{pa 1367 780}%
\special{pa 1300 800}%
\special{fp}%
%
\special{pn 8}%
\special{pn 8}%
\special{pa 2600 800}%
\special{pa 2663 800}%
\special{fp}%
\special{pa 2694 800}%
\special{pa 2702 800}%
\special{fp}%
\special{pa 2732 800}%
\special{pa 2795 800}%
\special{fp}%
\special{pa 2826 800}%
\special{pa 2834 800}%
\special{fp}%
\special{pa 2864 800}%
\special{pa 2927 800}%
\special{fp}%
\special{pa 2958 800}%
\special{pa 2966 800}%
\special{fp}%
\special{pa 2996 800}%
\special{pa 3059 800}%
\special{fp}%
\special{pa 3090 800}%
\special{pa 3098 800}%
\special{fp}%
\special{pa 3128 800}%
\special{pa 3191 800}%
\special{fp}%
\special{pa 3221 800}%
\special{pa 3229 800}%
\special{fp}%
\special{pa 3260 800}%
\special{pa 3300 800}%
\special{fp}%
\special{sh 1}%
\special{pa 3300 800}%
\special{pa 3233 780}%
\special{pa 3247 800}%
\special{pa 3233 820}%
\special{pa 3300 800}%
\special{fp}%
\end{picture}}%
\end{center}
\caption{Inserting monogons}
\label{fig:insertingmonogon}
\end{figure}

For a given framing $f$,  we now define a framed version of the Turaev cobracket.
Let $\alpha\in \hat{\pi}$ and represent it by a generic immersion.
By inserting monogons into $\alpha$, we can change the rotation number of $\alpha$ with respect to $f$ without changing its homotopy class
(see Figure \ref{fig:insertingmonogon}, where $\alpha^+$ and $\alpha^-$ are curves obtained by inserting a positive or negative monogon into $\alpha$).

We require that after this operation $\alpha$ is represented by a generic immersed loop with $\rot^f(\alpha)=0$.
Each self-intersection $p$ of $\alpha$ is traversed by $\alpha$ twice, and there are two linearly independent tangent vectors of $\alpha$ at $p$.
Temporarily, we label them by $v_1,v_2\in T_p\Sigma$ so that the ordered pair $(v_1,v_2)$ is a positive basis for $T_p\Sigma$.
Define $\alpha^1_p$ as the loop starting at $p$ in the direction of $v_1$ and going along $\alpha$ until the first return to $p$.
Define $\alpha^2_p$ similarly by starting in the direction of $v_2$ (see Figure \ref{fig:splittingloop}).
Then, the \emph{framed version of the Turaev cobracket} is defined by formula
\begin{equation}
\label{eq:cob}
\delta^f_{\rm Turaev}(\alpha):=\sum_p \alpha^1_p \wedge \alpha^2_p =
\sum_p \left( \alpha^1_p \otimes \alpha^2_p - \alpha^2_p \otimes \alpha^1_p \right) \in |\K\pi| \otimes |\K\pi|,
\end{equation}
where the sum is taken over all self-intersections of $\alpha$.

\begin{figure}
\begin{center}
\input{fig_splittingloop.tex}
\end{center}
\caption{Splitting $\alpha$ at a self-intersection}
\label{fig:splittingloop}
\end{figure}

\begin{prop}
\label{prop:dfcob}
The map $\delta^f = \delta^f_{\rm Turaev}$ is a Lie cobracket on $|\K\pi|$.
That is, $\delta^f$ satisfies
\begin{itemize}
\item
the coskew-symmetry condition:
$\delta^f(a)^{\circ}=-\delta^f(a)$ for any $a\in |\K\pi|$,
\item
the coJacobi identity:
$
N(\delta^f \otimes {\rm id}) \delta^f=0 \colon |\K\pi| \to |\K\pi|^{\otimes 3}.
$
\end{itemize}
Here, $N(a\otimes b\otimes c)=a\otimes b \otimes c+b\otimes c \otimes a+c\otimes a\otimes b$
for $a,b,c\in |\K\pi|$.
\end{prop}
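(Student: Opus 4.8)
The coskew-symmetry is immediate from the shape of the definition: each summand $\alpha^1_p \wedge \alpha^2_p$ is already antisymmetric, so $\delta^f(\alpha)^\circ = \sum_p (\alpha^1_p \otimes \alpha^2_p - \alpha^2_p \otimes \alpha^1_p)^\circ = -\delta^f(\alpha)$ termwise. The genuine content is therefore two-fold: that $\delta^f$ is well defined as a map on $|\K\pi|$, and that it satisfies coJacobi. The plan is to handle both geometrically, following the unframed arguments of \cite{Tu91} and \cite{KK15} and isolating the new features coming from the framing.

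For well-definedness I would first observe that once the free homotopy class of $\alpha$ and the normalization $\rot_f(\alpha)=0$ are fixed, the regular homotopy class of the generic representative is uniquely determined: by a Whitney--Graustein type theorem for framed surfaces, the regular homotopy classes inside a fixed class in $\hat\pi$ are classified by $\rot_f \in \Z$. Two generic representatives in the same regular homotopy class are then joined by a finite sequence of self-tangency (Reidemeister-II type) and triple-point (Reidemeister-III type) moves, with no monogon moves required since these preserve $\rot_f$. I would then check invariance under each move as in \cite{Tu91,KK15,MT14}: in a self-tangency move the two created double points carry opposite signs $\varepsilon(\dot\alpha_p,\dot\beta_p)$ while producing freely homotopic pairs of sub-loops, so their wedge terms cancel; in a triple-point move the three terms are pairwise unchanged up to free homotopy. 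The one point to keep straight, absent in the unframed picture, is that each sub-loop must be recorded as an honest element of $\hat\pi$ (possibly a constant loop $\mathbf{1}$ arising from a monogon), which is consistent precisely because the allowed moves do not change rotation numbers.

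For the coJacobi identity I would fix a representative with $\rot_f(\alpha)=0$ and expand $(\delta^f \otimes 1)\delta^f(\alpha)$. Resolving the first $\delta^f$ at a double point $p$ splits $\alpha$ into $\alpha^1_p,\alpha^2_p$; applying the second $\delta^f$ means re-representing one sub-loop with vanishing rotation number and reading off its self-intersections, which are exactly the double points $q\neq p$ of $\alpha$ both of whose strands lie on the same sub-arc. In chord-diagram terms, on the circle parametrizing $\alpha$ the chords of $p$ and $q$ either cross (the interleaved $pqpq$ configuration) or do not (the $ppqq$ configuration). The interleaved pairs contribute nothing in either order, since resolving $p$ turns $q$ into an intersection \emph{between} the two sub-loops rather than a self-intersection of either. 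Thus $(\delta^f\otimes 1)\delta^f(\alpha)$ reduces to a sum over ordered non-interleaved pairs of double points (together with constant-loop terms). I would then group these by the unordered pair $\{p,q\}$ and by the three cyclic slots produced by $N$, and exhibit termwise cancellation, the signs being controlled by the product $\varepsilon(\dot\alpha_p,\dot\alpha_p)\,\varepsilon(\dot\alpha_q,\dot\alpha_q)$ and the orientations of the three resulting loops; this is exactly Turaev's cancellation for the unframed cobracket.

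The hard part will be controlling the framing-dependent $\mathbf{1}$-terms throughout the coJacobi computation. In the unframed setting one works modulo $\K\mathbf{1}$, so the monogon and rotation-number bookkeeping is invisible; in the framed lift one must verify that the constant-loop contributions appearing when sub-loops are re-normalized to $\rot_f=0$ also cancel under $N$. I would address this using the additivity of the rotation number under resolution, namely $\rot_f(\alpha)=\rot_f(\alpha^1_p)+\rot_f(\alpha^2_p)$ up to a local $\pm 1$ correction determined by the sign of the crossing, to count precisely how many monogons each re-normalization requires, and then check that the resulting terms of the form $\mathbf{1}\wedge(\cdots)$ pair off consistently under the cyclic symmetrization $N$. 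Matching these framing corrections against the geometric cancellation of the two-double-point terms is the technical core of the proof.
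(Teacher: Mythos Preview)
Your approach is essentially the paper's: reduce coJacobi to Turaev's unframed argument over pairs of double points, and then handle separately the monogon contributions coming from re-normalizing the sub-loops to $\rot_f=0$, showing each piece lies in $\ker N$. (The paper does not re-argue well-definedness here; it takes that as given and cites \cite{Tu91} for the unframed part.)

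One point to correct: the rotation number is \emph{exactly} additive under the oriented resolution at a self-intersection, with no $\pm 1$ term. The corner angles of $\alpha_p^1$ and $\alpha_p^2$ at $p$ are opposite and cancel, while the smooth arcs reassemble to $\alpha$; hence $\rot_f(\alpha_p^1)+\rot_f(\alpha_p^2)=\rot_f(\alpha)=0$. This symmetry is precisely what makes the monogon bookkeeping clean: if $\alpha_p^1$ needs $m_p$ positive monogons, then $\alpha_p^2$ needs $m_p$ negative ones, and since a positive (resp.\ negative) monogon in $\gamma$ contributes $\gamma\wedge{\bf 1}$ (resp.\ $-\gamma\wedge{\bf 1}$), the total monogon term is
\[
\sum_p m_p\big((\alpha_p^1\wedge{\bf 1})\otimes\alpha_p^2+(\alpha_p^2\wedge{\bf 1})\otimes\alpha_p^1\big),
\]
which one checks directly is annihilated by $N$. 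With a spurious $\pm 1$ offset the two monogon counts would differ and this cancellation would not go through, so this is the step to pin down carefully.
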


\begin{proof}
The coskew-symmetry condition follows from  equation \eqref{eq:cob}.

To prove the coJacobi identity, let $\alpha$ be a generic immersed loop with $\rot^f(\alpha)=0$.
Then
\[
(\delta^f\otimes {\rm id})\delta^f (\alpha)=
\sum_p \delta^f(\alpha_p^1)\otimes \alpha_p^2-\delta^f(\alpha_p^2)\otimes \alpha_p^1.
\]
There are two contributions on the right hand side. The first one comes from the sum over self-intersections $q$ of $\alpha_p^1$ in the first term and of $\alpha_p^2$ in the second term. 
It is easy to check (see the original argument of Turaev in the proof of \cite[Theorem 8.3]{Tu91})
that the sum of these contributions is in the kernel of the map $N$.

The second contribution comes from the fact that in order to compute $\delta^f(\alpha_p^1)$ and $\delta^f(\alpha_p^2)$ one needs to insert a certain number of monogons in $\alpha_p^1$ and $\alpha_p^2$ to insure vanishing of their rotation numbers.
Notice that for each self-intersection $p$,
\[
\rot^f(\alpha_p^1)+\rot^f(\alpha_p^2)=\rot^f(\alpha)=0.
\]
Therefore, if we insert $m_p$ positive monogons into $\alpha_p^1$ to achieve $\rot^f(\alpha_p^1)=0$, then we need $m_p$ negative monogons for $\alpha_p^2$.
Now, each positive (resp. negative) monogon of $\gamma$ contributes to $\delta^f(\gamma)$ as $\gamma \wedge {\bf 1}$ (resp. $-\gamma \wedge {\bf 1}$).
Therefore, the second contribution is computed as
\[
\sum_p m_p ((\alpha_p^1\wedge {\bf 1}) \otimes \alpha_p^2+
(\alpha_p^2\wedge {\bf 1}) \otimes \alpha_p^1),
\]
and this is in the kernel of $N$.
This completes the proof.
\end{proof}

\begin{rem}
The maps $\delta^f$ for all $f$ descend to a unique map
$\delta_{\rm Turaev}\colon |\K\pi|/\K {\bf 1} \to (|\K\pi|/\K {\bf 1})^{\otimes 2}$. Furthermore, the element ${\bf 1} \in |\K\pi|$ is central under the Goldman bracket. Therefore, $\K {\bf 1}$ is a Lie ideal and the Goldman bracket descends to the space $|\K\pi|/\K {\bf 1}$ which carries a canonical (framing independent) structure of a Lie bialgebra.
\end{rem}

\subsection{Operations $\mu^f_r$ and $\mu^f_l$}
\label{subsec:mu}

In this subsection, we introduce a lift of the framed Turaev cobracket to the group algebra $\K \pi$ and describe its properties.

Choose a framing $f$ in such a way that it is trivial on a neighborhood of $\nu$.
In more detail, the velocity vector field of $\nu$ corresponds to $(1,0)\in \mathbb{R}^2$ through $f$. Denote by $v_*\in T_*\Sigma$ and $v_{\bullet}\in T_{\bullet}\Sigma$ the inward tangent vectors corresponding to $(0,1)\in \mathbb{R}^2$,
see Figure \ref{fig:nutrivial}.

\begin{figure}
\begin{center}
{\unitlength 0.1in%
\begin{picture}(28.6000,10.0000)(1.4000,-12.4800)%
%
\special{pn 13}%
\special{pa 1400 1200}%
\special{pa 3000 1200}%
\special{fp}%
%
\special{pn 4}%
\special{sh 1}%
\special{ar 1800 1200 16 16 0 6.2831853}%
%
\special{pn 4}%
\special{sh 1}%
\special{ar 2600 1200 16 16 0 6.2831853}%
\put(17.5000,-14.0000){\makebox(0,0)[lb]{$\bullet$}}%
\put(25.5000,-14.0000){\makebox(0,0)[lb]{$*$}}%
%
\special{pn 8}%
\special{pa 1800 1200}%
\special{pa 1800 800}%
\special{fp}%
\special{sh 1}%
\special{pa 1800 800}%
\special{pa 1780 867}%
\special{pa 1800 853}%
\special{pa 1820 867}%
\special{pa 1800 800}%
\special{fp}%
%
\special{pn 8}%
\special{pa 2600 1200}%
\special{pa 2600 800}%
\special{fp}%
\special{sh 1}%
\special{pa 2600 800}%
\special{pa 2580 867}%
\special{pa 2600 853}%
\special{pa 2620 867}%
\special{pa 2600 800}%
\special{fp}%
\put(17.5000,-7.0000){\makebox(0,0)[lb]{$v_{\bullet}$}}%
\put(25.5000,-7.0000){\makebox(0,0)[lb]{$v_*$}}%
\put(6.0000,-11.9500){\makebox(0,0)[lb]{$(1,0)$}}%
\put(1.4000,-7.9500){\makebox(0,0)[lb]{$(0,1)$}}%
%
\special{pn 4}%
\special{pa 1860 1134}%
\special{pa 1960 1134}%
\special{fp}%
\special{sh 1}%
\special{pa 1960 1134}%
\special{pa 1893 1114}%
\special{pa 1907 1134}%
\special{pa 1893 1154}%
\special{pa 1960 1134}%
\special{fp}%
\special{pa 1860 1134}%
\special{pa 1860 1034}%
\special{fp}%
\special{sh 1}%
\special{pa 1860 1034}%
\special{pa 1840 1101}%
\special{pa 1860 1087}%
\special{pa 1880 1101}%
\special{pa 1860 1034}%
\special{fp}%
%
\special{pn 8}%
\special{pa 2200 1200}%
\special{pa 2080 1240}%
\special{fp}%
\special{pa 2200 1200}%
\special{pa 2080 1160}%
\special{fp}%
\put(21.6000,-13.4500){\makebox(0,0)[lb]{$\nu$}}%
%
\special{pn 8}%
\special{pa 300 1100}%
\special{pa 300 900}%
\special{fp}%
\special{sh 1}%
\special{pa 300 900}%
\special{pa 280 967}%
\special{pa 300 953}%
\special{pa 320 967}%
\special{pa 300 900}%
\special{fp}%
%
\special{pn 8}%
\special{pa 300 1100}%
\special{pa 500 1100}%
\special{fp}%
\special{sh 1}%
\special{pa 500 1100}%
\special{pa 433 1080}%
\special{pa 447 1100}%
\special{pa 433 1120}%
\special{pa 500 1100}%
\special{fp}%
%
\special{pn 4}%
\special{sh 1}%
\special{ar 300 1100 8 8 0 6.2831853}%
%
\special{pn 4}%
\special{sh 1}%
\special{ar 1860 1133 8 8 0 6.2831853}%
%
\special{pn 4}%
\special{pa 2020 1134}%
\special{pa 2120 1134}%
\special{fp}%
\special{sh 1}%
\special{pa 2120 1134}%
\special{pa 2053 1114}%
\special{pa 2067 1134}%
\special{pa 2053 1154}%
\special{pa 2120 1134}%
\special{fp}%
\special{pa 2020 1134}%
\special{pa 2020 1034}%
\special{fp}%
\special{sh 1}%
\special{pa 2020 1034}%
\special{pa 2000 1101}%
\special{pa 2020 1087}%
\special{pa 2040 1101}%
\special{pa 2020 1034}%
\special{fp}%
%
\special{pn 4}%
\special{sh 1}%
\special{ar 2020 1133 8 8 0 6.2831853}%
%
\special{pn 4}%
\special{pa 2180 1134}%
\special{pa 2280 1134}%
\special{fp}%
\special{sh 1}%
\special{pa 2280 1134}%
\special{pa 2213 1114}%
\special{pa 2227 1134}%
\special{pa 2213 1154}%
\special{pa 2280 1134}%
\special{fp}%
\special{pa 2180 1134}%
\special{pa 2180 1034}%
\special{fp}%
\special{sh 1}%
\special{pa 2180 1034}%
\special{pa 2160 1101}%
\special{pa 2180 1087}%
\special{pa 2200 1101}%
\special{pa 2180 1034}%
\special{fp}%
%
\special{pn 4}%
\special{sh 1}%
\special{ar 2180 1133 8 8 0 6.2831853}%
%
\special{pn 4}%
\special{pa 2340 1134}%
\special{pa 2440 1134}%
\special{fp}%
\special{sh 1}%
\special{pa 2440 1134}%
\special{pa 2373 1114}%
\special{pa 2387 1134}%
\special{pa 2373 1154}%
\special{pa 2440 1134}%
\special{fp}%
\special{pa 2340 1134}%
\special{pa 2340 1034}%
\special{fp}%
\special{sh 1}%
\special{pa 2340 1034}%
\special{pa 2320 1101}%
\special{pa 2340 1087}%
\special{pa 2360 1101}%
\special{pa 2340 1034}%
\special{fp}%
%
\special{pn 4}%
\special{sh 1}%
\special{ar 2340 1133 8 8 0 6.2831853}%
%
\special{pn 4}%
\special{pa 2500 1134}%
\special{pa 2600 1134}%
\special{fp}%
\special{sh 1}%
\special{pa 2600 1134}%
\special{pa 2533 1114}%
\special{pa 2547 1134}%
\special{pa 2533 1154}%
\special{pa 2600 1134}%
\special{fp}%
\special{pa 2500 1134}%
\special{pa 2500 1034}%
\special{fp}%
\special{sh 1}%
\special{pa 2500 1034}%
\special{pa 2480 1101}%
\special{pa 2500 1087}%
\special{pa 2520 1101}%
\special{pa 2500 1034}%
\special{fp}%
%
\special{pn 4}%
\special{sh 1}%
\special{ar 2500 1133 8 8 0 6.2831853}%
%
\special{pn 4}%
\special{pa 1660 1134}%
\special{pa 1760 1134}%
\special{fp}%
\special{sh 1}%
\special{pa 1760 1134}%
\special{pa 1693 1114}%
\special{pa 1707 1134}%
\special{pa 1693 1154}%
\special{pa 1760 1134}%
\special{fp}%
\special{pa 1660 1134}%
\special{pa 1660 1034}%
\special{fp}%
\special{sh 1}%
\special{pa 1660 1034}%
\special{pa 1640 1101}%
\special{pa 1660 1087}%
\special{pa 1680 1101}%
\special{pa 1660 1034}%
\special{fp}%
%
\special{pn 4}%
\special{sh 1}%
\special{ar 1660 1133 8 8 0 6.2831853}%
%
\special{pn 4}%
\special{pa 1500 1134}%
\special{pa 1600 1134}%
\special{fp}%
\special{sh 1}%
\special{pa 1600 1134}%
\special{pa 1533 1114}%
\special{pa 1547 1134}%
\special{pa 1533 1154}%
\special{pa 1600 1134}%
\special{fp}%
\special{pa 1500 1134}%
\special{pa 1500 1034}%
\special{fp}%
\special{sh 1}%
\special{pa 1500 1034}%
\special{pa 1480 1101}%
\special{pa 1500 1087}%
\special{pa 1520 1101}%
\special{pa 1500 1034}%
\special{fp}%
%
\special{pn 4}%
\special{sh 1}%
\special{ar 1500 1133 8 8 0 6.2831853}%
%
\special{pn 4}%
\special{pa 2660 1134}%
\special{pa 2760 1134}%
\special{fp}%
\special{sh 1}%
\special{pa 2760 1134}%
\special{pa 2693 1114}%
\special{pa 2707 1134}%
\special{pa 2693 1154}%
\special{pa 2760 1134}%
\special{fp}%
\special{pa 2660 1134}%
\special{pa 2660 1034}%
\special{fp}%
\special{sh 1}%
\special{pa 2660 1034}%
\special{pa 2640 1101}%
\special{pa 2660 1087}%
\special{pa 2680 1101}%
\special{pa 2660 1034}%
\special{fp}%
%
\special{pn 4}%
\special{sh 1}%
\special{ar 2660 1133 3 3 0 6.2831853}%
%
\special{pn 4}%
\special{pa 2820 1134}%
\special{pa 2920 1134}%
\special{fp}%
\special{sh 1}%
\special{pa 2920 1134}%
\special{pa 2853 1114}%
\special{pa 2867 1134}%
\special{pa 2853 1154}%
\special{pa 2920 1134}%
\special{fp}%
\special{pa 2820 1134}%
\special{pa 2820 1034}%
\special{fp}%
\special{sh 1}%
\special{pa 2820 1034}%
\special{pa 2800 1101}%
\special{pa 2820 1087}%
\special{pa 2840 1101}%
\special{pa 2820 1034}%
\special{fp}%
%
\special{pn 4}%
\special{sh 1}%
\special{ar 2820 1133 3 3 0 6.2831853}%
\put(2.0000,-4.0000){\makebox(0,0)[lb]{$\mathbb{R}^2$}}%
\put(22.0000,-4.0000){\makebox(0,0)[lb]{$\Sigma$}}%
\end{picture}}%
\end{center}
\caption{Trivializing $f$ near $\nu$}
\label{fig:nutrivial}
\end{figure}

Let $\Pi\Sigma(\bullet,*)$ be the set of homotopy classes of paths from $\bullet$ to $*$, and define $\Pi\Sigma(*,\bullet)$ similarly.
The path $\nu$ induces natural identifications
\[
\pi \cong \Pi\Sigma(\bullet,*), \quad
\gamma \mapsto \nu \gamma,
\quad \text{and} \quad
\pi \cong \Pi\Sigma(*,\bullet), \quad
\gamma \mapsto \gamma \overline{\nu}.
\]

Let $\gamma \in \pi$.
Using the bijection $\pi\cong \Pi\Sigma(\bullet,*)$, we choose a representative of $\gamma$ to be a generic immersed path
$\gamma\colon ([0,1],0,1) \to (\Sigma, \bullet, *)$ such that $\dot{\gamma}(0) = v_{\bullet}$ and $\dot{\gamma}(1) = -v_*$.
We further assume that $\gamma$ has the rotation number $-1/2$ with respect to $f$.

For each self-intersection $p$ of $\gamma$, we denote $\gamma^{-1}(p)=\{ t_1^p,t_2^p \}$ with $0<t_1^p <t_2^p<1$.
The velocity vectors $\dot{\gamma}(t_1^p), \dot{\gamma}(t_2^p)$ are linearly independent in $T_p\Sigma$.
Let $\gamma_{t_1^p t_2^p}$ be the restriction of $\gamma$ to the interval $[t_1^p,t_2^p]$.
This becomes a loop since $\gamma(t_1^p)=\gamma(t_2^p)=p$.
Define the paths $\gamma_{0t_1^p}$ and $\gamma_{t_2^p 1}$ similarly.
Then the concatenation $\overline{\nu}\gamma_{0 t_1^p}\gamma_{t_2^p 1} \in \pi$ is defined.
Set
\begin{equation}
\label{eq:muf}
\mu^f_r(\gamma):=\sum_p \varepsilon(\dot{\gamma}(t_1^p),\dot{\gamma}(t_2^p))\,
|\gamma_{t_1^p t_2^p}| \otimes \overline{\nu} \gamma_{0 t_1^p} \gamma_{t_2^p 1}
\in |\K\pi| \otimes \K\pi.
\end{equation}
Figure \ref{fig:mu} shows a computation of $\mu^f_r$, where there is only one transverse self-intersection of $\gamma$.
Extending $\K$-linearly, we obtain a map
\[
\mu^f_r \colon \K \pi \to |\K\pi|\otimes \K\pi.
\]

\begin{figure}
\begin{center}
{\unitlength 0.1in%
\begin{picture}(53.0000,8.4500)(10.0000,-16.2000)%
%
\special{pn 13}%
\special{pa 1000 1600}%
\special{pa 2200 1600}%
\special{fp}%
%
\special{pn 13}%
\special{pa 3400 1600}%
\special{pa 4600 1600}%
\special{fp}%
%
\special{pn 13}%
\special{pa 5100 1600}%
\special{pa 6300 1600}%
\special{fp}%
%
\special{pn 4}%
\special{sh 1}%
\special{ar 1200 1600 16 16 0 6.2831853}%
%
\special{pn 4}%
\special{sh 1}%
\special{ar 2000 1600 16 16 0 6.2831853}%
%
\special{pn 4}%
\special{sh 1}%
\special{ar 3600 1600 16 16 0 6.2831853}%
%
\special{pn 4}%
\special{sh 1}%
\special{ar 4400 1600 16 16 0 6.2831853}%
%
\special{pn 4}%
\special{sh 1}%
\special{ar 5300 1600 16 16 0 6.2831853}%
%
\special{pn 4}%
\special{sh 1}%
\special{ar 6100 1600 16 16 0 6.2831853}%
%
\special{pn 13}%
\special{ar 1600 1300 100 100 0.0000000 6.2831853}%
%
\special{pn 8}%
\special{pa 1200 1600}%
\special{pa 1200 1200}%
\special{fp}%
%
\special{pn 8}%
\special{ar 1300 1200 100 100 3.1415927 4.7123890}%
%
\special{pn 8}%
\special{pa 1300 1100}%
\special{pa 1500 1100}%
\special{fp}%
%
\special{pn 8}%
\special{ar 1500 1300 300 200 4.7123890 6.2831853}%
%
\special{pn 8}%
\special{ar 1600 1290 200 200 6.2831853 3.1415927}%
%
\special{pn 8}%
\special{ar 1700 1295 300 200 3.1415927 4.7123890}%
%
\special{pn 8}%
\special{pa 1700 1095}%
\special{pa 1900 1095}%
\special{fp}%
%
\special{pn 8}%
\special{ar 1900 1195 100 100 4.7123890 6.2831853}%
%
\special{pn 8}%
\special{pa 2000 1195}%
\special{pa 2000 1595}%
\special{fp}%
%
\special{pn 8}%
\special{pa 1200 1240}%
\special{pa 1216 1320}%
\special{fp}%
\special{pa 1200 1240}%
\special{pa 1184 1320}%
\special{fp}%
%
\special{pn 4}%
\special{sh 1}%
\special{ar 1592 1108 16 16 0 6.2831853}%
\put(11.6000,-17.2000){\makebox(0,0)[lb]{$\bullet$}}%
\put(19.6000,-17.2000){\makebox(0,0)[lb]{$*$}}%
\put(15.6000,-9.2000){\makebox(0,0)[lb]{$\gamma$}}%
%
\special{pn 13}%
\special{ar 4000 1300 100 100 0.0000000 6.2831853}%
%
\special{pn 8}%
\special{ar 4000 1290 200 200 6.2831853 3.1415927}%
%
\special{pn 13}%
\special{ar 5700 1300 100 100 0.0000000 6.2831853}%
%
\special{pn 8}%
\special{pa 6100 1195}%
\special{pa 6100 1595}%
\special{fp}%
%
\special{pn 8}%
\special{pa 5300 1240}%
\special{pa 5316 1320}%
\special{fp}%
\special{pa 5300 1240}%
\special{pa 5284 1320}%
\special{fp}%
%
\special{pn 8}%
\special{ar 4000 1303 200 180 3.1415927 6.2831853}%
%
\special{pn 8}%
\special{pa 4000 1490}%
\special{pa 4060 1502}%
\special{fp}%
\special{pa 4000 1490}%
\special{pa 4061 1462}%
\special{fp}%
%
\special{pn 8}%
\special{ar 5400 1200 100 100 3.1415927 4.7123890}%
%
\special{pn 8}%
\special{pa 5400 1100}%
\special{pa 6000 1100}%
\special{fp}%
%
\special{pn 8}%
\special{ar 6000 1200 100 100 4.7123890 6.2831853}%
\put(48.0000,-14.0000){\makebox(0,0)[lb]{$\otimes$}}%
\put(27.0000,-14.0000){\makebox(0,0)[lb]{$\mu^f_r(\gamma) =$}}%
\put(35.6000,-17.2000){\makebox(0,0)[lb]{$\bullet$}}%
\put(43.6000,-17.2000){\makebox(0,0)[lb]{$*$}}%
\put(52.6000,-17.2000){\makebox(0,0)[lb]{$\bullet$}}%
\put(60.6000,-17.2000){\makebox(0,0)[lb]{$*$}}%
%
\special{pn 8}%
\special{pa 6100 1600}%
\special{pa 5300 1520}%
\special{fp}%
%
\special{pn 8}%
\special{pa 5300 1520}%
\special{pa 5300 1200}%
\special{fp}%
%
\special{pn 8}%
\special{pa 1640 1600}%
\special{pa 1560 1580}%
\special{fp}%
\special{pa 1640 1600}%
\special{pa 1560 1620}%
\special{fp}%
\put(16.0000,-17.2000){\makebox(0,0)[lb]{$\nu$}}%
\end{picture}}%
\end{center}
\caption{The operation $\mu^f_r$}
\label{fig:mu}
\end{figure}

\begin{rem}
Our sign convention in formula \eqref{eq:muf} is different from \cite{genus0} and \cite{KK15}.
\end{rem}

By exchanging the roles of $*$ and $\bullet$, we obtain another map
\[
\mu^f_l \colon \K\pi \to \K\pi \otimes |\K\pi|.
\]
In more detail, we now use the bijection $\pi \cong \Pi\Sigma(*,\bullet)$.
Let $\gamma\in \pi$ and represent it as a generic immersed path from $*$ to $\bullet$ with $\dot{\gamma}(0)=v_*$ and $\dot{\gamma}(1)=-v_{\bullet}$.
We further assume that $\rot^f(\gamma) = 1/2$.
Then, we have
\begin{equation}
\label{eq:mufbar}
\mu^f_l(\gamma)=-\sum_p \varepsilon(\dot{\gamma}(t_1^p),\dot{\gamma}(t_2^p))\,
\gamma_{0 t_1^p}\gamma_{t_2^p 1} \nu \otimes |\gamma_{t_1^p t_2^p}|
\in \K\pi \otimes |\K\pi|.
\end{equation}
The maps $\mu^f_r$ and $\mu^f_l$ are related by the following equality:
\begin{equation}  \label{eq:mu&mu}
\mu_l^f(\gamma)=
-\mu_r^f(\gamma)^{\circ}+\gamma \otimes {\bf 1}-1\otimes |\gamma|.
\end{equation}

The next proposition describes product formulas for the maps $\mu^f_r$ and $\mu^f_l$  and their relation to the cobracket $\delta^f$.

\begin{prop}
\label{prop:prodmu}
\begin{enumerate}
\item[$(i)$]
For any $a,b\in \K\pi$,
\begin{align}
\mu^f_r(ab) & = \mu^f_r(a)(1\otimes b)+(1\otimes a)\mu^f_r(b)
+(|\cdot |\otimes {\rm id})\kappa(a,b), \label{eq:muuv} \\
\mu^f_l(ab) & = \mu^f_l(a)(b\otimes 1)+(a\otimes 1)\mu^f_l(b)+({\rm id} \otimes |\cdot |)\kappa(b,a). \label{eq:muuv2}
\end{align}
Here, the term $\mu^f_r(a)(1\otimes b)$ in the right hand side of \eqref{eq:muuv} takes multiplication of $\K \pi$ in the following way: $(c \otimes d)(1\otimes b) = c \otimes db$ for $c\in |\K \pi|$, $d\in \K\pi$.
The three other similar terms are interpreted in the same manner.
\item[$(ii)$]
For any $a \in \K\pi$,
\begin{align}
\delta^f(|a|)
& = \alt ({\rm id} \otimes |\cdot |)\mu^f_r(a) + |a|\wedge {\bf 1} \label{eq:altmu} \\
& = ({\rm id} \otimes |\cdot |)\mu^f_r(a)+(|\cdot |\otimes {\rm id})\mu^f_l(a). \label{eq:bul**bul}
\end{align}
Here, $\alt(x\otimes y)=x\otimes y-y\otimes x$ for $x,y\in |\K\pi|$.
\end{enumerate}
\end{prop}

\begin{proof}
For (i), the same argument as in the proof of \cite[Lemma 4.3.3]{KK15} works. 
For convenience of the reader, we give a proof of  equation \eqref{eq:muuv}. For $\alpha, \beta \in \pi$ we choose generic immersed paths  in $\Pi\Sigma(\bullet, *)$ (also denoted by $\alpha$ and $\beta$) such that $\dot{\alpha}(0)=\dot{\beta}(0)=v_\bullet$, $\dot{\alpha}(1)=\dot{\beta}(1)=-v_*$ and $\rot^f(\alpha)=\rot^f(\beta) = -1/2$.
Then, we can modify the path $\alpha \overline{\nu} \beta$ in a neighborhood of $\nu$ so that it also satisfies these conditions.
The resulting path can be used to compute $\mu^f_r(\alpha \beta)$, 
see Figure \ref{fig:modification}.
We will denote by $s$ the parameter on the path $\alpha$ and by $t$ the parameter on $\beta$. Furthermore, we will denote by $p$ self-intersections of $\alpha$, by $q$ self-intersections of $\beta$, by $r$ intersections of $\alpha$ and $\beta$, and by $\varepsilon_p, \varepsilon_q$ and $\varepsilon_r$ the corresponding signs. Then, we have
\begin{align*}
\mu^f_r(\alpha \beta) = & 
\textstyle\sum_p \varepsilon_p |\alpha_{s^p_1 s^p_2}| \otimes (\overline{\nu} \alpha_{0 s^p_1}\alpha_{s^p_2 1})(\overline{\nu} \beta) 
 + \textstyle\sum_q \varepsilon_q |\beta_{t^q_1 t^q_2}| \otimes (\overline{\nu} \alpha)(\overline{\nu} \beta_{0 t^q_1} \beta_{t^q_2 1}) \\
 & + \textstyle\sum_r \varepsilon_r |\alpha_{s^r 1} \overline{\nu} \beta_{0 t^r}| \otimes \overline{\nu} \alpha_{0 s^r} \beta_{t^r 1} \\
= & \ \mu^f_r(\alpha)(1 \otimes \beta) + (1 \otimes \alpha) \mu^f_r(\beta) + (|\cdot| \otimes {\rm id})\kappa(\alpha,\beta).
\end{align*}
Here in the last line we again view $\alpha$ and $\beta$ as elements of $\pi$. Formula \eqref{eq:muuv} now follows by linearity with respect to $\alpha$ and $\beta$.
The proof of formula \eqref{eq:muuv2} is similar, so we omit it.

\begin{figure}
\begin{center}
\input{fig_modification.tex}
\end{center}
\caption{Modifying $\alpha \overline{\nu} \beta$ in a neighborhood of $\nu$}
\label{fig:modification}
\end{figure}

We next prove (ii).
It is sufficient to consider the case where $a = \gamma \in \pi$.
Using the identification $\pi\cong \Pi\Sigma(\bullet,*)$, we choose its representative by a generic immersion $\gamma \colon ([0,1],0,1) \to (\Sigma,\bullet,*)$ such that $\dot{\gamma}(0)=v_{\bullet}$, $\dot{\gamma}(1)=-v_*$, and $\rot^f(\gamma)=-1/2$.
Then inserting a positive monogon into $\gamma \overline{\nu}$, we obtain a generic immersed loop with vanishing rotation number with respect to $f$.
This loop has  a new self-intersection  whose contribution to $\delta^f(|\gamma|)$ is $|\gamma|\wedge {\bf 1}$.
We see that $\alt ({\rm id} \otimes |\cdot |)\mu^f_r(\gamma)$ is equal to the sum of contributions to $\delta^f(|\gamma|)$ from the self-intersections of $\gamma$.
This proves \eqref{eq:altmu}. 
To prove \eqref{eq:bul**bul}, we use \eqref{eq:mu&mu} and \eqref{eq:altmu}.
\end{proof}

The repeated use of the product formula \eqref{eq:muuv} yields the following more general product formula for the operation $\mu^f_r$.
(We have a similar formula for $\mu^f_l$, but we omit it.)

\begin{cor}
\label{cor:muuuu}
Let $a_1,\ldots,a_m\in \K\pi$.
Then,
\begin{align*}
\mu^f_r(a_1\cdots a_m) = &
\textstyle\sum_i (1\otimes a_1\cdots a_{i-1}) \mu^f_r(a_i)
(1\otimes a_{i+1}\cdots a_m) \\
& + \textstyle\sum_i (|\cdot |\otimes {\rm id})\kappa(a_1\cdots a_{i-1},a_i)(1\otimes a_{i+1}\cdots a_m).
\end{align*}
\end{cor}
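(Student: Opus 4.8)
The plan is to prove the formula by induction on the number of factors $m$, using only the product formula \eqref{eq:muuv} of Proposition \ref{prop:prodmu}(i). Throughout, all the indicated multiplications act in the $\K\pi$-factor (the second tensor slot), so they are compatible with the projection $|\cdot|$ in the first slot and everything is well defined. The base case $m=1$ is immediate: the first sum contributes $(1\otimes 1)\mu^f(u_1)(1\otimes 1)=\mu^f(u_1)$, while the single term of the second sum is $(|\cdot|\otimes 1)\kappa(1,u_1)$, which vanishes. Indeed, setting $a=b=1$ in the Leibniz rule \eqref{eq:ab,c} gives $\kappa(1,c)=2\kappa(1,c)$, hence $\kappa(1,c)=0$ for all $c$; this same vanishing also shows that the boundary term $i=1$ of the second sum is always zero once empty products are read as $1$.

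For the inductive step I would set $w:=u_1\cdots u_{m-1}$ and apply \eqref{eq:muuv} to the factorization $u_1\cdots u_m=w\cdot u_m$, obtaining
\[
\mu^f(u_1\cdots u_m)=\mu^f(w)(1\otimes u_m)+(1\otimes w)\mu^f(u_m)+(|\cdot|\otimes 1)\kappa(w,u_m).
\]
Next I would substitute the inductive hypothesis for $\mu^f(w)$ and multiply it on the right by $1\otimes u_m$. Since $(1\otimes u_{i+1}\cdots u_{m-1})(1\otimes u_m)=1\otimes u_{i+1}\cdots u_m$, the first $\mu^f$-sum for $w$ turns into exactly the $i=1,\dots,m-1$ terms of the desired first sum, and the term $(1\otimes w)\mu^f(u_m)=(1\otimes u_1\cdots u_{m-1})\mu^f(u_m)$ supplies the missing $i=m$ term. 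Likewise the $\kappa$-sum for $w$ produces, after the right multiplication, the $i=1,\dots,m-1$ terms of the desired second sum, while the leftover $(|\cdot|\otimes 1)\kappa(w,u_m)=(|\cdot|\otimes 1)\kappa(u_1\cdots u_{m-1},u_m)$ is precisely its $i=m$ term. Collecting the two groups yields the claimed identity.

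The step to watch is the bookkeeping of the $\kappa$-contributions: one must recognize that the commutator term produced at each stage of the recursion is already in the \emph{unexpanded} form $\kappa(u_1\cdots u_{i-1},u_i)$ that appears in the statement, so that no further appeal to the expansion in Corollary \ref{cor:kuv} is required. (Alternatively, one could expand each $\kappa(u_1\cdots u_{i-1},u_i)$ via Corollary \ref{cor:kuv} and match the two resulting double sums term by term, but the direct induction above is cleaner and avoids this.) Beyond this, the only care needed is the correct reading of the empty products at the extreme indices $i=1$ and $i=m$ together with the vanishing $\kappa(1,\cdot)=0$, both of which are settled in the base case.
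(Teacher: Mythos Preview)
Your proof is correct and is precisely the argument the paper has in mind: the corollary is stated with the remark that ``the repeated use of the product formulas yields'' it, i.e., induction on $m$ using \eqref{eq:muuv}. Your handling of the boundary terms via $\kappa(1,\cdot)=0$ is the only detail needing comment, and you dealt with it correctly.
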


The next proposition describes how the operations $\delta^f$, $\mu^f_r$ and $\mu^f_l$ depend on the choice of  framing.

\begin{prop}
\label{prop:changef}
Let $f$ and $f'$ be two framings on $\Sigma$ and let $\chi\in H^1(\Sigma, \Z)$ be the cohomology class describing their difference as in Remark \ref{rem:framing}.
Then, for any $\gamma \in \pi$,
\begin{align*}
\mu^{f'}_r(\gamma) & =  \mu^f_r(\gamma)+\chi(\gamma)\, ({\bf 1}\otimes \gamma), \\
\mu^{f'}_l(\gamma) & = \mu^f_l(\gamma)-\chi(\gamma)\, (\gamma \otimes {\bf 1}), \\
\delta^{f'}(|\gamma|) & = \delta^f(|\gamma|)+\chi(\gamma)\, ({\bf 1}\wedge |\gamma|).
\end{align*}
\end{prop}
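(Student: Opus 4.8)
The three formulas are not independent: the statements for $\mu^f$ and $\mu^f_{*\bullet}$ are equivalent, and the formula for $\delta^f$ follows from either of them. The plan is therefore to establish the first formula by a direct geometric argument and then to deduce the other two.

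To prove the formula for $\mu^f = \mu^f_{\bullet *}$, I start from a generic immersed representative of $\gamma$ adapted to $f$, that is, a path from $\bullet$ to $*$ with $\dot{\gamma}(0) = v_{\bullet}$, $\dot{\gamma}(1) = -v_*$ and $\rot_f(\gamma) = -1/2$, as in the definition \eqref{eq:muf}. By \eqref{eq:ff'chi}, the same curve has rotation number $\rot_{f'}(\gamma) = -1/2 + \chi(\gamma)$ with respect to $f'$. To compute $\mu^{f'}(\gamma)$ I must use a representative with $\rot_{f'} = -1/2$, which I obtain from the one above by inserting $\chi(\gamma)$ negative monogons when $\chi(\gamma) > 0$, or $|\chi(\gamma)|$ positive monogons when $\chi(\gamma) < 0$ (each positive, resp. negative, monogon shifts the rotation number by $+1$, resp. $-1$). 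Since the sign $\varepsilon(\dot{\gamma}(t_1^p), \dot{\gamma}(t_2^p))$ depends only on the orientation of $\Sigma$ and not on the framing, the double points already present contribute exactly the terms appearing in $\mu^f(\gamma)$, so the entire difference $\mu^{f'}(\gamma) - \mu^f(\gamma)$ is the sum of the contributions of the newly created monogon double points.

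It remains to compute the contribution of a single monogon double point $p$. The inner loop $\gamma_{t_1^p t_2^p}$ bounds the monogon disk, hence is contractible and $|\gamma_{t_1^p t_2^p}| = \mathbf{1}$; the outer path $\overline{\nu}\gamma_{0 t_1^p}\gamma_{t_2^p 1}$ is homotopic to $\gamma$ once the monogon is collapsed. Thus each monogon contributes $\pm(\mathbf{1}\otimes\gamma)$, and the sign is pinned down exactly as in the monogon computation carried out in the proof of Proposition \ref{prop:dfcob}: a positive monogon contributes $-(\mathbf{1}\otimes\gamma)$ and a negative one $+(\mathbf{1}\otimes\gamma)$. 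Summing over the inserted monogons yields $\chi(\gamma)\,(\mathbf{1}\otimes\gamma)$ in both cases, which is the first formula. The formula for $\mu^f_{*\bullet}$ then follows from the identity $\mu^f_{*\bullet}(\gamma) = -\mu^f_{\bullet *}(\gamma)^{\circ} + \gamma\otimes\mathbf{1} - \mathbf{1}\otimes|\gamma|$ established, for each framing, in the proof of Proposition \ref{prop:prodmu}, together with $(\mathbf{1}\otimes\gamma)^{\circ} = \gamma\otimes\mathbf{1}$. Finally, applying $\alt(1\otimes|\cdot|)$ to the first formula and invoking \eqref{eq:altmu} gives the formula for $\delta^f$, since $\alt(1\otimes|\cdot|)(\mathbf{1}\otimes\gamma) = \mathbf{1}\wedge|\gamma|$ and the term $|\gamma|\wedge\mathbf{1}$ is framing-independent.

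The main obstacle is the careful bookkeeping of signs: one must correctly match the convention that positive monogons raise the rotation number with the orientation sign $\varepsilon$ assigned to the resulting double point, so that a positive monogon indeed contributes $-(\mathbf{1}\otimes\gamma)$ to $\mu^f$. This is precisely the point where the local geometry of the monogon, together with its compatibility with the computation already performed for $\delta^f$ in Proposition \ref{prop:dfcob}, must be invoked; everything else reduces to a direct comparison of the two sums of double-point contributions.
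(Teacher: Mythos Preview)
Your proof is correct and follows essentially the same geometric argument as the paper's own proof: start from a representative adapted to $f$, adjust its rotation number for $f'$ by inserting monogons, and read off the extra terms. The paper dispatches the second and third formulas with ``can be proved in a similar way,'' whereas you deduce them from the first via the identity $\mu^f_{*\bullet}(\gamma)=-\mu^f_{\bullet *}(\gamma)^{\circ}+\gamma\otimes{\bf 1}-1\otimes|\gamma|$ and \eqref{eq:altmu}; that is a perfectly good (and slightly tidier) alternative.
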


\begin{proof}
Let $\gamma \colon ([0,1],0,1)\to (\Sigma,\bullet,*)$ be a generic immersion such that $\dot{\gamma}(0)=v_{\bullet}$, $\dot{\gamma}(1)=-v_*$, and $\rot^f(\gamma)=-1/2$.
Then, by \eqref{eq:ff'chi}, we see that by inserting $\chi(\gamma)$ negative monogons to $\gamma$, we obtain a generic immersion $\gamma'$ with $\rot^{f'}(\gamma')=-1/2$.
Since each negative monogon in $\gamma'$ contributes a term ${\bf 1}\otimes \gamma$ to $\mu^{f'}_r(\gamma)$, the formula for $\mu^{f'}_r(\gamma)$ follows.

The other formulas can be proved in a similar way.
\end{proof}

\begin{rem}  \label{rem:mu_red}
As an immediate consequence of Proposition \ref{prop:changef}, the maps $\mu_r^f$ for all framings descend to the canonical map $\mu_r\colon \K \pi \to (|\K\pi|/\K {\bf 1}) \otimes \K\pi$.
Similarly, one obtains the map $\mu_l\colon \K\pi \to \K \pi \otimes (|\K \pi|/\K {\bf 1})$.
These reduced versions of the coaction maps were considered in \cite{KK15}.
\end{rem}

\subsection{Compatibility conditions}
\label{subsec:compa}

The Goldman bracket and framed Turaev cobracket define a Lie bialgebra structure on the space $|\K\pi|$.

\begin{prop}
\label{prop:GTLie}
\begin{enumerate}
\item[$(i)$]
The triple $(|\K\pi|,[\cdot,\cdot]_{\rm Goldman},\delta^f_{\rm Turaev})$ is a Lie bialgebra.
That is, $[\cdot,\cdot]=[\cdot,\cdot]_{\rm Goldman}$ is a Lie bracket on $|\K\pi|$, $\delta^f = \delta^f_{\rm Turaev}$ is a Lie cobracket on $|\K\pi|$, and they satisfy the compatibility condition: for any $x,y\in |\K\pi|$,
\[
\delta^f ([x,y]) =
x(\delta^f(y)) - y(\delta^f(x)).
\]
Here, on the right hand side, $x(\delta^f(y))$ denotes the adjoint action of the Lie algebra $(|\K\pi|,[\cdot, \cdot]_{\rm Goldman})$ on $|\K\pi|\otimes |\K\pi|$.
\item[$(ii)$]
Moreover, this Lie bialgebra structure is involutive in the sense that
\[
[\cdot,\cdot] \circ \delta^f=0 \colon |\K\pi|\to |\K\pi|\otimes |\K\pi|\to |\K\pi|.
\]
\end{enumerate}
\end{prop}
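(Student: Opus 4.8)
The bracket and cobracket axioms have already been settled (the Goldman bracket is a Lie bracket by \cite{Go86}, and $\delta^f$ is a Lie cobracket by Proposition~\ref{prop:dfcob}), so the plan is to concentrate on the two remaining assertions: the cocycle (compatibility) identity and involutivity. For both I would work one level up, on $\K\pi$, using the double bracket $\kappa$ and the maps $\mu^f, \mu^f_{*\bullet}$, and only project to $|\K\pi|$ at the very end. The two inputs that make this feasible are the presentation of the Goldman bracket as $[|a|,|b|] = |\kappa(a,b)'\kappa(a,b)''|$ together with the derivation property of $\{|a|,-\}_\kappa$, and the two expressions for $\delta^f$ in Proposition~\ref{prop:prodmu}(ii), namely \eqref{eq:altmu} and \eqref{eq:bul**bul}. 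The geometric combinatorics underlying both identities is essentially that of the unframed case treated in \cite{Tu91} and recast algebraically in \cite{KK15, MT14}; the genuinely new point is to control the framing-dependent monogon terms.

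For the cocycle identity I would first reduce, by bilinearity, to $u = |\alpha|$ and $v = |\beta|$ for generic based loops $\alpha,\beta$. Writing $[u,v] = |\kappa(\alpha,\beta)'\kappa(\alpha,\beta)''|$ and using \eqref{eq:bul**bul} to expand $\delta^f([u,v])$, the product formula \eqref{eq:muuv} for $\mu^f$ (and its $\mu^f_{*\bullet}$ companion), together with Corollary~\ref{cor:muuuu}, lets me distribute $\mu^f$ across the word $\kappa(\alpha,\beta)'\kappa(\alpha,\beta)''$. On the other side, $u.\delta^f(v) - v.\delta^f(u)$ is expanded with \eqref{eq:bul**bul} and the observation that the adjoint action is itself given by $\kappa$. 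Matching the two expansions amounts to a single Leibniz-type compatibility identity between $\mu^f$ and $\kappa$ on $\K\pi$, which I would isolate as a lemma and verify termwise using the double-bracket axioms \eqref{eq:a,bc}--\eqref{eq:ab,c} and the symmetry relation \eqref{eq:yxxy}. The framing enters only through the monogon-insertion terms, which are multiples of $\mathbf 1$ in one tensor slot; by Proposition~\ref{prop:changef} these are under explicit control, and since $\mathbf 1$ is central for the Goldman bracket they are tracked separately (one uses, for instance, $u.(\mathbf 1\wedge v) - v.(\mathbf 1\wedge u) = 2\,\mathbf 1\wedge[u,v]$).

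For involutivity I would start from \eqref{eq:altmu}, namely $\delta^f(|\gamma|) = \alt(1\otimes|\cdot|)\mu^f(\gamma) + |\gamma|\wedge\mathbf 1$. Applying $[\cdot,\cdot]$ annihilates the $|\gamma|\wedge\mathbf 1$ term because $\mathbf 1$ is central, and on the remaining term $[\cdot,\cdot]\circ\alt$ acts as twice the Goldman bracket of the two tensor factors. Thus involutivity reduces to $\sum_p \varepsilon(\dot\gamma(t_1^p),\dot\gamma(t_2^p))\,[\,|\gamma_{t_1^p t_2^p}|,\,|\overline\nu\gamma_{0t_1^p}\gamma_{t_2^p 1}|\,] = 0$, where $p$ runs over the self-intersections of a generic representative of $\gamma$. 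Geometrically each such $p$ smooths $\gamma$ into an inner loop and an outer loop whose mutual intersections are precisely the remaining self-intersections of $\gamma$; summing the Goldman brackets over all $p$, the contributions cancel in pairs indexed by ordered pairs of self-intersection points with opposite signs, exactly as in \cite[Theorem~8.3]{Tu91}. I would carry out this cancellation by the same double-point bookkeeping, again noting that the framing-dependent monogons contribute only $\mathbf 1$-terms that drop out of the bracket.

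The main obstacle is the cocycle identity, and within it the Leibniz-type compatibility lemma between $\mu^f$ and $\kappa$: getting the signs right across the distinct ways an intersection point of the concatenated loop $\alpha_p\beta_p$ can arise—self-intersections of $\alpha$, self-intersections of $\beta$, and the two orientations of an $\alpha$--$\beta$ crossing—and matching these against the adjoint-action terms on the right-hand side is the delicate combinatorial step. By contrast, the framing corrections are well controlled through Proposition~\ref{prop:changef} and the centrality of $\mathbf 1$, so the framed identities follow once the unframed bookkeeping is in place.
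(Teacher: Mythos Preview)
Your approach is sound but takes a considerably more laborious route than the paper. For the compatibility condition, the paper works geometrically rather than algebraically: it chooses generic immersed representatives $\alpha,\beta$ with $\rot_f(\alpha)=\rot_f(\beta)=0$, and then observes that rotation numbers add under concatenation, so $\rot_f(|\alpha_p\beta_p|)=0$ for every intersection point $p$. Hence \emph{no monogon insertions are ever needed} when computing $\delta^f$ of the bracket, and the framed statement reduces verbatim to Turaev's unframed argument from \cite[Theorem~8.3]{Tu91}. This sidesteps entirely the ``Leibniz-type compatibility lemma between $\mu^f$ and $\kappa$'' that you identify as the main obstacle; by a judicious choice of representatives the framing simply disappears from the calculation. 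Your algebraic route through $\K\pi$, $\mu^f$, and the product formula \eqref{eq:muuv} would also work (and is closer in spirit to \cite{KK15}), but it forces you to track monogon terms and match signs across several intersection types, whereas the paper's additivity observation makes all of that unnecessary.

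For involutivity your argument is essentially correct and spells out what the paper merely cites: it refers to the proof of \cite[Proposition~B.1]{Cha04}, which is the same double-point pairing you describe. So here the two approaches coincide; you just give more detail.
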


\begin{proof}
To prove the compatibility condition, let $\alpha$ and $\beta$ be generic immersed loops such that $\rot^f(\alpha) = \rot^f(\beta) = 0$.
Observe that for each intersection $p\in \alpha\cap \beta$,
\[
\rot^f(|\alpha_p\beta_p|)=\rot^f(\alpha)+\rot^f(\beta)=0.
\]
Therefore, to compute $\delta^f(|\alpha_p\beta_p|)$, we do not need insertion of monogons.
Then, the same argument as in the proof of \cite[Theorem 8.3]{Tu91} yields
$\delta^f ([\alpha,\beta]) = \alpha(\delta^f(\beta))-\beta(\delta^f(\alpha))$.

The involutivity condition can be proved in the same way as \cite[Proposition B.1]{Cha04}.
\end{proof}

The previous proposition has a refinement of the following form.

\begin{prop} \label{prop:bimodule}
\begin{enumerate}
\item[$(i)$]
The operations $\{ \cdot, \cdot\} = \{ \cdot,\cdot\}_{\kappa} \colon |\K\pi| \otimes \K\pi \to \K\pi$ and $\mu^f_r$ satisfy the following compatibility condition: for all $x\in |\K\pi|$ and $a\in \K\pi$,
\begin{equation*} 
\mu^f_r(\{ x, a\}) =
x(\mu^f_r(a)) + ({\rm id} \otimes \{ \cdot,\cdot\})(\delta^f(x) \otimes a).
\end{equation*}
Here, in the first term of the right hand side the action of $x$ on $|\K\pi|\otimes \K\pi$  is given by $x(y\otimes b) =[x,y]\otimes b + y\otimes \{ x, b\}$ for $y\in |\K\pi|$ and $b \in \K\pi$.
\item[$(ii)$]
The operations $\{\cdot, \cdot \} \colon |\K\pi|\otimes \K\pi \to \K\pi$ and $\mu^f_r$ satisfy the involutivity condition
\[
\{\cdot, \cdot \} \circ \mu^f_r=0 \colon \K\pi \to |\K\pi|\otimes \K\pi \to \K\pi.
\]
\end{enumerate}
\end{prop}

\begin{proof}
An unframed version of this statement was proved in \cite[Proposition 3.2.7]{KK15}, and the same proof works here as well. 
For convenience of the reader, we give an outline of the proof and we point out changes accounting for framing.

(i)
Let $\alpha$ be a free loop with $\rot^f(\alpha) =0$ and let $\beta \colon ([0,1],0,1) \to (\Sigma,\bullet,*)$ be an immersed path with $\dot{\beta}(0) = v_{\bullet}$, $\dot{\beta}(1) = -v_*$ and $\rot^f(\beta) = -1/2$.
We further assume that $\alpha$ and $\beta$ are in general position.
Let us compute
\[
\mu^f_r(\{ \alpha,\beta \}) = \sum_{p\in \alpha \cap \beta}
\varepsilon(\dot{\alpha}_p, \dot{\beta}_p) \,
\mu^f_r(\beta_{\bullet p}\alpha_p\beta_{p*}).
\]
Notice that for every intersection point $p\in \alpha \cap \beta$, one has
$
\rot^f( \beta_{\bullet p} \alpha_p \beta_{p*}) = -1/2
$.
Therefore, to compute $\mu^f_r(\beta_{\bullet p}\alpha_p\beta_{p*})$, we do not need to insert extra monogons.

Self-intersections of $\beta_{\bullet p} \alpha_p \beta_{*p}$ are classified into the following three types:
\begin{enumerate}
    \item[(a)] those coming from self-intersections of $\alpha$;
    \item[(b)] those coming from self-intersections of $\beta$;
    \item[(c)] those coming from intersections of $\alpha$ and $\beta$ that are different from $p$.
\end{enumerate}

We claim that contributions of type  (a) sum up to the expression $({\rm id} \otimes \{ \cdot, \cdot\})(\delta^f(\alpha) \otimes \beta)$.
To see this, write $\delta^f(\alpha) = \alpha' \otimes \alpha''$ by using the Sweedler notation.
Then,
$$
({\rm id} \otimes \{ \cdot, \cdot\})(\delta^f(\alpha) \otimes \beta) = \alpha' \otimes \{ \alpha'', \beta\}
$$ 
is a sum over pairs $(p,q)$, where $p\in \alpha \cap \beta$ and $q$ is a self-intersection of $\alpha$. This is exactly the same set as 
we consider for  contributions of type (a). Checking signs proves the claim.

Contributions of type (b) yield the expression $\alpha(\mu^f(\beta))$.
To see this, write $\mu^f_r(\beta) = \beta' \otimes \beta''$.
Then,
$$
\alpha(\mu^f_r(\beta)) = [\alpha, \beta'] \otimes \beta'' + \beta' \otimes \{ \alpha, \beta''\}.
$$
Let $q$ be a self-intersection of $\beta$.
It contributes to $\mu^f_r(\beta)$ as $\pm |\beta_{t_1^qt_2^q}| \otimes \overline{\nu} \beta_{0t_1^q}\beta_{t_2^q1}$.
If $p \in \alpha \cap \beta$, there are two possibilities:
\begin{enumerate}
    \item[(b-1)] $p$ belongs to the loop $|\beta_{t_1^q t_2^q}|$;
    \item[(b-2)] $p$ belongs to the path  $\beta_{0t_1^q}\beta_{t_1^q1}$.
\end{enumerate}
Checking signs shows that contributions of type (b-1) sum up to 
$[\alpha,\beta'] \otimes \beta''$, and contributions of type (b-2) sum up to 
$\beta' \otimes \{ \alpha,\beta''\}$.

Finally, contributions of type (c) cancel each other.
This is illustrated in Figure \ref{fig:cancellation}.
In this example, there are no self-intersections of type (a) and (b).
One has $\{ \alpha, \beta \} = \varepsilon_p\, \overline{\nu} \ell + \varepsilon_q\, \overline{\nu} m$, where $\varepsilon_p = \varepsilon(\dot{\alpha}_p, \dot{\beta}_p)$ and $\varepsilon_q = \varepsilon(\dot{\alpha}_q, \dot{\beta}_q)$.
On the one hand, we have $\mu^f_r(\ell) = \varepsilon_q \, \gamma \otimes \overline{\nu} \delta$ since $\dot{\ell}(t_1^q) = \dot{\alpha}_q$. 
On the other hand, we have $\mu^f_r(m) = -\varepsilon_p \, \gamma \otimes \overline{\nu} \delta$ since $\dot{m}(t_1^p) = \dot{\beta}_p$.
Therefore,
\[
\mu^f_r(\{ \alpha, \beta\}) =
\varepsilon_p \varepsilon_q \, \gamma \otimes \overline{\nu} \delta
-\varepsilon_q \varepsilon_p \, \gamma \otimes \overline{\nu} \delta
= 0,
\]
as desired.
This concludes the proof of (i).

\begin{figure}
\begin{center}
\input{fig_cancellation.tex}
\end{center}
\caption{Cancellation of the contributions from (c).}
\label{fig:cancellation}
\end{figure}

(ii)
Let $\gamma \colon ([0,1],0,1) \to (\Sigma, \bullet, *)$ be a generic immersed path such that $\dot{\gamma}(0) = v_{\bullet}$, $\dot{\gamma}(1) = -v_*$ and $\rot^f(\gamma) = -1/2$.
The expression $\{ \cdot,\cdot\} (\mu^f_r(\gamma))$ is a sum over ordered pairs $(p,q)$ of distinct self-intersections of $\gamma$ such that $t^p_1 < t_1^q < t_2^p < t_2^q$ or $t^q_1 < t_1^p < t_2^q < t_2^p$.
Moreover, the contribution of the pair $(p,q)$ cancels the contribution of the pair $(q,p)$ in a way similar to contributions of type  (c) in the proof of (i).
Thus we obtain $\{ \cdot, \cdot \} (\mu^f_r(\gamma)) = 0$.
\end{proof}

\subsection{Standard generators and adapted framing}
\label{subsec:stdada}

Propositions~\ref{prop:kdb} and \ref{prop:prodmu} show that the double bracket $\kappa$ and the coaction maps $\mu^f_r, \mu^f_l$ are completely determined by their values on generators of $\pi$. In this subsection, we compute these values  on  standard systems of generators of $\pi$.

By the classification theorem of surfaces, there are unique integers $g,n\ge 0$ such that $\Sigma$ is diffeomorphic to $\Sigma_{g,n+1}$, a compact oriented surface of genus $g$ with $n+1$ boundary components.
Fix a labeling of the boundary components, $\pa \Sigma=\bigsqcup_{j=0}^n \pa_j \Sigma$, such that $*\in \pa_0 \Sigma$.
Then, there is a generating system for $\pi=\pi_1(\Sigma,*)$, $\alpha_i,\beta_i,\gamma_j$, $i=1,\ldots,g$, $j=1,\ldots,n$, such that
\begin{equation} \label{eq:gamma0}
\gamma_0 = \prod_{i=1}^g \alpha_i \beta_i {\alpha_i}^{-1} {\beta_i}^{-1} \prod_{j=1}^n \gamma_j,
\end{equation}
where $\gamma_0$ is the loop around the $0$th boundary $\pa_0 \Sigma$ with negative orientation and $\gamma_j$ is freely homotopic to the $j$th boundary $\pa_j \Sigma$ with positive orientation,
see Figure \ref{fig:abc}.

\begin{figure}
\begin{center}
{\unitlength 0.1in%
\begin{picture}(16.0200,12.0000)(3.8000,-15.2000)%
%
\special{pn 13}%
\special{pa 1980 720}%
\special{pa 1950 720}%
\special{pa 1920 709}%
\special{pa 1864 679}%
\special{pa 1837 661}%
\special{pa 1811 643}%
\special{pa 1785 624}%
\special{pa 1760 604}%
\special{pa 1736 583}%
\special{pa 1712 561}%
\special{pa 1690 538}%
\special{pa 1670 513}%
\special{pa 1659 484}%
\special{pa 1660 480}%
\special{fp}%
%
\special{pn 13}%
\special{pa 1660 488}%
\special{pa 1690 488}%
\special{pa 1720 499}%
\special{pa 1776 529}%
\special{pa 1803 547}%
\special{pa 1829 565}%
\special{pa 1855 584}%
\special{pa 1880 604}%
\special{pa 1904 625}%
\special{pa 1928 647}%
\special{pa 1950 670}%
\special{pa 1970 695}%
\special{pa 1981 724}%
\special{pa 1980 728}%
\special{fp}%
%
\special{pn 13}%
\special{ar 780 720 400 400 2.4980915 5.6396842}%
%
\special{pn 13}%
\special{pa 1218 640}%
\special{pa 1098 480}%
\special{fp}%
%
\special{pn 13}%
\special{pa 1538 640}%
\special{pa 1658 480}%
\special{fp}%
%
\special{pn 13}%
\special{ar 778 720 160 160 0.0000000 6.2831853}%
%
\special{pn 13}%
\special{ar 1378 520 200 200 0.6435011 2.4980915}%
%
\special{pn 13}%
\special{pa 1858 880}%
\special{pa 1978 720}%
\special{fp}%
%
\special{pn 13}%
\special{ar 2018 1000 200 200 3.1415927 3.7850938}%
%
\special{pn 13}%
\special{pa 460 960}%
\special{pa 820 1440}%
\special{fp}%
%
\special{pn 13}%
\special{pa 1820 1000}%
\special{pa 1820 1440}%
\special{fp}%
%
\special{pn 13}%
\special{ar 1320 1440 500 80 0.0000000 6.2831853}%
%
\special{pn 4}%
\special{sh 1}%
\special{ar 1380 1360 16 16 0 6.2831853}%
%
\special{pn 4}%
\special{sh 1}%
\special{ar 1260 1360 16 16 0 6.2831853}%
%
\special{pn 8}%
\special{pn 8}%
\special{pa 1540 648}%
\special{pa 1570 648}%
\special{pa 1597 658}%
\special{fp}%
\special{pa 1651 686}%
\special{pa 1656 689}%
\special{pa 1683 707}%
\special{pa 1700 719}%
\special{fp}%
\special{pa 1748 755}%
\special{pa 1760 764}%
\special{pa 1784 785}%
\special{pa 1793 793}%
\special{fp}%
\special{pa 1835 836}%
\special{pa 1850 855}%
\special{pa 1861 884}%
\special{pa 1860 888}%
\special{fp}%
%
\special{pn 8}%
\special{pn 8}%
\special{pa 940 720}%
\special{pa 965 698}%
\special{pa 987 681}%
\special{fp}%
\special{pa 1039 648}%
\special{pa 1045 645}%
\special{pa 1075 636}%
\special{pa 1098 633}%
\special{fp}%
\special{pa 1160 633}%
\special{pa 1170 634}%
\special{pa 1203 638}%
\special{pa 1220 640}%
\special{fp}%
%
\special{pn 8}%
\special{pa 1380 1360}%
\special{pa 1660 880}%
\special{fp}%
%
\special{pn 8}%
\special{pa 1380 1360}%
\special{pa 1600 840}%
\special{fp}%
%
\special{pn 8}%
\special{ar 1700 880 40 20 3.1415927 4.7123890}%
%
\special{pn 8}%
\special{pa 1700 860}%
\special{pa 1860 880}%
\special{fp}%
%
\special{pn 8}%
\special{pa 1600 840}%
\special{pa 1611 811}%
\special{pa 1600 780}%
\special{pa 1586 749}%
\special{pa 1573 719}%
\special{pa 1549 661}%
\special{pa 1540 640}%
\special{fp}%
%
\special{pn 8}%
\special{pa 1380 1360}%
\special{pa 960 1080}%
\special{fp}%
%
\special{pn 8}%
\special{pa 780 1000}%
\special{pa 812 1001}%
\special{pa 842 1008}%
\special{pa 871 1020}%
\special{pa 899 1036}%
\special{pa 927 1055}%
\special{pa 954 1076}%
\special{pa 960 1080}%
\special{fp}%
%
\special{pn 8}%
\special{pa 1380 1360}%
\special{pa 1120 880}%
\special{fp}%
%
\special{pn 8}%
\special{pa 1380 1360}%
\special{pa 1000 920}%
\special{fp}%
%
\special{pn 8}%
\special{pa 940 720}%
\special{pa 943 752}%
\special{pa 947 784}%
\special{pa 953 816}%
\special{pa 962 846}%
\special{pa 975 875}%
\special{pa 990 903}%
\special{pa 1000 920}%
\special{fp}%
%
\special{pn 8}%
\special{pa 1220 640}%
\special{pa 1235 668}%
\special{pa 1263 726}%
\special{pa 1274 756}%
\special{pa 1284 786}%
\special{pa 1293 817}%
\special{pa 1300 840}%
\special{fp}%
%
\special{pn 8}%
\special{pa 1300 840}%
\special{pa 1380 1360}%
\special{fp}%
%
\special{pn 8}%
\special{ar 780 720 280 280 1.5707963 4.7123890}%
%
\special{pn 8}%
\special{ar 780 720 320 280 4.7123890 6.2831853}%
%
\special{pn 8}%
\special{pa 1100 720}%
\special{pa 1100 752}%
\special{pa 1102 784}%
\special{pa 1106 815}%
\special{pa 1113 847}%
\special{pa 1120 878}%
\special{pa 1120 880}%
\special{fp}%
\put(13.4000,-14.6000){\makebox(0,0)[lb]{$*$}}%
\put(12.2000,-14.6000){\makebox(0,0)[lb]{$\bullet$}}%
%
\special{pn 8}%
\special{pa 1564 918}%
\special{pa 1532 949}%
\special{fp}%
\special{pa 1564 918}%
\special{pa 1570 963}%
\special{fp}%
%
\special{pn 8}%
\special{pa 991 1101}%
\special{pa 1013 1141}%
\special{fp}%
\special{pa 991 1101}%
\special{pa 1036 1107}%
\special{fp}%
%
\special{pn 8}%
\special{pa 1312 916}%
\special{pa 1299 959}%
\special{fp}%
\special{pa 1312 916}%
\special{pa 1338 952}%
\special{fp}%
\put(9.1400,-12.3800){\makebox(0,0)[lb]{$\alpha_1$}}%
\put(13.2400,-8.8800){\makebox(0,0)[lb]{$\beta_1$}}%
\put(16.1400,-10.7800){\makebox(0,0)[lb]{$\gamma_1$}}%
\end{picture}}%
\end{center}
\caption{generators $\alpha_i,\beta_i,\gamma_j$ ($g=1,n=1$)}
\label{fig:abc}
\end{figure}

\begin{rem}
Given a basepoint $*\in \pa \Sigma$ and a labeling $\pa \Sigma=\bigsqcup_{j=0}^n \pa_j \Sigma$ such that $* \in \pa_0 \Sigma$, the choice of a generating system $\alpha_i,\beta_i,\gamma_j$ as above is unique up to an action of the mapping class group of $\Sigma$ relative to the boundary $\pa \Sigma$.
\end{rem}

We compute the operation $\kappa$ on the standard generators $\alpha_i,\beta_i,\gamma_j$.

\begin{prop}
\label{prop:kapvalue}
For $i=1,\ldots,g$,
\begin{align*}
\kappa(\alpha_i,\alpha_i) & =
\alpha_i \otimes \alpha_i - 1 \otimes {\alpha_i}^2, \\
\kappa(\alpha_i,\beta_i) &= \beta_i\otimes \alpha_i, \\
\kappa(\beta_i,\alpha_i) & =
\beta_i \otimes \alpha_i -\alpha_i\beta_i \otimes 1 - 1 \otimes \beta_i \alpha_i, \\
\kappa(\beta_i,\beta_i) & =
\beta_i \otimes \beta_i - {\beta_i}^2\otimes 1.
\end{align*}
For $j=1,\ldots,n$,
\[
\kappa(\gamma_j,\gamma_j)= \gamma_j \otimes \gamma_j -1\otimes {\gamma_j}^2.
\]
If $(x, y) \in \{\alpha_i,\beta_i\} \times \{\alpha_k,\beta_k\}$ with $i<k$,  $(x, y) = (\alpha_i, \gamma_j)$, $(x,y) = (\beta_i, \gamma_j)$ or $(x,y) = (\gamma_j, \gamma_k)$ with $j < k$, then 
\[
\kappa(x,y) = 0
\quad \text{and} \quad
\kappa(y,x) = x\otimes y + y\otimes x- xy \otimes 1-1 \otimes yx.
\]
\end{prop}

\begin{proof}
Figure \ref{fig:pfkap} computes $\kappa(\alpha_1,\beta_1)$ and $\kappa(\beta_1,\alpha_1)$.
The other cases are similar.
\begin{figure}
\begin{center}
\input{fig_computingkappa.tex}
\end{center}
\caption{Computing $\kappa(\alpha_1,\beta_1)$ and $\kappa(\beta_1,\alpha_1)$}
\label{fig:pfkap}
\end{figure}
\end{proof}

\begin{rem}
Since $\kappa$ is a double bracket,
Proposition \ref{prop:kapvalue} gives its characterization.
\end{rem}

There are simple closed curves freely homotopic to $|\alpha_i|,|\beta_i|,|\gamma_j|$.
Abusing notation, we denote them simply by $\alpha_i, \beta_i, \gamma_j$.
A framing on $\Sigma$ is completely determined by the values of its rotation number function on these simple closed curves.

\begin{dfn}
Fix a standard generating system $\alpha_i,\beta_i,\gamma_j$ as above.
The \emph{adapted} framing is the framing $f^{\rm adp}$ with
its rotation number function $\rot^{\rm adp}=\rot^{f^{\rm adp}}$ 
given by
\[
\rot^{\rm adp}(\alpha_i)=\rot^{\rm adp}(\beta_i)=0, \quad
\rot^{\rm adp}(\gamma_j)=-1,
\]
for $i=1,\ldots,g$ and $j=1,\ldots,n$.
\end{dfn}

Note that the Poincar\'e-Hopf theorem implies $\rot^{\rm adp}(\gamma_0) = 2g-1$.

Let $\mu^{\rm adp}_r, \mu^{\rm adp}_l$ be the operations $\mu^f_r, \mu^f_l$ for the adapted framing $f = f^{\rm adp}$.

\begin{prop}
\label{prop:muvalue}
For $i=1,\ldots,g$ and $j = 1, \ldots, n$, we have
\begin{equation*}
\begin{array}{ll}
\mu^{\rm adp}_r(\alpha_i)={\bf 1}\otimes \alpha_i,
& \mu^{\rm adp}_l(\alpha_i) = - 1 \otimes |\alpha_i|, \\
\mu^{\rm adp}_r(\beta_i)=-|\beta_i|\otimes 1, &
\mu_l^{\rm adp}(\beta_i) = \beta_i \otimes {\bf 1}, \\
\mu_r^{\rm adp}(\gamma_j)=0, & \mu_l^{\rm adp}(\gamma_j)=\gamma_j \otimes {\bf 1} - 1 \otimes |\gamma_j|.
\end{array}
\end{equation*}
\end{prop}

\begin{proof}
The values $\mu_r^{\rm adp}(\alpha_i)$ and $\mu_r^{\rm adp}(\beta_i)$ can be seen from Figure \ref{fig:pfmu}, which shows representatives of $\nu \alpha_i$ and $\nu \beta_i$ with rotation number $-1/2$ with respect to $f^{\rm adp}$.
For $\nu \gamma_j$, there is a representative with no self-intersections.
Hence $\mu_r^{\rm adp}(\gamma_j)=0$. The values of $\mu_l^{\rm adp}$ on generators are obtained using equation \eqref{eq:mu&mu}.
\begin{figure}
\begin{center}
\input{fig_computingmu.tex}
\end{center}
\caption{Computing $\mu^{\rm adp}$}
\label{fig:pfmu}
\end{figure}
\end{proof}

Together, Propositions~\ref{prop:muvalue} and \ref{prop:changef} uniquely determine values of the maps $\mu^f_r$ and $\mu^f_l$ on the generating system $\alpha_i, \beta_i, \gamma_j$ for arbitrary framing $f$. This is summarized in the following theorem:
\begin{thm}   \label{thm:values_mu^f}
The maps $\mu^f_r$ and $\mu^f_l$ take the following values on generators $\alpha_i, \beta_i, \gamma_j$.
For $i = 1, \ldots, g$ and $j = 1,\ldots, n$, we have
\begin{equation*}
    \begin{array}{ll}
    \mu^f_r(\alpha_i)=({\rm rot}^f(\alpha_i) +1)({\bf 1}\otimes \alpha_i),
& \mu^f_l(\alpha_i) =-{\rm rot}^f(\alpha_i)(\alpha_i \otimes {\bf 1}) - 1 \otimes |\alpha_i|, \\
\mu^f_r(\beta_i)={\rm rot}^f(\beta_i)({\bf 1} \otimes \beta_i)-|\beta_i|\otimes 1, &
\mu_l^f(\beta_i) = (1 - {\rm rot}^f(\beta_i))(\beta_i \otimes {\bf 1}), \\
\mu_r^f(\gamma_j)=({\rot}^f(\gamma_j) + 1)({\bf 1} \otimes \gamma_j), & 
\mu_l^f(\gamma_j)=- {\rot}^f(\gamma_j)(\gamma_j \otimes {\bf 1}) - 1 \otimes |\gamma_j|.
    \end{array}
\end{equation*}
\end{thm}

\begin{rem}  \label{rem:mu_red_generators}
Note that the canonical coaction maps $\mu_r$ and $\mu_l$ (see Remark \ref{rem:mu_red}) take the following form on generators:
for $i = 1, \ldots, g$ and $j = 1, \ldots, n$,
$$
\begin{array}{ll}
\mu_r(\alpha_i)=0,
& \mu_l(\alpha_i) = - 1 \otimes |\alpha_i|, \\
\mu_r(\beta_i)=-|\beta_i|\otimes 1, &
\mu_l(\beta_i) = 0, \\
\mu_r(\gamma_j)=0, & 
\mu_l(\gamma_j)= - 1 \otimes |\gamma_j|.
\end{array}
$$
\end{rem}

\subsection{Functoriality and automorphisms}
In this subsection, we recall how the group algebra $\K\pi$ and the operations $\kappa$ and $\mu^f_r, \mu^f_l$ behave under embedding of surfaces. We also discuss automorphisms of $\K\pi$ equipped with these operations.

Let $\Sigma$ and $\overline{\Sigma}$ be two compact connected oriented surfaces with nonempty boundary.
Choose basepoints and labeling of the boundary components of $\Sigma$ and $\overline{\Sigma}$ such that $* \in \partial_0 \Sigma$ and $\bar{*} \in \partial_0 \overline{\Sigma}$.
Let $i \colon \Sigma \to \overline{\Sigma}$ be an embedding of oriented surfaces mapping $\partial_0 \Sigma$ to $\partial_0 \overline{\Sigma}$ and $*$ to $\bar{*}$. We denote the fundamental groups by $\pi = \pi_1(\Sigma, *)$ and $\bar{\pi}=\pi_1(\overline{\Sigma}, \bar{*})$. The map $i$ induces a group homomorphism $i_* \colon \pi \to \bar{\pi}$ and a Hopf algebra homomorphism $i_* \colon \K\pi \to \K\bar{\pi}$. The latter induces a map (denoted by the same symbol) $i_* \colon |\K\pi| \to |\K\bar{\pi}|$.

Denote the double brackets on $\K\pi$ and $\K\bar{\pi}$ by $\kappa$ and $\bar{\kappa}$, respectively. Let $\bar{f}$ be a framing on $\overline{\Sigma}$ and denote by $f$ its restriction to $\Sigma$. Denote by $\mu^f_r, \mu^f_l$ and $\mu^{\bar{f}}_r, \mu^{\bar{f}}_l$ the corresponding coaction maps.

\begin{prop} \label{prop:functorial}
The map $i_*$ defines a homomorphism of double bracket and coaction structures
$$
i_* \colon (\K\pi, \kappa, \mu^f_r, \mu^f_l) \to (\K\bar{\pi}, \bar{\kappa}, \mu^{\bar{f}}_r, \mu^{\bar{f}}_l).
$$
Namely, we have $\bar{\kappa} \circ (i_* \otimes i_*) = (i_* \otimes i_*) \circ \kappa$, $\mu^{\bar{f}}_r \circ i_* = (i_* \otimes i_*)\circ \mu^f_r$ and $\mu^{\bar{f}}_l \circ i_* = (i_* \otimes i_*)\circ \mu^f_l$.
\end{prop}

\begin{proof}  
All the operations are defined in terms of intersections and self-intersections of curves, and those are preserved under embeddings of oriented surfaces. Furthermore, rotation numbers of curves in $\Sigma$ are preserved under the embedding $i$ since the framing $\bar{f}$ restricts to $f$. This completes the proof.
\end{proof}

\begin{rem}  \label{rem:capping_trick}
Note that if $i_*$ is injective, the operations $\kappa$ and $\mu^f_r, \mu^f_l$ are completely determined by $\bar{\kappa}$ and $\mu^{\bar{f}}_r, \mu^{\bar{f}}_l$.

\begin{figure}
\begin{center}
\input{fig_cappingtrick.tex}
\end{center}
\caption{Capping trick ($g=1, n=2$)}
\label{fig:cappingtrick}
\end{figure}

An example of such a situation is the following {\em capping trick}.
Let $\Sigma$ be a surface of genus $g$ with $n+1$ boundary components. 
Define the surface $\overline{\Sigma}$ by attaching $n$ copies of $\Sigma_{1,1}$ to the boundary components $\pa_j \Sigma$, $j=1,\ldots,n$:
\[
\overline{\Sigma}=\Sigma \cup_{\pa}
\big( \bigcup_{j=1}^n \Sigma_{1,1} \big).
\]
Here, $\cup_{\pa}$ denotes the gluing along $\bigcup_{j=1}^n \pa_j \Sigma$.
See Figure \ref{fig:cappingtrick}.
The group $\bar{\pi}$ is a free group of rank $2(g+n)$.
Given a standard generating set $\{ \alpha_i, \beta_i \}_{i=1}^g \cup \{ \gamma_j \}_{j=1}^n$ of $\pi$, one can choose a standard generating set of $\bar{\pi}$ in a way that the first $g$ pairs of generators are $i_* \alpha_i, i_* \beta_i$ and the last $n$ pairs $\alpha_{g+j}, \beta_{g+j}$, $j=1, \ldots, n$, satisfy
$i_* \gamma_j = \alpha_{g+j} \beta_{g+j} \alpha_{g+j}^{-1} \beta_{g+j}^{-1}$.
Note that for any framing $\bar{f}$ on $\overline{\Sigma}$ we have ${\rm rot}^f(\gamma_j)=1$ for the induced framing $f$ on $\Sigma$.
\end{rem}

A special case of surface embeddings are self-diffeomorphisms of $\Sigma$. Recall that the {\em mapping class group} of $\Sigma$ is defined as the group of connected components of the diffeomorphism group of $\Sigma$ fixing the boundary pointwise:
$$
\mathcal{M} = \mathcal{M}(\Sigma) := {\rm Diff}(\Sigma, \partial \Sigma)/{\rm Diff}_0(\Sigma, \partial \Sigma).
$$
There is a natural action of $\mathcal{M}$ on the fundamental group $\pi$, which is called the {\em Dehn-Nielsen action}.
It extends naturally to the action of $\mathcal{M}$ on the group algebra $\K \pi$.

For a given framing $f$ on $\Sigma$, denote by $\mathcal{M}^f = \mathcal{M}^f(\Sigma)$ the subgroup of the mapping class group consisting of elements that preserve $f$.
By restriction, the group $\mathcal{M}^f$ acts on the group algebra $\K \pi$.
Our main interest in the {\em framed mapping class group} $\mathcal{M}^f$ comes from the following fact:
\begin{prop} \label{prop:MCGfaction}
The action of $\mathcal{M}^f$ on $\K \pi$ preserves the double bracket $\kappa$ and the coaction maps $\mu^f_r$, $\mu^f_l$.
\end{prop}

\begin{proof}
This follows from Proposition~\ref{prop:functorial}.
\end{proof}

In general, any element $\varphi\in \mathcal{M}$ maps a framing $f$ to another framing $\varphi(f)$ and induces an isomorphism
$$
(\K\pi, \kappa, \mu^{\varphi(f)}_r, \mu^{\varphi(f)}_l) \cong (\K\pi, \kappa, \mu^f_r, \mu^f_l).
$$

\begin{rem} \label{rem:Torelli_n=0}
When $n=0$, the action of $\mathcal{M}$ on $\K \pi$ is faithful.
Thus we can regard the framed mapping class group as a subgroup of the automorphism group of $\K \pi$ together with the double bracket $\kappa$ and the coaction maps $\mu^f_r, \mu^f_l$:
$$
\mathcal{M}^f \subset {\rm Aut}(\K\pi, \kappa, \mu^f_r, \mu^f_l).
$$
In Section~\ref{sec:Johnson}, we use this fact to study the Torelli group of $\Sigma$.

When $n > 0$, i.e., $\Sigma$ has more than one boundary component, the action of $\mathcal{M}$ on $\K \pi$ is not faithful.
In fact, the Dehn twists along the $j$th boundary component ($j = 1,\ldots, n$) acts trivially on $\pi$, but it defines a nontrivial element of $\mathcal{M}$. 
To get a faithful action of the mapping class group, we would need  to consider the action on the \emph{fundamental groupoid} of $\Sigma$, where we choose basepoints from each boundary component of $\Sigma$.
For simplicity our exposition in Section~\ref{sec:Johnson} will focus on the case of $n = 0$.
It is possible to extend it to the case of $n > 0$ following  \cite{KK15, KK16}.
\end{rem}

\section{Filtrations and associated graded} \label{sec:filt}

In this section, we consider the natural {\em weight filtration} on the group algebra $\K \pi$ and the induced filtration on the space $|\K \pi|$. These filtrations give rise to completions $\widehat{\K\pi}$ and $\widehat{|\K\pi|}$ and to their associated graded objects ${\rm gr}^{\rm wt}\,  \K\pi$ and ${\rm gr}^{\rm wt} \, |\K\pi| $. 

All the operations of the previous section extend to completions and descend to graded operations on associated graded objects. In particular, $|\widehat{\K\pi}|$ is a filtered involutive Lie bialgebra and ${\rm gr}^{\rm wt}\, |\K\pi|$ is a graded involutive Lie bialgebra. In both cases, the Lie cobracket depends on the choice of framing. 

We will focus on the double bracket $\kappa$ and on the coaction maps $\mu^f_r, \mu^f_l$ since they uniquely determine the Goldman bracket and Turaev cobracket.

\subsection{Weight filtrations on free groups and free associative algebras}

In this subsection, we recall some standard facts about filtrations on group algebras of free groups and on free associative algebras which represent the corresponding associated graded objects.

Let $\Gamma$ be a free group of finite rank. 
The group algebra $\K \Gamma$ has a structure of a Hopf algebra whose coproduct, augmentation map and antipode are given by formulas
\[
\Delta(\gamma)=\gamma\otimes \gamma,
\quad \varepsilon(\gamma)=1,
\quad
\iota(\gamma)=\gamma^{-1}
\]
for any $\gamma \in \Gamma$. Denote by $I\Gamma=\ker(\varepsilon)$ the augmentation ideal and define the completion of the group algebra
$$
\widehat{\K \Gamma}= \varprojlim_m \, \K \Gamma/(I\Gamma)^m.
$$
The Hopf algebra structure on $\K\Gamma$ naturally extends to a complete Hopf algebra structure on $\widehat{\K\Gamma}$.
Since $\Gamma$ is a free group of finite rank, 
the canonical map $\K\Gamma \to \widehat{\K \Gamma}$ is injective
(see e.g. \cite[Ch.\ 2, Exer.\ \S 5, 1(b)]{Bou71})
and one can identify $\mathbb{K}\Gamma$ with its image in $\widehat{\K \Gamma}$.

The completed group algebra $\widehat{\K \Gamma}$ carries a decreasing filtration by kernels of quotient maps $\widehat{\K \Gamma} \to \K \Gamma/(I\Gamma)^m$, $m\ge 0$. Besides this canonical filtration, there are many other interesting decreasing filtrations on 
$\widehat{\K \Gamma}$, and we describe some of them below.

Let $\{ \gamma_j\}_{j=1}^n$ be a (finite) free generating set of $\Gamma$, where $n$ is the rank of $\Gamma$. The group algebra 
$\K \Gamma$ is generated by elements of the form $(\gamma_j^{\pm 1} -1)$. The completed group algebra $\widehat{\K \Gamma}$ is freely generated by elements 
$$
Z_j=\gamma_j -1
$$
since
$
\gamma^{-1} -1 = \sum_{k=1}^\infty (-1)^k (\gamma-1)^k
$
for any $\gamma \in \Gamma$.
In terms of generators $Z_j$, the Hopf algebra operations on $\widehat{\K \Gamma}$ acquire the form
\begin{equation}    \label{eq:Zs}
\Delta(Z_j)=
 Z_j \otimes 1 + 1 \otimes Z_j + Z_j \otimes Z_j, \hskip 0.3cm 
\varepsilon(Z_j)=0, \hskip 0.3cm
\iota(Z_j) = \sum_{k=1}^\infty (-1)^k {Z_j}^k.
\end{equation}
It is convenient to introduce a notation
$$
Z_{J}=Z_{j_1} \cdots Z_{j_l},
$$
for elements of a basis of the filtered vector space $\widehat{\K \Gamma}$. Here $l\in \mathbb{Z}_{\geq 0}$,  $J=(j_1, \dots, j_l)$  and the basis element corresponding to $l=0$ is the unit of $\widehat{\K \Gamma}$.

\begin{rem} \label{rem:Magnus}
The basis $\{ Z_J \}_J$ defines a $\K$-algebra isomorphism
$$
\widehat{\mathbb{K} \Gamma} \cong
\K \langle \langle Z_1, \ldots, Z_n \rangle \rangle
= \widehat{T} \left( {\textstyle \bigoplus}_{j=1}^n \mathbb{K} Z_j \right).
$$
This isomorphism is given on generators by formula $\gamma_j=1+Z_j$ and is called \emph{the Magnus expansion} \cite{Mag35} (for a discussion of more general expansions, see Section~\ref{sec:expansions+KV}).
\end{rem}

Let us assign positive integer weights $w_j = {\rm wt}(Z_j) \in \mathbb{Z}_{>0}$ to generators $Z_j = \gamma_j - 1$ and weights ${\rm wt}(Z_J) = \sum_{k=1}^l w_{j_k}$ to  elements of the basis $\{ Z_J \}_J$.
This assignment defines a decreasing filtration on $\widehat{\K \Gamma}$ by two-sided ideals $\widehat{\K \Gamma}(m)$ defined by  
$$
\widehat{\K\Gamma}(m)=\{ \sum_J c_J Z_J ; c_J \in \K, {\rm wt}(Z_J)\geq m \},
\quad m \ge 0.
$$
It follows from the definition that it is multiplicative in the sense that $\widehat{\K\Gamma}(m)\cdot \widehat{\K\Gamma}(n) \subset \widehat{\K\Gamma}(m+n)$.
This filtration restricts to a decreasing filtration $\{ \K\Gamma(m)\}_m$ on the group algebra $\K\Gamma \subset \widehat{\K \Gamma}$ by multiplicative two-sided ideals which is called the \emph{weight filtration} on $\K\Gamma$.
More explicitly,
\[
\K \Gamma(m) = \widehat{\K \Gamma}(m) \cap \K \Gamma.
\]

Let us collect some elementary properties of the weight filtration.
First, equations~\eqref{eq:Zs} show that the weight filtrations $\{ \widehat{\K\Gamma}(m) \}_m$ and $\{ \K\Gamma(m)\}_m$ are preserved by Hopf algebra operations (for any choice of weights).
Second, the natural map $\K\Gamma(m)/\K\Gamma(m+1) \to
\widehat{\K\Gamma}(m)/\widehat{\K\Gamma}(m+1)$
is a $\K$-linear isomorphism for any $m\ge 0$.
This follows from the fact that the set $\{ Z_J \ {\rm mod} \ \widehat{\K\Gamma}(m+1) ; {\rm wt}(Z_J) =m \}$ is a $\K$-basis of $\widehat{\K\Gamma}(m)/\widehat{\K\Gamma}(m+1)$.
Consequently, the completion of $\K\Gamma$ with respect to the filtration $\{\K\Gamma(m)\}_m$ is canonically isomorphic to $\widehat{\K\Gamma}$.
Third, an alternative description of $\K\Gamma(m)$ is given as follows:

\begin{prop} \label{prop:KGamma(m)}
For any $m\ge 0$, the set $\mathcal{Z}^{(m)}:=\{ Z_J ; {\rm wt}(Z_J) \ge m\}$ generates $\K\Gamma(m)$ as a two-sided ideal of $\K\Gamma$.
In particular, we have $\K\Gamma(1) = I\Gamma$.
\end{prop}

\begin{proof}
See Appendix~\ref{subsec:alt_wt_filt}.
\end{proof}

We will need the following elementary lemma.
In what follows, we denote by $\equiv_m$ the equivalence modulo elements of $\K\Gamma(m)$.

\begin{lem} \label{lem:commutator_degree}
\begin{enumerate}
    \item[(i)]
    Let $\alpha \in \Gamma$ such that $(\alpha - 1) \in \K\Gamma(m)$.
    Then, $(\alpha^{-1} - 1) \in \K\Gamma(m)$ and $(\alpha^{-1} - 1) \equiv_{m+1} -(\alpha - 1)$.
    \item[(ii)]
    Let $\alpha, \beta \in \Gamma$ such that $(\alpha - 1), (\beta-1) \in \K\Gamma(m)$.
    Then, $(\alpha\beta - 1) \in \K\Gamma(m)$ and
    \[
    (\alpha\beta - 1) \equiv_{m+1} (\alpha - 1) + (\beta - 1).
    \]
    \item[(iii)]
    Let $\alpha, \beta \in \Gamma$ such that $(\alpha -1) \in \K\Gamma(l)$ and $(\beta - 1) \in \K\Gamma(m)$. Then, their group commutator $\gamma=\alpha \beta \alpha^{-1} \beta^{-1}$ satisfies $(\gamma -1) \in \K \Gamma(l+m)$.
    Furthermore, we have
    \[
    (\gamma - 1) \equiv_{l+m+1} [(\alpha - 1), (\beta - 1)].
    \]
\end{enumerate}
\end{lem}

\begin{proof}
(i) Since $\K\Gamma(1) = I\Gamma$ and the weight filtration is multiplicative, we compute $(\alpha^{-1} -1) = -\alpha^{-1}(\alpha -1) \equiv_{m+1} -(\alpha -1) \in \K\Gamma(m)$.

(ii) This follows from
$(\alpha\beta - 1) = (\alpha - 1) + \alpha (\beta - 1) \equiv_{m+1} (\alpha - 1) + (\beta - 1)$.

(iii)
Set $p= l + m + 1$.
We compute
\begin{align*}
 & \hspace{1.5em} \gamma-1 \\
 & = 
(\alpha -1 +1)(\beta-1 +1)(\alpha^{-1} -1 +1)(\beta^{-1}-1+1) -1 \\
& \equiv_{p} 
(\alpha - 1)(\beta - 1) + (\alpha - 1)(\beta^{-1} - 1)
+(\beta - 1)(\alpha^{-1} - 1) + (\alpha^{-1} - 1)(\beta^{-1} - 1) \\
& \equiv_{p} 
(\alpha - 1)(\beta - 1) - (\alpha - 1)(\beta - 1)
-(\beta - 1)(\alpha - 1) + (\alpha - 1)(\beta - 1) \\
& =
[(\alpha -1), (\beta-1)],
\end{align*}
where we have used (i) in the fourth line.
The right hand side is manifestly in $\K\Gamma(l + m)$.
\end{proof}

\begin{rem}
The filtration $\{ \K\Gamma(m)\}_m$ depends on the choice of generating set $\{ \gamma_j\}_{j=1}^n$ and weights $w_j = {\rm wt}(Z_j)$.
If $w_j=1$ for all $j$, we obtain again the canonical filtration by powers of the augmentation ideal $(I \Gamma)^m$. Hence, in this case the filtration is independent of the choice of the generating set.
\end{rem}

Consider the first homology of the group $\Gamma$:
$$
H := H_1(\Gamma, \K) = (\Gamma/ [\Gamma, \Gamma]) \otimes_{\Z} \K.
$$ 
We will denote by $z_j=[\gamma_j] \in H$ the homology classes of generators of $\Gamma$. The choice of weights $w_j$ induces a filtration on $H$:
$$
H^{(m)} = \K \{ z_j ; w_j \geq m \}.
$$
Its associated graded is of the form
$$
{\rm gr}^{\rm wt} \, H = \bigoplus_{m \ge 1} H^{(m)}/H^{(m+1)} \cong \bigoplus_{m \ge 1} \K\{ z_j ; w_j = m \}.
$$
The tensor algebra $T({\rm gr}^{\rm wt} \, H)$ generated by ${\rm gr}^{\rm wt}\, H$ has a structure of a graded Hopf algebra with operations
\begin{equation}  \label{eq:zs}
\Delta(z_j) = z_j \otimes 1 + 1\otimes z_j, \hskip 0.3cm
\varepsilon(z_j)=0, \hskip 0.3cm
\iota(z_j)=-z_j.
\end{equation}
Using grading, one can define a completion
\[
A:= \widehat{T}({\rm gr}^{\rm wt} \, H) = \prod_{m \ge 0} ({\rm gr}^{\rm wt}\, H)^{\otimes m} \cong \mathbb{K}\langle\langle z_1, \dots, z_n \rangle\rangle.
\]
The Hopf algebra operations of $T({\rm gr}^{\rm wt} \, H)$ extend to the completion and give rise to a structure of a (complete) graded Hopf algebra on $A$.

The associated graded of $\K \Gamma$ with respect to the weight filtration is defined as a complete graded vector space
$$
{\rm gr}^{\rm wt}\, \K \Gamma := \prod_{m\geq 0} \K \Gamma(m)/\K \Gamma(m+1).
$$
For an element $a \in \K \Gamma(m)$ we denote by 
$$
{\rm gr}(a) \in \K \Gamma(m)/\K \Gamma(m+1)
$$ 
its projection to the associated graded.
The following proposition describes the Hopf algebra structure of the associated graded ${\rm gr}^{\rm wt} \, \K \Gamma$:

\begin{prop}
\label{prop:grh}
There is a unique isomorphism of complete graded Hopf algebras
\begin{equation*} 
\gr^{\rm wt} \, \K \Gamma \cong
\widehat{T}(\gr^{\rm wt} \, H)
= A
\end{equation*}
which maps ${\rm gr}(Z_j)$ to $z_j$.
\end{prop}

\begin{proof}
The assignment ${\rm gr}(Z_j) \mapsto z_j$ uniquely determines the map on ${\rm gr}(Z_J)$:
$$
{\rm gr}(Z_J)={\rm gr}(Z_{j_1}) \cdots {\rm gr}(Z_{j_l}) \mapsto z_{j_1} \cdots z_{j_l}.
$$
By applying the projection ${\rm gr}$ to equations~\eqref{eq:Zs} we obtain equations~\eqref{eq:zs}.
Hence this map is a Hopf algebra isomomorphism.
\end{proof}

The weight filtration on $\K\Gamma$ and $\widehat{\K\Gamma}$ induce the weight filtration on the spaces $|\K\Gamma|$ and $|\widehat{\K\Gamma}|$.
As a consequence of Proposition \ref{prop:grh}, there is a canonical isomorphism of grade $\K$-vector spaces
\begin{equation} \label{eq:isowithgraded_abs}
{\rm gr}^{\rm wt}\, |\K\Gamma| \cong |\widehat{T}(\gr^{\rm wt} \, H)| = |A|.
\end{equation}

\subsection{The weight filtration on the group algebra $\K\pi$} \label{subsec:filtonKpi}
In this subsection, we apply the formalism of the previous subsection to the fundamental group $\pi = \pi_1(\Sigma)$ of a compact oriented surface $\Sigma$ with nonempty boundary.

Since $\partial \Sigma \neq \emptyset$, $\pi$ is a free group.
We assume that $\Sigma$ is a surface of genus $g$ with $n+1$ boundary components, where $g,n\ge 0$, and we choose a standard generating system of $\pi$ given by $\alpha_i, \beta_i$ for $i=1, \dots, g$ and $\gamma_j$ for $j=1, \dots, n$, as in Section~\ref{subsec:stdada}. We denote the corresponding homology classes in $H = H_1(\pi, \K)$ by
$$
[\alpha_i] = x_i, \hskip 0.3cm 
[\beta_i] = y_i, \hskip 0.3cm
[\gamma_j] = z_j.
$$

\begin{dfn}
The {\em weight filtration} on $\K\pi$ is defined by the assignment
${\rm wt}(\alpha_i - 1) = {\rm wt}(\beta_i - 1) = 1$ for all $i = 1, \ldots, g$ and
${\rm wt}(\gamma_j - 1) = 2$ for all $j = 1, \ldots, n$.
\end{dfn}

The weight filtration on $\K \pi$ induces a weight filtration on $H$ with 
$$
{\rm deg}(x_i) = {\rm deg}(y_i) =1, \hskip 0.3cm {\rm deg}(z_j)=2.
$$
A more invariant description of this filtration is as follows. Consider the intersection pairing $\langle \cdot, \cdot\rangle$ on $H \cong H_1(\Sigma, \K)$.
Then, we have
\[
H^{(1)}=H
\hspace{1em}
\text{and}
\hspace{1em}
H^{(2)}= \{ a \in H ; \text{$\langle a, b \rangle = 0 $ for any $b \in H$} \}.
\]
By abuse of notation, we denote the generators of ${\rm gr}^{\rm wt} \, H$ by the same letters $x_i, y_i, z_j$. 
In particular, $H/H^{(2)}=\bigoplus_{i=1}^g (\K x_i \oplus \K y_i)$.
In view of the isomorphism ${\rm gr}^{\rm wt} \, \K\pi \cong \widehat{T}({\rm gr}^{\rm wt}\, H) = A$ in Proposition \ref{prop:grh}, elements $x_i,y_i,z_j \in {\rm gr}^{\rm wt} \, H$ are given by
\[
x_i = {\rm gr}(\alpha_i - 1), \quad
y_i = {\rm gr}(\beta_i - 1), \quad
z_j = {\rm gr}(\gamma_j - 1).
\]

The intersection pairing descends to a (non-degenerate) skew-symmetric bilinear form
\[
\langle \cdot, \cdot \rangle \colon
(H/H^{(2)}) \times (H/H^{(2)}) \to \K.
\]
On generators, this gives
$$
\langle x_i, x_k \rangle = \langle y_i, y_k\rangle =0, \hskip 0.3cm 
\langle x_i, y_k\rangle = - \langle y_i, x_k \rangle=  \delta_{ik}.
$$
For future use, it is convenient to introduce one more operation on $H^{(2)}=\bigoplus_{j=1}^n \K z_j$:
\[
\mathfrak{z} \colon H^{(2)} \times H^{(2)} \to H^{(2)},
\quad \mathfrak{z}(z_j,z_k):=\delta_{jk} z_k.
\]
Both the skew-symmetric form $\langle \cdot, \cdot\rangle$ and the symmetric bilinear operation $\mathfrak{z}$ naturally extend to ${\rm gr}^{\rm wt} \, H = (H/H^{(2)}) \oplus H^{(2)}$, where $\langle \cdot, \cdot \rangle$ extends by zero on $H^{(2)}$ and $\mathfrak{z}$ extends by zero on $H/H^{(2)}$.

Recall the element $\gamma_0 \in \pi$ which corresponds to the loop around the $0$th boundary component $\pa_0 \Sigma$.
(See equation \eqref{eq:gamma0}.)

\begin{lem} \label{lem:omega}
We have $(\gamma_0 -1) \in \K\pi(2)$.
The element $\omega := {\rm gr}(\gamma_0 -1)$ can be written as  
\begin{equation*}  
\omega = \sum_{i=1}^g [x_i, y_i] + \sum_{j=1}^n z_j.
\end{equation*}
\end{lem}

\begin{proof}
Applying Lemma \ref{lem:commutator_degree}~(ii)(iii) to equation \eqref{eq:gamma0}, we obtain
\begin{equation*}  
\gamma_0 -1 \equiv_3 \sum_{i=1}^g [\alpha_i -1, \beta_i -1] + \sum_{j=1}^n (\gamma_j-1)
\in \K\pi(2).
\end{equation*}
Projecting this equation to $\mathbb{K}\pi(2)/\mathbb{K}\pi(3)$, we obtain the required expression for $\omega$.
\end{proof}

\begin{rem}
Note that under the filtration with ${\rm wt}(\alpha_i - 1)={\rm wt}(\beta_i - 1) = {\rm wt}(\gamma_j - 1)=1$, we would have had $(\gamma_0-1) \in \K\pi(1)$ and ${\rm gr}(\gamma_0-1)=\sum_{j=1}^n z_j$ which corresponds to the relation between homology classes of  boundary components of $\Sigma$.
\end{rem}

\begin{prop}  \label{prop:filtration_independent}
The weight filtration on $\K\pi$ is independent of the choice of standard generating system of $\pi$.
\end{prop}

\begin{proof}
We make use of  the capping trick of Remark \ref{rem:capping_trick}. Recall that $\overline{\Sigma}$ is a surface of genus $g+n$ with one boundary component and $\bar{\pi}$ is its fundamental group. The weight filtration on $\widehat{\mathbb{K} \bar{\pi}}$ and on $\mathbb{K} \bar{\pi}$ is defined by powers of the augmentation ideal. Hence, it is independent of the choice of the generating set. We will show that it restricts to the weight filtrations on $\widehat{\mathbb{K} \pi}$ and $\mathbb{K} \pi$. 

First, we show that the natural injective maps $i_*\colon \mathbb{K} \pi \to \mathbb{K} \bar{\pi}$ and $i_*\colon \widehat{\mathbb{K} \pi} \to \widehat{\mathbb{K} \bar{\pi}}$ preserve the weight filtration.
Let $\alpha_i, \beta_i, \gamma_j$ for $i=1, \dots, g, j=1, \dots, n$ be a standard generating system of $\pi$.
Choose a generating set $\bar{\alpha}_k, \bar{\beta}_k$ of $\bar{\pi}$ such that
\begin{equation}   \label{eq:capping}
\begin{array}{ll}
 i_* \alpha_i = \bar{\alpha}_i, \quad i_* \beta_i = \bar{\beta}_i &
{\rm for} \,\, i=1, \dots, g; \\
i_* \gamma_j= \bar{\alpha}_{g+j} \bar{\beta}_{g+j} \bar{\alpha}^{-1}_{g+j}
\bar{\beta}^{-1}_{g+j} & {\rm for} \,\, j=1, \dots, n.
\end{array}
\end{equation}
Then, with respect to the weight filtration, we have ${\rm wt}(\alpha_i - 1) = {\rm wt}(\beta_i - 1) = 1$ in $\K\pi$, and ${\rm wt}(\bar{\alpha}_i - 1) = {\rm wt}(\bar{\beta}_i - 1) = 1$ in $\K\bar{\pi}$ as well.
Also, ${\rm wt}(\gamma_j - 1) = 2$ by definition, and $i_*(\gamma_j - 1) \in \K\bar{\pi}(2)$ by Lemma~\ref{lem:commutator_degree}~(iii).
This proves the claim.

Next, we consider the associated graded of the injection $i_* \colon \mathbb{K} \pi \to \mathbb{K} \bar{\pi}$.
On generators, the associated graded of $i_*$ takes the form
\begin{equation}  \label{eq:graded_capping}
(i_*)_{\rm gr} \colon x_i \mapsto \bar{x}_i, \quad
y_i \mapsto \bar{y}_i, \quad
z_j \mapsto [\bar{x}_{g+j}, \bar{y}_{g+j}].
\end{equation}
This map is injective, and this implies that $i_* \colon \K\pi/\K\pi(m) \to \K \bar{\pi} / \K \bar{\pi}(m)$ is injective for all $m\ge 0$ as well.
Hence, the restriction of the weight filtration on $\mathbb{K} \bar{\pi}$ to $\mathbb{K} \pi$ coincides with the weight filtration on $\mathbb{K} \pi$.
Therefore, the latter is independent of the choice of standard generating system, as required.
\end{proof}

\begin{rem}
The weight filtration for the group algebra of the fundamental group  of bordered surfaces is compatible with injective maps $\K\pi \to \K\pi'$ induced by embeddings of surfaces $\Sigma \to \Sigma'$ which map $\pa_0\Sigma$ to $\pa_0 \Sigma'$ and $*$ to $*'$.
An example of such a statement is the use of the capping trick in the proof of Proposition~\ref{prop:filtration_independent}.
\end{rem}

\subsection{Associated graded of the double bracket} \label{subsec:gr_kappa}
In this subsection, we define and study the associated graded of the double bracket $\kappa$ under the weight filtration on $\K \pi$.

\begin{dfn}
Let $V$ and $W$ be filtered $\K$-vector spaces which carry decreasing filtrations $\{ V(m)\}_m$ and $\{ W(m) \}_m$. We say that the map $\lambda \colon V \to W$ is of filtration degree $d \in \mathbb{Z}$ if $\lambda(V(m)) \subset W(m+d)$ for any $m$, and the associated graded map
$$
\lambda_{\rm gr} \colon {\rm gr}\, V = \prod_m V(m)/V(m+1) \to \prod_m W(m+d)/W(m+d+1) = {\rm gr}\, W
$$
is non-vanishing.
\end{dfn}

\begin{rem}
The map $\lambda$ as above induces a map $\widehat{V} \to \widehat{W}$ between completions, which has the same filtration degree as $\lambda$.
For simplicity, we denote it by the same letter.
\end{rem}

We can now determine the filtration degree of the double bracket $\kappa$.
To exclude the case where $\pi$ is trivial, we assume that $(g,n) \neq (0,0)$.
The first result is as follows:

\begin{prop}  \label{prop:degree_kappa}
Under the weight filtration on $\K \pi$, the filtration degree
of the double bracket $\kappa$ is greater than or equal to $(-2)$.
\end{prop}

\begin{proof}
It is convenient to use the capping trick of Remark~\ref{rem:capping_trick} and to compute the filtration degree of the double bracket on $\K\overline{\pi}$.
Since both the double bracket and the weight filtration are functorial, the result applies to the double bracket on $\K\pi$.

The weight filtration on $\K\overline{\pi}$ is given by $\K\overline{\pi}(m) = (I\overline{\pi})^m$.
Letting all $u_i,v_j \in I\overline{\pi}$ in Corollary~\ref{cor:kuv}, we see that $\kappa(\K\overline{\pi}^{\otimes 2}(m)) \subset \K\overline{\pi}^{\otimes 2}(m-2)$, as required.
\end{proof}

For all $m \in \mathbb{Z}$, we have the induced map
$$
\kappa_\gr \colon \K\pi^{\otimes 2}(m)/\K\pi^{\otimes 2}(m+1) \to \K\pi^{\otimes 2}(m-2)/\K\pi^{\otimes 2}(m-1)
$$
(the maps for $m \le 1$ are trivial).
Under the canonical isomorphism ${\rm gr}^{\rm wt}\, \K \pi \cong \widehat{T}({\rm gr}^{\rm wt}\, H) = A$, these maps comprise a graded double bracket on $A$:
\[
\kappa_\gr \colon A \otimes A
\to A \otimes A.
\]
The next lemma shows (among other things) that this map is non-vanishing.

\begin{lem}
\label{lem:kgruv}
For any $a,b\in \gr^{\rm wt}\, H$,
\begin{equation*}  
\kappa_\gr(a,b)=\langle a,b \rangle(1\otimes 1)
+ \mathfrak{z}(a,b)\otimes 1 - 1\otimes \mathfrak{z}(a,b).
\end{equation*}
On generators, non-vanishing graded double brackets are as follows:
\begin{equation*}
\kappa_{\rm gr}(x_i, y_i) = - \kappa_{\rm gr}(y_i, x_i) =1\otimes 1, \hspace {1em}
\kappa_{\rm gr}(z_j, z_j) = z_j \otimes 1 - 1 \otimes z_j.
\end{equation*}
\end{lem}

\begin{proof}
Let $a, b \in {\rm gr}^{\rm wt}\, H$ be elements of the basis $\{ x_i, y_i, z_j\}$.
Then, we can check the equation by using Proposition
\ref{prop:kapvalue}. For example, we compute
\[
\kappa(\alpha_i-1,\beta_i-1)
= \beta_i \otimes \alpha_i \equiv_1 1\otimes 1.
\]
This proves $\kappa_\gr(x_i,y_i)=1\otimes 1$.
Let us give one more example:
\begin{align*}
\kappa(\gamma_j-1,\gamma_j-1)
& = \gamma_j \otimes \gamma_j - 1\otimes {\gamma_j}^2 \\
& = (\gamma_j-1)\otimes \gamma_j - 1\otimes (\gamma_j-1)-1\otimes (\gamma_j-1)^2 \\
& \equiv_3 (\gamma_j-1)\otimes 1 - 1\otimes (\gamma_j-1).
\end{align*}
This proves $\kappa_\gr(z_j,z_j)=z_j\otimes 1 - 1\otimes z_j$.
The other cases can be checked similarly.
\end{proof}

Since $\kappa_\gr$ is non-vanishing, we conclude that the filtration degree of $\kappa$ is $(-2)$ indeed and that $\kappa_\gr$ is the associated graded map of $\kappa$.
The following proposition gives an explicit formula for $\kappa_\gr$ on any pair of monomials:

\begin{prop}
\label{prop:kgr}
Let $a=a_1\cdots a_l$, $b=b_1\cdots b_m\in A$ with $a_i,b_j \in \gr^{\rm wt}\, H$.
Then,
\begin{align*}
\kappa_\gr(a,b)
 = & 
\textstyle\sum_{i,j} \langle a_i,b_j \rangle\,
b_1\cdots b_{j-1}a_{i+1}\cdots a_l \otimes a_1\cdots a_{i-1}b_{j+1}\cdots b_m \\
 & + \textstyle\sum_{i,j} b_1\cdots b_{j-1} \, \mathfrak{z}(b_j,a_i)\, a_{i+1}\cdots a_l \otimes a_1\cdots a_{i-1}b_{j+1}\cdots b_m \\
 & -\textstyle\sum_{i,j} b_1\cdots b_{j-1}a_{i+1}\cdots a_l \otimes a_1\cdots a_{i-1} \, \mathfrak{z}(a_i,b_j)\, b_{j+1}\cdots b_m.
\end{align*}
\end{prop}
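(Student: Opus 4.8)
The plan is to derive the formula purely from two facts already established: the Leibniz-type expansion of a double bracket over products (Corollary \ref{cor:kuv}) and the closed form of $\kappa_\gr$ on generators (Lemma \ref{lem:kgruv}). The key observation is that $\kappa_\gr$ is itself a double bracket on $\widehat{T}(\gr\, H)$, being the associated graded of the double bracket $\kappa$ as noted after Proposition \ref{prop:wekmu}; hence the identity of Corollary \ref{cor:kuv}, which follows formally from the derivation axioms \eqref{eq:a,bc} and \eqref{eq:ab,c}, holds verbatim with $\kappa$ replaced by $\kappa_\gr$. The first step is therefore to write
\[
\kappa_\gr(u,v) = \sum_{i,j} (v_1\cdots v_{j-1} \otimes u_1\cdots u_{i-1})\, \kappa_\gr(u_i,v_j)\, (u_{i+1}\cdots u_l \otimes v_{j+1}\cdots v_m),
\]
which reduces the computation entirely to the generator values $\kappa_\gr(u_i,v_j)$.

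Next I would insert the expression from Lemma \ref{lem:kgruv}, namely $\kappa_\gr(u_i,v_j) = \langle u_i,v_j\rangle(1\otimes 1) + \mathfrak{z}(u_i,v_j)\otimes 1 - 1\otimes \mathfrak{z}(u_i,v_j)$, into each summand and distribute the outer tensor factors using the componentwise product $(a\otimes b)(c\otimes d) = ac\otimes bd$. The scalar term $\langle u_i,v_j\rangle(1\otimes 1)$ sandwiched between the two outer factors produces $v_1\cdots v_{j-1}u_{i+1}\cdots u_l \otimes u_1\cdots u_{i-1}v_{j+1}\cdots v_m$, which after summation gives the first line of the claimed formula. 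The term $\mathfrak{z}(u_i,v_j)\otimes 1$ inserts the factor into the left tensor slot, between $v_{j-1}$ and $u_{i+1}$, yielding the second line; the term $-1\otimes \mathfrak{z}(u_i,v_j)$ inserts it into the right tensor slot, between $u_{i-1}$ and $v_{j+1}$, yielding the third line with the overall minus sign.

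There is essentially no obstacle here: the statement is a direct bookkeeping computation once the double bracket Leibniz rule and the generator values are in place. The only points requiring care are that the outer monomials attach on the correct sides of the inserted generator value — dictated by the placement of $(v_1\cdots v_{j-1}\otimes u_1\cdots u_{i-1})$ on the left and $(u_{i+1}\cdots u_l\otimes v_{j+1}\cdots v_m)$ on the right in Corollary \ref{cor:kuv} — and that the symmetry $\mathfrak{z}(u_i,v_j)=\mathfrak{z}(v_j,u_i)$ accounts for the purely cosmetic difference in how the middle factor is written in the second line versus the others.
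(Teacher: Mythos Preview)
Your proposal is correct and follows exactly the same approach as the paper, which simply states that the formula follows from Lemma \ref{lem:kgruv} together with the graded version of Corollary \ref{cor:kuv}. Your additional remarks about the double-bracket status of $\kappa_\gr$, the placement of the outer factors, and the symmetry of $\mathfrak{z}$ are all accurate and make explicit precisely the bookkeeping the paper leaves implicit.
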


\begin{proof}
This follows from the previous lemma and the graded version of Corollary \ref{cor:kuv}.
\end{proof}

The graded double bracket $\kappa_\gr$ on $A$ induces the bracket operation $\{ \cdot, \cdot \}_{\kappa_{\rm gr}}$ and the map
$$
\sigma^{\kappa_{\rm gr}} \colon |A| \to {\rm Der}(A), \hskip 0.3cm
\sigma^{\kappa_{\rm gr}}(|a|): b \mapsto
\{ |a|, b\}_{\kappa_{\rm gr}} =
\kappa_{\rm gr}(a,b)' \kappa_{\rm gr}(a,b)''.
$$
Note that $|A| \cong {\rm gr}^{\rm wt}\, |\K \pi|$ by \eqref{eq:isowithgraded_abs}.
To simplify notation, we will write $\{ \cdot, \cdot \}_{\kappa_{\rm gr}} = \{ \cdot, \cdot \}_{\rm gr}$ and $\sigma^{\kappa_{\rm gr}} = \sigma_{\rm gr}$.

Proposition \ref{prop:degree_kappa} implies that the filtration degree of the operation $\sigma$ and the Goldman bracket are at least $(-2)$.
It is $(-2)$ indeed, as the following proposition shows.

\begin{prop}
If $g \ge 1$ or $n\ge 2$, the filtration degree of $\sigma$ and the Goldman bracket are $(-2)$.
Furthermore, their associated graded maps coincide with $\sigma_{\rm gr}$ and $\{ \cdot, \cdot\}_{\rm gr}$, respectively.
\end{prop}

\begin{proof}
We have $\{ |x_i|, |y_i|\}_{\rm gr} = {\bf 1} \neq 0$ and $\{ |{z_1}^2 z_2|,  |{z_1}^2{z_2}^2| \}_{\rm gr} = |{z_1}^2z_2z_1{z_2}^2| - |{z_1}^2{z_2}^2z_1z_2| \neq 0$.
This shows that $\{ \cdot, \cdot \}_{\rm gr}$ is non-vanishing on $|A|$, so is $\sigma_{\rm gr}$.
\end{proof}

We will use the notation $[\cdot, \cdot]_{\rm gr}$ for the associated graded map of the Goldman bracket.

In later sections, we will need some algebraic results on the bracket operations $\sigma_{\rm gr}$ and $[\cdot, \cdot]_{\rm gr}$.
We collect them here.
First, the image and the kernel of the map $\sigma_{\rm gr}\colon |A| \to {\rm Der}(A)$ is characterized as follows:
\begin{prop}  \label{prop:ker_sigma_gr}
The image of the map $\sigma_{\rm gr}$ is given by
\[
{\rm im}(\sigma_{\rm gr}) = \{ u \in {\rm Der}(A) ; \text{$u(\omega) = 0$ and $u(z_j) = [z_j, u_j]$ for some $u_j \in A$} \},
\]
and the kernel is given by
$$
\ker(\sigma_{\rm gr}) = \mathbb{K} {\bf 1} \oplus \bigoplus_{j=1}^n \, |\mathbb{K}[[z_j]]_{\geq 1}|.
$$
\end{prop}

\begin{proof}
See Appendix \ref{subsec:bra_kappa_gr}.
\end{proof}

The next result is about the center of the Lie algebra $(|A|, [\cdot, \cdot]_{\rm gr})$.
Recall that the center of a Lie algebra $(\mathfrak{g}, [\cdot, \cdot])$, denoted by $Z(\mathfrak{g}, [\cdot, \cdot])$, is defined to be the set of elements $u \in \mathfrak{g}$ such that $[u,v]=0$ for any $v\in \mathfrak{g}$.
For an element $a \in A$, we denote by $|\K[[a]]_{\geq m}| \subset |A|$ the set of formal power series of the form $\sum_{k=m}^\infty c_k |a^k|$.

\begin{thm}[\cite{CBEG07}]
\label{thm:center}
The center of the Lie algebra $(|A|, [\cdot, \cdot]_{\rm gr})$ is given as follows:
\[
Z(|A|, [\cdot,\cdot]_\gr)
= \K {\bf 1} \oplus |\K[[\omega]]_{\geq 2} | \oplus \bigoplus_{j=1}^n | \K[[z_j]]_{\ge 1}|.
\]
\end{thm}
A similar result to Theorem~\ref{thm:center} has been proved in \cite{CBEG07} by using Poisson geometry of quiver varieties. In Appendix~\ref{subsec:bra_kappa_gr}, we give a purely algebraic proof of Theorem~\ref{thm:center}.

\begin{rem}
Note that by Theorem \ref{thm:center},
$
|\omega| = \sum_{i=1}^g |[x_i, y_i]| + \sum_{j=1}^n |z_j| =
\sum_{j=1}^n |z_j|
$
is also a central element of the Lie algebra $(|A|,[\cdot,\cdot]_{\rm gr})$.
\end{rem}

The description of the center $Z(|A|, [\cdot, \cdot]_{\rm gr})$ yields the following result.

\begin{prop}  \label{prop:tiDelta_center}
Let $|a| \in Z(|A|, [\cdot, \cdot]_{\rm gr})$. Then,
$$
\tilde{\Delta}(|a|) \in Z(|A|, [\cdot, \cdot]_{\rm gr}) \otimes Z(|A|, [\cdot, \cdot]_{\rm gr}).
$$
Here, $\tilde{\Delta} \colon |A| \to |A|\otimes |A|$ is the map induced from the twisted coproduct $\tilde{\Delta} = ({\rm id} \otimes \iota)\circ \Delta$ of $A$ (see also \eqref{eq:tDelta}).
\end{prop}

\begin{proof}
By Theorem \ref{thm:center}, all elements of $Z(|A|, [\cdot, \cdot]_{\rm gr})$ are linear combinations of elements of the form $|a^k|$, where $a$ is a primitive element of $A$. That is,
$
\tilde{\Delta}(a) = a \otimes 1 - 1 \otimes a
$.
This implies that
$$
\tilde{\Delta}(|a^k|) = \sum_{l=0}^k (-1)^l \frac{k!}{l!(l-k)!} \, |a^{k-l}| \otimes |a^l| \in Z(|A|, [\cdot, \cdot]_{\rm gr}) \otimes Z(|A|, [\cdot, \cdot]_{\rm gr}),
$$
as required.
\end{proof}

Finally, we have the following result on the graded Lie algebra $(|A|, [\cdot, \cdot]_{\rm gr})$ and its center.

\begin{prop}   \label{prop:inner_derivation}
Let $|a| \in |A|$ such that $[|a|, |b|]_{\rm gr}=0$ for all $b \in A$ of sufficiently high degree (that is, ${\rm deg}(b) \geq N$ for some $N$). Then, $|a| \in Z(|A|, [\cdot, \cdot]_{\rm gr})$.
\end{prop}

\begin{proof}
By assumptions, the derivation $\sigma_{\rm gr}(|a|) \colon b \mapsto \{ |a|, b\}_{\rm gr}$ has the property that 
$$
|\sigma_{\rm gr}(|a|)(b)|= [|a|, |b|]_{\rm gr} =0
$$
for all $b \in A$ of sufficiently high degree. By Theorem \ref{thm:idt} in Appendix,
this implies that $\sigma_{\rm gr}(|a|)$ is an inner derivation.
Namely, there exists some $c \in A$ such that 
$
\sigma_{\rm gr}(|a|)(b) = [c, b]
$
for any $b \in A$.
Then, for every $b \in A$ we have
$$
[|a|, |b|]_{\rm gr} = |\sigma_{\rm gr}(|a|)(b)| = |[c, b]|=0.
$$
We conclude that $|a| \in Z(|A|, [\cdot, \cdot]_{\rm gr})$, as required.
\end{proof}

\subsection{Associated graded of operations $\mu^f_r$ and $\mu^f_l$}

In this subsection, we compute the associated graded of the coaction maps $\mu^f_r$ and $\mu^f_l$.

Recall that for a given framing $f$ the rotation number ${\rm rot}^f \colon \hat{\pi}^+ \to \mathbb{Z}$ is defined on regular homotopy classes of immersed free loops on $\Sigma$. 
In general, it does not descend to the first homology $H=H_1(\Sigma, \mathbb{Z})$.
However, for homology classes $z_j$ with $j= 1, \dots, n$ there are distinguished simple embedded curves which represent them: these are the boundary components $\pa_j \Sigma$ with positive orientation.
Furthermore, note that for any standard generating system $\alpha_i, \beta_i, \gamma_j$ for $\pi$, the regular homotopy class of $|\gamma_j|$ coincides with the one of $\pa_j \Sigma$.

We define a $\K$-linear map $q^f \colon {\rm gr}^{\rm wt} \, H \to \K$ by setting
$$
q^f(x_i)=q^f(y_i)=0, \hskip 0.3cm q^f(z_j)={\rm rot}^f(\gamma_j) + 1.
$$

We can now compute the associated graded of the topological operations $\mu^f_r$ and $\mu^f_l$:

\begin{prop}
\label{prop:grmuf}
Ler $f$ be a framing on $\Sigma$. Then, the operations $\mu^f_r$ and $\mu^f_l$ are of filtration degree $(-2)$ and their associated graded $\mu^f_{r, \gr}$ and $\mu^f_{l, \gr}$ satisfy for all $a \in \gr^{\rm wt} \, H$
$$
\mu^f_{r, \gr}(a)=q^f(a)({\bf 1}\otimes 1), \hskip 0.3cm
\mu^f_{l, \gr}(a)=-q^f(a)(1 \otimes {\bf 1}).
$$
Furthermore, for any $a=a_1\cdots a_m \in A$
with $a_i\in \gr^{\rm wt}\, H$,
\begin{align*}
\mu^f_{r, \gr}(a)= & \textstyle\sum_{i=1}^m q^f(a_i)\, {\bf 1} \otimes a_1\cdots a_{i-1}a_{i+1}\cdots a_m \\
& +\textstyle\sum_{j<k} \langle a_j,a_k \rangle \,
|a_{j+1}\cdots a_{k-1}| \otimes a_1\cdots a_{j-1}a_{k+1}\cdots a_m \\
& + \textstyle\sum_{j<k}  |\mathfrak{z}(a_k,a_j) a_{j+1}\cdots a_{k-1}| \otimes a_1\cdots a_{j-1} a_{k+1}\cdots a_m \\
& -\textstyle\sum_{j<k} |a_{j+1}\cdots a_{k-1}|\otimes
a_1\cdots a_{j-1} \mathfrak{z}(a_j, a_k) a_{k+1}\cdots a_m.
\end{align*}
Finally, the map $\mu^f_{l,{\rm gr}}$ satisfies $\mu^f_{l,\gr}(a) = - {\mu^f_{r,\gr}(a)}^{\circ}$ for all $a\in A$.
\end{prop}

\begin{proof}
We first prove that the filtration degree of $\mu^f_{r}$ and $\mu^f_{l}$ are at least $(-2)$.
Using the capping trick as in the proof of Proposition~\ref{prop:degree_kappa}, the problem is reduced to the case of the coaction maps on $\K\overline{\pi}$.
Then, by Corollary~\ref{cor:muuuu} and the fact that the filtration degree of $\kappa$ is $(-2)$, we conclude that the filtration degree of $\mu^f_r$ and $\mu^f_l$ are at least $(-2)$.

We now compute its associated graded component $\mu^f_{r, \gr}$ of degree $(-2)$ in order to check whether it is non-vanishing.
Since the generators $x_i$ and $y_i$ are of degree $1$, $\mu^f_{r, \gr}(x_i)=\mu^f_{r, \gr}(y_i)=0$.
For generators $z_j$, using Theorem~\ref{thm:values_mu^f} we compute
$$
\mu^f_r(\gamma_j-1)=\mu^f_r(\gamma_j) =({\rm rot}^f(\gamma_j) +1)({\bf 1}\otimes \gamma_j)
\equiv_1 q^f(z_j)({\bf 1}\otimes 1)
$$
which implies $\mu^f_{r, {\rm gr}}(z_j)=q^f(z_j)({\bf 1}\otimes 1)$.
Corollary~\ref{cor:muuuu} descends to a product formula for $\mu^f_{r, \gr}$ of the same form, and we have
\begin{align*}
\mu^f_{r, \gr}(a_1\cdots a_m)=&
\textstyle\sum_{i=1}^m ({\bf 1}\otimes a_1\cdots a_{i-1}) \mu^f_{r, \gr}(a_i)
(1\otimes a_{i+1}\cdots a_m) \\
& +\textstyle\sum_{k=1}^m (|\cdot |\otimes {\rm id})\kappa_\gr(a_1\cdots a_{k-1},a_k)(1\otimes a_{k+1}\cdots a_m).
\end{align*}
The first term becomes $\sum_{i=1}^m q^f(a_i)\, {\bf 1} \otimes a_1\cdots a_{i-1}a_{i+1}\cdots a_m$.
By using Proposition~\ref{prop:kgr} we can compute the second term, and we obtain the formula as required.

Note that $\mu^f_{r, \gr}(z_j z_j) = 2q^f(z_j)\, {\bf 1} \otimes z_j + (|\cdot |\otimes {\rm id})\kappa_{\gr}(z_j,z_j) = 2q^f(z_j)\, {\bf 1}\otimes z_j + |z_j|\otimes 1 -{\bf 1} \otimes z_j \neq 0$ and $\mu^f_{r, \gr}(x_i y_i) = (|\cdot| \otimes {\rm id}) \kappa_\gr(x_i, y_i) = {\bf 1} \otimes 1 \neq 0$.
As $(g,n) \neq (0,0)$, this implies that
the filtration degree of $\mu^f_r$ is exactly $(-2)$.

Finally, the associated graded of equation \eqref{eq:mu&mu} implies that the operation $\mu^f_l$ is of filtration degree $(-2)$ as well and $\mu^f_{l,\gr}(a) = - {\mu^f_{r,\gr}(a)}^{\circ}$ for all $a\in A$.
\end{proof}

The operations $\mu_{r, \gr}^f$ and $\mu^f_{l, \gr}$ define the graded Lie cobracket $\delta_\gr^f$ on $|A|$. In more detail, the associated graded of equations \eqref{eq:altmu} and \eqref{eq:bul**bul} yield
\begin{equation*} 
\delta^f_{\gr}(|a|)
= \alt ({\rm id} \otimes |\cdot |)\mu^f_{r, \gr}(a)
=({\rm id} \otimes |\cdot|)\mu^f_{r, \gr}(a) + (|\cdot|\otimes {\rm id})\mu^f_{l, \gr}(a).
\end{equation*}
Note that the second term $|\gamma| \wedge {\bf 1}$ in \eqref{eq:altmu} does not contribute to the associated graded since it is of filtration degree $0$.

\begin{prop}
If $g\ge 1$ or $n\ge 2$, the map $\delta^f_{\rm gr}$ is non-vanishing.
In other words, the filtration degree of the framed Turaev cobracket $\delta^f$ is $(-2)$.
\end{prop}

\begin{proof}
Direct computation shows that $\mu^f_{r, \gr}({x_1}^2{y_1}^2x_1y_1) = {\bf 1} \otimes ((x_1y_1)^2 - {x_1}^2{y_1}^2)$, hence $\delta^f_{\rm gr}(|{x_1}^2{y_1}^2x_1y_1|) = {\bf 1} \wedge (|(x_1y_1)^2 - {x_1}^2{y_1}^2|) \neq 0$.
We also compute $\mu^f_{r, \gr}({z_1}^2 z_2) = 2q^f(z_1)\, {\bf 1} \otimes z_1z_2 + q^f(z_2)\, {\bf 1} \otimes {z_1}^2 + |z_1| \otimes z_2 - {\bf 1} \otimes z_1z_2$, and we see that $\delta^f_{\gr}(|{z_1}^2z_2|) \neq 0$.
\end{proof}

Together with the graded Lie bracket $[\cdot,\cdot]_{\rm gr}$, the map $\delta_\gr^f$ defines a structure of an involutive Lie bialgebra $(|A|, [\cdot, \cdot]_\gr, \delta_\gr^f)$.

\begin{rem}
Note that the only framing dependence of $\mu_{r, \gr}^f(a)$ is in the component ${\bf 1} \otimes A \subset |A| \otimes A$ of the image. In a similar fashion, the framing dependence $\mu^f_{l, \gr}(a)$ is in $A \otimes {\bf 1}$, and of $\delta^f_\gr(|a|)$
in ${\bf 1} \wedge |A|$.
\end{rem}

\begin{rem}
The Lie bialgebra $(|A|, [\cdot, \cdot]_\gr, \delta_\gr^f)$ coincides with the {\em necklace Lie bialgebra} \cite{BLeB, Ginzburg, Schedler} associated to the quiver with $g$ circles and $n$ edges emanating from a distinguished vertex.
\end{rem}

\subsection{Associated graded of surface embeddings} \label{subsec:grsurfemb}
In this subsection, we describe the associated graded of surface embeddings, and in particular of the capping trick.

Recall that by Proposition \ref{prop:functorial} an embedding of oriented surfaces
$i \colon \Sigma \to \overline{\Sigma}$ induces a homomorphism of group algebras together with double bracket and coaction structures. This homomorphism extends to completed group algebras:
$$
i_* \colon (\widehat{\K\pi}, \kappa, \mu^f_r, \mu^f_l) \to (\widehat{\K\bar{\pi}}, \bar{\kappa}, \mu_r^{\bar{f}}, \mu_l^{\bar{f}}).
$$
Furthermore, by taking associated graded we obtain a homomorphism of graded double brackets and coaction maps:
$$
(i_*)_\gr \colon ({\rm gr}^{\rm wt} \, \K\pi, \kappa_\gr , \mu_{r, \gr}^f, \mu^f_{l, \gr}) \to ({\rm gr}^{\rm wt} \, \K\bar{\pi}, \bar{\kappa}_\gr , \mu_{r, \gr}^{\bar{f}}, \mu^{\bar{f}}_{l, \gr}).
$$

\begin{exple} \label{ex:capping_grded}
As an illustration, we consider the capping trick map in Remark~\ref{rem:capping_trick}.
With the generating sets of $\pi$ and $\bar{\pi}$ as in \eqref{eq:capping}, the associated graded map $(i_*)_\gr \colon {\rm gr}^{\rm wt} \, \K\pi = A \to \bar{A} = {\rm gr}^{\rm wt} \, \K\bar{\pi}$ is given by equation~\eqref{eq:graded_capping}.
One can directly check that $(i_*)_\gr$ is a homomorphism of double brackets and coaction maps. For convenience of the reader, we give a couple of examples of such calculations. To simplify notation, denote $z_j=z$ and 
$\bar{x}_{g+j}=\bar{x}, \bar{y}_{g+j}=\bar{y}$. Then, we have
\begin{align*}
\kappa_{\rm gr}( (i_*)_\gr(z), (i_*)_\gr(z) ) & =  
\kappa_{\rm gr}( [\bar{x}, \bar{y}], [\bar{x}, \bar{y}] ) \\
& = [\bar{x}, \bar{y}] \otimes 1 - 1 \otimes [\bar{x}, \bar{y}] \\
& = (i_*)_\gr(z) \otimes 1 - 1 \otimes (i_*)_\gr(z) \\
& = (i_*)_\gr(\kappa_{\rm gr}( z, z)).
\end{align*}
Here we have used Lemma~\ref{lem:formulas2} (i) in the second line.
Furthermore,
$$
\mu^{\bar{f}}_{r, \gr}((i_*)_\gr(z)) = \mu^{\bar{f}}_{r, \gr}([\bar{x}, \bar{y}]) =
(|\cdot| \otimes {\rm id})(\kappa_{\rm gr}( \bar{x}, \bar{y}) - \kappa_{\rm gr}( \bar{y}, \bar{x}))=2({\bf 1} \otimes 1)=(i_*)_\gr(\mu^{f}_{r, \gr}(z)).
$$
Here we have used the facts that $\mu^{\bar{f}}_{r, \gr}(\bar{x}) = \mu^{\bar{f}}_{r, \gr}(\bar{y}) =0$, that ${\rm rot}^f(\gamma_j) =1$ (see Remark~\ref{rem:capping_trick}) and that $q^f(z_j)={\rm rot}^f(\gamma_j) +1 =2$.
Therefore, $\mu^{f}_{r, \gr}(z) = q_j ({\bf 1} \otimes 1) = 2({\bf 1} \otimes 1)$.
\end{exple}

\subsection{Mapping class groups and automorphism groups}
\label{subsec:mcgauto}

The action of the mapping class group $\mathcal{M} = \mathcal{M}(\Sigma)$ on $\K \pi$ extends to an action on the completed group algebra $\widehat{\K \pi}$ preserving the weight filtration.
Let $\mathcal{I} = \mathcal{I}(\Sigma)$ be the subgroup of $\mathcal{M}$ consisting of elements acting trivially on 
\[
\widehat{\K \pi}(1)/\widehat{\K \pi}(2) \cong \K \pi(1)/ \K \pi(2) \cong H/H^{(2)}.
\]
When $n=0$, $\mathcal{I}$ is nothing but the classical {\em Torelli group} of $\Sigma = \Sigma_{g,1}$ (that is, the kernel of the action of $\mathcal{M}$ on the first homology $H$).
When $n >0$, the group $\mathcal{I}$ is the largest Torelli group in the sense of Putman \cite{Putman08}.

The weight filtration on $\widehat{\K \pi}$ induces a decreasing filtration $\{ \mathcal{I}(k) \}_{k \ge 1}$ on the group $\mathcal{I}$.
The $k$th term of this filtration is the kernel of the action on
\[
\widehat{\K \pi}(1)/ \widehat{\K \pi}(k+1)
\cong
\K \pi(1)/ \K \pi(k+1).
\]

\begin{rem}
This filtration (in particular for the case $n=0$) is called the {\em Johnson filtration} \cite{Joh80, Joh83}.
When $n=0$, $\mathcal{I}(k)$ coincides with the kernel of the action on the $k$th nilpotent quotient of $\pi$.
\end{rem}

The filtration $\{\mathcal{I}(k) \}_{k \ge 1}$ of the group $\mathcal{I}$ gives rise to the graded Lie algebra
\[
{\rm Lie}_{\rm gr}(\mathcal{I}):= \bigoplus_{k=1}^{\infty} \mathcal{I}(k)/\mathcal{I}(k+1),
\]
where the Lie bracket is induced from the group commutator in $\mathcal{I}$.

For a given framing $f$ on $\Sigma$, we will be interested in the intersection of the Torelli group with the framed mapping class group:
\[
\mathcal{I}^f := 
\mathcal{I} \cap \mathcal{M}^f.
\]
Note that we have
\[
\mathcal{I}(2) \subset 
\mathcal{I}^f \subset
\mathcal{I}(1) = \mathcal{I}.
\]
(See e.g. \cite[\S 5]{Joh80}.)
By Proposition~\ref{prop:MCGfaction}, we have a natural group homomorphism
\begin{equation} \label{eq:Torellimap}
\mathcal{I}^f \to {\rm Aut}(\widehat{\K\pi}, H/H^{(2)}, \kappa, \mu^f_r, \mu^f_l)
\end{equation}
which is injective when $n=0$.
The filtration $\{\mathcal{I}(k) \}_{k \ge 1}$ restricts to a decreasing filtration on $\mathcal{I}^f$, and it gives rise to the graded Lie algebra
\[
{\rm Lie}_{\rm gr}(\mathcal{I}^f)=\bigoplus_{k=1}^\infty \, \mathcal{I}^f(k)/\mathcal{I}^f(k+1).
\]
Note that ${\rm Lie}_{\rm gr}(\mathcal{I})$ and ${\rm Lie}_{\rm gr}(\mathcal{I}^f)$ differ only in the degree one part.

\section{Divergence and log-Jacobian cocycles and their properties}
\label{sec:div}

In this section, we describe the non-commutative divergence and log-Jacobian cocycles and their properties. 

\subsection{Preliminaries on 1-cocycles} \label{sec:cocycles}
In this subsection, we recall definitions and general properties of 1-cocycles on Lie algebras and on groups.

We begin with the definition of Lie algebra $1$-cocycles:

\begin{dfn}
Let $\mathfrak{g}$ be a Lie algebra and $M$ a (left) $\mathfrak{g}$-module. A linear map ${\sf b}\colon \mathfrak{g} \to M$ is a {\em Lie algebra 1-cocycle} with values in $M$ if it satisfies the following property: for all $u, v \in \mathfrak{g}$,
$$
{\sf b}([u, v]) = u({\sf b}(v)) - v({\sf b}(u)).
$$
\end{dfn}

For every $m \in M$, one can define its {\em coboundary} to be the $1$-cocycle given by
\[
dm \colon \mathfrak{g} \to M,
\quad
u \mapsto u(m).
\]
The first Lie algebra cohomology $H^1(\mathfrak{g}, M)$ is defined as the quotient of the space of 1-cocycles by the space of $1$-coboundaries:
$$
H^1(\mathfrak{g}, M) =
\frac{ \{ \text{$1$-cocycles of $\mathfrak{g}$ with values in $M$} \}}{ \{ \text{$1$-coboundaries} \}}.
$$

Next we recall the definition of additive group 1-cocycles:
\begin{dfn}
Let $\mathcal{G}$ be a group and $M$ a (left) $\mathcal{G}$-module.
A map ${\sf c}\colon \mathcal{G} \to M$ is an (additive) {\em group 1-cocycle} with values in $M$ if it satisfies the following property: for all $F,G \in \mathcal{G}$,
$$
{\sf c}(FG) = {\sf c}(F) + F({\sf c}(G)).
$$
\end{dfn}
Among other things, the 1-cocycle condition implies ${\sf c}(e_{\mathcal{G}})=0$, where $e_{\mathcal{G}}$ is the unit element in $\mathcal{G}$, and 
${\sf c}(F^{-1})=-F^{-1}({\sf c}(F))$ for all $F \in \mathcal{G}$.
For every $m \in M$ one can define the {\em coboundary} of $m$ to be the $1$-cocycle defined by
$$
dm\colon \mathcal{G} \to M, \quad
F \mapsto F(m) - m.
$$
As in the case of Lie algebras, the first group cohomology of $\mathcal{G}$ with values in $M$ is defined as
$$
H^1(\mathcal{G}, M) = \frac{ \{ \text{$1$-cocycles of $\mathcal{G}$ with values in $M$} \} }{ \{ \text{$1$-coboundaries} \}}.
$$

Assume that $\mathfrak{g}$ is a (pro-)nilpotent Lie algebra. Then, the {\em Baker-Campbell-Hausdorff (BCH) series} defines a structure of a group $\mathcal{G}$ on the set $\mathfrak{g}$ with the product
$$
u * v = {\rm bch}(u, v) = 
\log (e^u e^v) = u + v + \frac{1}{2} [u, v] + \cdots.
$$
We call $\mathcal{G}$ the {\em exponentiation} of $\mathfrak{g}$ and denote $\mathcal{G} = \exp(\mathfrak{g})$.
The identity map $\mathfrak{g} \ni u \mapsto u = \exp(u) \in \mathcal{G}$ plays the role of the exponential map.
In our application, $\mathfrak{g}$ is always equipped with a complete filtration $\mathfrak{g} = \mathfrak{g}_1 \supset \mathfrak{g}_2 \supset \cdots$ indexed by positive integers and the Lie bracket on $\mathfrak{g}$ is filtration-preserving.

Let $M$ be a $\mathfrak{g}$-module equipped with a complete filtration
and assume that the Lie algebra action of $\mathfrak{g}$ on $M$ is filtration-preserving (that is, the map $\mathfrak{g} \otimes M \to M, u\otimes m \mapsto u(m)$ is filtration-preserving).
This action integrates to a group action of $\mathcal{G}$ which is given by the following formula: for $F = \exp(u)\in \mathcal{G}$ and $m\in M$,
\[
F(m) = \exp(u)(m) = \sum_{k=0}^{\infty} \frac{1}{k!} u^k(m).
\]
In what follows, all $1$-cocycles of $\mathfrak{g}$ and $\mathcal{G}$ with values in $M$ are assumed to be continuous with respect to the filtrations on $\mathfrak{g}$, $\mathcal{G}$ and $M$. 
We say that the Lie algebra 1-cocycle ${\sf b}\colon \mathfrak{g} \to M$ {\em integrates} to an additive group 1-cocycle ${\sf c}\colon \mathcal{G} \to M$ if for all $u \in \mathfrak{g}$

\begin{equation} \label{eq:cintb}
\left. \frac{d}{dt}\, {\sf c}(\exp(tu))\right|_{t=0} = {\sf b}(u).
\end{equation}
Every Lie algebra 1-cocycle ${\sf b}$ admits a unique integration.
More explicitly, this integration is given by 
\[
{\sf c}(\exp (u)) = 
\frac{e^u - 1}{u} ({\sf b}(u)) = 
\sum_{k=0}^{\infty} \frac{1}{(k+1)!}
u^k ({\sf b}(u)).
\]
Conversely, for any group $1$-cocycle ${\sf c}\colon \mathcal{G} \to M$ the map ${\sf b}\colon \mathfrak{g} \to M$ defined by \eqref{eq:cintb} is a Lie algebra $1$-cocycle, which is called the {\em differentiaion} of ${\sf c}$.
By integration and differentiation, Lie algebra $1$-cocycles and additive group $1$-cocycles correspond to each other bijectively:
\[
\left\{
\hspace{-0.5em}
\begin{array}{ll}
\text{ continuous $1$-cocycles } \\
\text{ of $\mathfrak{g}$ with values in $M$ }
\end{array}
\hspace{-0.5em}
\right\}
\cong 
\left\{
\hspace{-0.5em}
\begin{array}{ll}
\text{ continuous $1$-cocycles } \\
\text{ of $\mathcal{G}$ with values in $M$ }
\end{array}
\hspace{-0.5em}
\right\}.
\]
For more details, see Section~\ref{subsec:App_integration}.

\begin{exple} \label{eq:trivial}
Every $m \in M$ determines the Lie algebra coboundary and group coboundary both of which are denoted by $dm$.
These two Lie algebra and group $1$-cocycles correspond to each other through the above bijection.
For, the integration of the coboundary $dm \colon \mathfrak{g} \to M$ is given by $\exp(u) \mapsto \textstyle\frac{e^u - 1}{u}(dm(u)) = \textstyle\frac{e^u - 1}{u}(u(m)) = e^u (m) - u = dm(\exp(u))$.
\end{exple}

Let $\mathcal{G}$ be a group and $M$ a $\mathcal{G}$-module.
The group $\mathcal{G}$ acts on the space of additive group 1-cocycles of $\mathcal{G}$:
\begin{equation}  \label{eq:action_on_cocycles}
{\sf c}^F(G) = F({\sf c}(F^{-1} GF))={\sf c}(G) + G({\sf c}(F)) - {\sf c}(F).
\end{equation}
The first expression makes it obvious that the map 
$F\colon {\sf c} \mapsto {\sf c}^F$ defines an $\mathcal{G}$-action. The second expression shows that ${\sf c}^F$ is a 1-cocycle cohomologous to ${\sf c}$. One uses the $1$-cocycle property of ${\sf c}$ to pass from one expression to the other.
If $\mathcal{G} = \exp(\mathfrak{g})$ for a pro-nilpotent Lie algebra $\mathfrak{g}$, there is a group action of $\mathcal{G}$ on Lie algebra $1$-cocycles of $\mathfrak{g}$ obtained by differentiating \eqref{eq:action_on_cocycles}.
Actually, we will need to work with a more general setting, which is described below.

Let $\mathfrak{g}$ be a Lie algebra and $M$ a $\mathfrak{g}$-module both of which are equipped with complete filtrations
and the action of $\mathfrak{g}$ on $M$ is filtration-preserving.
Let $\mathfrak{g}_+$ be the Lie subalgebra of $\mathfrak{g}$ that consists of elements of positive filtration degree.
The Lie algebra $\mathfrak{g}_+$ is pro-nilpotent, and the adjoint action of $\mathfrak{g}_+$ on $\mathfrak{g}$ and the Lie algebra action of $\mathfrak{g}_+$ on $M$ integrate to group actions of $\mathcal{G}_+= \exp(\mathfrak{g}_+)$.
We denote by $Z^1(\mathfrak{g}, M)$ the space of continuous $1$-cocycles of $\mathfrak{g}$ with values in $M$.
The space $Z^1(\mathfrak{g}, M)$ admits an action of $\mathcal{G}_+$ given by the following formula: for any $F\in \mathcal{G}_+$, ${\sf b} \in Z^1(\mathfrak{g},M)$ and $u \in \mathfrak{g}$,
\[
{\sf b}^F(u) := F({\sf b} ({\rm Ad}_{F^{-1}} u)).
\]
\begin{prop} \label{prop:bFu}
 Let $F \in \mathcal{G}_+$ and ${\sf b}\in Z^1(\mathfrak{g},M)$.
We denote by ${\sf c}\colon \mathcal{G}_+ \to M$ the integration of the restriction of ${\sf b}$ to $\mathfrak{g}_+$.
Then, we have
\begin{equation*}
{\sf b}^F = {\sf b} + d({\sf c}(F)).
\end{equation*}
\end{prop}
\begin{proof}
See Appendix~\ref{subsec:App_integration}.
\end{proof}

We keep the assumptions on $\mathfrak{g}$ and $M$ as above.
Let ${\sf b}_1, {\sf b}_2 \colon \mathfrak{g} \to M$ be cohomologous $1$-cocycles and let $m \in M$ such that ${\sf b}_2 - {\sf b}_1 = dm$.
We denote by ${\sf c}_1, {\sf c}_2 \colon \mathcal{G}_+ \to M$ the integration of ${{\sf b}_1}|_{\mathfrak{g}_+}$ and ${{\sf b}_2}|_{\mathfrak{g}_+}$, respectively.

\begin{dfn} \label{dfn:R}
For $F\in \mathcal{G}_+$, we define the element $R_{{\sf c}_1,{\sf c}_2}(F) = R(F) \in M$ by 
\begin{equation}  \label{eq:D^f_first}
R(F):= {\sf c}_2(F) - F(m) = {\sf c}_1(F) - m.
\end{equation}
(To be more rigorous, $R(F)$ depends on $m$ as well.
For simplicity, we drop $m$ from the notation.)
\end{dfn}

Since ${\sf c}_2 - {\sf c}_1 = dm$, the equivalence of the two expressions above follows: we compute
\[
{\sf c}_2(F) - F(m) = {\sf c}_1(F) + F(m) - m - F(m) = {\sf c}_1(F) - m.
\]

The expression $R(F)$ will play an important role in our discussion of the KV problem.

\begin{prop} \label{prop:transform_R}
Keep the notations as above.
\begin{enumerate}
\item[(i)]
We have ${\sf b}_1^F - {\sf b}_2 = d(R(F))$.

\item[(ii)]
The map $R\colon \mathcal{G}_+ \to M$ has the following property:
for all $F, G \in \mathcal{G}_+$,
\begin{equation*} 
R(FG) = {\sf c}_2(F) + F(R(G)) = R(F) + F({\sf c}_1(G)).
\end{equation*}
\end{enumerate}
\end{prop}

\begin{proof}
(i) Using Proposition~\ref{prop:bFu} and ${\sf b}_2 - {\sf b}_1 = dm$, we compute
\[
{\sf b}_1^F - {\sf b}_2 = {\sf b}_1 + d( {\sf c}_1(F)) - {\sf b}_2 = d( {\sf c}_1(F) - m) = d(R(F)).
\]

(ii) For the first equality, we compute
$$
R(FG) = {\sf c}_2(FG) - F(G(m)) = {\sf c}_2(F) + F({\sf c}_2(G) - G(m)) =
{\sf c}_2(F) + F(R(G)).
$$
Similarly, for the second equality we have
$$
R(FG) = {\sf c}_1(FG) - m = {\sf c}_1(F) + F({\sf c}_1(G)) - m = R(F) + F({\sf c}_1(G)),
$$
as required.
\end{proof}

\subsection{Gradings and filtrations} \label{subsec:gradfilt}
In preparation to non-commutative calculus, we recall some facts about gradings and filtrations on free associative algebras.

Let $A= \mathbb{K}\langle\langle z_1, \dots, z_n\rangle\rangle$ be the ring of non-commutative formal power series in variables $z_1, \ldots, z_n$. Recall that it carries a Hopf algebra structure with coproduct, augmentation map and antipode given by
$$
\Delta(z_j) =z_j \otimes 1 + 1 \otimes z_j,
\quad
\varepsilon(z_j) = 0,
\quad
\iota(z_j) = - z_j.
$$
For future use, it is convenient to introduce the  twisted coproduct
\begin{equation} \label{eq:tDelta}
\tilde{\Delta} := ({\rm id} \otimes \iota) \circ \Delta \colon A \to A \otimes A.
\end{equation}
In particular, $\tilde{\Delta}(z_j) = z_j \otimes 1 - 1 \otimes z_j$.

For positive integral weights on the generators, $w_j={\rm wt}(z_j)$, $j=1, \ldots, n$, we introduce a grading and a decreasing filtration on $A$: for any $k \ge 0$, we define
\[
A_k = \{ a\in A ; {\rm wt}(a) = k \},
\quad
A_{\geq k} = \{ a \in A; {\rm wt}(a) \geq k \},
\quad
A_{> k} = \{ a \in A; {\rm wt}(a) > k \}.
\]
We have ${\rm gr}\, A = A$ as graded $\K$-algebras.
Note that this identification depends on the choice of generators $z_j$.

Let ${\rm End}(A)$ be the space of continuous endomorphisms of $A$.
We will be interested in the following two subsets of ${\rm End}(A)$: the Lie algebra ${\rm Der}(A)$ of continuous derivations of $A$ and the group ${\rm Aut}(A)$ of continuous algebra automorphisms of $A$.
If $E\in {\rm End}(A)$ satisfies $E(A_{\ge k}) \subset A_{\ge k}$ for all $k$, then one can define the associated graded map ${\rm gr}\, E \in {\rm End}({\rm gr}\, A) = {\rm End}(A)$ which satisfies ${\rm gr}\, E (A_k) \subset A_k$ for all $k$.
With this in mind, we introduce the following Lie subalgebras of ${\rm Der}(A)$:
\begin{align*}
& {\rm Der}^{\ge 0}(A) := \{ u \in {\rm Der}(A) ;
\text{$u(A_{\ge k}) \subset A_{\ge k}$ for all $k$} \}, \\
& {\rm Der}^+(A) := \{ u \in {\rm Der}^{\ge 0}(A) ; {\rm gr}\, u = 0 \}, \\
& {\rm Der}^0(A) : = \{ u \in {\rm Der}^{\ge 0}(A) ; {\rm gr}\, u = u \}.
\end{align*}
The space ${\rm Der}(A)$ becomes a graded Lie algebra with respect to the weight filtration.
For $u \in {\rm Der}^{\ge 0}(A)$, the condition ${\rm gr}\, u = 0$ (resp. ${\rm gr}\, u = u$) is equivalent to that $u$ is a derivation of positive degree (resp. of degree zero).
The Lie algebra ${\rm Der}^+(A)$ is pro-nilpotent, and we have the following semi-direct decomposition:
\[
{\rm Der}^{\ge 0}(A) \cong 
{\rm Der}^0(A) \ltimes {\rm Der}^+(A),
\quad
u \mapsto ( {\rm gr}\, u, u - {\rm gr}\, u) =: (u_0, u_+).
\]
In a similar fashion, we define the following subgroups of ${\rm Aut}(A)$:
\begin{align*}
& {\rm Aut}^{\ge 0}(A) := \{ F \in {\rm Aut}(A) ;
\text{$F(A_{\ge k}) = A_{\ge k}$ for all $k$} \}, \\
& {\rm Aut}^+(A) := \{ F \in {\rm Aut}^{\ge 0}(A) ; {\rm gr}\, F = {\rm id}_A \}, \\ 
& {\rm Aut}^0(A) := \{ F \in {\rm Aut}^{\ge 0}(A) ; {\rm gr}\, F = F \}.
\end{align*}
Note that for $F \in {\rm Aut}(A)$, 
\[
F \in {\rm Aut}^+(A)
\quad
\iff
\quad
\text{$F(z_j) \in z_j + A_{> w_j}$ for all $j = 1, \ldots, n$}.
\]
There is a semi-direct decomposition of the group ${\rm Aut}^{\ge 0}(A)$:
\begin{equation} \label{eq:Aut(w)dec}
{\rm Aut}^{\ge 0}(A) \cong
{\rm Aut}^0(A) \ltimes {\rm Aut}^+(A),
\quad
F \mapsto ({\rm gr}\, F, ({\rm gr}\, F)^{-1}F) =:(F_0, F_+).
\end{equation}

\begin{rem}
If all the weights $w_j$ are equal to each other, we have ${\rm Aut}^{\ge 0}(A) = {\rm Aut}(A)$.
\end{rem}

The positive derivations and automorphisms of $A$ correspond to each other by the exponential and logarithm maps
\begin{align*}
& \exp \colon {\rm Der}^+(A) \to {\rm Aut}^+(A), \quad
u \mapsto \exp (u) = \sum_{k=0}^{\infty} \frac{1}{k!} u^k, \\
& \log \colon {\rm Aut}^+(A) \to {\rm Der}^+(A), \quad
F \mapsto \log (F) = \sum_{k=1}^{\infty} \frac{(-1)^{k-1}}{k} (F - {\rm id}_A)^k,
\end{align*}
through which the group ${\rm Aut}^+(A)$ is identified with the exponentiation of ${\rm Der}^+(A)$.

Next, we define a graded $n$-dimensional vector space $H=\bigoplus_{j=1}^n \mathbb{K}_{w_j}$, where $\mathbb{K}_{w_j}$ stands for one dimensional linear space of weight $w_j$, and consider the following graded unital ring
$$
{\rm Mat} := A^{\rm op} \otimes A \otimes {\rm End}(H),
$$
where $A^{\rm op}$ is the opposite algebra of $A$.
Note that $A^{\rm op}$ is also a (completed) free associative algebra.
From time to time we will make use of the identity algebra anti-homomorphism
\[
A \to A^{\rm op}, \quad a \mapsto a.
\]
It intertwines the action of ${\rm Der}(A)$ and of ${\rm Aut}(A)$ on $A$ and $A^{\rm op}$, maps commutators to commutators, and induces an isomorphism $|A^{\rm op}| \cong |A|$.
Elements of ${\rm Mat}$ are regarded as $n \times n$ matrices with coefficients in $A^{\rm op} \otimes A$.
There are natural actions of the Lie algebra ${\rm Der}(A)$ and the group ${\rm Aut}(A)$ on ${\rm Mat}$.
We denote by ${\rm Mat}^{\ge 0}$, ${\rm Mat}^+$ and ${\rm Mat}^0$ the subspace of elements of non-negative degree, positive degree elements and degree zero elements, respectively. 
In terms of matrix coefficients, we have the following description:
\begin{align*}
X & \in {\rm Mat}^{\ge 0} \hspace{-6em} & \iff & \quad
\text{$X_{ij} \in (A^{\rm op}\otimes A)_{\ge w_j - w_i}$ for all $i,j$}, \\
X & \in {\rm Mat}^+ \hspace{-6em} & \iff & \quad
\text{$X_{ij} \in (A^{\rm op}\otimes A)_{> w_j - w_i}$ for all $i,j$}, \\
X & \in {\rm Mat}^0 \hspace{-6em} & \iff & \quad
\text{$X_{ij} \in (A^{\rm op}\otimes A)_{w_j - w_i}$ for all $i,j$}.
\end{align*}
In particular, any element $X \in {\rm Mat}^{\ge 0}$ satisfies $X_{ij} = 0$ if $w_j - w_i < 0$.
For every $X \in {\rm Mat}^{\ge 0}$, one can define its associated graded ${\rm gr}\, X \in {\rm Mat}^0$ by setting the $(i,j)$ entry to be the degree $w_j - w_i$ part of $X_{ij}$.

Let ${\rm Mat}^*$ be the group of invertible elements in ${\rm Mat}$.
Furthermore, we introduce the following three subgroups:
\begin{align*}
{\rm Mat}^{*, \ge 0} &:= \{ X \in {\rm Mat}^* ; X, X^{-1} \in {\rm Mat}^{\ge 0} \}, \\
{\rm Mat}^{*,+} &:= \{ X \in {\rm Mat}^{*, \ge 0} ; {\rm gr}\, X = I_n \}, \\
{\rm Mat}^{*,0} &:= \{ X \in {\rm Mat}^{*, \ge 0} ; {\rm gr}\, X = X \},
\end{align*}
where $I_n$ is the $n\times n$ identity matrix.
Note that we can also write
$$
{\rm Mat}^{*,+} = \{ I_n + Y; Y \in {\rm Mat}^+ \}
$$
and the logarithm map sends it bijectively onto ${\rm Mat}^+$:
\begin{equation} \label{eq:logbij}
\log \colon {\rm Mat}^{*,+} \to {\rm Mat}^+,
\quad
X \mapsto \sum_{k=1}^{\infty} \frac{(-1)^{k-1}}{k} (X - I_n)^k.
\end{equation}

\subsection{Non-commutative calculus}

In non-commutative differential calculus, partial derivatives are derivations with values in $A \otimes A$ (instead of $A$):
$$
\partial_j = \partial_{z_j} \colon A \to A \otimes A, \hskip 0.3cm
\partial_j z_k=\delta_{jk}(1\otimes 1).
$$
(See also \cite[Section 2.4]{CBEG07}.)
We will use the Sweedler notation: $\partial_j a=\partial'_j a \otimes \partial''_j a$.
As $\partial_j$ is a derivation, we have
\begin{equation} \label{eq:pa_jab}
\partial_j(ab) = a (\partial'_j b) \otimes \partial''_j b + \partial'_j a \otimes (\partial''_j a) b
\end{equation}
for any $a,b \in A$.
For example, $\partial_2(z_1z_2z_3)=z_1 \otimes z_3$.  For a derivation $u \in {\rm Der}(A)$ and an element  $a \in A$ we have
\begin{equation} \label{eq:u(a)}
u(a) = \sum_{j=1}^n (\partial'_j a) u(z_j) (\partial''_j a).
\end{equation}
The partial derivatives commute in the following sense:
\begin{equation} \label{eq:partial_commute}
(\partial_i \otimes 1)\partial_k = (1\otimes \partial_k) \partial_i,
\end{equation}
where the both sides are mappings of $A$ to $A^{\otimes 3}$.
In Appendix~\ref{subsec:tnc}, we collect several other formulas in non-commutative calculus that we will use.

\begin{dfn} \label{dfn:operatorD}
We define a map $D = D_z \colon {\rm End}(A) \to {\rm Mat}$ by formula
$$
D(E)_{ij} = \partial_i E(z_j).
$$
Here matrix elements $\partial_i E(z_j)$ take values in $A \otimes A$, and we understand them as elements of $A^{\rm op} \otimes A$ by using the identity anti-homomorphism
$A \to A^{\rm op}$.
\end{dfn}

Basic properties of the operator $D$ are as follows:

\begin{prop} \label{prop:DuE}
\begin{enumerate}
    \item[$(i)$] For all $u \in {\rm Der}(A)$ and $E \in {\rm End}(A)$, we have
    \[
    D(uE) = D(u)D(E) + u(D(E)).
    \]
    \item[$(ii)$] For all $F \in {\rm Aut}(A)$ and $E \in {\rm End}(A)$, we have
    $
    D(FE)=D(F) F(D(E))
    $.
    \item[$(iii)$] For all $F \in {\rm Aut}(A)$, the matrix $D(F)$ is invertible and
    $
    D(F)^{-1} = F(D(F^{-1}))
    $.
    \item[$(iv)$] For all $u, v \in {\rm Der}(A)$, we have 
\[
D([u,v])=u(D(v))-v(D(u))+[D(u), D(v)].
\]
\end{enumerate}
\end{prop}

\begin{proof}
We denote $E(z_j)=E_j(z_1, \dots, z_n)$ for $E \in {\rm End}(A)$.

(i) Using equations~\eqref{eq:pa_jab} and \eqref{eq:u(a)}, we compute
\begin{align*}
D(uE)_{ij}
& =   \partial_i u(E_j) \\ 
& = \partial_i \textstyle\sum_k (\partial'_k E_j) u(z_k) (\partial''_k E_j)  \\
& = \textstyle\sum_{k} (\partial'_i (\partial'_k E_j)) \otimes (\partial''_i (\partial'_k E_j)) u(z_k) (\partial''_k E_j) + 
\textstyle\sum_k (\partial'_k E_j \otimes 1) \partial_i u(z_k) (1 \otimes \partial''_k E_j) \\
& \hspace{1em} + \textstyle\sum_{k}  (\partial'_k E_j) u(z_k) (\partial'_i (\partial''_k E_j)) \otimes (\partial''_i (\partial''_k E_j)). 
\end{align*}
The second term of the last expression gives $\sum_k D(u)_{ik} D(E)_{kj}$.
(Note that we take products in $A^{\rm op} \otimes A$.)
Using the commutativity of the partial derivatives \eqref{eq:partial_commute}, we see that the other two terms combined give $u(D(E)_{ij}) = u ( \partial'_i(E_j) \otimes \partial''_i(E_j) )$.

(ii) For the second equation, we compute
\begin{align*}
D(FE)_{ij} 
& = \partial_i (F(E_j(z_1, \dots, z_n))) \\
& = \textstyle\sum_k F(\partial'_k E_j)(\partial_i F(z_k)) F(\partial''_k E_j) \\
& = \textstyle\sum_k D(F)_{ik} F(D(E))_{kj}.
\end{align*}
Here, we have used Example~\ref{ex:chainrule} (a non-commutative version of chain rule formula) in the second line.

(iii) Using (ii), we compute
$I_n = D({\rm id}_A)=D(FF^{-1}) = D(F)F(D(F^{-1}))$
and similarly $D(F^{-1})F^{-1}(D(F)) = I_n$.
This implies the desired result.

(iv) Using (i) twice, we obtain
\begin{align*}
D([u,v]) & = D(uv - vu) \\
& = D(u)D(v)+u(D(v)) - D(v)D(u)-v(D(u)) \\
& = u(D(v))-v(D(u))+[D(u), D(v)],
\end{align*}
as required.
This completes the proof.
\end{proof}

\begin{rem}
Proposition~\ref{prop:DuE}~(iv) admits the following interpretation. Consider the semi-direct sum of ${\rm Der}(A)$ and ${\rm Mat}$, where ${\rm Mat}$ is viewed as a Lie algebra under commutator so as we obtain a short exact sequence of Lie algebras
$$
0 \to {\rm Mat} \to {\rm Der}(A) \ltimes {\rm Mat} \to {\rm Der}(A) \to 0.
$$
Then, the map $u \mapsto (u, D(u))$ is a Lie algebra homomorphism ${\rm Der}(A) \to {\rm Der}(A) \ltimes {\rm Mat}$.
\end{rem}

Taking the weight filtration into consideration, we have the following:

\begin{prop}        \label{prop:D_weights}
For any assignment of positive integral weights $w_j = {\rm wt}(z_j)$, 
the operator $D\colon {\rm End}(A) \to {\rm Mat}$ is a map of filtration degree zero.
In particular, $D$ maps ${\rm Der}^{\ge 0}(A)$, ${\rm Der}^+(A)$ and ${\rm Der}^0(A)$ to ${\rm Mat}^{\ge 0}$, ${\rm Mat}^+$ and ${\rm Mat}^0$, respectively.
Similarly, $D$ maps ${\rm Aut}^{\ge 0}(A)$, ${\rm Aut}^+(A)$ and ${\rm Aut}^0(A)$ to ${\rm Mat}^{*,\ge 0}$, ${\rm Mat}^{*,+}$ and ${\rm Mat}^{*,0}$ respectively.
\end{prop}

\begin{proof}
Let $E \in {\rm End}(A)$ be a homogeneous element of degree $k$.
Then $E(z_j)$ is of degree $k + w_j$ and $D(E)_{ij} = \partial_i E(z_j)$ is of degree $k + (w_j - w_i)$.
This shows that the matrix $D(E)$ is of degree $k$.
\end{proof}

\subsection{Non-commutative divergence cocycle}
In this subsection, we define the non-commutative divergence cocycle.

We start with the following observation:
\begin{prop} \label{prop:trace_Mat}
    There is an isomorphism $|{\rm Mat}|\cong |A| \otimes |A|$ 
    under which the class of $X \in {\rm Mat}$ corresponds to $|\cdot |^{\otimes 2}(\sum_{j=1}^n X_{jj})$.
\end{prop}

\begin{proof}
    Recall that $|A^{\rm op}| \cong |A|$, and that $|{\rm End}(H)| \cong \mathbb{K}$ with isomorphism given by the matrix trace. This implies 
    $$
    |{\rm Mat}| = |A^{\rm op}| \otimes |A| \otimes |{\rm End}(H)|  \cong |A| \otimes |A|,
    $$
    as required.
\end{proof}

By abuse of notation, we denote the canonical projection by the trace symbol ${\rm Tr}$:
\[
{\rm Tr}\colon {\rm Mat} \to |{\rm Mat}|  \cong |A| \otimes |A|.
\]

\begin{dfn} \label{dfn:ncd}
We define the {\em non-commutative divergence} as the composition of maps ${\rm Tr}$ and $D$:
$$
{\sf Div} = {\sf Div}_z := {\rm Tr} \circ D\colon {\rm Der}(A) \to |A| \otimes |A|, \hskip 0.3cm
u \mapsto {\rm Tr}(D(u)).
$$
\end{dfn}

\begin{prop}
The divergence map ${\sf Div}$ is a Lie algebra 1-cocycle on 
${\rm Der}(A)$  with values in $|A| \otimes |A|$. It is given by the following formula:
\begin{equation} \label{eq:div_def}
{\sf Div}(u)=\sum_{j=1}^n |\partial_j u(z_j)|.
\end{equation}
\end{prop}

\begin{proof}
The $1$-cocycle property follows from Proposition~\ref{prop:DuE}~(iv) and from the fact that the map ${\rm Tr}$ vanishes on commutators:
\begin{align*}
 {\sf Div}([u,v]) & = {\rm Tr}(D([u,v])) \\
 & = {\rm Tr}(u(D(v)) - v(D(u)) +  [D(u), D(v)]) \\
 & = u({\rm Tr}(D(v))) - v({\rm Tr}(D(u))) \\
 & = u({\sf Div}(v)) - v({\sf Div}(u)).
\end{align*}
Here we have used the fact 
that the map ${\rm Tr}$ is equivariant under the natural action of ${\rm Der}(A)$ on ${\rm Mat}$ and on $|A| \otimes |A|$. 
To prove equation \eqref{eq:div_def}, we compute
$$
{\sf Div}(u) = {\rm Tr}(D(u)) = |\cdot|^{\otimes 2} \big( \sum_{j=1}^n \partial_j u(z_j) \big)
$$
by using Proposition~\ref{prop:trace_Mat}.
This completes the proof.
\end{proof}

\begin{rem}
\begin{enumerate}
    \item[(i)]
The formula \eqref{eq:div_def} shows that ${\sf Div}$ is a map of weight zero.
    \item[(ii)]
Denote by ${\rm sw}\colon |A| \otimes |A| \to |A| \otimes |A|$ the switch map 
$|a| \otimes |b| \to |b| \otimes |a|$. Then, the composition map ${\rm sw} \circ {\sf Div}$ is also a Lie algebra 1-cocycle.
\end{enumerate}
\end{rem}

\begin{exple}
    Let $u \in {\rm Der}(A)$ be a linear derivarion. That is, $u(z_j) = \sum_k u_{jk} z_k$ with 
    $u_{jk} \in \K$. In this case, the divergence is given by the matrix trace:
    $$
{\sf Div}(u) = \sum_{j=1}^n |\partial_j u(z_j)| = \sum_{j=1}^n u_{jj} ({\bf 1} \otimes {\bf 1}).
    $$
\end{exple}

\begin{exple} \label{ex:innerderivation}
Let ${\rm inn}_a := -{\rm ad}_a$ be the inner derivation with generator $a \in A$.
That is, ${\rm inn}_a(x) = [x,a]$ for all $x \in A$.
Then, we have
\[
{\sf Div}({\rm inn}_a) = (n-1)\, {\bf 1} \wedge |a|.
\]
Indeed, we compute
\begin{align*}
    \textstyle\sum_{j=1}^n | \partial_j([z_j, a]) |
    &= \textstyle\sum_{j=1}^n
    | 1\otimes a + z_j \partial'_j(a) \otimes \partial''_j(a) - \partial'_j(a) \otimes \partial''_j(a)z_j - a \otimes 1| \\
    &= n\, {\bf 1} \wedge |a|
    + \textstyle\sum_{j=1}^n | \partial'_j(a)z_j \otimes \partial''_j(a) - \partial'_j(a) \otimes z_j \partial''_j(a) |,
\end{align*}
and the second term is equal to $|a \otimes 1 - 1 \otimes a| = |a| \wedge {\bf 1}$ by Example~\ref{ex:a11a}.
Hence ${\sf Div}({\rm inn}_a) = n\, {\bf 1} \wedge |a| + |a| \wedge {\bf 1} = (n-1)\, {\bf 1} \wedge |a|$, as required.
\end{exple}

\begin{rem}
It is natural to conjecture that every weight zero 1-cocycle on ${\rm Der}(A)$ with values in $|A| \otimes |A|$ is a linear combination of the maps ${\sf Div}$ and ${\rm sw} \circ {\sf Div}$.
In graded differential geometry, the (commutative) divergence is the unique weight zero 1-cocycle on the Lie algebra of graded vector fields with values in the ring of formal power series (see \cite[Theorem 1.2]{GF} for a similar result).
In the non-commutative setting, we do not know how to prove or disprove a similar statement. We will present a different uniqueness result in Section~\ref{sec:div_unique}.
\end{rem}

\subsection{Non-commutative log-Jacobian cocycle}

In this subsection, we introduce the non-commutative log-Jacobian cocycle which integrates the non-commutative divergence.
For this purpose, we make use of the following map.

\begin{prop} \label{prop:Trlog}
The map ${\rm Tr} \circ \log \colon {\rm Mat}^{*,+} \to |A| \otimes |A|$, where $\log$ is the bijection in equation~\eqref{eq:logbij}, 
is a group homomorphism under the additive group structure on $|A| \otimes |A|$.
\end{prop}

\begin{proof}
Let $X, Y \in {\rm Mat}^{*,+}$.
Since the map ${\rm Tr}$ vanishes on commutators, we compute
\begin{align*}
{\rm Tr} \log(XY) & = {\rm Tr}({\rm bch}(\log(X), \log(Y))) \\
& = {\rm Tr} \big( \log(X) + \log(Y) + \textstyle\frac{1}{2}[\log(X), \log(Y)] + \cdots \big) \\
& = {\rm Tr} \log(X) + {\rm Tr} \log(Y),
\end{align*}
as required.
\end{proof}

Now we introduce the {\em non-commutative log-Jacobian cocycle}.

\begin{dfn} \label{dfn:J}
Define ${\sf J} = {\sf J}_z \colon {\rm Aut}^{\ge 0}(A) \to |A| \otimes |A|$ by
\[
{\sf J}(F) = F_0 \left( {\rm Tr} \log D(F_+) \right),
\]
where $F= F_0F_+$ is the decomposition of $F$ according to \eqref{eq:Aut(w)dec}.
\end{dfn}

By definition, we have ${\sf J}(F) = F_0 ({\sf J}(F_+))$ for any $F \in {\rm Aut}^{\ge 0}(A)$.

We prove some basic properties of the map ${\sf J}$.
The key lemma is the following.

\begin{lem} \label{lem:trkey}
\begin{enumerate}
\item[$(i)$]
Let $X, Y \in {\rm Mat}^{*,0}$. For all $u \in {\rm Der}(A)$ and $m \ge 1$, the matrix $X u^m(Y)$ is upper triangular: $(X u^m(Y))_{ij} = 0$ if $w_i \ge w_j$.
In particular, we have
\[
{\rm Tr} (X u^m(Y) ) = 0.
\]
\item[$(ii)$]
Let $X \in {\rm Mat}^{*,0}$ and $F \in {\rm Aut}^+(A)$.
Then, $X F(X^{-1}), F(X^{-1})X \in {\rm Mat}^{*,+}$ and
\[
{\rm Tr} \log (X F(X^{-1})) = 
{\rm Tr} \log (F(X^{-1})X) = 0.
\]
\end{enumerate}
\end{lem}

\begin{proof}
(i) Suppose that $w_i \ge w_j$. We have
\[
(X u^m(Y))_{ij} = \sum_k X_{ik}\, u^m(Y_{kj}).
\]
If $w_k < w_j$, then $w_k - w_i \le w_k - w_j < 0$ hence $X_{ik} = 0 \in (A^{\rm op} \otimes A)_{w_k - w_i} = \{ 0 \}$.
If $w_k = w_j$, then $Y_{kj} \in (A^{\rm op} \otimes A)_{w_j - w_k} = \mathbb{K}$ is scalar and $u^m(Y_{kj}) = 0$ since $u$ is a derivation.
If $w_k > w_j$, then $Y_{kj} = 0 \in (A^{\rm op} \otimes A)_{w_j - w_k} = \{ 0 \}$.
Hence, for all $k$ we have $X_{ik} u^m(Y_{kj}) = 0$ and consequently
\[
(X u^m(Y))_{ij} = 0.
\]
Taking the trace of $X u^m(Y)$ yields ${\rm Tr} (X u^m(Y) ) = 0$.

(ii)
We have $X F(X^{-1}) \in {\rm Mat}^{*,+}$ since ${\rm gr}\, (X F(X^{-1})) = X ({\rm gr}\, F (X^{-1})) = X X^{-1} = I_n$.
Similarly, $F(X^{-1}) X \in {\rm Mat}^{*,+}$.
Since ${\rm Tr} \log (F(X^{-1}) X) = {\rm Tr} \log (X F(X^{-1}))$ by the conjugation invariance property of ${\rm Tr}$, it is sufficient to compute ${\rm Tr} \log (X F(X^{-1}))$.
Setting $u:= \log F$ and $Y = X^{-1}$, we have
\[
\log ( X F(X^{-1})) = \log \big( I_n + X u(Y) + \frac{1}{2!} X u^2(Y) + \cdots \big).
\]
This is upper triangular by (i) and the fact that the product of upper triangular matrices is again upper triangular.
Therefore, we conclude that ${\rm Tr} \log (X F(X^{-1})) = 0$.
\end{proof}

\begin{prop} \label{prop:FJG}
If $F \in {\rm Aut}^0(A)$ and $G \in {\rm Aut}^+(A)$, then $F G F^{-1} \in {\rm Aut}^+(A)$ and 
\[
{\sf J}(F G F^{-1}) = F ({\sf J}(G)).
\]
\end{prop}

\begin{proof}
Since the map $F \mapsto {\rm gr}\, F$ is multiplicative, ${\rm gr}\, G = {\rm id}_A$ implies that ${\rm gr}\, (FGF^{-1}) = {\rm id}_A$.
For any $F, G \in {\rm Aut}(A)$, we compute 
\[
D(FGF^{-1}) = 
D(F) F(D(G)) FGF^{-1}(D(F)^{-1})
\]
by using Proposition~\ref{prop:DuE}~(ii)(iii).
We set
\[
X_1 : = F (D(G)),
\quad
X_2 := F G F^{-1} (D(F)^{-1}) D(F).
\]
Then $X_1, X_2 \in {\rm Mat}^{*,+}$ and $D(FGF^{-1}) = D(F) X_1X_2 D(F)^{-1}$.
We compute 
\[
{\sf J}(F G F^{-1})
= {\rm Tr} \log D(F G F^{-1} ) 
= {\rm Tr} \log( X_1 X_2) 
= {\rm Tr} \log X_1 + {\rm Tr} \log X_2. \]
Here, we have used the conjugation invariance of the map ${\rm Tr}$ and Proposition~\ref{prop:Trlog}.
By Lemma~\ref{lem:trkey}~(ii), we have ${\rm Tr} \log X_2 = 0$.
Hence
\[
{\sf J}(FGF^{-1}) = {\rm Tr} \log X_1 = F({\rm Tr} \log D(G)) = F({\sf J}(G)),
\]
as required.
\end{proof} 

\begin{rem}
The map ${\sf J}$ is invariant under the right action of ${\rm Aut}^0(A)$:
if $F \in {\rm Aut}^{\ge 0}(A)$ and $G_0 \in {\rm Aut}^0(A)$, 
\[
{\sf J}(F G_0) = {\sf J}(F).
\]
Indeed, since $F G_0 = (F_0G_0)({G_0}^{-1} F_+ G_0)$, we compute
\[
{\sf J}(F G_0 ) = (F_0G_0) {\sf J}({G_0}^{-1} F_+ G_0)
= F_0G_0{G_0}^{-1} ({\sf J}(F_+))
= F_0 ({\sf J}(F_+)) = {\sf J}(F).
\]
Here, we have used Proposition~\ref{prop:FJG} in the second equality.
\end{rem}

\begin{prop} \label{prop:JDiv}
\begin{enumerate}
\item[$(i)$] The map ${\sf J}\colon {\rm Aut}^{\ge 0}(A) \to |A| \otimes |A|$ is a group $1$-cocycle.
\item[$(ii)$]
The restriction of ${\sf J}$ to ${\rm Aut}^+(A)$ is the integration of ${\sf Div} \colon {\rm Der}^+(A) \to |A| \otimes |A|$.
\end{enumerate}
\end{prop}

\begin{proof}
(i) We will have to show that for any $F, G \in {\rm Aut}^{\ge 0}(A)$ we have
\[
{\sf J}(FG) = {\sf J}(F) + F( {\sf J}(G)).
\]

{\it Step 1.}
First, we consider the case where $F, G \in {\rm Aut}^+(A)$.
Then we compute
\begin{align*}
{\sf J}(FG) & = {\rm Tr} \log D(FG) \\
& = {\rm Tr} \log (D(F)F(D(G))) \\
& = {\rm Tr} \log D(F) + {\rm Tr} \log F(D(G)) \\
& = {\sf J}(F) + F({\sf J}(G)).
\end{align*}
Here, we have used Proposition~\ref{prop:DuE} (ii) in the second line and Proposition~\ref{prop:Trlog} in the third line.

{\it Step 2.}
We next consider the general case.
Since $F = F_0 F_+$ and $G = G_0 G_+$, we have $FG = (F_0G_0) ({G_0}^{-1} F_+ G_0 G_+)$.
Hence
\begin{align*}
{\sf J}(FG) & = (F_0G_0) \left( {\sf J}({G_0}^{-1}F_+G_0 G_+) \right) \\
&= (F_0G_0) \left( {\sf J}({G_0}^{-1}F_+G_0) + {G_0}^{-1}F_+G_0 ({\sf J}(G_+)) \right) \\
&= (F_0G_0) \left( {G_0}^{-1} {\sf J}(F_+) \right) + F_0F_+ ( G_0 {\sf J}(G_+)) \\
& = {\sf J}(F) + F({\sf J}(G)).
\end{align*}
Here, we have used Step 1 in the second line and Proposition \ref{prop:FJG} in the third line. 
This completes the proof.

(ii) For $u \in {\rm Der}^+(A)$, we compute
\begin{align*}
\left. \textstyle\frac{d}{dt} \, {\sf J}(\exp(tu)) \right|_{t=0} 
& = \left. \textstyle\frac{d}{dt} \, {\rm Tr} \log(D(\exp(tu))) \right|_{t=0} \\
& = \left. \textstyle\frac{d}{dt} \, {\rm Tr} \log(I_n + t D(u) + O(t^2)) \right|_{t=0} \\
& = {\rm Tr}(D(u)) = {\sf Div}(u).
\end{align*}
This completes the proof.
\end{proof}

\begin{rem}
In formal (commutative) geometry, one defines the multiplicative Jacobian group 1-cocycle by formula ${\sf J}_{\rm mult}(F)={\rm det}(D(F))$, where $D(F)_{ij}=\partial_i F(z_j)$ is the matrix representing the derivative of the map $F$. Whenever $D(F)$ admits a logarithm, one can define an additive group 1-cocycle
$$
{\sf J}_{\rm add}(F)=\log({\sf J}_{\rm mult}(F)) = \log({\rm det}(D(F))) = 
{\rm Tr}(\log(D(F))).
$$
This observation motivates our definition of the map ${\sf J}$. The notion of trace admits a straightforward generalization to non-commutative context while extending the notion of determinant would require some rule for ordering matrix elements. This is the reason for us to use the additive log-Jacobian cocycle ${\sf J}_{\rm add}$ instead of the multiplicative Jacobian cocycle ${\sf J}_{\rm mult}$.
\end{rem}

\begin{prop}  \label{prop:change_variables_Div}
For all $F \in {\rm Aut}^{\ge 0}(A)$ and $u \in {\rm Der}(A)$, we have
\begin{equation*} 
F({\sf Div}_z({\rm Ad}_{F^{-1}} u)) =
{\sf Div}_z(u) + u({\sf J}_z(F)).
\end{equation*}
\end{prop}

\begin{rem}
If $F \in {\rm Aut}^+(A)$, this equation follows from Proposition~\ref{prop:bFu}.
We will show that it actually holds true for all $F \in {\rm Aut}^{\ge 0}(A)$.
\end{rem}
 
\begin{proof}[Proof of Proposition \ref{prop:change_variables_Div}]
We first compute by Proposition~\ref{prop:DuE}
\begin{equation} \label{eq:FDAd}
F(D({\rm Ad}_{F^{-1}}u)) = D(F)^{-1} D(u) D(F) + D(F)^{-1} u(D(F)).
\end{equation}
We will proceed with the following three steps.

{\it Step 1.}
Assume that $F\in {\rm Aut}^+(A)$.
By taking the trace of \eqref{eq:FDAd}, we have
\begin{align*}
F({\sf Div}({\rm Ad}_{F^{-1}}u)) &= 
{\rm Tr} \big( D(F)^{-1} D(u) D(F) + D(F)^{-1} u(D(F)) \big)  \\
& = {\rm Tr} (D(u)) + {\rm Tr} (D(F)^{-1} u(D(F))) \\
& = {\sf Div}(u) + u ({\rm Tr} \log D(F) ) \\
& = {\sf Div}(u) + u ({\sf J}(F)).
\end{align*}
Here, we have used Example \ref{ex:ulog} in the third line.

{\it Step 2.}
Assume that $F \in {\rm Aut}^0(A)$.
We show that 
\[
F({\sf Div}_z({\rm Ad}_{F^{-1}} u))
= {\sf Div}_z(u).
\]
Since $D(F) \in {\rm Mat}^{*,0}$ by Proposition~\ref{prop:D_weights}, Lemma~\ref{lem:trkey}~(i) implies that ${\rm Tr}(D(F)^{-1} u(D(F))) = 0$.
Hence taking the trace of the both sides of \eqref{eq:FDAd} gives the result.

{\it Step 3.}
Finally we consider the general case. 
Denote $F = F_0F_+$.
Then
\begin{align*}
F({\sf Div}_z({\rm Ad}_{F^{-1}} u))
&=
F_0F_+({\sf Div}_z ( {\rm Ad}_{{F_+}^{-1}} {\rm Ad}_{{F_0}^{-1}} u)) \\
& = F_0 \left( {\sf Div}_z( {\rm Ad}_{{F_0}^{-1}} u) + ({\rm Ad}_{{F_0}^{-1}} u) ({\sf J}(F_+)) \right) \\
& = {\sf Div}_z (u) + u (F_0({\sf J}(F_+))) \\
&= {\sf Div}_z (u) + u({\sf J}(F)).  
\end{align*}
Here we have used Step 1 in the second line and Step 2 in the third line.
This completes the proof.
\end{proof}

\subsection{Change of variables and examples}
\label{subsec:change_variables}

The divergence and log-Jacobian cocycles defined in the previous subsections depend on the choice of generators of $A$.
Now we fix $F \in {\rm Aut}^{\ge 0}(A)$, and define the new set of generators $Z_j=F(z_j)$. We let the weights of $Z_j$ be equal to the weights of $z_j$. We will denote the map $D$, the divergence and log-Jacobian cocycles corresponding to the generating sets $z_1, \dots, z_n$ and $Z_1, \dots, Z_n$ by $D_z, {\sf Div}_z, {\sf J}_z$ and $D_Z, {\sf Div}_Z, {\sf J}_Z$, respectively.

The following statement gives the transformation formula of the non-commutative log-Jacobian and divergence cocycles under the change of variables.

\begin{prop}  \label{prop:change_variables_unified}
\begin{enumerate}   
\item[$(i)$]
For $E \in \mathrm{End}(A)$, we have $D_Z(E) = F(D_z(F^{-1}EF))$.
\item[$(ii)$]
As in \eqref{eq:Aut(w)dec}, 
we decompose $G = G_{0,z}G_{+,z}$ and  $G = G_{0,Z}G_{+,Z}$ with respect to the generators $z_j$ and $Z_j$, respectively, for $G \in {\rm Aut}^{\ge 0}(A)$. Then we have 
$G_{0,Z} = F(F^{-1}GF)_{0,z}F^{-1}$ and $G_{+,Z} = F(F^{-1}GF)_{+,z}F^{-1}$.
\item[$(iii)$]
For $G \in {\rm Aut}^{\ge 0}(A)$, we have
$$
{\sf J}_Z(G)=F{\sf J}_z(F^{-1}GF) ={\sf J}_z(G) +G({\sf J}_z(F)) - {\sf J}_z(F). 
$$
\item[$(iv)$]
For $u \in {\rm Der}(A)$, we have 
\begin{equation*} 
 {\sf Div}_Z(u) = F({\sf Div}_z({\rm Ad}_{F^{-1}} u)) =
 {\sf Div}_z(u) + u({\sf J}_z(F)).
 \end{equation*}
\end{enumerate}
\end{prop}
\begin{proof}
(i) Denote $E(z_j)=E_j(z_1, \dots, z_n)$, $E(Z_j)=\tilde{E}_j(Z_1, \dots, Z_n)$ and compute
$$
F^{-1}EF(z_j)=F^{-1}E(Z_j)=F^{-1}(\tilde{E}_j(Z_1, \dots, Z_n))=
\tilde{E}_j(z_1, \dots, z_n).
$$
We observe that the maps $E$ and $F^{-1}EF$ are given by the same formulas in variables $Z_j$ and $z_j$, respectively. 
This implies the equality (i). 

(ii) If we take $E = G$ in the proof of (i), 
we observe that the maps $G$ and $F^{-1}GF$ are given by the same formulas in variables $Z_j$ and $z_j$, respectively. 
This implies the equalities (ii).

(iii) By (i) and (ii), we have 
\begin{align*}
{\sf J}_Z(G) &= F(F^{-1}GF)_{0,z}F^{-1} \big( \mathrm{Tr}\log D_Z(F(F^{-1}GF)_{+,z}F^{-1}) \big) \\
&= F(F^{-1}GF)_{0,z} (\mathrm{Tr}\log D_z((F^{-1}GF)_{+,z})) \\
&= F{\sf J}_z(F^{-1}GF).
\end{align*}
The second equality follows from the $1$-cocycle property of ${\sf J}_z$ (Proposition \ref{prop:JDiv} (i)).

(iv) Similarly we have 
\begin{align*}
{\sf Div}_Z(u) &= \mathrm{Tr}(D_Z(u)) = \mathrm{Tr}(FD_z(F^{-1}uF)) 
= F \big( \mathrm{Tr}(D_z(F^{-1}uF))\big) \\
& = F({\sf Div}_z(F^{-1}uF)) = F({\sf Div}_z({\rm Ad}_{F^{-1}} u)).
\end{align*}
The second equality follows from Proposition~\ref{prop:change_variables_Div}. 
\end{proof}

\begin{rem}   \label{rem:any_weights}
\begin{enumerate}
\item[(i)]
Note that Proposition \ref{prop:change_variables_unified} applies to any choice of weights $w_j$. In particular, if one chooses $w_j=1$ for all $j$, then ${\rm Aut}^{\ge 0}(A)$ is the group of automorphisms of $A$ preserving the filtration by polynomial degree. 

\item[(ii)]
By Propositions~\ref{prop:change_variables_unified} (iii)(iv), the cohomology classes of divergence and log-Jacobian cocycles are independent of the choice of generators of $A$. 
\end{enumerate}
\end{rem}

It is instructive to consider the case of $n=1$ in the non-commutative context. We use a shorthand notation $z_1 = z$.

\begin{prop} \label{prop:Div_J_n=1}
For $n=1$, let $u \in {\rm Der}(A)$ and $F\in {\rm Aut}^+(A)$. Then,
$$
{\sf Div}(u) =  \left| \frac{u(z)\otimes 1 - 1 \otimes u(z)}{z\otimes 1 - 1 \otimes z} \right|, \hskip 0.3cm
{\sf J}(F)=\left| \log\left( \frac{F(z) \otimes 1 - 1 \otimes F(z)}{z\otimes 1 - 1 \otimes z} \right)        \right|.
$$
\end{prop}

\begin{proof}
Note that 
$$
\partial_z (z^k)
= \sum_{l=0}^{k-1} z^l \otimes z^{k-l-1}=
\frac{z^k\otimes 1 - 1 \otimes z^k}{z\otimes 1 - 1 \otimes z}.
$$
This implies that 
\[
D(E) = \frac{E(z) \otimes 1 - 1 \otimes E(z)}{z\otimes 1 - 1 \otimes z}
\]
for any ${\rm End}(E)$.
From this, we obtain the desired formulas.
\end{proof}

\begin{exple}   \label{ex:Z=e^z-1}
For $Z=F(z) =e^z-1$, we obtain
$$
{\sf Div}_Z(u) = {\sf Div}_z(u) + u \left| \log\left( \frac{e^{z\otimes 1} - e^{1 \otimes z}}{z\otimes 1 - 1 \otimes z} \right) \right| .
$$
Note that we can treat $z\otimes 1$ and $1 \otimes z$ as two independent commuting variables. This allows us to rewrite the log-Jacobian on the right hand side as follows:
$$
\left| \log\left( \frac{e^{z\otimes 1} - e^{1 \otimes z}}{z\otimes 1 - 1 \otimes z} \right) \right| =
{\bf 1} \otimes |z| + \left| \log\left( \frac{e^{(z\otimes 1 - 1 \otimes z)} - 1}{z\otimes 1 - 1 \otimes z} \right) \right|= {\bf 1} \otimes |z| + |\tilde{\Delta}(r(z))|,
$$
where $\tilde{\Delta}$ is the twisted coproduct \eqref{eq:tDelta} and
\begin{equation}      \label{eq:r(s)}
r(x)=\log\left( \frac{e^x - 1}{x} \right)= \frac{x}{2} + \sum_{k=2}^\infty \frac{B_k}{k \cdot k!} \, x^k
= \frac{x}{2} + \frac{x^2}{24} - \frac{x^4}{2880} + \frac{x^6}{181440} + \cdots.
\end{equation}
Here $B_k$'s are the Bernoulli numbers.
\end{exple}

\subsection{Uniqueness of divergence cocycles on group algebras} \label{sec:div_unique}

Let $\Gamma$ be a free group of finite rank.
In this subsection, we define divergence cocycles on derivations of the group algebra of $\Gamma$.
We use them to define a family of 1-cocycles on the group ${\rm Aut}(\Gamma)$ of automorphisms of $\Gamma$ cohomologous to a generator of the twisted cohomology group $H^1({\rm Aut}(\Gamma), H_1(\Gamma,\mathbb{Z}))$.
We will see that the divergence cocycle on the derivations of the group algebra of $\Gamma$ is (essentially) unique.

As in Remark~\ref{rem:Magnus}, for a given choice of free generators $\gamma_1, \dots, \gamma_n$ of $\Gamma$ there is a Magnus map ({\em Magnus expansion}) which injects the group algebra of $\Gamma$ into  a (degree completed) free associative algebra:
$$
\theta_M \colon \mathbb{K} \Gamma \to \mathbb{K}\langle\langle Z_1, \dots, Z_n\rangle\rangle, \hskip 0.3cm
\gamma_j \mapsto 1 + Z_j.
$$
In addition, it is useful to introduce the exponential map ({\em exponential expansion}):
$$
\theta_{\rm exp} \colon \mathbb{K} \Gamma \to \mathbb{K}\langle\langle z_1, \dots, z_n\rangle\rangle, \hskip 0.3cm \gamma_j \mapsto e^{z_j}.
$$
Both the Magnus and the exponential expansions induce algebra isomorphisms between the (degree completed) free associative algebra and the completed group algebra:
$$
\mathbb{K}\langle\langle Z_1, \dots, Z_n\rangle\rangle
\xleftarrow[\hspace{0.5em} \cong \hspace{0.5em}]{\theta_M}
\widehat{\mathbb{K} \Gamma}
\xrightarrow[\hspace{0.5em} \cong \hspace{0.5em}]{\theta_{\exp}}
\mathbb{K}\langle\langle z_1, \dots, z_n\rangle\rangle.
$$
The Magnus generators $Z_j$ are related to the exponential generators $z_j$ by formula
$$
Z_j = e^{z_j} -1.
$$

Now we introduce $1$-cocycles on derivations of $\K \Gamma$.
First, partial derivatives $\partial_{Z_j}$ with respect to the $Z$-variables are naturally defined on the completed group algebra $\widehat{\mathbb{K}\Gamma}$. They restrict to $\mathbb{K} \Gamma$:
$$
\partial_{Z_j} (\gamma_k) = \delta_{jk}(1 \otimes 1), \hskip 0.3cm
\partial_{Z_j} (\gamma_k^{-1}) = - \delta_{jk}(\gamma_k^{-1} \otimes \gamma_k^{-1}).
$$
Hence, the divergence cocycle ${\sf Div}_Z$ also restricts to ${\rm Der}(\mathbb{K} \Gamma)$ and takes the form
\begin{equation} \label{eq:Mag_div}
{\sf Div}_Z(u)= \sum_{j=1}^n |\partial_{Z_j} u(Z_j)| = \sum_{j=1}^n |\partial_{Z_j} u(\gamma_j)|.
\end{equation}

Next, for each $1 \le j \le n$,  we consider the map ${\sf e}_j \colon {\rm Der}(\mathbb{K} \Gamma) \to |\mathbb{K} \Gamma|$ given by formula
$$
{\sf e}_j(u) = |u(\gamma_j) \gamma_j^{-1}|.
$$
\begin{prop} \label{prop:eju}
    \begin{enumerate}
        \item[$(i)$] The map ${\sf e}_j$ is a Lie algebra $1$-cocycle.
        \item[$(ii)$] The extension of ${\sf e}_j$ to ${\rm Der}(\widehat{\K \Gamma})$ is the coboundary of $|z_j|$: that is, it holds that ${\sf e}_j(u) = u(|z_j|)$ for any $u\in {\rm Der}(\widehat{\K \Gamma})$.
    \end{enumerate}
\end{prop}
\begin{proof}
(i)
Let $u, v \in {\rm Der}(\K\Gamma)$.
We compute
\[
u({\sf e}_j(v))
= u(|v(\gamma_j) \gamma_j^{-1}|)
= |u(v(\gamma_j)) \gamma_j^{-1}| - |v(\gamma_j)\gamma_j^{-1}u(\gamma_j)\gamma_j^{-1}|.
\]
The second term is symmetric in $u$ and $v$.
Hence,
\[
u({\sf e}_j(v)) - v({\sf e}_j(u)) = |u(v(\gamma_j))\gamma_j^{-1}| - |v(u(\gamma_j))\gamma_j^{-1}|
= |[u,v](\gamma_j) \gamma_j^{-1}|
= {\sf e}_j([u,v]).
\]

(ii) Let $u \in {\rm Der}(\widehat{\K\Gamma})$.
We compute
$$
{\sf e}_j(u)=|u(\gamma_j)\gamma_j^{-1}| = |u(e^{z_j})e^{-z_j}| = |u(z_j)e^{z_j}e^{-z_j}|=
|u(z_j)|=u(|z_j|).
$$
Here we have used Lemma~\ref{lem:ufaga} in the third equality.
\end{proof}

Finally, we introduce the main object of this section.
To a free generating set $\gamma=\{ \gamma_1, \dots, \gamma_n\}$ of $\Gamma$ we associate a Lie algebra 1-cocycle on ${\rm Der}(\mathbb{K} \Gamma)$ with values in $|\mathbb{K} \Gamma| \otimes |\mathbb{K} \Gamma|$ given by formula
\begin{equation} \label{dfn:c_gamma}
    {\sf Div}_\gamma(u)={\sf Div}_Z(u) - {\bf 1} \otimes \sum_{j=1}^n {\sf e}_j(u).
\end{equation}
The map ${\sf Div}_\gamma$ naturally extends to a 1-cocycle ${\rm Der}(\widehat{ \mathbb{K}\Gamma}) \to |\widehat{\mathbb{K}\Gamma}| \otimes |\widehat{\mathbb{K}\Gamma}|$ that we denote in the same way.

The extended map ${\sf Div}_{\gamma}$ can also be expressed in terms of the divergence in the exponential generators:
\begin{prop}  \label{prop:c_change_variables}
For any $u\in {\rm Der}(\widehat{\K \Gamma})$, we have
\begin{equation*} 
{\sf Div}_\gamma(u) = {\sf Div}_z(u) + u \big( |\tilde{\Delta}(\sum_{j=1}^n r(z_j))| \big),
\end{equation*}
where $r(x)$ is given by formula~\eqref{eq:r(s)}.
\end{prop}
\begin{proof}
Using Proposition~\ref{prop:eju}~(ii) and the calculation of Example~\ref{ex:Z=e^z-1}, we obtain
\begin{align*}
{\sf Div}_\gamma(u) &=  {\sf Div}_Z(u) - {\bf 1} \otimes \textstyle\sum_j u(|z_j|) \\
&=  {\sf Div}_z(u) + \textstyle\sum_j u \big( {\bf 1} \otimes |z_j| + |\tilde{\Delta}(r(z_j))| \big)
- {\bf 1} \otimes \textstyle\sum_j u(|z_j|)\\
&= {\sf Div}_z(u) + u(|\tilde{\Delta}(\textstyle\sum_j r(z_j)|),
\end{align*}
as required.
\end{proof}

Consider the group homomorphism $\Gamma \to |\widehat{\K \Gamma}|, \gamma \mapsto |\log \gamma|$, where $|\widehat{\K \Gamma}|$ is viewed as an abelian group under addition. The group homomorphism property follows from
\[
|\log (\gamma \gamma')| = |{\rm bch}(\log \gamma, \log \gamma')| =
|\log \gamma| + |\log \gamma'|,
\]
where we have used that commutators vanish under the $|\cdot |$-sign.

This group homomorphism factors through the natural map 
$\Gamma \to H_1(\Gamma, \mathbb{Z}) \cong \mathbb{Z}^n, \gamma \mapsto [\gamma]$. The induced linear map 
$H_1(\Gamma, \mathbb{Z}) \to |\widehat{\K\Gamma}|, [\gamma] \mapsto |\log \gamma|$ is in fact an isomorphism onto its image.
Indeed, by composing with $\theta_{\exp} \colon |\widehat{\K\Gamma}| \to |\K \langle \langle z_1, \ldots, z_n \rangle \rangle|$, one obtains an injection from $H_1(\Gamma, \mathbb{Z})$ to the linear part of $|\K \langle \langle z_1, \ldots, z_n \rangle \rangle|$:
\[
H_1(\Gamma, \mathbb{Z}) \cong \bigoplus_{j=1}^n \mathbb{Z} [\gamma_j] \to \bigoplus_{j=1}^n \K |z_j| \subset |\K \langle \langle z_1, \ldots, z_n \rangle \rangle|,
\quad [\gamma_j] \mapsto |\log \gamma_j| \mapsto |z_j|.
\]
Hence the image
\[
\Lambda := \{ |\log \gamma| \in |\widehat{\K \Gamma}| ; \gamma \in \Gamma \}
\]
is a lattice in $|\widehat{\K \Gamma}|$ naturally isomorphic to $H_1(\Gamma, \mathbb{Z})$.
We will often identify  elements of $H_1(\Gamma, \mathbb{Z})$ and their images in $\Lambda$.
We will also be interested in the image of $\Lambda$ under the twisted coproduct $\tilde{\Delta}$,
$$
\tilde{\Delta}(\Lambda)=\{ |\log \gamma| \otimes {\bf 1} - {\bf 1}\otimes |\log \gamma| ; \gamma \in \Gamma \} \subset |\widehat{\K\Gamma}| \otimes |\widehat{\K\Gamma}|.
$$

The main result of this subsection is the following theorem:

\begin{thm}  \label{thm:div_independent}
For every generating set $\gamma=\{ \gamma_1, \dots, \gamma_n\}$ of $\Gamma$
there is a unique additive group 1-cocycle $\mathfrak{s}_\gamma \colon {\rm Aut}(\Gamma) \to H_1(\Gamma, \mathbb{Z})$ such that for all $F \in {\rm Aut}(\Gamma)$ and $u\in {\rm Der}(\widehat{\mathbb{K}\Gamma})$ we have
$$
{\sf Div}_{F(\gamma)}(u) - {\sf Div}_\gamma(u) =u(\tilde{\Delta}(\mathfrak{s}_\gamma(F))).
$$
Furthermore, for $G \in {\rm Aut}(\Gamma)$, we have
$$
\mathfrak{s}_{G(\gamma)}(F) = \mathfrak{s}_\gamma(F) + F(\mathfrak{s}_\gamma(G)) -\mathfrak{s}_\gamma(G).
$$
\end{thm}

\begin{proof}
In what follows, we put all the weights $w_j = {\rm wt}(z_j)$ equal to $1$.
Then, the group ${\rm Aut}(\Gamma)$ naturally injects into ${\rm Aut}^{\ge 0}(\widehat{\K \Gamma})$.
We split the proof into several steps.

\vskip 0.2cm

{\em Step 1}. Recall (see e.g., \cite[\S 3.5]{MKS}) that the group of automorphisms of the free group $\Gamma$ is generated by permutations of generators, an inversion:
\begin{equation}  \label{eq:inversion}
\gamma_1 \mapsto \gamma_1^{-1},
\quad
\gamma_j \mapsto \gamma_j \,\, {\rm for} \,\, j\geq 2,
\end{equation}
and a slide move:
\begin{equation} \label{eq:slide}
\gamma_1 \mapsto \gamma_1 \gamma_2,
\quad
\gamma_j \mapsto \gamma_j \,\, {\rm for} \,\, j\geq 2.
\end{equation}

We will establish the fact that 
\begin{equation}  \label{eq:c-c_in}
{\sf Div}_{F(\gamma)} - {\sf Div}_\gamma \in d(\tilde{\Delta}(\Lambda))
\end{equation}
first on each of these generators, and then on all elements $F \in {\rm Aut}(\Gamma)$.
Observe that both the Magnus divergence ${\sf Div}_Z$ and the element $\sum_j |z_j|$ are invariant under permutations of generators $\gamma_j$. Hence, the cocycle ${\sf Div}_\gamma$ is also invariant under permutations. 

\vskip 0.2cm 

{\em Step 2}. Next, let $F \in {\rm Aut}(\Gamma)$ be the inversion  \eqref{eq:inversion}.
It gives rise to a linear transformation of the exponential generators
\begin{equation*}
z_1 \mapsto z'_1=\log(\gamma_1^{-1}) = - z_1, \hskip 0.3cm z_j \mapsto z'_j=z_j \,\,
{\rm for} \,\, j \geq 2,
\end{equation*}
which induces a map
$$
|z_1| \mapsto - |z_1|, \hskip 0.3cm |z_j| \mapsto |z_j| \,\,
{\rm for} \,\, j \geq 2.
$$
This implies
$$
\sum_{j=1}^n |z'_j| = \big( \sum_{j=1}^n |z_j| \big) - 2|z_1|.
$$
For new Magnus generators $Z'_j=F(Z_j)$, we have 
$$
Z'_1 = (1 + Z_1)^{-1} - 1 = \sum_{k=1}^\infty (-1)^k {Z_1}^k, \hskip 0.3cm
Z_j'=Z_j \,\, {\rm for} \,\, j\geq 2.
$$
The map $F$ admits a decomposition $F = F_0F_+$ of the form
\[
F_0(Z_1) = -Z_1, \quad
F_+(Z_1) = \sum_{k=1}^{\infty} {Z_1}^k, \quad
F_0(Z_j) = F_+(Z_j) = Z_j
\, \, \text{for $j\ge 2$}.
\]
Using Proposition~\ref{prop:Div_J_n=1}, we compute
\[
{\sf J}_Z(F_+) = \left| \log \left(
\frac{F_+(Z_1) \otimes 1 -1 \otimes F_+(Z_1)}{Z_1 \otimes 1 - 1 \otimes Z_1} \right) \right|
= \left| \log \left(
\frac{1}{(1 - Z_1) \otimes (1 - Z_1)} \right) \right|
\]
and
\begin{align*}
{\sf J}_Z(F) & = F_0 ({\sf J}_Z(F_+)) = \big| \log \big( {\textstyle \frac{1}{(1 + Z_1) \otimes (1 + Z_1)}} \big) \big| \\
& = \left| \log( e^{-z_1} \otimes e^{-z_1}) \right| =
-(|z_1| \otimes {\bf 1} + {\bf 1} \otimes |z_1|).
\end{align*}
Hence, by Proposition~\ref{prop:change_variables_unified}, we obtain
$$
{\sf Div}_{Z'}(u)={\sf Div}_Z(u) - u(|z_1| \otimes {\bf 1} + {\bf 1} \otimes |z_1|).
$$
Referring to the defining formula \eqref{dfn:c_gamma} of ${\sf Div}_{\gamma}$, we conclude that under the inversion
$$
{\sf Div}_{F(\gamma)}(u) - {\sf Div}_\gamma(u) = u(2({\bf 1} \otimes |z_1|) - |z_1| \otimes {\bf 1} - {\bf 1} \otimes |z_1|) = - u(\tilde{\Delta}(|z_1|)).
$$

\vskip 0.2cm

{\em Step 3}.  We now analyse the slide move  $G \in {\rm Aut}(\Gamma)$ given by equation \eqref{eq:slide}. We have the following transformation of exponential generators:
$$
z_1 \mapsto z'_1=\log(\gamma_1\gamma_2) = {\rm bch}(z_1, z_2), \hskip 0.3cm z_j \mapsto z'_j=z_j \,\,
{\rm for} \,\, j \geq 2
$$
which induces the map
$$
|z_1| \mapsto |{\rm bch}(z_1, z_2)| = |z_1| + |z_2|, \hskip 0.3cm |z_j| \mapsto |z_j| \,\,
{\rm for} \,\, j \geq 2,
$$
where we have used the fact that $| \cdot |$ vanishes on commutators. We obtain
$$
\sum_{j=1}^n |z'_j| = \big( \sum_{j=1}^n |z_j| \big) + |z_2|.
$$
For Magnus generators $Z'_j=G(Z_j)$, we compute
$$
Z'_1 = Z_1 + Z_2 + Z_1 Z_2, \hskip 0.3cm
Z_j'=Z_j \,\, {\rm for} \,\, j\geq 2.
$$
The map $G$ admits a decomposition
$G = G_0 G_+$ of the form
\[
G_0(Z_1) = Z_1 + Z_2, 
\quad
G_+(Z_1) = Z_1 + Z_1Z_2 - Z_2^2,
\quad
G_0(Z_j) = G_+(Z_j) = Z_j
\, \, \text{for $j\ge 2$}.
\]
Note that $D_{Z}(G_+) = (\partial_{Z_i} G_+(Z_j))_{ij}$ is an upper triangular matrix with diagonal entries 
$$
(1 \otimes (1+Z_2), 1\otimes 1, \dots, 1\otimes 1)=
(1 \otimes e^{z_2}, 1\otimes 1, \dots, 1\otimes 1)
$$
and hence $\log D_Z(G_+)$ has diagonal entries $(1 \otimes z_2, 0, \ldots, 0)$.
Thus ${\sf J}_Z(G_+) = {\bf 1} \otimes |z_2|$ and ${\sf J}_Z(G) = G_0 ({\sf J}_Z(G_+)) = {\bf 1} \otimes |z_2|$.
Therefore
$$
{\sf Div}_{Z'}(u)={\sf Div}_Z(u) + u({\bf 1} \otimes |z_2|)
$$
and
$$
{\sf Div}_{G(\gamma)}(u) - {\sf Div}_\gamma(u) = u( {\bf 1} \otimes |z_2| - {\bf 1} \otimes |z_2|) = 0.
$$

\vskip 0.2cm

{\em Step 4}. We will now define a map $\mathfrak{s}_\gamma \colon {\rm Aut}(\Gamma) \to H_1(\Gamma, \mathbb{Z}) \cong \Lambda$ and show that it is indeed an additive group 1-cocycle. 
First, observe that 
\begin{align*}
{\sf Div}_{F(\gamma)}(u) 
& = {\sf Div}_{F(Z)}(u) -  {\bf 1} \otimes \textstyle\sum_j u(|F(z_j)|) \\
& = F({\sf Div}_Z(F^{-1}uF) -{\bf 1} \otimes \textstyle\sum_j (F^{-1}u F)(|z_j|)) \\
& = F({\sf Div}_\gamma(F^{-1}uF)).
\end{align*}

Assume that for $F, G \in {\rm Aut}(\Gamma)$ we have
$$
{\sf Div}_{F(\gamma)}(u)-{\sf Div}_\gamma(u)=u(\tilde{\Delta}(x)), \hskip 0.3cm
{\sf Div}_{G(\gamma)}(u)-{\sf Div}_\gamma(u)=u(\tilde{\Delta}(y))
$$
for some $x, y \in \Lambda$. We compute
\begin{align}
 & {\sf Div}_{FG(\gamma)}(u) - {\sf Div}_\gamma(u) \nonumber \\
=\, & {\sf Div}_{FG(\gamma)}(u) - {\sf Div}_{F(\gamma)}(u) +{\sf Div}_{F(\gamma)}(u) - {\sf Div}_\gamma(u) \nonumber \\
=\, & F({\sf Div}_{G(\gamma)}(F^{-1}uF) -{\sf Div}_\gamma(F^{-1}uF)) + {\sf Div}_{F(\gamma)}(u) - {\sf Div}_\gamma(u) \nonumber \\
=\, & F(F^{-1}uF(\tilde{\Delta}(y))) + u(\tilde{\Delta}(x)) \nonumber \\
=\, & u(\tilde{\Delta}(x + F(y))).
\label{eq:m_cocycle}
\end{align}

Assign $\mathfrak{s}_\gamma(F)=0$ if $F$ is a permutation of the generators or the slide move, and $\mathfrak{s}_\gamma(F)=-|z_1|$ if $F$ is the inversion.
Equation \eqref{eq:m_cocycle} shows that for every $F \in {\rm Aut}(\Gamma)$ there is an element $x(F) \in \Lambda$ such that
$$
{\sf Div}_{F(\gamma)}(u)-{\sf Div}_\gamma(u)=u(\tilde{\Delta}(x(F))).
$$
Property \eqref{eq:c-c_in} has been established.
We claim that such an element is in fact unique.
Indeed, consider the derivation $\ell$ such that $\ell(z_j) = z_j$ for all $j$ (namely, $\ell$ is the Euler operator) and compute
$$
{\sf Div}_{F(\gamma)}(\ell)-{\sf Div}_\gamma(\ell)=\ell(\tilde{\Delta}(x(F))) = \tilde{\Delta}(x(F)).
$$
Since $\tilde{\Delta}$ is injective, this shows that $x(F)$ is uniquely determined by the left hand side. We can now put $\mathfrak{s}_\gamma(F)=x(F)$.
The uniqueness of $x(F)$ and equation \eqref{eq:m_cocycle} show that $\mathfrak{s}_\gamma$ is indeed an additive group 1-cocycle on ${\rm Aut}(\Gamma)$ with values in $\Lambda \cong H_1(\Gamma, \mathbb{Z})$.

\vskip 0.2cm

{\em Step 5.} Let $G \in {\rm Aut}(\Gamma)$.
For any $u \in {\rm Der}(\widehat{\K \Gamma})$, we compute
\begin{align*}
u(\tilde{\Delta}(\mathfrak{s}_{G(\gamma)}(F)) )
& = {\sf Div}_{FG(\gamma)}(u) - {\sf Div}_{G(\gamma)}(u) \\
& = ({\sf Div}_{FG(\gamma)}(u) - {\sf Div}_\gamma(u)) - ({\sf Div}_{G(\gamma)}(u) - {\sf Div}_\gamma(u)) \\
& = u(\tilde{\Delta}(\mathfrak{s}_\gamma(FG) - \mathfrak{s}_\gamma(G))) \\
& = u(\tilde{\Delta}(\mathfrak{s}_\gamma(F) + F(\mathfrak{s}_\gamma(G))- \mathfrak{s}_\gamma(G))).
\end{align*}
By choosing again $u$ to be the Euler operator $\ell$ and
using the injectivity of  $\tilde{\Delta}$, we conclude
$$
\mathfrak{s}_{G(\gamma)}(F) = \mathfrak{s}_\gamma(F) + F(\mathfrak{s}_\gamma(G)) -\mathfrak{s}_\gamma(G),
$$
as required.
\end{proof}

\begin{rem}
Theorem~\ref{thm:div_independent} shows that the cocycle ${\sf Div}_\gamma$ is independent of the generating set $\gamma$ modulo the lattice $\tilde{\Delta}(\Lambda)$. Therefore, the cocycles ${\sf Div}_\gamma$ for different generating sets $\gamma$ descend to a canonical cocycle 
$$
{\sf Div}_{\Gamma} \colon {\rm Der}(\widehat{\mathbb{K}\Gamma}) \to (|\widehat{\mathbb{K}\Gamma}| \otimes |\widehat{\mathbb{K}\Gamma}|)/\tilde{\Delta}(\Lambda).
$$
\end{rem}

\begin{rem}
In \cite{Satoh}, Satoh showed that for a free group $\Gamma$ of rank $n\ge 2$ one has 
$$
H^1({\rm Aut}(\Gamma), H_1(\Gamma, \mathbb{Z})) \cong \mathbb{Z}.
$$
The cocycle $\mathfrak{s}_\gamma$ is nontrivial, and represents a generator of this cohomology group. 
%
%
\end{rem}

\subsection{Divergence on derivations of free Lie algebras} \label{subsec:div_der_Lie}

In this subsection, we consider restriction of the non-commutative divergence cocycle to derivations of free Lie algebras.

Recall that the (degree completed) free Lie algebra $L:=L(z_1, \dots, z_n)$ can be identified with the subspace of primitive elements in $A = \K\langle \langle z_1, \ldots, z_n \rangle \rangle$ under the coproduct $\Delta$, and moreover $A$ is the universal enveloping algebra of $L$.
For future use, we introduce a version of partial derivatives
$d_j = d_{z_j}\colon A \to A$ given by
$$
d_j =({\rm id} \otimes \varepsilon) \circ \partial_j.
$$
Here and in what follows we denote by $\partial_j=\partial_{z_j}$ the partial derivatives with respect to the exponential variables.
It is easy to see that for every $a \in A$ one has
$$
a=\varepsilon(a) + \sum_{j=1}^n (d_j a)z_j.
$$

\begin{lem} \label{lem:dja=ddja}
For $a \in L$, one has
$
\partial_j a = \tilde{\Delta}(d_j a)
$.
\end{lem}

\begin{proof}
The maps $L \to A \otimes A$ defined by $a \mapsto \partial_j a$ and $a \mapsto \tilde{\Delta}(d_j a)$ satisfy the identity
$$
\mathfrak{d}([a,b]) = \mathfrak{d}(a)(1 \otimes b)+(a\otimes 1)\mathfrak{d}(b) - (b\otimes 1)\mathfrak{d}(a)-\mathfrak{d}(b)(1 \otimes a).
$$
Here, we have meant by $\mathfrak{d}$ the two maps, and used the fact that for $a \in L$ one has $\tilde{\Delta}(a)=a\otimes 1 - 1 \otimes a$, and for 
$b\in A$ one has $\tilde{\Delta}(ab)=(a \otimes 1) \tilde{\Delta}(b) - \tilde{\Delta}(b)(1 \otimes a)$.
The two maps agree on the exponential generators: $\partial_j z_k = \tilde{\Delta}(d_j z_k)=\delta_{jk} (1\otimes 1)$, and we conclude that $\partial_j a = \tilde{\Delta}(d_j a)$ for all $a \in L$.
\end{proof}

Let ${\rm Der}(L)$ be the Lie algebra of continuous derivations of $A$.
It can be identified with the Lie algebra of Hopf derivations of $A$:
\[
{\rm Der}(L) \cong {\rm Der}(A, \Delta)
= \{ u \in {\rm Der}(A); u \circ \Delta = \Delta \circ u \}.
\]
Here, any derivation $u \in {\rm Der}(A)$ acts on $A\otimes A$ by $u\otimes {\rm id} + {\rm id}\otimes u$.
We introduce a map ${\sf div}_z \colon {\rm Der}(L) \to |A|$ given by formula (see \cite{AT12}):
\begin{equation} \label{eq:div_z} 
{\sf div}_z(u) = \sum_{j=1}^n |d_j u(z_j)|.
\end{equation}
It is related to the non-commutative divergence in the following way (see \cite{genus0}):

\begin{prop}  \label{prop:Div_div}
Let $u \in {\rm Der}(L)$. Then,
\begin{equation*}
{\sf Div}_z(u) = \tilde{\Delta}({\sf div}_z(u)), 
\hskip 0.3cm
{\sf div}_z(u) = ({\rm id} \otimes \varepsilon)({\sf Div}_z(u)).
\end{equation*}
Furthermore, the map ${\sf div}_z \colon {\rm Der}(L) \to |A|$ is a Lie algebra 1-cocycle.
\end{prop}

\begin{proof}
To prove the first equation, we compute, using Lemma \ref{lem:dja=ddja},
$$
\tilde{\Delta}({\sf div}_z(u)) = \tilde{\Delta}\sum_{j=1}^n |d_j u(z_j)| =
\sum_{j=1}^n |\tilde{\Delta}(d_j u(z_j))| =\sum_{j=1}^n |\partial_j u(z_j)| = {\sf Div}_z(u).
$$
In the other direction, we have
$$
({\rm id} \otimes \varepsilon)({\sf Div}_z(u))=({\rm id} \otimes \varepsilon) \sum_{j=1}^n |\partial_j u(z_j)| = \sum_{j=1}^n |({\rm id} \otimes \varepsilon) \partial_j u(z_j)|= \sum_{j=1}^n |d_j u(z_j)| = {\sf div}_z(u),
$$
as required.
The map $({\rm id} \otimes \varepsilon) \colon |A| \otimes |A| \to |A|$ is a homomorphism of ${\rm Der}(L)$-modules. Since the non-commutative divergence is a 1-cocycle, so is the map ${\sf div}_z$.
\end{proof}

Given positive integral weights $w_j = {\rm wt}(z_j)$, the free Lie algebra $L$ is equipped with the weight filtration.
Let ${\rm Der}^+(L) = {\rm Der}(L) \cap {\rm Der}^+(A)$ be the Lie algebra of derivations of positive filtration degree.
Its exponentiation is the group ${\rm Aut}^+(L) = {\rm Aut}(L) \cap {\rm Aut}^+(A)$ of positive automorphisms of $L$.
The Lie algebra 1-cocycle ${\sf div}_z \colon {\rm Der}(L) \to |A|$ restricted to ${\rm Der}^+(L)$ integrates to a group 1-cocycle 
${\sf j}_z \colon {\rm Aut}^+(L) \to |A|$.

\begin{prop} \label{prop:J_and_j}
Let $F \in {\rm Aut}^+(L)$.
\begin{enumerate}
\item[$(i)$] We have
${\sf J}_z(F) = \tilde{\Delta}({\sf j}_z(F))$ and ${\sf j}_z(F) = ({\rm id} \otimes \varepsilon)({\sf J}_z(F))$.
\item[$(ii)$]
We have 
\[
{\sf j}_z(F) = {\rm Tr} \log \mathsf{D}(F).
\]
Here, $\mathsf{D}(F) = (d_i F(z_j))_{ij} \in {\rm Mat}_n(A^{\rm op})$ with $d_i F(z_j)$ regarded as an element of $A^{\rm op}$ through the identity algebra anti-homomorphism $A \to A^{\rm op}$.
\end{enumerate} 
\end{prop}

\begin{proof}
(i) Both $\tilde{\Delta}\colon |A| \to |A| \otimes |A|$ and $({\rm id} \otimes \varepsilon)\colon |A| \otimes |A| \to |A|$ are homomorphisms of ${\rm Aut}(L)$-modules. Hence the assertion follows from Proposition~\ref{prop:Div_div}.

(ii)
This follows from applying ${\rm id} \otimes \varepsilon$ to the defining formula of ${\sf J}(F)$ in Definition~\ref{dfn:J}.
\end{proof}

\begin{rem}
In Proposition~\ref{prop:J_and_j}, it might be a bit cumbersome to work with a matrix with entries in $A^{\rm op}$.
To solve it, introduce yet another version of partial derivatives $d^{\flat}_j := (\varepsilon \otimes {\rm id}) \circ \partial_j \colon A \to A$ and a variant of twisted coproduct $\tilde{\Delta}^{\flat} := (\iota \otimes {\rm id}) \circ \Delta$.
Then, for any $a\in L$ we have $d_i a = \iota(d_i^{\flat} a)$ and $\partial_j a = \tilde{\Delta}^{\flat}(d_j^{\flat}a)$, similarly to Lemma~\ref{lem:dja=ddja}.
Then, replacing $d_j$ with $d_j^{\flat}$, we obtain a Lie algebra $1$-cocycle ${\sf div}_z^{\flat} \colon {\rm Der}(L) \to |A|$ 
and the corresponding integration ${\sf j}_z^{\flat}\colon {\rm Aut}^+(L) \to |A|$ which satisfies ${\sf div}_z(u) = \iota({\sf div}_z^{\flat}(u))$ and ${\sf j}_z(F) = \iota ({\sf j}_z^{\flat}(F))$.
Then we obtain ${\sf j}_z^{\flat}(F) = (\varepsilon \otimes {\rm id})({\sf J}_z(F)) = {\rm Tr} \log \mathsf{D}^{\flat}(F)$, where $\mathsf{D}^{\flat}(F):= (d_i^{\flat} F(z_j))_{ij} \in {\rm Mat}_n(A)$.
Therefore another expression of ${\sf j}_z(F)$ is as follows:
\[
{\sf j}_z(F) = \iota ({\rm Tr} \log \mathsf{D}^{\flat}(F)).
\]
We have chosen $d_j$ to adjust to the convention in \cite{AT12}.
\end{rem}

\subsection{Cocycles on Hopf derivations of group algebras}

Let $\Gamma$ be a free group with $n$ free generators $\gamma_1, \dots, \gamma_n$. Recall that the group algebra $\mathbb{K}\Gamma$ and the completed group algebra $\widehat{\mathbb{K}\Gamma}$ are Hopf algebras with coproduct $\Delta(\gamma_j) = \gamma_j \otimes \gamma_j$, augmentation $\varepsilon(\gamma_j) = 1$ and antipode $\iota(\gamma_j) = \gamma_j^{-1}$.

We will consider the Lie algebra of Hopf derivations of $\widehat{\K \Gamma}$:
\[
{\rm Der}(\widehat{\mathbb{K}\Gamma}, \Delta) =
\{ u \in {\rm Der}(\widehat{\mathbb{K}\Gamma}); u \circ \Delta = \Delta \circ u \}.
\]
Here, any derivation $u \in {\rm Der}(\widehat{\K \Gamma})$ acts on $\widehat{\K \Gamma} \otimes \widehat{\K \Gamma}$ by $u\otimes {\rm id} + {\rm id} \otimes u$.
For completeness, we record the following elementary properties of Hopf derivations.
\begin{lem} \label{lem:ucommutes}
Let $u \in {\rm Der}(\widehat{\mathbb{K}\Gamma}, \Delta)$.
Then $u$ commutes with $\iota$, and $\varepsilon \circ u = 0$.
In particular, $u$ commutes with the twisted coproduct $\tilde{\Delta} = ({\rm id}\otimes \iota) \circ \Delta$:
\[
u \circ \tilde{\Delta} = \tilde{\Delta} \circ u.
\]
\end{lem}
\begin{proof}
Note that the antipode $\iota$ acts on the primitive elements in $\widehat{\K \Gamma}$ as minus the identity, and that the primitive elements are in the kernel of the augmentation $\varepsilon$.
Since the derivation $u \in {\rm Der}(\widehat{\mathbb{K}\Gamma}, \Delta)$ maps primitives to primitives, the equations $u \circ \iota = \iota \circ u$ and $\varepsilon \circ u = 0$ hold on the primitive elements.
As $u\circ \iota$, $\iota \circ u$ and $\varepsilon \circ u$ are (anti-)derivations, these equations hold true on the whole $\widehat{\K \Gamma}$ as well.
\end{proof}

Furthermore, under the exponential expansion $\theta_{\rm exp}$ as in Section~\ref{sec:div_unique}, we have an isomorphism of Hopf algebras $\widehat{\mathbb{K}\Gamma} \cong \mathbb{K}\langle\langle z_1, \dots, z_n \rangle\rangle = A$ and
$$
{\rm Der}(\widehat{\mathbb{K}\Gamma}, \Delta) \cong
{\rm Der}(L).
$$
Similar to the notation ${\sf div}_z(u)$, it is convenient to introduce the notation
$$
{\sf div}_\gamma(u) = \sum_{j=1}^n |d_{Z_j} u(Z_j)|,
$$
where $d_{Z_j} = ({\rm id} \otimes \varepsilon) \circ \partial_{Z_j}$.

\begin{prop}  \label{prop:c_b}
For each free generating system $\{ \gamma_1, \dots, \gamma_n\}$ of $\Gamma$ the map 
$$
{\sf div}_\gamma \colon {\rm Der}(\widehat{\mathbb{K}\Gamma}, \Delta) \to |\widehat{\mathbb{K}\Gamma}|
$$
is a Lie algebra 1-cocycle. Furthermore, we have 
$$
{\sf div}_\gamma(u) = {\sf div}_z(u) + u(|\sum_{j=1}^n r(z_j)|)
$$
and ${\sf Div}_\gamma(u) = \tilde{\Delta}({\sf div}_\gamma(u))$.
\end{prop}

\begin{proof}
By Proposition~\ref{prop:c_change_variables}, Proposition~\ref{prop:Div_div} and Lemma~\ref{lem:ucommutes}, we have
$$
{\sf Div}_\gamma(u) = {\sf Div}_z(u) + u(|\tilde{\Delta}( \sum_j r(z_j))|)
= \tilde{\Delta}({\sf div}_z(u) + u(\sum_j |r(z_j)|)).
$$
By applying $({\rm id} \otimes \varepsilon)$ to this equation, we get
\begin{align*}
{\sf div}_z(u) + u(\textstyle\sum_j |r(z_j)|) & =  
({\rm id} \otimes \varepsilon)(\textstyle\sum_j |\partial_{Z_j} u(Z_j)| - {\bf 1} \otimes \textstyle\sum_j u(|z_j|)) \\
& = \textstyle\sum_j |d_{Z_j} u(Z_j)| \\
& = {\sf div}_\gamma(u).
\end{align*}
Here we have used Lemma~\ref{lem:ucommutes} to compute $\varepsilon (u(|z_j|)) = 0$ and that
$({\rm id} \otimes \varepsilon) \circ \partial_{Z_j} = d_{Z_j}$.
This formula shows that the map ${\sf div}_\gamma$ differs from ${\sf div}_z$ by a $1$-coboundary.
Since ${\sf div}_z$ is a Lie algebra $1$-cocycle by Proposition \ref{prop:Div_div}, we conclude that ${\sf div}_\gamma$ is a $1$-cocycle as well.
\end{proof}

Let ${\rm Aut}^+(\widehat{\K \Gamma})$ be the group of positive automorphisms of $\widehat{\K \Gamma}$ (with respect to the exponential generators), and let ${\rm Aut}^+(\widehat{\K \Gamma}, \Delta)$ be the subgroup of ${\rm Aut}^+(\widehat{\K \Gamma})$ consisting of Hopf autormorphisms of $\widehat{\K \Gamma}$. 
Denote by ${\sf J}_\gamma \colon {\rm Aut}^+(\widehat{\mathbb{K}\Gamma}) \to |\widehat{\mathbb{K}\Gamma}| \otimes |\widehat{\mathbb{K}\Gamma}|$ and
by ${\sf j}_\gamma\colon {\rm Aut}^+(\widehat{\mathbb{K}\Gamma}, \Delta) \to |\widehat{\mathbb{K}\Gamma}|$ the group $1$-cocycles obtained by integrating ${\sf Div}_{\gamma}$ and ${\sf div}_{\gamma}$, respectively.
They are related by formula
$$
{\sf J}_\gamma(F)=\tilde{\Delta}({\sf j}_\gamma(F)).
$$

\subsection{Tangential derivations and other cocycles} \label{subse:tngder}

The content of this and the following subsections will be needed when we work with surfaces with more than one boundary component.

Let $\mathcal{T}= \{ t_1, \ldots, t_k \} \subset A$ be a finite subset of $A$. We consider the following space of {\em tangential derivations}:
$$
{\rm tDer}_{\mathcal{T}}(A) = (u, u_1, \dots u_k) \in {\rm Der}(A) \times A^k; u(t_j)=[t_j, u_j]\}.
$$
We will use the notation $\tilde{u} = (u,u_1,\ldots,u_k)$ for tangential derivations.
Note that the components $u_j$, which we call the $j$th tangential component of $\tilde{u}$, are determined by $u$ up to the centralizer of $t_j$ in $A$.
The space ${\rm tDer}_\mathcal{T}(A)$ carries a Lie bracket: for $\tilde{u} = (u,u_1,\ldots,u_k)$ and $\tilde{v} = (v,v_1,\ldots,v_k)$,
$$
[\tilde{u}, \tilde{v}] = 
([u,v], \dots, u(v_j)-v(u_j)+(u_jv_j-v_ju_j), \dots).
$$
The projection ${\rm tDer}_{\mathcal{T}}(A) \to {\rm Der}(A), \tilde{u} \mapsto u$ is a Lie homomorphism. Hence, ${\rm Der}(A)$-modules automatically get a structure of ${\rm tDer}_\mathcal{T}(A)$-modules.

\begin{rem}
If $\mathcal{T}$ consists of homogeneous elements in $A$, then the Lie algebra ${\rm tDer}_\mathcal{T}(A)$ is graded. When it does not lead to confusion, we drop the symbol  $\mathcal{T}$ in the notation.
\end{rem}

\begin{prop} \label{prop:tgccocycles_lie}
For each $j \in \{1, \ldots, k\}$, the map ${\sf b}_j\colon {\rm tDer}_\mathcal{T}(A) \to |A|$ defined by formula
\[ 
    {\sf b}_j(\tilde{u}) = |u_j|
\]
is a Lie algebra 1-cocycle.
\end{prop}

\begin{proof}
We compute
$$
{\sf b}_j([\tilde{u}, \tilde{v}])=|u(v_j)-v(u_j)+(u_jv_j-v_ju_j)|=
u(|v_j|)-v(|u_j|)=u({\sf b}_j(\tilde{v}))-v({\sf b}_j(\tilde{u})),
$$
as required.
\end{proof}

Let $L = L(z_1,\ldots, z_n)$ be the (complete) free Lie algebra generated by $z_1, \ldots, z_n$.
If all elements $t_j$ belong to $L$, one can define the following Lie subalgebra of ${\rm tDer}_{\mathcal{T}}(A)$:
$$
{\rm tDer}_\mathcal{T}(L) = \{ (u, u_1, \dots, u_k) \in {\rm Der}(L) \times L^k; u(t_j)=[t_j, u_j] \}.
$$
Note that when restricted to ${\rm tDer}_\mathcal{T}(L)$, the cocycle ${\sf b}_j(\tilde{u})=|u_j|$ picks only the terms in $u_j$ which are linear in $z_i$'s since Lie brackets and multibrackets vanish under the $|\cdot|$-sign.

Given positive integral weights $w_j = {\rm wt}(z_j)$, one can consider the space ${\rm Der}^+(L)$ of positive derivations as in Section~\ref{subsec:div_der_Lie}.
We define a pro-nilpotent Lie algebra
\[
{\rm tDer}_{\mathcal{T}}^+(L) :=
\{ (u,u_1,\ldots,u_k) \in {\rm tDer}_{\mathcal{T}}(L) ; u \in {\rm Der}^+(L) \}
\]
and its exponentiation
${\rm tAut}_{\mathcal{T}}^+(L):= \exp({\rm tDer}_{\mathcal{T}}^+(L))$.
Another description of ${\rm tAut}_{\mathcal{T}}^+(L)$ is given  as follows:
\[
{\rm tAut}_{\mathcal{T}}^+(L) = 
\{ (F,f_1,\ldots, f_k) \in {\rm Aut}^+(L) \times \exp(L)^k ; F(t_j) = {f_j}^{-1}t_j f_j \}.
\]
For more details, see Appendix~\ref{subsec:tgtl}.
The projection to the first component $\tilde{F} = (F,f_1,\ldots,f_k) \mapsto F$ is a group homomorphism, and the product of ${\rm tAut}^+_{\mathcal{T}}(L)$ is defined by formula
$$
\tilde{F}\tilde{G}=(FG, \dots, f_j F(g_j), \dots).
$$

\begin{prop} \label{prop:tgccocycles_grp}
For each $j \in \{ 1,\ldots,k \}$, the expression 
$$
{\sf c}_j(\tilde{F}) = |\log(f_j)|
$$
for $\tilde{F} = (F,f_1,\ldots,f_k)$ is a group 1-cocycle on ${\rm tAut}^+_{\mathcal{T}}(L)$. 
Furthermore, it integrates the restriction of the cocycle ${\sf b}_j$ to ${\rm tDer}^+_{\mathcal{T}}(L)$.
\end{prop}

\begin{proof}
First, we check the cocycle property of ${\sf c}_j$:
\begin{align*}
{\sf c}_j(\tilde{F} \tilde{G}) =  |\log(f_j F(g_j))| 
& = |{\rm bch}(\log(f_j), \log(F(g_j)))| \\
& = |\log(f_j)| + |\log(F(g_j))| \\
& = {\sf c}_j(\tilde{F})+ \tilde{F}({\sf c}_j(\tilde{G})).
\end{align*}
Next, let $\tilde{u} = (u,u_1,\ldots,u_k) \in {\rm tDer}^+_{\mathcal{T}}(L)$.
Then $\exp(t \tilde{u}) = (\exp(tu), \dots, \exp(tu_j + O(t^2)), \dots)$ (see \eqref{eq:fu} in Appendix~\ref{subsec:tgtl}). We compute
$$
\left. \frac{d}{dt} \, {\sf c}_j(\exp(t \tilde{u})) \right|_{t=0} =
\left. \frac{d}{dt} |t u_j + O(t^2)| \right|_{t=0} = |u_j| = {\sf b}_j(\tilde{u}),
$$
as required.
\end{proof}

\subsection{Tangential derivations for surfaces} \label{subsec:tdersurface}

In this section, we apply the construction in the previous subsections to the completed group algebra of the fundamental group $\pi = \pi_1(\Sigma, *)$.

Let $\gamma =\{ \gamma_1, \ldots, \gamma_n \}$ be a peripheral system for $\pi$, i.e., a set of elements in $\pi$ such that $\gamma_j$ is homotopic to the $j$th boundary component of $\Sigma$ with positive orientation. 
For instance, such a $\gamma$ can be chosen as part of some standard generating system for $\pi$.
We consider the following Lie algebra:
\[
{\rm tDer}_{\gamma}(\K \pi) :=
\{
(u, u_1,\ldots, u_n) \in {\rm Der}(\K \pi) \times (\K \pi)^n ; u(\gamma_j) = [\gamma_j, u_j]
\}.
\]

\begin{prop} \label{prop:tDerSS'}
For two peripheral systems $\gamma=\{ \gamma_1, \dots, \gamma_n\}$ and $\gamma' = \{ \gamma'_1, \ldots, \gamma'_n \}$, there is a canonical Lie algebra isomorphism
\[
{\rm tDer}_{\gamma}(\K \pi)
\cong {\rm tDer}_{\gamma'}(\K \pi).
\]
\end{prop}

\begin{proof}
Since $\gamma_j'$ and $\gamma_j$ are freely homotopic, there is an element $\delta_j \in \pi$ such that $\gamma'_j = \delta_j \gamma_j \delta_j^{-1}$.
We consider the following map:
\begin{equation} \label{eq:tDerS}
{\rm tDer}_{\gamma}(\K \pi)
\to {\rm tDer}_{\gamma'}(\K \pi),
\quad
(u, u_1, \ldots, u_n) \mapsto
(u, u'_1, \ldots, u'_n),
\end{equation}
where $u'_j = \delta_j u_j \delta_j^{-1} - u(\delta_j) \delta_j^{-1}$.

First we check that $(u, u'_1, \dots, u'_n)$ is a tangential derivation:
\begin{align*}
u(\gamma'_j) =  u (\delta_j \gamma_j  \delta_j^{-1}) 
&=  u(\delta_j) \gamma_j \delta_j^{-1} +
\delta_j u(\gamma_j) \delta_j^{-1}
+ \delta_j \gamma_j u(\delta_j^{-1}) \\
& = u(\delta_j) \gamma_j \delta_j^{-1} +
\delta_j [\gamma_j, u_j] \delta_j^{-1} - \delta_j \gamma_j \delta_j^{-1} u(\delta_j) \delta_j^{-1} \\
& = [\gamma'_j, \delta_j u_j \delta_j^{-1} - u(\delta_j) \delta_j^{-1} ] \\
& = [\gamma'_j, u'_j].
\end{align*}
Next we show that the map \eqref{eq:tDerS} is well defined.
Since the centralizer of $\gamma_j$ in $\pi$ is an infinite cyclic group generated by $\gamma_j$, the element $\delta_j$ is determined only up to multiplication by powers of $\gamma_j$ from the right.
If $\delta'_j = \delta_j \gamma_j$, then we compute
\begin{align*}
\delta'_j u_j (\delta'_j)^{-1} - u(\delta'_j) (\delta'_j)^{-1} 
& = (\delta_j \gamma_j) u_j (\gamma_j^{-1} \delta_j^{-1})
- (u(\delta_j) \gamma_j + \delta_j u(\gamma_j) ) (\gamma_j^{-1} \delta_j^{-1}) \\
&= \delta_j \gamma_j u_j \gamma_j^{-1} \delta_j^{-1}
- (u(\delta_j) \gamma_j + \delta_j [\gamma_j, u_j]) \gamma_j^{-1} \delta_j^{-1} \\
&= \delta_j u_j \delta_j^{-1} - u(\delta_j) \delta_j^{-1}.
\end{align*}
This shows that the map \eqref{eq:tDerS} is independent of the choice of $\{ \delta_j\}_j$.
Finally, it is easy to see that the map \eqref{eq:tDerS} is an isomorphism of Lie algebras.
\end{proof}

Proposition \ref{prop:tDerSS'} implies that the Lie algebra ${\rm tDer}_{\gamma}(\K \pi)$ is uniquely determined by the surface $\Sigma$ and the numbering of its boundary components.
We will drop $\gamma$ from the notation and simply use ${\rm tDer}(\K \pi)$.

We also consider the completed version of ${\rm tDer}(\K\pi)$:
\[
{\rm tDer}(\widehat{\K \pi})=
{\rm tDer}_{\gamma}(\widehat{\K \pi}) :=
\{
(u, u_1,\ldots, u_n) \in {\rm Der}(\widehat{\K \pi}) \times (\widehat{\K \pi})^n ; u(\gamma_j) = [\gamma_j, u_j]
\}.
\]
Note that $u(\gamma_j) = [\gamma_j, u_j]$ is equivalent to $u(\log \gamma_j) = [\log \gamma_j, u_j]$.
This Lie algebra will play an important role in the subsequent sections.
We make a couple of remarks:
\begin{itemize}
    \item
The Lie algebra ${\rm tDer}(\widehat{\K \pi})$ is naturally filtered by the weight filtration on $\widehat{\K \pi}$, and its associated graded is isomorphic to ${\rm tDer}_{\{z_1,\ldots,z_n\}}(A)$, where $A = \widehat{T}({\rm gr}^{\rm wt}\, H)$.
     \item
Let $\{\alpha_i, \beta_i, \gamma_j\}$ be a standard generating system of $\pi$ and let $\gamma = \{\gamma_1, \ldots, \gamma_n \}$.
The exponential expansion
\[
\theta_{\exp}\colon \alpha_i \mapsto e^{x_i},
\quad \beta_i \mapsto e^{y_i},
\quad \gamma_j \mapsto e^{z_j}
\]
gives rise to a Lie algebra isomorphism
\[
{\rm tDer}(\widehat{\K \pi})
\xrightarrow[\hspace{0.5em} \cong \hspace{0.5em}]{\theta_{\exp}}
{\rm tDer}_{\{ z_1, \ldots, z_n \}}(A).
\]
\end{itemize}

We end this subsection with definition of several related objects.
First let 
\[
{\rm tDer}(\widehat{\K \pi}, \Delta)
= \{ (u,u_1,\ldots,u_n) \in {\rm tDer}(\widehat{\K \pi}) ; \text{$u \in {\rm Der}(\widehat{\K \pi},\Delta)$ and $u_j$ is primitive in $\widehat{\K \pi}$} \}
\]
be the Lie algebra of tangential Hopf derivations of $\widehat{\K \pi}$.
Next, using the weight filtration on $\widehat{\K \pi}$, we define the Lie algebras
\begin{align*}
    {\rm tDer}^+(\widehat{\K \pi})
    & = \{ (u,u_1,\ldots,u_n) \in {\rm tDer}(\widehat{\K \pi}) ; u \in {\rm Der}^+(\widehat{\K \pi}) \}, \\
    {\rm tDer}^+(\widehat{\K \pi}, \Delta) & = {\rm tDer}^+(\widehat{\K \pi}) \cap {\rm tDer}(\widehat{\K \pi}, \Delta).
    \end{align*}
Finally these two Lie algebras exponentiate to the following groups
\[
{\rm tAut}^+(\widehat{\K \pi})
= \exp({\rm tDer}^+(\widehat{\K \pi})),
\hspace{2em}
{\rm tAut}^+(\widehat{\K \pi}, \Delta)
= \exp({\rm tDer}^+(\widehat{\K \pi}, \Delta)).
\]

\subsection{Cocycles with spectator variable} \label{subsec:spectator}

In this section, we introduce an algebraic construction which will be used to analyze the coaction maps $\mu^f_r$ and $\mu^f_l$.

For $A = \K\langle \langle z_1, \ldots, z_n \rangle \rangle$, let $A\langle s \rangle:= \K\langle \langle z_1, \ldots, z_n, s \rangle \rangle$ be the algebra obtained by adding to $A$ an extra free non-commutative variable $s$ of weight zero.
We consider the Lie algebra of derivations of $A\langle s \rangle$ which vanish on $s$:
$$
{\rm Der}_s(A\langle s \rangle)=\{ u \in {\rm Der}(A\langle s \rangle); u(s)=0 \}.
$$
The non-commutative divergence cocycle ${\sf Div} \colon {\rm Der}_s(A\langle s \rangle) \to |A\langle s \rangle| \otimes |A\langle s \rangle|$ is given by the standard formula
$$
{\sf Div}(u) = \sum_{j=1}^n |\partial_j u(z_j)|.
$$
Note that there is no term $|\partial_s u(s)|$ on the right hand side since $u(s)=0$.

In what follows, we will focus on derivations of degree one in $s$, namely, derivations whose values at any elements of $A$ are linear in $s$. They are of the form
\begin{equation}  \label{eq:a_s_b}
u(z_j) = a'_j s a''_j,
\quad j=1,\ldots,n,
\end{equation}
where $a'_j \otimes a''_j \in A \otimes A$, adopting the Sweedler notation.
The space ${\rm Der}^{s-{\rm lin}}(A\langle s \rangle)$ of derivations of $A\langle s \rangle$ linear in $s$ can be identified with the space ${\rm Der}(A, A \otimes A)$ of derivations of $A \to A \otimes A$ by formally replacing $s$ with the symbol $\otimes$.
In more detail, to a derivation $u$ as in \eqref{eq:a_s_b}, one associates a derivation
\begin{equation} \label{eq:pa_u}
\partial_u: A \to A \otimes A, \quad
z_j \mapsto \partial_u(z_j) := \partial_s u(z_j) = a'_j \otimes a''_j.
\end{equation}
to obtain an isomorphism $\partial\colon {\rm Der}^{s-{\rm lin}}(A\langle s \rangle) \overset{\cong}{\to} {\rm Der}(A, A\otimes A), u \mapsto \partial_u$.
The space ${\rm Der}^{s-{\rm lin}}(A\langle s \rangle)$
is naturally an $A \otimes A^{\rm op}$-module with the action given by
\begin{equation} \label{eq:s-linbimodule}
(x \otimes y)(u) \colon z_j \mapsto (a'_j y) s (x a''_j).
\end{equation}
Formally this action is given by replacing $s$ with $ysx$, and matches a natural $A\otimes A^{\rm op}$-module structure on ${\rm Der}(A, A\otimes A)$.
When restricted to derivations linear in $s$, the divergence map takes values in the vector space
\begin{equation}  \label{eq:linear-s}
\big( |A| \otimes |s A| \big) \oplus \big( |s A| \otimes |A| \big).
\end{equation}
It is convenient to split the divergence map on ${\rm Der}^{s-{\rm lin}}(A\langle s \rangle)$ into two maps using the identification $|sA| \cong A$ as follows:
$$
{\sf Div}_r \colon {\rm Der}^{s-{\rm lin}}(A\langle s \rangle) \to |A| \otimes A, 
\hskip 0.3cm
{\sf Div}_l \colon {\rm Der}^{s-{\rm lin}}(A\langle s \rangle) \to A \otimes |A|.
$$
Using the Sweedler notation, one can define these maps by the following equation: for all $u \in {\rm Der}^{s-{\rm lin}}(A\langle s \rangle)$, 
\[
{\sf Div}(u) = |{\sf Div}'_r(u) | \otimes | s {\sf Div}''_r(u)| + 
| s {\sf Div}'_l(u) | \otimes |{\sf Div}''_l(u) |.
\]

\begin{prop}  \label{prop:Div_l_r}
For $u \in {\rm Der}^{s-{\rm lin}}(A\langle s \rangle)$ and $x, y \in A$, we have
\begin{equation*}
\begin{array}{lll}  
{\sf Div}_r((x \otimes y)u) 
& = & (1 \otimes x) \big( {\sf Div}_r(u)(1 \otimes y) + (|\cdot | \otimes {\rm id})\partial_u y \big), \\
{\sf Div}_l((x \otimes y)u) 
& = & \big( (x \otimes 1){\sf Div}_l(u) + 
({\rm id} \otimes | \cdot|)\partial_u x \big) (y \otimes 1).
\end{array}
\end{equation*}
\end{prop}

\begin{proof}
Let $u$ be given as in \eqref{eq:a_s_b}.
We compute
\begin{align*}
{\sf Div}((x \otimes y)u)
& = \textstyle\sum_j |\partial_j((x\otimes y)u)(z_j)| \\
& = \textstyle\sum_j
(|a'_j ysx(\partial'_j a''_j)| \otimes |\partial''_j a''_j| +
|a'_j ys(\partial'_j x)| \otimes |(\partial''_j x) a''_j|) + \cdots,
\end{align*}
where $\cdots$ stand for terms where $s$ is in the second factor of the tensor product. We can now read the formula for the map ${\sf Div}_l$:
$$
{\sf Div}_l((x\otimes y)u) = \sum_j(x(\partial'_j a''_j)a'_j y \otimes |\partial''_j a''_j| + (\partial'_j x)a'_j y \otimes |(\partial''_j x) a''_j|).
$$
The two terms on the right hand side reproduce the two terms in the formula for ${\sf Div}_l$.
The formula for ${\sf Div}_r$ can be verified in a similar fashion.
\end{proof}

Next consider a $1$-coboundary $d{\bf m} \colon {\rm Der}(A) \to |A| \otimes |A|$ of some ${\bf m}=|m'| \otimes |m''| \in |A| \otimes |A|$. It admits a natural extension to ${\rm Der}_s(A\langle s \rangle)$ given by the same formula. On derivations linear in $s$, it takes values in the space \eqref{eq:linear-s}. We again identify $|s A| \cong A$, and we
denote the components of the map $d{\bf m}$ by 
\begin{equation*}
d{\bf m}_r \colon {\rm Der}^{s-{\rm lin}}(A\langle s \rangle) \to |A| \otimes A,  
\hskip 0.3cm
d{\bf m}_l \colon {\rm Der}^{s-{\rm lin}}(A\langle s \rangle) \to A \otimes |A|.
\end{equation*}

\begin{prop}  \label{prop:trivial_l_r}
For $u \in {\rm Der}^{s-{\rm lin}}(A\langle s \rangle)$ and $x, y \in A$, we have
\[
\begin{array}{lll}
d{\bf m}_r((x \otimes y)u)
& = & (1 \otimes x)d{\bf m}_r(u)(1 \otimes y), \\
d{\bf m}_l((x \otimes y)u)
& = & (x \otimes 1)d{\bf m}_l(u)(y \otimes 1).
\end{array}
\]
\end{prop}

\begin{proof}
Let $u$ be given as in \eqref{eq:a_s_b}.
We compute
$$
d{\bf m}((x \otimes y)u)=
((x \otimes y)u) (|m'| \otimes |m''|) =
\sum_j |(\partial'_j m') a'_j ysx a''_j (\partial''_j m')|
\otimes |m''| + \cdots,
$$
where $\cdots$ stands for the action of the derivation $(x \otimes y)u$ on $|m''|$. We can now read the result for the map $d{\bf m}_l$:
$$
d{\bf m}_l((x \otimes y)u) = x a''_j (\partial''_j m')(\partial'_j m') a'_j y \otimes |m''| 
= (x \otimes 1)dm_l(u)(y \otimes 1),
$$
as required. The calculation for the map $d{\bf m}_r$ is similar.
\end{proof}

Let $\mathcal{T} = \{ t_1, \ldots, t_k \} \subset A$.
One can extend the cocycles ${\sf b}_j \colon {\rm tDer}_{\mathcal{T}}(A) \to |A|$ in Proposition~\ref{prop:tgccocycles_lie} to tangential derivations of the algebra with spectator variable $A\langle s \rangle$. We define
$$
{\rm tDer}_\mathcal{T}(A\langle s \rangle)=\{ (u, u_1, \dots, u_k) \in {\rm Der}_s(A\langle s \rangle) \times (A\langle s \rangle)^k; u(t_j)=[t_j, u_j] \} .
$$
Again we will be interested in tangential derivations which are linear in $s$:
$$
u(z_j)= a'_j s a''_j,
\quad j = 1,\ldots, n, 
\quad \text{and} \quad
u_j= u'_j s u''_j, 
\quad j = 1,\ldots,k.
$$
Let ${\rm tDer}_\mathcal{T}^{s-{\rm lin}}(A\langle s \rangle)$ be the space of $s$-linear tangential derivations, i.e., the space of elements $\tilde{u} = (u, u_1,\ldots,u_k) \in {\rm tDer}_{\mathcal{T}}(A\langle s \rangle)$ such that $u \in {\rm Der}^{s-{\rm lin}}(A\langle s \rangle)$ and $u_j$ is linear in $s$ for all $j = 1,\ldots,k$.
As a natural lift of the action \eqref{eq:s-linbimodule}, it carries an action of $A \otimes A^{\rm op}$ via the substitution $s \mapsto ysx$:
\begin{equation} \label{eq:s-linbimoduletan}
((x\otimes y)u)(z_j)= (a'_jy)s(x a''_j), \hskip 0.6cm
((x\otimes y)u)_j= (u'_j y)s(x u''_j).
\end{equation}

As before, the linear in $s$ part of $|A\langle s \rangle|$ can be identified with $A$.
Hence, the restriction of the cocycle ${\sf b}_j$ to ${\rm tDer}_\mathcal{T}^{s-{\rm lin}}(A\langle s \rangle)$ defines the following map with values in $A$:
\[
{\sf b}_j^{s-{\rm lin}}\colon {\rm tDer}_{\mathcal{T}}^{s-{\rm lin}}(A\langle s \rangle) \to A, 
\quad
\tilde{u} \overset{{\sf b}_j}{\mapsto} |u'_j s u''_j| \mapsto u''_j u'_j.
\]

\begin{lem}  \label{prop:c_i_action}
For all $\tilde{u} \in {\rm tDer}_{\mathcal{T}}^{s-{\rm lin}}(A\langle s \rangle)$ and $x,y \in A$, one has
$$
{\sf b}_j^{s-{\rm lin}}((x \otimes y)\tilde{u}) = 
x {\sf b}_j^{s-{\rm lin}}(\tilde{u}) y.
$$
\end{lem}

\begin{proof}
This is straightforward:
$
{\sf b}_j^{s-{\rm lin}}((x \otimes y)\tilde{u}) =
x u''_j u'_j y = x( u''_j u'_j) y = x {\sf b}_j^{s-{\rm lin}}(\tilde{u}) y
$.
\end{proof}

The next theorem summarizes several statements of this section.

\begin{thm}  \label{thm:c_l_r}
Let $\mathfrak{b}\colon {\rm tDer}_\mathcal{T}(A) \to |A| \otimes |A|$ be a 1-cocycle which is cohomologous to the sum of the pull-back of the non-commutative divergence ${\sf Div}$ and a linear combination of the cocycles ${\bf 1} \otimes {\sf b}_j$ and ${\sf b}_j \otimes {\bf 1}$, $j=1,\ldots,k$.
For its natural extension to ${\rm tDer}_\mathcal{T}(A\langle s \rangle)$, we denote the components of $\mathfrak{b}$ by
\[
\mathfrak{b}_r\colon {\rm tDer}_\mathcal{T}^{s-{\rm lin}}(A\langle s \rangle) \to |A| \otimes A,
\hspace{2em}
\mathfrak{b}_l\colon {\rm tDer}_\mathcal{T}^{s-{\rm lin}}(A\langle s \rangle) \to A \otimes |A|.
\]
In more detail, these maps are defined by the following equation: for all $\tilde{u} \in {\rm tDer}_\mathcal{T}^{s-{\rm lin}}(A\langle s \rangle)$,
\[
\mathfrak{b}({\tilde{u}}) = 
| \mathfrak{b}'_r(\tilde{u})| \otimes |s\mathfrak{b}''_r(\tilde{u})| + 
|s\mathfrak{b}'_l(\tilde{u})| \otimes |\mathfrak{b}''_l(\tilde{u})|.
\]
Then, the maps $\mathfrak{b}_r$ and $\mathfrak{b}_l$ satisfy the properties
\begin{equation}       \label{eq:c_l_r_long}
\begin{array}{lll}
\mathfrak{b}_r((x \otimes y)\tilde{u}) & = & 
(1\otimes x)\big( \mathfrak{b}_r(\tilde{u})(1\otimes y)+(|\cdot| \otimes {\rm id})\partial_u y \big), \\
\mathfrak{b}_l((x\otimes y)\tilde{u}) 
& = & \big( (x\otimes 1)\mathfrak{b}_l(\tilde{u})+({\rm id}\otimes |\cdot|)\partial_u x \big)(y\otimes 1)
\end{array}
\end{equation}
for all $\tilde{u} \in {\rm tDer}_\mathcal{T}^{s-{\rm lin}}(A\langle s \rangle)$ and $x, y \in A$.
\end{thm}

\begin{proof}
For 1-cocycles cohomologous to the pull-back of the non-commutative divergence this statement is a direct consequence of Propositions~\ref{prop:Div_l_r} and \ref{prop:trivial_l_r}.
It remains to check the effect of adding a linear combination of the cocycles ${\bf 1} \otimes {\sf b}_j$ and ${\sf b}_j \otimes {\bf 1}$. 
Consider the cocycle ${\sf b}_j \otimes {\bf 1}$.
Its contribution to the component $\mathfrak{b}_r$ vanishes for all $\tilde{u}$.
For the component $\mathfrak{b}_l$, we compute using Lemma~\ref{prop:c_i_action}:
$$
({\sf b}_j \otimes {\bf 1})_l((x\otimes y)\tilde{u})= {\sf b}_j^{s-{\rm lin}}((x \otimes y)\tilde{u}) \otimes {\bf 1} =
(x {\sf b}_j^{s-{\rm lin}}(\tilde{u})y) \otimes {\bf 1})=
(x\otimes 1)({\sf b}_j \otimes {\bf 1})_l(\tilde{u})(y\otimes 1),
$$
which implies the statement of the theorem. A computation for the cocycle ${\bf 1} \otimes {\sf b}_j$ is similar.
\end{proof}

\begin{rem}
By putting $s=1$, we obtain Lie algebra homomorphisms
\[
{\rm Der}^{s-{\rm lin}}(A\langle s \rangle)
\to {\rm Der}(A),
\hspace{2em}
{\rm tDer}^{s-{\rm lin}}_{\mathcal{T}}(A\langle s \rangle)
\to {\rm tDer}_{\mathcal{T}}(A).
\]
These maps will be used in the next section.
\end{rem}

\section{Turaev cobracket as non-commutative divergence}      \label{sec:Turaev}

In this section, we explain how the Turaev cobracket and the coaction maps $\mu^f_r$, $\mu^f_l$ can be reprensented as compositions of a canonical $1$-cocycle obtained by modifying the non-commutative divergence cocycle and operations induced by the double bracket $\kappa$.
We also describe their associated graded versions.

\subsection{Multiplicative properties} \label{subsec:mulprop}

Let $A=\mathbb{K}\langle\langle z_1, \dots z_n\rangle\rangle$ be the ring of non-commutative formal power series in variables $z_1, \dots, z_n$ (with some grading or filtration). In this subsection, we give a way to construct maps $A \to |A| \otimes A$ and $A \to A \otimes |A|$ with certain multiplicative properties.

Assume that $A$ is equipped with a double bracket $\Pi \colon A \otimes A \to A \otimes A$. This double bracket has a canonical extension to the algebra $A\langle s \rangle = \K\langle \langle z_1, \ldots, z_n, s\rangle \rangle$ with the free extra variable $s$
defined by the property that $\Pi(s, a)=\Pi(a, s)=0$
for all $a \in A\langle s \rangle$. By abuse of notation, we also denote this double bracket by $\Pi$. 

\begin{dfn} \label{dfn:Ham}
We define a map ${\rm Ham}^{\Pi}$ as follows:
\begin{equation*}
{\rm Ham}^{\Pi} \colon A \rightarrow {\rm Der}^{s-{\rm lin}}(A\langle s \rangle), \quad
a \mapsto {\rm Ham}^{\Pi}_a = \sigma^{\Pi}(|sa|) = \{ |sa|, \cdot \}_{\Pi}.
\end{equation*}
\end{dfn}
Formally ${\rm Ham}^{\Pi}$ is obtained from $\Pi$ by replacing $\otimes$ with $s$, or equivalently equals the composite of the map $A \to {\rm Der}(A, A\otimes A), a \mapsto \Pi(a, \cdot)$ and the inverse of the isomorphism $\partial\colon {\rm Der}^{s-{\rm lin}}(A\langle s \rangle) \to {\rm Der}(A, A\otimes A)$ given in \eqref{eq:pa_u}.
In more detail, for any $a, b\in A$ we have
$$
{\rm Ham}^{\Pi}_a(b)=\Pi(a,b)' s \Pi(a,b)''
$$
where $\Pi(a,b)=\Pi(a,b)' \otimes \Pi(a,b)''$, and we have
$
\partial_{{\rm Ham}^{\Pi}_a}(b) = \Pi(a,b)
$.

Since the derivation ${\rm Ham}^{\Pi}_a$ is linear in $s$, it maps $|A|$ to the $s$-linear part of $|A\langle s \rangle|$, that is $|sA| \cong A$.
The following lemma describes this action.
\begin{lem} \label{lem:Hama|b|}
For any $a, b \in A$, we have
$
{\rm Ham}_a^{\Pi}(|b|) = 
| s \{ a, |b| \}_{\Pi}|
$.
\end{lem}
\begin{proof}
We compute
\[
{\rm Ham}_a^{\Pi}(|b|)
= | \{ |sa|, b\}_{\Pi} |
= | \Pi(a,b)' s \Pi(a,b)''|
= | s \Pi(a,b)''\Pi(a,b)' | 
= | s \{ a, |b| \}_{\Pi}|.
\]
\end{proof}

Recall from \eqref{eq:s-linbimodule} that the space ${\rm Der}^{s-{\rm lin}}(A\langle s \rangle)$ has an $A\otimes A^{\rm op}$-module structure.

\begin{prop}  \label{prop:h_{ab}}
For $a, b \in A$, we have
$$
{\rm Ham}^{\Pi}_{ab} = (1\otimes b){\rm Ham}^{\Pi}_a + (a\otimes 1){\rm Ham}^{\Pi}_b.
$$
\end{prop}

\begin{proof}  \label{prop:m=ch}
The direct computation
\begin{align*}
{\rm Ham}^{\Pi}_{ab}(c)  & =  \Pi(ab, c)' s \Pi(ab, c)'' \\
& = (\Pi(a, c)'b)s \Pi(a, c)'' + \Pi(b, c)' s (a\Pi(b, c)'') \\
& = \big( (1\otimes b){\rm Ham}^{\Pi}_a + (a\otimes 1){\rm Ham}^{\Pi}_b \big)(c)
\end{align*}
follows from defining properties of double brackets.
\end{proof}

Let $\mathcal{T} = \{ t_1, \ldots, t_k \} \subset A$.
In all our applications, $\mathcal{T}$ is supposed to be a subset of a free generating set of the algebra $A$, i.e., $\mathcal{T} \subset \{ z_1, \ldots, z_n\}$.
For such a choice of $\mathcal{T}$, the centralizer of $t_j$ in the algebra $A\langle s \rangle$ (which is a non-commutative formal power series ring in $n+1$ variables) is $\K [[t_j]]$,
and its intersection with the set of elements linear in $s$ is trivial.
In what follows, we will always assume this condition.

\begin{dfn}
A double bracket $\Pi\colon A \otimes A \to A \otimes A$ is called {\em tangential to $\mathcal{T}$} if there exists a map $\widetilde{{\rm Ham}^{\Pi}} \colon A \to {\rm tDer}_{\mathcal{T}}^{s-{\rm lin}}(A\langle s \rangle), a \mapsto \widetilde{{\rm Ham}_a^{\Pi}}$ which lifts the map ${\rm Ham}^{\Pi}$.
\end{dfn}

\begin{lem} \label{lem:lift_Ham}
\begin{enumerate}
    \item[$(i)$]
Let $\Pi \colon A \otimes A \to A \otimes A$ be a double bracket tangential to $\mathcal{T}$.
Then, the lift $\widetilde{{\rm Ham}^{\Pi}}$ of ${\rm Ham}^{\Pi}$ is uniquely determined by $\Pi$.
\item[$(ii)$]
Let $\mathcal{Z}$ be a free generating set of $A$.
A double bracket $\Pi$ is tangential to $\mathcal{T}$ if and only if for any generator $z \in \mathcal{Z}$ and $1\le j \le k$, there exists an element $u_j(z,s) \in A \langle s \rangle$ linear in $s$ such that ${\rm Ham}^{\Pi}_z(t_j) = [t_j, u_j(z,s)]$.
\item[$(iii)$]
If $\Pi$ is a double bracket tangential to $\mathcal{T}$, the composition 
\[
A \xrightarrow{\widetilde{{\rm Ham}^{\Pi}}}
{\rm tDer}_{\mathcal{T}}^{s-{\rm lin}}(A\langle s \rangle)
\xrightarrow{s=1}
{\rm tDer}_{\mathcal{T}}(A)
\]
descends to a map $\tilde{\sigma} = \tilde{\sigma}^{\Pi}\colon |A| \to {\rm tDer}_{\mathcal{T}}(A), |a| \mapsto {\widetilde{{\rm Ham}^{\Pi}_a}}|_{s=1}$.
The map $\tilde{\sigma}$ lifts the map $\sigma = \sigma^{\Pi}\colon |A| \to {\rm Der}(A)$.
\end{enumerate}
\end{lem}

\begin{proof}
(i) The existence of a lift $\widetilde{{\rm Ham}^{\Pi}}$ means that for each $a\in A$ and $1 \le j \le k$ there exists an element $u_j(a,s) \in A\langle s \rangle$ linear in $s$ such that ${\rm Ham}^{\Pi}_a(t_j) = [t_j, u_j(a,s)]$.
The element $u_j(a,s)$ is determined by the derivation ${\rm Ham}^{\Pi}_a$ modulo the centralizer of $t_j$ in $A \langle s \rangle$.
Since by assumptions the intersection of the centralizer of $t_j$ in $A\langle s \rangle$ with the set of elements linear in $s$ is trivial, it follows that $u_j(a,s)$ is uniquely determined and $u_j(a,s)$ is the $j$th tangential component of $\widetilde{{\rm Ham}^{\Pi}_a}$.

(ii) The ``only if'' part is clear.
To prove the converse, let $a, b \in A$ and assume that 
\[
{\rm Ham}^{\Pi}_a(t_j) = [t_j, u'_js u''_j],
\quad \quad
{\rm Ham}^{\Pi}_b(t_j) = [t_j, v'_js v''_j],
\]
where we use the Sweedler notation.
Then, by Proposition \ref{prop:h_{ab}} we have
\[
{\rm Ham}^{\Pi}_{ab}(t_j) =
\left( (1\otimes b){\rm Ham}^{\Pi}_a + (a \otimes 1){\rm Ham}^{\Pi}_b \right) (t_j)
= [t_j, u'_j b s u''_j + v'_j s a v''_j].
\]
Hence the property established on generators of $A$ extends to all elements of $A$.

(iii)
The proof of (ii) shows the following: for any $a,b\in A$ the $j$th tangential component of $\widetilde{{\rm Ham}^{\Pi}_{ab}}$ is given by $u'_j bsu''_j + v'_j sa v''_j$.
Putting $s=1$, we obtain $u'_j b u''_j + v'_j a v''_j$, which is symmetric in $a$ and $b$.
Hence the composition of $\widetilde{{\rm Ham}^{\Pi}}$ with evaluation at $s=1$ vanishes on commutators in $A$.
Hence we obtain the induced map $\tilde{\sigma}$, as required.
By construction, the projection of $\tilde{\sigma}^{\Pi}(|a|) = {\widetilde{{\rm Ham}^{\Pi}_a}}|_{s=1} \in {\rm tDer}_{\mathcal{T}}(A)$ onto ${\rm Der}(A)$ is ${{\rm Ham}^{\Pi}_a}|_{s=1} = \sigma^{\Pi}(|a|)$.
\end{proof}

Recall from \eqref{eq:s-linbimoduletan} that the space ${\rm tDer}_{\mathcal{T}}^{s-{\rm lin}}(A \langle s \rangle )$ has a structure of $A \otimes A^{\rm op}$-module.

\begin{prop} \label{prop:h_{ab}refined}
Let $\Pi$ be a double bracket tangential to $\mathcal{T}$.
For all $a, b \in A$, we have
\[
\widetilde{{\rm Ham}^{\Pi}_{ab}}
= (1 \otimes b) \widetilde{{\rm Ham}^{\Pi}_a} + (a\otimes 1)\widetilde{{\rm Ham}^{\Pi}_b}.
\]
\end{prop}
\begin{proof}
The proof of Lemma~\ref{lem:lift_Ham}~(ii) shows the following: if the $j$th tangnential components of $\widetilde{{\rm Ham}^{\Pi}_a}$ and $\widetilde{{\rm Ham}^{\Pi}_b}$ are $u'_jsu''_j$ and $v'_jsv''_j$, respectively, then the $j$th tangential component of $\widetilde{{\rm Ham}^{\Pi}_{ab}}$ is equal to $u'_jbsu''_j + v'_jsav''_j$.
This fact together with Proposition~\ref{prop:h_{ab}} implies the required result.
\end{proof}

The main result of this subsection is the following statement:
\begin{prop} \label{prop:mrab}
Let $\Pi$ be a double bracket on $A$ tangential to $\mathcal{T}$, and assume that $\mathfrak{b}_r\colon {\rm tDer}_{\mathcal{T}}^{s-{\rm lin}}(A\langle s \rangle) \to |A| \otimes A$ and $\mathfrak{b}_l\colon {\rm tDer}_{\mathcal{T}}^{s-{\rm lin}}(A\langle s \rangle) \to A \otimes |A|$ are maps with properties~\eqref{eq:c_l_r_long}.
Then the composition maps $m_r = \mathfrak{b}_r \circ \widetilde{{\rm Ham}^{\Pi}} \colon A \to |A| \otimes A$ and $m_l= \mathfrak{b}_l \circ \widetilde{{\rm Ham}^{\Pi}}\colon A \to A \otimes |A|$ satisfy the following multiplicative properties:
\begin{equation} \label{eq:multiplicative}
\begin{array}{lll}
m_r(ab) & = & m_r(a)(1 \otimes b) + (1\otimes a)m_r(b)+ (|\cdot| \otimes 1) \Pi(a,b), \\
m_l(ab) & = & m_l(a)(b \otimes 1) + (a\otimes 1)m_l(b)+ (1 \otimes |\cdot|) \Pi(b,a). \\
\end{array}
\end{equation}
\end{prop}

\begin{proof}
The proof is by direct computation:
\begin{align*}
m_r(ab) & =  \mathfrak{b}_r(\widetilde{{\rm Ham}^{\Pi}_{ab}}) \\
& = \mathfrak{b}_r \big( (1\otimes b)\widetilde{{\rm Ham}^{\Pi}_a} + (a\otimes 1)\widetilde{{\rm Ham}^{\Pi}_b} \big) \\
& = \mathfrak{b}_r(\widetilde{{\rm Ham}^{\Pi}_a})(1 \otimes b) + (|\cdot| \otimes 1)\partial_{{\rm Ham}^{\Pi}_a}b + (1 \otimes a)\mathfrak{b}_r(\widetilde{{\rm Ham}^{\Pi}_b}) \\
& = m_r(a)(1 \otimes b) + (1\otimes a)m_r(b)+ (|\cdot| \otimes 1) \Pi(a,b).
\end{align*}
Here we have used Proposition \ref{prop:h_{ab}refined} in the second line and \eqref{eq:c_l_r_long} in the third line.
The multiplicative property of the map $m_l$ is proved by a similar calculation.
\end{proof}

\subsection{The cocycle ${\sf Div}^f$} \label{subsec:cocyclec^f}
In this subsection, for each framing $f$ on the surface $\Sigma$ with nonepmty boundary we introduce a 1-cocycle ${\sf Div}^f$ on the Lie algebra of tangential derivations of the completed group algebra $\widehat{\mathbb{K}\pi}$, where  $\pi=\pi_1(\Sigma)$.

In more detail, let $\Sigma$ be an oriented surface of genus $g$ with $n+1$ boundary components.
Choose a standard generating system $\alpha_i, \beta_i, \gamma_j$ with $i=1, \dots, g, j=1, \dots, n$.
We use this notation (when it does not lead to confusion)  both for simple closed curves and for generators of $\pi$.
We define Magnus generators $X_i, Y_i, Z_j$
$$
X_i = \alpha_i -1, \hskip 0.3cm Y_i=\beta_i -1, \hskip 0.3cm Z_j= \gamma_j -1
$$
and exponential generators $x_i, y_i, z_j$
$$
x_i = \log (\alpha_i), \hskip 0.3cm y_i = \log (\beta_i), \hskip 0.3cm 
z_j = \log (\gamma_j).
$$
As was explained in Section~\ref{sec:div_unique}, the completed group algebra $\widehat{\mathbb{K}\pi}$ is isomorphic to the degree completed free associative algebra with the generators $X_i, Y_i, Z_j$ and to the degree completed free associative algebra with the generators $x_i, y_i, z_j$:
$$
\mathbb{K}\langle\langle X_i, Y_i, Z_j \rangle\rangle
\xleftarrow[\hspace{0.5em} \cong \hspace{0.5em}]{\theta_M}
\widehat{\mathbb{K} \pi}
\xrightarrow[\hspace{0.5em} \cong \hspace{0.5em}]{\theta_{\exp}}
\mathbb{K}\langle\langle x_i, y_i, z_j \rangle\rangle.
$$

Let $f$ be a framing of $\Sigma$.
We use it to define the following element
\begin{equation} \label{eq:def_pf}
{\bf p}^f=\sum_{i=1}^g |{\rm rot}^f(\alpha_i) y_i - {\rm rot}^f(\beta_i) x_i| \in \Lambda \subset |\widehat{\K \pi}|
\end{equation}
and the 1-cocycle ${\sf b}^f \colon {\rm tDer}(\widehat{\K \pi}) \to |\widehat{\K \pi}|$ 
on the Lie algebra of tangential derivations ${\rm tDer}(\widehat{\K \pi})$ (see Section~\ref{subsec:tdersurface}), defined by the following formula:
\begin{equation} \label{eq:def_bf}
{\sf b}^f(\tilde{u}) = \sum_{j=1}^n {\rm rot}^f(\gamma_j) {\sf b}_j(\tilde{u}) = \sum_{j=1}^n {\rm rot}^f(\gamma_j) |u_j|,
\quad \quad
\tilde{u} = (u, u_1,\ldots, u_n).
\end{equation}

Finally we can define one of the main objects of this paper:
\begin{dfn} \label{dfn:cfabc}
For a standard generating system $\{ \alpha_i, \beta_i, \gamma_j\}$ and a framing $f$ on $\Sigma$, we define a 1-cocycle
$$
{\sf Div}^f_{\alpha, \beta, \gamma}\colon {\rm tDer}(\widehat{\K \pi}) \to |\widehat{\K \pi}| \otimes |\widehat{\K \pi}|
$$
by formula
\begin{equation} \label{eq:c^f_definition}
{\sf Div}^f_{\alpha, \beta, \gamma}(\tilde{u}) =
{\sf Div}_{\alpha, \beta, \gamma}(u) + {\bf 1} \wedge (u({\bf p}^f) + {\sf b}^f(\tilde{u})),
\end{equation}
where ${\sf Div}_{\alpha, \beta, \gamma}$ stands for the 1-cocycle in equation~\eqref{dfn:c_gamma} determined by the generating system $\{ \alpha_i, \beta_i, \gamma_j \}$.
\end{dfn}

Note that the $1$-coboundaries $u \mapsto u(|z_j|)$ vanish on ${\rm tDer}(\widehat{\K \pi})$.
Indeed, we have $u(|z_j|)=|[z_j, u_j]|=0$.
Using this fact and equation~\eqref{dfn:c_gamma}, one has
\begin{equation}  \label{eq:c^f_details}
{\sf Div}^f_{\alpha, \beta, \gamma}(\tilde{u}) = {\sf Div}_{X,Y,Z}(u) - {\bf 1} \otimes u(\sum_i (|x_i| + |y_i|))
+ {\bf 1} \wedge (u({\bf p}^f) + {\sf b}^f(\tilde{u})),
\end{equation}
where ${\sf Div}_{X,Y,Z}$ is the Magnus divergence~\eqref{eq:Mag_div} with respect to the variables $X_i, Y_i$ and $Z_j$.
Using Proposition~\ref{prop:c_change_variables}, one also obtains another expression
\begin{equation} \label{eq:c^f_details2}
{\sf Div}^f_{\alpha, \beta, \gamma}(\tilde{u}) = 
{\sf Div}_{x,y,z}(u) +
{\bf 1} \wedge {\sf b}^f(\tilde{u}) + u ( \tilde{\Delta}({\bf r} - {\bf p}^f) ).
\end{equation}
Here, ${\sf Div}_{x,y,z}$ is the divergence map with respect to the exponential generators $x_i, y_i$ and $z_j$ and the element ${\bf r} \in |\widehat{\K \pi}|$ is defined by 
\begin{equation} \label{dfn:bfr}
{\bf r} := \sum_{i=1}^g | r(x_i) + r(y_i) |.
\end{equation} 
To obtain \eqref{eq:c^f_details2} from \eqref{eq:c^f_definition}, one uses the fact that ${\bf p}^f$ is in the span of $|x_i|$ and $|y_i|$ which implies that ${\bf 1} \wedge {\bf p}^f = - \tilde{\Delta}({\bf p}^f)$ and the fact that the $1$-coboundaries $u \mapsto u (\tilde{\Delta}(|r(z_j)|))$ vanish on ${\rm tDer}(\widehat{\K \pi})$.

\begin{rem}
When $g=0$, we drop the symbols $x,y$ in ${\sf Div}_{x,y,z}$ and simply denote ${\sf Div}_{z}$.
When $n = 0$, we denote ${\sf Div}_{x,y}$ instead of ${\sf Div}_{x,y,z}$.
We will use the same style of abbreviation when it is applicable.
\end{rem}

\subsection{The map ${\rm Ham}^\kappa$}

Recall that the completed group algebra $\widehat{\mathbb{K} \pi}$ carries a canonical double bracket $\kappa$ described in Section~\ref{subsec:bra}.
This double bracket induces a map 
$$
{\rm Ham}^\kappa \colon \widehat{\K \pi} \to {\rm Der}^{s-{\rm lin}}(\widehat{\K \pi}\langle s \rangle)
$$
introduced in Section~\ref{subsec:mulprop}.
We can compute values of the map ${\rm Ham}^\kappa$ on the standard generators $\alpha_i, \beta_i, \gamma_j$ by using Proposition~\ref{prop:kapvalue}.
For instance, if $k < i$, 
\[
{\rm Ham}^\kappa_{\alpha_i}(\alpha_k) = 
\alpha_k s \alpha_i + \alpha_i s \alpha_k - \alpha_k \alpha_i s - s \alpha_i \alpha_k 
= [\alpha_k, [s, \alpha_i]]
\]
and ${\rm Ham}^\kappa_{\alpha_i}(\alpha_i) = \alpha_i s\alpha_i - s {\alpha_i}^2 = [\alpha_i, s \alpha_i]$.
Besides, ${\rm Ham}^{\kappa}_{\alpha}(\beta) = 0$ if $\kappa(\alpha, \beta) = 0$.

The nontrivial values of the map ${\rm Ham}^{\kappa}_{\alpha_i}$ on the standard generators are as follows:
\begin{equation} \label{eq:Hamalpha}
\begin{array}{llll}
{\rm Ham}^\kappa_{\alpha_i}(\alpha_k) & = & 
[\alpha_k, [s,\alpha_i]] & {\rm for} \,\, k < i, \\
{\rm Ham}^\kappa_{\alpha_i}(\beta_k) & = & 
[\beta_k, [s,\alpha_i]]
& {\rm for} \,\, k < i, \\
{\rm Ham}^\kappa_{\alpha_i}(\alpha_i) & = & 
[\alpha_i, s\alpha_i], & \\
{\rm Ham}^\kappa_{\alpha_i}(\beta_i) & = &
\beta_i s \alpha_i. & 
\end{array}
\end{equation}
For ${\rm Ham}^\kappa_{\beta_i}$, the nontrivial values are
\begin{equation} \label{eq:Hambeta}
\begin{array}{llll}
{\rm Ham}^\kappa_{\beta_i}(\alpha_k) & = &
[\alpha_k, [s,\beta_i]] & {\rm for} \,\, k< i, \\
{\rm Ham}^\kappa_{\beta_i}(\beta_k) & = &
[\beta_k, [s,\beta_i]] & {\rm for} \,\, k< i, \\
{\rm Ham}^\kappa_{\beta_i}(\alpha_i) & = & 
\beta_i s \alpha_i - \alpha_i \beta_i s - s \beta_i \alpha_i, & \\
{\rm Ham}^\kappa_{\beta_i}(\beta_i) & = &
[\beta_i, -\beta_i s]. & \\
\end{array}
\end{equation}
Finally the nontrivial values of ${\rm Ham}^\kappa_{\gamma_j}$ are
\begin{equation} \label{eq:Hamgamma}
\begin{array}{llll}
{\rm Ham}^\kappa_{\gamma_j}(\alpha_i) & = &
[\alpha_i, [s,\gamma_j]] & {\rm for} \,\, {\rm all} \,\, i, \\
{\rm Ham}^\kappa_{\gamma_j}(\beta_i) & = &
[\beta_i, [s,\gamma_j]] & {\rm for} \,\, {\rm all} \,\, i, \\
{\rm Ham}^\kappa_{\gamma_j}(\gamma_k) & = &
[\gamma_k, [s,\gamma_j]] &
{\rm for} \,\, k < j, \\
{\rm Ham}^\kappa_{\gamma_j}(\gamma_j) & = & 
[\gamma_j, s\gamma_j]. & \\
\end{array}
\end{equation}

Let $\mathcal{T} = \{ z_1, \ldots, z_n \}$, and consider the Lie algebras ${\rm tDer}_{\mathcal{T}}(\widehat{\K \pi})$ and ${\rm tDer}^{s-{\rm lin}}_{\mathcal{T}}(\widehat{\K \pi}\langle s \rangle)$ following constructions in Sections~\ref{subse:tngder} and \ref{subsec:spectator}.
With a more intrinsic description in Section~\ref{subsec:tdersurface} in mind, we will drop the symbol $\mathcal{T}$ in the notation and simply denote ${\rm tDer}(\widehat{\K \pi})$ and ${\rm tDer}^{s-{\rm lin}}(\widehat{\K \pi} \langle s \rangle)$, respectively.

The double bracket $\kappa$ has the following special property:

\begin{prop} \label{prop:lifthk}
The double bracket $\kappa$ is tangential to $\{ z_1, \ldots, z_n \}$.
In other words, the map ${\rm Ham}^{\kappa}$ admits a lift $\widetilde{{\rm Ham}^{\kappa}} \colon \widehat{\K \pi} \to {\rm tDer}^{s-{\rm lin}}(\widehat{\K \pi}\langle s \rangle)$.
\end{prop}
\begin{proof}
According to Lemma~\ref{lem:lift_Ham} (ii), it is sufficient to prove that for any $\delta \in \{ \alpha_i, \beta_i, \gamma_j\}$, there exist elements $u_j(\delta, s) \in \widehat{\K \pi}\langle s \rangle$ linear in $s$ such that ${\rm Ham}^{\kappa}_{\delta}(z_j) = [z_j, u_j(\delta,s)]$.

Since ${\rm Ham}^\kappa_{\alpha_i}(\gamma_j)={\rm Ham}^\kappa_{\beta_i}(\gamma_j)=0$, this condition is verified on the generators $\alpha_i$ and $\beta_i$ (with $u_j(\alpha_i,s) = u_j(\beta_i,s) = 0$).
For the generators $\gamma_j$, by \eqref{eq:Hamgamma} we have for $k < j$
$$
{\rm Ham}^\kappa_{\gamma_j}(\gamma_k)=[\gamma_k, [s, \gamma_j]], \hskip 0.3cm
{\rm Ham}^\gamma_{\gamma_j}(z_k)=[z_k, [s, \gamma_j]],
$$
for $k=j$, we obtain
$$
{\rm Ham}^\kappa_{\gamma_j}(\gamma_j)=[\gamma_j, s\gamma_j], \hskip 0.3cm
{\rm Ham}^\kappa_{\gamma_j}(z_j)=[z_j, s\gamma_j], 
$$
and for $k>j$, ${\rm Ham}^{\kappa}_{\gamma_j}(\gamma_k) = {\rm Ham}^{\kappa}_{\gamma_j}(z_k) = 0$.
This completes the proof.
\end{proof}

In the next subsection, we use the following calculation on derivations in the image of the maps ${\rm Ham}^{\kappa}$ and $\widetilde{{\rm Ham}^{\kappa}}$.
\begin{lem} \label{lem:Ham_computation_trace}
\begin{enumerate}
\item[(i)]
The action of the derivations ${\rm Ham}^{\kappa}_{\alpha_i}, {\rm Ham}^{\kappa}_{\beta_i},{\rm Ham}^{\kappa}_{\gamma_j}$ on the elements $|x_i|, |y_i|, |z_j|$, which form a basis of $\Lambda \otimes_{\mathbb{Z}} \K \subset |\widehat{\K \pi}|$, is given as follows: 
\[
{\rm Ham}^{\kappa}_{\alpha_i}(|y_i|) = |s\alpha_i|, 
\quad {\rm Ham}^{\kappa}_{\beta_i}(|x_i|) = -|s\beta_i|,
\]
and all the other values are trivial.
\item[(ii)]
The values of the $1$-cocycles ${\sf b}_1, \ldots, {\sf b}_n$ on the tangential lifts $\widetilde{{\rm Ham}^{\kappa}_{\alpha_i}}, \widetilde{{\rm Ham}^{\kappa}_{\beta_i}}, \widetilde{{\rm Ham}^{\kappa}_{\gamma_j}}$ are given as follows:
\[
{\sf b}_j(\widetilde{{\rm Ham}^{\kappa}_{\gamma_j}}) = |s\gamma_j|,
\]
and all the other values are trivial.
\end{enumerate}
\end{lem}

\begin{proof}
(i) First we consider ${\rm Ham}^{\kappa}_{\alpha_i}$.
For the value ${\rm Ham}_{\alpha_i}^{\kappa}(|y_i|)$, we compute as follows: 
\[
{\rm Ham}_{\alpha_i}^{\kappa}(|y_i|) = {\rm Ham}_{\alpha_i}^{\kappa}(|\log \beta_i|) = | {\rm Ham}_{\alpha_i}^{\kappa}(\beta_i) \beta_i^{-1} | = |\beta_i s \alpha_i \beta_i^{-1}| = |s\alpha_i|.
\]
Here we have used Example \ref{ex:ulog} in the second equality.
By \eqref{eq:Hamalpha}, we have ${\rm Ham}^{\kappa}_{\alpha_i}(|x_k|) = |[x_k, [s,\alpha_i]]| = 0$ for $k<i$, and similarly we can compute all the other values which are trivial.
The calculations for ${\rm Ham}^{\kappa}_{\beta_i}$ and ${\rm Ham}^{\kappa}_{\gamma_j}$ are similar. 

(ii) Since ${\rm Ham}^{\kappa}_{\alpha_i}(\gamma_j) = 0$, the value ${\sf b}_j(\widetilde{{\rm Ham}^{\kappa}_{\alpha_i}})$ vanishes.
Similarly, ${\sf b}_j(\widetilde{{\rm Ham}^{\kappa}_{\beta_i}})=0$.
For $\widetilde{{\rm Ham}^{\kappa}_{\gamma_j}}$, the proof of Proposition~\ref{prop:lifthk} yields the following: ${\sf b}_k(\widetilde{{\rm Ham}^\kappa_{\gamma_j}}) = |[s, \gamma_j]|=0
$ for $k < j$, ${\sf b}_j(\widetilde{{\rm Ham}^\kappa_{\gamma_j}})=|s\gamma_j|$, and
${\sf b}_k(\widetilde{{\rm Ham}^\kappa_{\gamma_j}}) = 0$ for $k > j$.
This completes the proof.
\end{proof}

We show that the composition of $\widetilde{{\rm Ham}^{\kappa}}$ with the $s=1$ evaluation map becomes a Lie algebra homomorphism.

\begin{prop} \label{prop:sigmatilde}
The map $\sigma \colon |\widehat{\K \pi}| \to {\rm Der}(\widehat{\K \pi})$ lifts to a Lie algebra homomorphism
\[
\tilde{\sigma}\colon |\widehat{\K \pi}| \to {\rm tDer}(\widehat{\K \pi}).
\]
\end{prop}
\begin{proof}
By Lemma~\ref{lem:lift_Ham}~(iii), $\sigma$ lifts to the map 
\[
\tilde{\sigma} \colon |\widehat{\K \pi}| \to {\rm tDer}(\widehat{\K \pi}),
\quad
|a| \mapsto \widetilde{{\rm Ham}^{\kappa}_a}|_{s=1}.
\]

To show that $\tilde{\sigma}$ is a Lie homomorphism, 
let $a, b \in \widehat{\K \pi}$ and write ${\rm Ham}^{\kappa}_a(z_j) = [z_j, u'_j s u''_j]$ and ${\rm Ham}^{\kappa}_b(z_j) = [z_j, v'_j s v''_j]$.
Then the $j$th tangential components of $\tilde{\sigma}(|a|)$ and $\tilde{\sigma}(|b|)$ are $u'_ju''_j$ and $v'_jv''_j$, respectively.
Since $[|a|, |b|] = | \sigma(|a|)b|$, the $j$th tangential component of $\tilde{\sigma}([|a|, |b|])$ can be seen from the bracket $\{ |s \sigma(|a|)(b)|, z_j \}$.
Using Proposition~\ref{prop:ksabc}, we compute
\begin{align*}
  \{ |s \sigma(|a|)b|, z_j \} & = \kappa'(\sigma(|a|)(b),z_j)s\kappa''(\sigma(|a|)(b),z_j) \\
  & = \sigma(|a|) (\kappa'(b,z_j)s \kappa''(b,z_j))
  - \kappa'(b, \sigma(|a|)(z_j))s \kappa''(b, \sigma(|a|)(z_j)) \\
  & = \sigma(|a|)( [z_j, v'_jsv''_j])
  - \sigma(|sb|)([z_j, u'_ju''_j]) \\
  & = [ [z_j, u'_ju''_j], v'_jsv''_j] + [z_j, \sigma(|a|)(v'_jsv''_j)] \\
  & \hspace{1em} - [[z_j, v'_jsv''_j], u'_ju''_j] - [z_j, \sigma(|sb|)(u'_ju''_j)] .
\end{align*}
By the Jacobi identity, the first and third term combined yields $[z_j, [u'_ju''_j, v'_jsv''_j]]$.
Thus the tangential component of $\widetilde{{\rm Ham}^{\kappa}}_{\hspace{-0.4em} \sigma(|a|)b} \in {\rm tDer}^{s-{\rm lin}}(\widehat{\K \pi}\langle s \rangle)$ is equal to 
\[
[u'_ju''_j, v'_jsv''_j] + \sigma(|a|)(v'_jsv''_j) - \sigma(|sb|)(u'_ju''_j).
\]
By putting $s=1$, the $j$th tangential component of $\tilde{\sigma}([|a|, |b|])$ is equal to
\[
[u'_ju''_j, v'_jv''_j] + \sigma(|a|)(v'_jv''_j) - \sigma(|b|)(u'_ju''_j),
\]
which coincides with the $j$th tangential component of $[\tilde{\sigma}(|a|), \tilde{\sigma}(|b|)]$.
This proves that $\tilde{\sigma}$ is a Lie homomorphism.
\end{proof}

Propositions~\ref{prop:lifthk} and \ref{prop:sigmatilde} summarize in the following commutative diagram:
\begin{equation} \label{rem:tangential_lift}
\xymatrix{
\widehat{\K \pi}
\ar[r]_(0.3){{\rm Ham}^{\kappa}} \ar[d]_{|\cdot|}
\ar@/^/@<2ex>[rr]^{\widetilde{{\rm Ham}^{\kappa}}} & {\rm Der}^{s-{\rm lin}}(\widehat{\K \pi}\langle s \rangle)
\ar[d]^{s=1}
& {\rm tDer}^{s-{\rm lin}}
(\widehat{\K \pi}\langle s \rangle)
\ar[l] \ar[d]_{s=1}
\\
|\widehat{\K \pi}|
\ar[r]^(0.4){\sigma}
\ar@/_/@<-2ex>[rr]_{\tilde{\sigma}}
& {\rm Der}(\widehat{\K\pi})
& {\rm tDer}(\widehat{\K \pi})
\ar[l]
}
\end{equation}
By taking the associated graded of Proposition \ref{prop:lifthk}, we see that the double bracket $\kappa_{\rm gr}$ is tangential to $\{ z_1, \ldots, z_n \} \subset A = {\rm gr}^{\rm wt}\, \K \pi$ and induces tangential lifts
\[
\widetilde{{\rm Ham}^{\kappa_{\rm gr}}} \colon A \to 
{\rm tDer}^{s-{\rm lin}}(A\langle s \rangle), \qquad
\tilde{\sigma}_{\rm gr} \colon |A| \to {\rm tDer}_{\{z_1,\ldots,z_n\}}(A).
\]
Also, by taking the associated graded of the diagram \eqref{rem:tangential_lift}, we obtain the following commutative diagram:
\[
\xymatrix{
A
\ar[r]_(0.3){{\rm Ham}^{\kappa_{\rm gr}}} \ar[d]_{|\cdot|}
\ar@/^/@<2ex>[rr]^{\widetilde{{\rm Ham}^{\kappa_{\rm gr}}}} & {\rm Der}^{s-{\rm lin}}(A\langle s \rangle)
\ar[d]^{s=1}
& {\rm tDer}^{s-{\rm lin}}
(A\langle s \rangle)
\ar[l] \ar[d]_{s=1}
\\
|A|
\ar[r]^(0.4){\sigma_{\rm gr}}
\ar@/_/@<-2ex>[rr]_{\tilde{\sigma}_{\rm gr}}
& {\rm Der}(A)
& {\rm tDer}_{\{z_1,\ldots,z_n\}}(A)
\ar[l]
}
\]

\subsection{Maps $\mu^f$ from the cocycle ${\sf Div}^f$}

In this subsection, we establish one of the main results of this paper: we show that the maps $\mu^f_r$ and $\mu^f_l$ determined by topology of self-intersections of curves on the framed surface can be represented as compositions of the cocycle ${\sf Div}^f_{\alpha, \beta, \gamma}$ and of the map $\widetilde{{\rm Ham}^\kappa}$ determined by intersections of pairs of curves.

We will abbreviate ${\sf Div}^f_{\alpha, \beta, \gamma}$ by ${\sf Div}^f$.
(As we will see soon in Theorem \ref{thm:Div^f}, the cocycle ${\sf Div}^f_{\alpha, \beta, \gamma}$ does not depend on the choice of standard generating system for $\pi$. Thus this abbreviation will be justified.)
Theorem~\ref{thm:c_l_r} applies to the $1$-cocycle ${\sf Div}^f$.
Thus it naturally extends to the 1-cocycle ${\rm tDer}(\widehat{\K \pi}\langle s \rangle) \to |\widehat{\K \pi}\langle s \rangle| \otimes |\widehat{\K \pi}\langle s \rangle|$ (which we also denote by ${\sf Div}^f$) and gives rise to maps
$$
{\sf Div}^f_r \colon {\rm tDer}^{s-{\rm lin}}(\widehat{\K \pi}\langle s \rangle) \to |\widehat{\K \pi}| \otimes \widehat{\K \pi},
\hskip 0.3cm
{\sf Div}^f_l \colon {\rm tDer}^{s-{\rm lin}}(\widehat{\K \pi}\langle s \rangle) \to \widehat{\K \pi} \otimes |\widehat{\K\pi}|.
$$

The following theorem expresses the coaction maps $\mu^f_r$ and $\mu^f_l$ as compositions of maps ${\sf Div}^f_r$ and ${\sf Div}^f_l$ induced by the cocycle ${\sf Div}^f$ and of the map $\widetilde{{\rm Ham}}^\kappa$ induced by the double bracket $\kappa$.

\begin{thm}   \label{thm:mu=ch}
We have,
\begin{equation} \label{eq:mu=ch}
\mu^f_r={\sf Div}^f_r \circ \widetilde{{\rm Ham}^\kappa},
\hspace{2em} 
\mu^f_l = {\sf Div}^f_l \circ \widetilde{{\rm Ham}^\kappa}.
\end{equation}
\end{thm}

\begin{proof}
First observe that the 1-cocycle ${\sf Div}^f$ is cohomologous to the sum of the divergence cocycle and a linear combination of cocycles ${\bf 1} \wedge {\sf b}_j$. Hence, by Theorem~\ref{thm:c_l_r}, the maps ${\sf Div}^f_r$ and ${\sf Div}^f_l$ satisfy equations \eqref{eq:c_l_r_long}.
Then, by Proposition~\ref{prop:mrab}, the maps
$$
m^f_r={\sf Div}^f_r \circ \widetilde{{\rm Ham}^\kappa},
\hspace{2em}
m^f_l={\sf Div}^f_l \circ \widetilde{{\rm Ham}^\kappa}
$$
satisfy the multiplicative properties~\eqref{eq:multiplicative} with $\Pi=\kappa$. Since the maps $\mu^f_r$ and $\mu^f_l$ have exactly the same multiplicative properties 
(see Proposition \ref{prop:prodmu}), it is sufficient to verify equalities
$\mu^f_r=m^f_r$ and $\mu^f_l=m^f_l$ on generators of $\pi$. The values of $\mu^f_r$ and $\mu^f_l$ on generators were computed in Theorem \ref{thm:values_mu^f}. So, it remains to compute the values of $m^f_r$ and $m^f_l$ on the generators of $\pi$.
From equation \eqref{eq:c^f_details}, we obtain the following: for any $\gamma \in \pi$,
\begin{align}
m^f_r(\gamma) & =  
{\sf Div}_r({\rm Ham}_{\gamma}^{\kappa})
- {\bf 1} \otimes {\rm Ham}_{\gamma}^{\kappa}(\sum_i |x_i| + |y_i|) + {\bf 1} \otimes {\rm Ham}_{\gamma}^{\kappa}({\bf p}^f) + {\bf 1} \otimes {\sf b}^f(\widetilde{{\rm Ham}_{\gamma}^{\kappa}}),  \nonumber \\
m^f_l(\gamma) & = 
{\sf Div}_l({\rm Ham}_{\gamma}^{\kappa}) - {\rm Ham}_{\gamma}^{\kappa}({\bf p}^f) \otimes {\bf 1} - {\sf b}^f(\widetilde{{\rm Ham}_{\gamma}^{\kappa}}) \otimes {\bf 1}.
\label{eq:m^f_explicit}
\end{align}
Here ${\sf Div}_r$ and ${\sf Div}_l$ are the components of the Magnus divergence ${\sf Div} = {\sf Div}_{X,Y,Z}$.
Also, we use the identification $|s \widehat{\K\pi}| \cong \widehat{\K\pi}$ to regard ${\rm Ham}^{\kappa}_{\gamma}(|x_i|), {\rm Ham}^{\kappa}_{\gamma}(|y_i|)$ and ${\sf b}^f(\widetilde{{\rm Ham}_{\gamma}^{\kappa}})$ as elements in $\widehat{\K \pi}$.

Using formulas \eqref{eq:Hamalpha} for ${\rm Ham}^\kappa_{\alpha_i}$, we compute 
%
$$
{\sf Div}({\rm Ham}^\kappa_{\alpha_i}) = 
| \partial_{X_i} {\rm Ham}_{\alpha_i}^{\kappa}(\alpha_i) + \partial_{Y_i} {\rm Ham}_{\alpha_i}^{\kappa}(\beta_i) |
= 2({\bf 1} \otimes |s\alpha_i|) - |s| \otimes |\alpha_i|
$$
which yields
$$
{\sf Div}_r({\rm Ham}^\kappa_{\alpha_i})=2({\bf 1} \otimes \alpha_i), \hskip 0.3cm
{\sf Div}_l({\rm Ham}^\kappa_{\alpha_i}) = - 1 \otimes |\alpha_i|.
$$
By Lemma~\ref{lem:Ham_computation_trace}, we see that the derivation ${\rm Ham}^{\kappa}_{\alpha_i}$ sends $|y_i|$ to $|s\alpha_i| \in |s \widehat{\K \pi}|$ (which corresponds to $\alpha_i \in \widehat{\K \pi}$) and all the other elements in the basis of $\Lambda \otimes_{\mathbb{Z}} \K$ to zero.
Also ${\sf b}_j(\widetilde{{\rm Ham}^{\kappa}_{\alpha_i}}) = 0$.
By \eqref{eq:m^f_explicit} we conclude that 
\begin{align*}
m_r^f(\alpha_i) & = {\sf Div}_r({\rm Ham}^\kappa_{\alpha_i}) + ({\rm rot}^f(\alpha_i) -1) ({\bf 1} \otimes {\rm Ham}^\kappa_{\alpha_i}|y_i|)  \\
& = 2({\bf 1} \otimes \alpha_i) +({\rm rot}^f(\alpha_i) -1) ({\bf 1} \otimes \alpha_i) \\
& = ({\rm rot}^f(\alpha_i) +1) ({\bf 1} \otimes \alpha_i)
\end{align*}
which coincides with the value of $\mu^f_r(\alpha_i)$.
In a similar fashion, we compute
\[
m^f_l(\alpha_i) = {\sf Div}_l({\rm Ham}^\kappa_{\alpha_i}) - {\rm rot}^f(\alpha_i)({\rm Ham}^\kappa_{\alpha_i} |y_i| \otimes {\bf 1})
= - 1 \otimes |\alpha_i| - {\rm rot}^f(\alpha_1)(\alpha_i \otimes {\bf 1})
\]
which coincides with the value of $\mu^f_l(\alpha_i)$.

For ${\rm Ham}^\kappa_{\beta_i}$ the
computation of the Magnus divergence yields
$$
{\sf Div}({\rm Ham}^\kappa_{\beta_i}) = 
|\partial_{X_i} {\rm Ham}^{\kappa}_{\beta_i}(\alpha_i) + \partial_{Y_i} {\rm Ham}^{\kappa}_{\beta_i}(\beta_i) | = 
|\beta_i s| \otimes {\bf 1} - {\bf 1} \otimes |\beta_i s| - |\beta_i| \otimes |s|
$$
and
$$
{\sf Div}_r({\rm Ham}^\kappa_{\beta_i})= - {\bf 1} \otimes \beta_i - |\beta_i| \otimes 1, \hskip 0.3cm
{\sf Div}_l({\rm Ham}^\kappa_{\beta_i})= \beta_i \otimes {\bf 1}.
$$
Again, by using Lemma~\ref{lem:Ham_computation_trace}, we conclude that
\[
m^f_r(\beta_i) = 
{\sf Div}_r({\rm Ham}^\kappa_{\beta_i}) - ({\rm rot}^f(\beta_i) + 1)({\bf 1} \otimes {\rm Ham}^\kappa_{\beta_i}|x_i|)
= {\rm rot}^f(\beta_i)({\bf 1} \otimes \beta_i) - |\beta_i| \otimes 1,
\]
and this expression coincides with the value of $\mu^f_r(\beta_i)$.
Similarly we compute
\[
m^f_l(\beta_i) = 
{\sf Div}_l({\rm Ham}^\kappa_{\beta_i}) + {\rm rot}^f(\beta_i)({\rm Ham}^\kappa_{\beta_i}|x_i| \otimes {\bf 1}) 
= (1 - {\rm rot}^f(\beta_i))(\beta_i \otimes {\bf 1})
\]
which coincides with the value of $\mu^f_l(\beta_i)$.

Finally the divergence of ${\rm Ham}^\kappa_{\gamma_j}$ takes the form
$$
{\sf Div}({\rm Ham}^\kappa_{\gamma_j}) = |\partial_{Z_j} {\rm Ham}^{\kappa}_{\gamma_j}(\gamma_j)| = {\bf 1} \otimes |s\gamma_j| - |s| \otimes |\gamma_j|,
$$
and
$$
{\sf Div}_r({\rm Ham}^\kappa_{\gamma_j})={\bf 1} \otimes \gamma_j, \hskip 0.3cm
{\sf Div}_l({\rm Ham}^\kappa_{\gamma_j})= - 1\otimes |\gamma_j|.
$$
Using Lemma~\ref{lem:Ham_computation_trace} again, we conclude that
\[
m^f_r(\gamma_j) = {\sf Div}_r({\rm Ham}^\kappa_{\gamma_j}) + {\rm rot}^f(\gamma_j)({\bf 1} \otimes {\sf b}_j(\widetilde{{\rm Ham}^\kappa_{\gamma_j}}))
= ({\rm rot}^f(\gamma_j) + 1)({\bf 1} \otimes \gamma_j).
\]
This expression coincides with $\mu^f_r(\gamma_j)$. In a similar way, we have
\[
m^f_l(\gamma_j) = {\sf Div}_l({\rm Ham}^\kappa_{\gamma_j}) - {\rm rot}^f(\gamma_j)({\sf b}_j(\widetilde{{\rm Ham}^\kappa_{\gamma_j}})\otimes {\bf 1})
= - {\rm rot}^f(\gamma_j)(\gamma_j \otimes {\bf 1}) - 1 \otimes |\gamma_j|,
\]
which coincides with the value of $\mu^f_l(\gamma_j)$.
\end{proof}

The next result establishes independence of the cocycle ${\sf Div}^f_{\alpha, \beta, \gamma}$ of the standard generating system.

\begin{thm}      \label{thm:Div^f}
For all standard generating systems of $\pi$, the cocycles
$$
{\sf Div}^f_{\alpha, \beta, \gamma} \colon {\rm tDer}(\widehat{\K \pi}) \to |\widehat{\K \pi}| \otimes |\widehat{\K \pi}|
$$
coincide with each other and define a canonical cocycle ${\sf Div}^f$ which depends only on the framing $f$.
\end{thm}

\begin{proof}
By Theorem \ref{thm:div_independent}, the cocycles ${\sf Div}_{\alpha, \beta,\gamma}$ for different generating systems differ by exact cocycles of the form
$$
u(\tilde{\Delta}(\lambda)) = u(\lambda) \otimes {\bf 1} - {\bf 1} \otimes u(\lambda),
$$
where 
$$
\lambda \in \Lambda = \mathbb{Z}\langle |x_i|, |y_i|, |z_j|; i=1,\dots,g, j=1, \dots, n \rangle.
$$
Together with the defining formula \eqref{eq:c^f_definition} of ${\sf Div}^f_{\alpha, \beta, \gamma}$, 
this shows that for two generating system $\{ \alpha_i, \beta_i, \gamma_j \}$ and $\{ \alpha'_i, \beta'_i, \gamma'_j \}$, we have
$$
\mathfrak{b}(\tilde{u})={\sf Div}^f_{\alpha', \beta',\gamma'}(\tilde{u}) - {\sf Div}^f_{\alpha, \beta, \gamma}(\tilde{u}) =
{\bf 1} \wedge (u \sum_i (k_i |x_i| + l_i |y_i|) + \sum_j(m_j u(|z_j|) + n_j {\sf b}_j(\tilde{u}))).
$$
Here $k_i, l_i, m_j, n_j \in \mathbb{Z}$ are integers. Recall that for $\tilde{u} \in {\rm tDer}(\widehat{\K \pi})$ we have $u(|z_j|)=|[z_j, u_j]|=0$. 
Hence $\sum_j m_j u(|z_j|) =0$, and the corresponding term in $\mathfrak{b}(\tilde{u})$ can be dropped.

By Theorem \ref{thm:mu=ch}, we have 
$\mathfrak{b}_r \circ \widetilde{{\rm Ham}^\kappa} = \mu^f_r - \mu^f_r=0$.
Hence, using Lemma~\ref{lem:Ham_computation_trace}, we compute
$$
0=(\mathfrak{b}_r \circ \widetilde{{\rm Ham}^\kappa})(\sum_i (\alpha_i + \beta_i) + \sum_j \gamma_j) = 
{\bf 1} \otimes (\sum_i (l_i \alpha_i - k_i \beta_i) + \sum_j n_j \gamma_j).
$$
The right hand side vanishes if and only if $k_i=l_i=n_j=0$ for all $i$ and $j$. Hence, all cocycles ${\sf Div}^f_{\alpha, \beta, \gamma}$ coincide, as required.
\end{proof}

\subsection{Further properties of the cocycle ${\sf Div}^f$} \label{subsec:furtherc_f}

In this subsection, we discuss further properties of the cocycle ${\sf Div}^f$, and in particular its relation to the Turaev cobracket and its graded version.

First we note that the cocycle ${\sf Div}^f$ admits a unique integration to the group 1-cocycle
\[
{\sf J}^f \colon {\rm tAut}^+(\widehat{\mathbb{K}\pi}) \to 
|\widehat{\mathbb{K}\pi}| \otimes |\widehat{\mathbb{K}\pi}|,
\quad
\tilde{F} \mapsto {\sf J}_{x,y,z}(F) + {\bf 1} \wedge {\sf c}^f(\tilde{F}) + F( \tilde{\Delta}({\bf r} - {\bf p}^f)) - \tilde{\Delta}({\bf r} - {\bf p}^f).
\]
Here, ${\sf J}_{x,y,z}$ is the non-commutative log-Jacobian cocycle in Definition~\ref{dfn:J} determined by the exponential variables $\{ x_i, y_i, z_j\}$, and ${\sf c}^f$ is the integration of the Lie $1$-cocycle ${\sf b}^f$:
\[
{\sf c}^f \colon {\rm tAut}^+(\widehat{\K \pi}, \Delta) \to |\widehat{\K\pi}|,
\quad \tilde{F} \mapsto \sum_j {\rm rot}^f(\gamma_j) {\sf c}_j(\tilde{F}) = \sum_j {\rm rot}^f(\gamma_j) |\log f_j|.
\]

Next we consider restrictions of ${\sf Div}^f$ to tangential Hopf derivations of $\widehat{\mathbb{K}\pi}$.

\begin{prop} \label{prop:div^f}
There is a unique Lie algebra 1-cocycle ${\sf div}^f \colon {\rm tDer}(\widehat{\mathbb{K}\pi}, \Delta) \to |\widehat{\mathbb{K}\pi}|$ such that ${\sf Div}^f = \tilde{\Delta} \circ {\sf div}^f$. It is given by formula
\[
{\sf div}^f(\tilde{u}) = {\sf div}_{\alpha,\beta,\gamma}(u) - u({\bf p}^f) - {\sf b}^f(\tilde{u})
= {\sf div}_{x,y,z} - {\sf b}^f(\tilde{u}) + u({\bf r} - {\bf p}^f).
\]
Here ${\sf div}_{\alpha, \beta, \gamma}$ is the Lie algebra $1$-cocycle in Proposition~\ref{prop:c_b} determined by the standard generating system $\{\alpha_i, \beta_i, \gamma_j\}$, and ${\sf div}_{x,y,z}$ is the map in \eqref{eq:div_z} determined by the exponential variables $\{ x_i, y_i, z_j \}$.
\end{prop}

\begin{proof}
Recall from \eqref{eq:c^f_definition} that 
$$
{\sf Div}^f(\tilde{u}) = {\sf Div}_{\alpha, \beta, \gamma}(u) + {\bf 1} \wedge (u({\bf p}^f) + {\sf b}^f(\tilde{u}))
$$
and observe that for $\tilde{u} \in {\rm tDer}(\widehat{\mathbb{K}\pi}, \Delta)$, we have 
$$
{\bf 1} \wedge (u({\bf p}^f) + {\sf b}^f(\tilde{u})) = - \tilde{\Delta}(u({\bf p}^f) + {\sf b}^f(\tilde{u})).
$$
Furthermore, by Proposition \ref{prop:c_b}, we have
${\sf Div}_{\alpha, \beta, \gamma}(u) = \tilde{\Delta}({\sf div}_{\alpha,\beta,\gamma}(u))$.
Hence ${\sf Div}^f(\tilde{u})=\tilde{\Delta}({\sf div}^f(\tilde{u}))$, where
$$
{\sf div}^f(\tilde{u}) = {\sf div}_{\alpha,\beta,\gamma}(u) - u({\bf p}^f) - {\sf b}^f(\tilde{u}).
$$
Similarly the second expression of ${\sf div}^f(\tilde{u})$ follows from \eqref{eq:c^f_details2}.
The uniqueness and the cocycle property of ${\sf div}^f$ follow from the injectivity of $\tilde{\Delta}$. The cocycle ${\sf div}^f$ depends only on the framing $f$ since so does the cocycle ${\sf Div}^f$.
\end{proof}

Note that the cocycle ${\sf div}^f$ admits a unique integration to a group 1-cocycle 
\[
{\sf j}^f \colon {\rm tAut}^+(\widehat{\mathbb{K}\pi}, \Delta) \to 
|\widehat{\mathbb{K}\pi}|,
\quad
\tilde{F} \mapsto {\sf j}_{x,y,z}(F) - {\sf c}^f(\tilde{F}) + F({\bf r} - {\bf p}^f) - ({\bf r} - {\bf p}^f), 
\]
and for $F\in {\rm tAut}^+(\widehat{\mathbb{K}\pi}, \Delta)$ we have ${\sf J}^f(F) = \tilde{\Delta}({\sf j}^f(F))$.
Here, ${\sf j}_{x,y,z}$ is the group $1$-cocycle in Proposition~\ref{prop:J_and_j} determined by the variables $\{x_i, y_i, z_j\}$.

Recall that the map $\sigma \colon |\widehat{\mathbb{K} \pi}| \to {\rm Der}(\widehat{\mathbb{K} \pi})$ admits a canonical tangential lift $\tilde{\sigma}$
(see Proposition~\ref{prop:sigmatilde}).

\begin{thm} \label{prop:delta=c_sigma}
We have
\begin{equation} \label{eq:delta=c_sigma}
    \delta^f_{\rm Turaev}= {\sf Div}^f \circ \tilde{\sigma}, 
\end{equation}
where $\tilde{\sigma} \colon |\widehat{\mathbb{K} \pi}| \to {\rm tDer}(\widehat{\mathbb{K} \pi})$ is the map induced by the double bracket $\kappa$.
\end{thm}

\begin{proof}
By Proposition \ref{prop:prodmu} (ii), the framed version of the Turaev cobracket is given by formula
$$
\delta^f_{\rm Turaev}(|a|) = 
({\rm id} \otimes |\cdot|) \mu^f_r(a) +(|\cdot| \otimes {\rm id}) \mu^f_l(a),
$$
where $a \in \widehat{\mathbb{K} \pi}$.
Using Theorem \ref{thm:mu=ch}, we compute
$$
\delta^f_{\rm Turaev}(|a|) =
\big( ({\rm id} \otimes |\cdot|) {\sf Div}^f_r + (|\cdot| \otimes {\rm id}) {\sf Div}^f_l \big) \circ \widetilde{{\rm Ham}^{\kappa}_a} 
= ({\sf Div}^f \circ \tilde{\sigma})(|a|).
$$
Here we have used the following facts: we recall that $\widetilde{{\rm Ham}^{\kappa}_a}$ is a tangential derivation linear in $s$. Then ${\sf Div}^f(\widetilde{{\rm Ham}^{\kappa}_a})$ takes values in 
$$
\big( |\widehat{\K \pi}| \otimes |s \widehat{\K \pi}| \big) \oplus
\big( |s \widehat{\K \pi}| \otimes |\widehat{\K \pi}| \big)
.
$$
By definition, we identify $|s \widehat{\K \pi}| \cong \widehat{\K \pi}$.
This gives rise to two components of the cocycle ${\sf Div}^f$, ${\sf Div}^f_r$ and ${\sf Div}^f_l$.
Composing ${\sf Div}^f_r$ with ${\rm id} \otimes |\cdot|$ and ${\sf Div}^f_l$ with $|\cdot| \otimes {\rm id}$ amounts to evaluation at $s=1$ and replacing $\widetilde{{\rm Ham}^\kappa}$ with $\tilde{\sigma}$ (see the commutative diagram \eqref{rem:tangential_lift}).
Adding the two components gives back the value of the cocycle ${\sf Div}^f$.
\end{proof}

Next we examine the graded version of the cocycle ${\sf Div}^f$.
For this puropose, we work with the explicit formula of ${\sf Div}^f$ given in \eqref{eq:c^f_details}.

\begin{prop}   \label{prop:c^f_gr}
The cocycle ${\sf Div}^f$ is of filtration degree zero, and
the associated graded map
${\sf Div}^f_{\rm gr} \colon {\rm tDer}_{\{ z_1, \ldots, z_n \}}(A) \to |A| \otimes |A|$ is given by
\begin{equation}   \label{eq:c^f_gr}
{\sf Div}^f_{\rm gr}={\sf Div}_{x,y,z} + {\bf 1} \wedge {\sf b}^f.
\end{equation}
Moreover, when restricted to tangential Hopf derivations of $A$, the cocycle
${\sf Div}^f_{\rm gr}$ acquires the form ${\sf Div}^f_{\rm gr}(\tilde{u}) = \tilde{\Delta}({\sf div}^f_{\rm gr}(\tilde{u}))$, where ${\sf div}^f_{\rm gr}\colon {\rm tDer}_{\{ z_1, \ldots, z_n\}}(L) \to |A|$ is given by
$$
{\sf div}^f_{\rm gr}(\tilde{u}) = {\sf div}_{x,y,z}(u) - {\sf b}^f(\tilde{u}).
$$
\end{prop}

\begin{proof}
Note that we can use the Magnus or the exponential (or any other) variables on $\widehat{\mathbb{K} \pi}$ to define the associated graded. With this in mind, we observe that in terms of Magnus variables the map ${\sf Div}_{X,Y,Z}$ is homogeneous of degree zero. Hence, so is its associated graded ${\sf Div}_{x,y,z}$ (where, as usual, we denote generators of the associated graded ${\rm gr}^{\rm wt}\, \widehat{\mathbb{K} \pi}$ by $x_i,y_i,z_j$). Furthermore, in terms of the exponential variables, the cocycle ${\sf b}^f$ is also homogeneous of degree zero. By abuse of notation, we denote its graded version by the same symbol. Finally the degree zero graded components of all trivial cocycles vanish, and in particular this applies to $u(\sum_i( |x_i| + |y_i|))$ and to $u({\bf p}^f)$. To conclude, we observe that the right hand side of equation~\eqref{eq:c^f_gr} is a non-vanishing cocycle. Hence the filtration degree of ${\sf Div}^f$ is indeed zero.
\end{proof}

The following statement is a graded version of Theorems~\ref{thm:mu=ch} and \ref{prop:delta=c_sigma}:

\begin{prop} \label{prop:delta=c_sigma_gr}
We have
\begin{equation} \label{eq:mu=ch_gr}
 \mu^f_{r, {\rm gr}} = {\sf Div}^f_{r, {\rm gr}} \circ \widetilde{{\rm Ham}^{\kappa_{\rm gr}}},
 \hspace{2em}
 \mu^f_{l, {\rm gr}} = {\sf Div}^f_{l, {\rm gr}} \circ \widetilde{{\rm Ham}^{\kappa_{\rm gr}}},
\end{equation}
\begin{equation} \label{eq:delta=c_sigma_gr}
    \delta^f_{\rm gr} = {\sf Div}^f_{\rm gr} \circ \tilde{\sigma}_{\rm gr}. 
\end{equation}
\end{prop}

\begin{proof}
Taking the associated graded of \eqref{eq:mu=ch} and \eqref{eq:delta=c_sigma} yields \eqref{eq:mu=ch_gr} and \eqref{eq:delta=c_sigma_gr}, respectively.
\end{proof}

For the group $1$-cocycles ${\sf J}^f_{\rm gr} \colon {\rm tAut}^+_{\{ z_1, \ldots, z_n \}}(A) \to |A| \otimes |A|$ and ${\sf j}^f_{\rm gr} \colon {\rm tAut}^+_{ \{ z_1, \ldots, z_n \}}(L) \to |A|$ corresponding to ${\sf Div}^f_{\rm gr}$ and ${\sf div}^f_{\rm gr}$, we have
${\sf J}^f_{\rm gr}(\tilde{F}) = \tilde{\Delta}({\sf j}^f_{\rm gr}(\tilde{F}))$, where
$$
{\sf j}^f_{\rm gr}(\tilde{F}) = {\sf j}_{x,y,z}(F) - {\sf c}^f(\tilde{F}).
$$

\section{Expansions and the Kashiwara-Vergne problem}  \label{sec:expansions+KV}

The goal of this section is to establish Theorem~\ref{thm:intro4} and to introduce higher genus Kashiwara-Vergne problems. The main tool is the theory of homomorphic expansions in the sense of Bar-Natan and Dancso \cite{BND13}.

We collect the notations used throughout this section.
Let $\Sigma$ be a surface of genus $g$ with $n+1$ boundary components.
We fix a standard generating system $\alpha_i, \beta_i, \gamma_j$ with $i=1,\ldots,g$, $j=1,\ldots,n$ for $\pi = \pi_1(\Sigma,*)$
(see Section~\ref{subsec:stdada}).
As was shown in Proposition~\ref{prop:grh}, the associated graded of the group algebra ${\rm gr}^{\rm wt}\, \K \pi$ with respect to the weight filtration is canonically isomorphic to the (completed) free associative algebra $A = \widehat{T}({\rm gr}^{\rm wt}\, H)$ with generators $x_i,y_i$ for $i=1,\ldots,g$ and $z_j$ for $j=1,\ldots,n$.
Under the weight filtration, these generators have the following degrees:
\[
\deg (x_i) = \deg(y_i) = 1,
\quad
\deg (z_j) = 2.
\]
We denote by
\[
L = L(x_i,y_i,z_j) = L({\rm gr}^{\rm wt}\, H)
\]
the (completed) free Lie algebra with generators $x_i,y_i,z_j$.
Recall from Section~\ref{subsec:div_der_Lie} that the Lie algebra ${\rm Der}^+(L)$ of positive derivations of $L$ integrates to the group ${\rm Aut}^+(L) = {\rm Aut}^+(A, \Delta)$.
As was explained in Section~\ref{subse:tngder}, there is also a tangential version of these Lie algebra and group, denoted by ${\rm tDer}^+(L) = {\rm tDer}^+_{\{ z_1, \ldots, z_n\}}(L)$ and ${\rm tAut}^+(L) = {\rm tAut}^+_{\{ z_1, \ldots, z_n\}}(L)$.

\subsection{Expansions: definitions and basic properties}

In this subsection, we define the notion of an expansion and discuss some special properties of expansions.

\begin{dfn}
An isomorphism of filtered algebras $\theta \colon \widehat{\K \pi} \to A$ is called an {\em expansion} if it descends to identity on the associated graded: ${\rm gr}\, \theta = {\rm id}$.
\end{dfn}

An expansion being an algebra isomorphism, it is sufficient to define it on the generators. The definition implies
$$
\theta(\alpha_i)\equiv_2 1 + x_i, \hskip 0.3cm
\theta(\beta_i) \equiv_2 1 + y_i, \hskip 0.3cm
\theta(\gamma_j)\equiv_3 1+ z_j,
$$
where $\equiv_m$ stands for the equivalence modulo $A_{\geq m}$.
Up to now, we were implicitly using the {\em Magnus expansion}
$$
\theta_M(\alpha_i)=1+x_i, \hskip 0.3cm
\theta_M(\beta_i) = 1 + y_i, \hskip 0.3cm
\theta_M(\gamma_j)=1+z_j
$$
and the {\em exponential expansion}
$$
\theta_{\rm exp}(\alpha_i)=e^{x_i}, \hskip 0.3cm
\theta_{\rm exp}(\beta_i) = e^{y_i}, \hskip 0.3cm
\theta_{\rm exp}(\gamma_j)=e^{z_j}.
$$

Recall that both $\widehat{\K \pi}$ and $A$ are filtered Hopf algebras. 

\begin{dfn}
An expansion is called {\em group-like}  if it is an isomorphism of filtered Hopf algebras.
\end{dfn}

It is easy to see that the exponential expansion is group-like whereas the Magnus expansion is not.
The following lemma describes group-like expansions using automorphisms of $L$.

\begin{prop}
Let $\theta \colon \widehat{\K\pi} \to A$ be a group-like expansion. Then there is a unique element $F \in {\rm Aut}^+(L)$ such that $\theta= F \circ \theta_{\rm exp}$.
\end{prop}

\begin{proof}
The map $F= \theta \circ \theta_{\rm exp}^{-1}$ is a Hopf algebra automorphism of $A$. Hence, it is an element of ${\rm Aut}(L)$.  By definition of an expansion, the associated graded of $\theta$ and $\theta_{\rm exp}$ are the identity morphisms. Hence, 
$$
F(x_i)= \theta(\theta_{\rm exp}^{-1}(x_i)) = \theta(\log(\alpha_i)) \equiv_2 \theta(\alpha_i -1) \equiv_2 x_i
$$
and $F(x_i) \in x_i + A_{\geq 2}$. Similar arguments show that $F(y_i) \in y_i + A_{\geq 2}$ and $F(z_j) \in z_j + A_{\geq 3}$. 
Therefore, $F \in {\rm Aut}^+(L)$, as required.
\end{proof}

Recall from Section~\ref{subsec:filtonKpi} that there is a distinguished element
\begin{equation} \label{eq:recall_omega}
\omega = {\rm gr}(\gamma_0 - 1) = \sum_{i=1}^g [x_i, y_i] + \sum_{j=1}^n z_j
\in A,
\end{equation}
where $\gamma_0 \in \pi$ is the loop around the $0$th boundary of $\Sigma$.

\begin{lem}
Let $\theta \colon \widehat{\K\pi} \to A$ be an expansion. Then we have
$\theta(\gamma_0) \equiv_3 1 + \omega$.
\end{lem}

\begin{proof}
Since ${\rm gr}\, \theta = {\rm id}$, we have $\theta(\gamma_0 - 1) \equiv_3 {\rm gr}(\gamma_0 -1) = \omega$.
\end{proof}

We continue with more definitions concerning expansions:

\begin{dfn}
\label{dfn:exptanb}
\begin{enumerate}
\item[(i)]
A group-like expansion $\theta \colon \widehat{\K\pi} \to A$ is called \emph{tangential} if for $j=1,\ldots,n$ there are group-like elements $f_j \in A$ such that
$\theta(\gamma_j)=f_j^{-1} \exp(z_j) f_j$.
\item[(ii)]
A tangential expansion $\theta$ is called {\em weakly special} if there is a group-like element $f_0 \in A$ such that $\theta(\gamma_0)=f_0^{-1} \exp(\omega)f_0$.
\item[(iii)] An expansion is called {\em special} if $\theta(\gamma_0)=\exp(\omega)$.
\end{enumerate}
\end{dfn}

%

We will use the following notation:
\begin{equation} \label{eq:xi_def}
\xi := \log \theta_{\rm exp}(\gamma_0) = \log \big( \prod_{i=1}^{g} (e^{x_i} e^{y_i} e^{-x_i} e^{-y_i})  \, \prod_{j=1}^{n} e^{z_j} \big)
\in A.
\end{equation}
It is convenient to state  conditions for a group-like expansion $\theta = F \circ \theta_{\rm exp}$ to be tangential, weakly special or special in terms of the automorphism $F \in {\rm Aut}^+(L)$.

\begin{prop}       \label{prop:properties_expansions}
\begin{enumerate}
    \item[$(i)$]
An expansion $\theta = \theta_F = F \circ \theta_{\rm exp}$ is tangential if and only if $F$ satisfies equations
$F(z_j)=f_j^{-1}z_jf_j$
for some group-like elements $f_j \in A$, $j=1, \dots, n$, i.e., $F$ admits a tangential lift $\tilde{F} = (F, f_1,\ldots, f_n) \in {\rm tAut}^+(L)$.
    \item[$(ii)$]
The expansion $\theta$ is weakly special if and only if $F$ admits a tangential lift and in addition satisfies the equation
$F(\xi) = f_0^{-1} \omega f_0$
for some group-like element $f_0 \in A$. 
    \item[$(iii)$]
The expansion $\theta$ is special
if and only if $F$ admits a tangential lift and
$F(\xi)=\omega$.
\end{enumerate}
\end{prop}

\begin{proof}
Assume that $\theta = F \circ \theta_{\rm exp}$ is tangential. Then,
$$
F(e^{z_j})=F(\theta_{\rm exp}(\gamma_j)) = \theta(\gamma_j) = f_j^{-1} e^{z_j} f_j
$$
which implies $f(z_j) = f_j^{-1} z_j f_j$.
In the other direction, we rearrange the equality above as follows:
$$
\theta(\gamma_j) = F(\theta_{\rm exp}(\gamma_j)) = F(e^{z_j}) =
\exp(f_j^{-1} z_j f_j) = f_j^{-1} e^{z_j} f_j
$$
which shows that $\theta$ is indeed a tangential expansion. This proves (i).
The proof of (ii) and (iii) are similar, so we omit them.
%
%
%
\end{proof}

\begin{rem}   \label{rem:special_inner}
A weakly special expansion can always be presented as a composition $\theta = {\rm Ad}_{f_0}^{-1} \circ \theta'$ of an inner automorphism of $A$ and of a special expansion $\theta'$. 
\end{rem}

\begin{rem}
If $n=0$, special expansions are called \emph{symplectic expansions} \cite{Mas12}.
If $g=0$, the terminology is due to \cite{Mas15} (but the notion had already appeared implicitly in \cite{AET, AT12}).
The condition in Definition \ref{dfn:exptanb} (ii) was given in \cite[\S 7.2]{KK16}.
Special expansions exist for any $g$ and $n$. This can be deduced from existence of symplectic expansions and 
of special expansions for $g=0$ by using gluing arguments for expansions.
\end{rem}

\subsection{Special expansions and double brackets}
In this subsection, we discuss various relations between special expansions and double brackets. 

First we recall the following key result (see Lemma 6.2 and Lemma 6.3 in \cite{MT13} and Theorem 2.31 in \cite{Naef}):

\begin{thm}  \label{thm:Florian}
\begin{enumerate}
\item[$(i)$] An element $\tilde{F} \in {\rm tAut}^+(A)$ preserves the double bracket 
$\kappa_{\rm gr}$ if and only if $F(\omega) = \omega$.

\item[$(ii)$] An element $\tilde{F} \in {\rm tAut}^+(\widehat{\K\pi})$ preserves the 
double bracket $\kappa$ if and only if $F(\gamma_0)=\gamma_0$.
\end{enumerate}
\end{thm}

\begin{rem}  \label{rem:omega=>h,sigma}
The following statements are direct corollaries of Theorem \ref{thm:Florian}:
\begin{enumerate}
\item[(i)]
Let $\tilde{F} \in {\rm tAut}^+(A)$ such that $F(\omega) = \omega$. Then, $F$ preserves the maps ${\rm Ham}^{\kappa_{\rm gr}}$ and $\sigma_{\rm gr}$ and the Lie bracket $[\cdot, \cdot]_{\rm gr}$.

\item[(ii)]
Let $\tilde{F} \in {\rm tAut}^+(\widehat{\K\pi})$ such that $F(\gamma_0)=\gamma_0$.
Then, $F$ preserves the maps ${\rm Ham}^\kappa$ and $\sigma$ and the Lie bracket $[\cdot, \cdot]_{\rm Goldman}$. 
\end{enumerate}
\end{rem}

For the future use we will need the following notation. Recall that the generating function of Bernoulli numbers has the form
$$
\frac{x}{e^x-1} =\sum_{k=0}^\infty \frac{B_k}{k!} \, x^k.
$$
We will consider the function 
$$
\phi(x) = \frac{1}{e^x -1} - \frac{1}{x} = \sum_{k=1}^\infty \frac{B_k}{k!} \, x^{k-1}
= - \frac{1}{2} + \frac{x}{12} - \frac{x^3}{720} + \frac{x^5}{30240} + \cdots
$$
and the element
\begin{equation}   \label{eq:phi}
\phi=\tilde{\Delta}(\phi(\omega)) \in A \otimes A.
\end{equation}
In the Sweedler notation, $\phi =\phi' \otimes \phi''$. Note that $\phi'\phi'' = \phi(0)=-1/2$.

For each expansion $\theta$, the double bracket $\kappa$ induces a double bracket $\kappa_\theta$ on $A$ such that $\theta$ becomes a $\K$-algebra homomorphism preserving double brackets.
In other words, $\kappa_{\theta}$ is a unique map which makes the following diagram commutative: 
\[
\xymatrix{
\widehat{\K \pi} \otimes \widehat{\K \pi} 
\ar[r]^{\kappa} \ar[d]_{\theta^{\otimes 2}}
&
\widehat{\K \pi} \otimes \widehat{\K \pi}
\ar[d]^{\theta^{\otimes 2}} \\
A \otimes A 
\ar[r]_{\kappa_{\theta}}
&
A \otimes A.
}
\]

The following result describes the induced map $\kappa_\theta$ for a special expansion $\theta$. It follows from the corresponding result by Massuyeau-Turaev \cite{MT13} for the homotopy intersection form $\eta = (\varepsilon \otimes {\rm id})\kappa$.

\begin{thm}[\cite{KK15, KK16, MTpre}]
\label{thm:kappa_f}
Let $\theta \colon \widehat{\K\pi} \to A$ be a special expansion. Then,
$$
\kappa_\theta=\kappa_{\rm gr} + \kappa_\phi,
$$
where $\kappa_{\phi}$ is the double bracket defined by 
$$
\kappa_{\phi}(a,b) = \phi'' a \otimes \phi' b + b \phi'' \otimes a \phi' - \phi'' \otimes a \phi' b - b \phi'' a \otimes \phi'.
$$
\end{thm}

\begin{rem}
Note that $\kappa_\theta \neq \kappa_{\rm gr}$. This is because (in terminology of van den Bergh) $\kappa_{\rm gr}$ is a double Poisson bracket and $\kappa$ (and hence $\kappa_\theta$) is a double quasi-Poisson bracket. In this paper, we do not introduce the two versions of the Jacobi identity for double brackets, and we do not develop this theme.
\end{rem}

Theorem \ref{thm:kappa_f} implies the following important statement:

\begin{prop}  \label{prop:sigma=sigma}
Let $\theta\colon \widehat{\K\pi} \to A$ be a special expansion
and let $\tilde{\sigma}_{\theta} \colon |A| \to {\rm tDer}(A) = {\rm tDer}_{\{z_1,\ldots, z_n\}}(A)$ be the induced map of $\tilde{\sigma}$ by $\theta$, i.e., the unique map which makes the diagram
\[
\xymatrix{
|\widehat{\K \pi}|
\ar[r]^{\tilde{\sigma}} \ar[d]_{\theta}
&
{\rm tDer}(\widehat{\K \pi})
\ar[d]^{\theta} \\
|A|
\ar[r]_{\tilde{\sigma}_{\theta}}
&
{\rm tDer}(A)
}
\]
commute. 
Then $\tilde{\sigma}_{\theta}$ coincides with the canonical tangential lift of $\sigma_{\rm gr}$: 
$\tilde{\sigma}_{\theta} = \tilde{\sigma}_{\rm gr}$.
\end{prop}

\begin{proof}
First, note that $\tilde{\sigma}_{\theta}$ is the composition of the map $\widetilde{{\rm Ham}^{\kappa_{\theta}}}$ and the evaluation at $s=1$.
We compute
\begin{equation*}   
{\rm Ham}^{\kappa_\phi}_a \colon b \mapsto \{ |sa|, b\}_{\kappa_{\phi}} = \phi'' as \phi' b + b \phi'' s a \phi' - \phi'' sa \phi' b - b \phi'' as \phi' =
[b, \phi'' [s,a] \phi'].
\end{equation*}
for all $a, b \in A$.
This shows that the double bracket $\kappa_{\phi}$ is tangential.
Since $(\phi'' [s,a] \phi')|_{s = 1} = 0$,
the map $\sigma_{\phi}(|a|)$ and its tangential lift vanish.
Therefore we have an equality of tangential lifts
$$
\tilde{\sigma}_\theta(|a|) = \widetilde{{\rm Ham}^{\kappa_\theta}_a}|_{s=1} =
\widetilde{{\rm Ham}^{\kappa_{\rm gr}}_a}|_{s=1} = \tilde{\sigma}_{\rm gr}(|a|),
$$
as required.
\end{proof}

\begin{rem}
The proof of Proposition~\ref{prop:sigma=sigma} shows that the image of ${\rm Ham}^{\kappa_\phi}$ is contained in the subspace of inner derivations linear in $s$.
\end{rem}

We now consider the map $\tilde{\sigma}_{\rm gr} \colon |A| \to {\rm tDer}(A)$ in more detail.
Its image and kernel are described by the following statement:

\begin{prop}  \label{prop:im_sigma_gr}
The image of the map $\tilde{\sigma}_{\rm gr} \colon |A| \to {\rm tDer}(A)$
is given by
$$
{\rm im}(\tilde{\sigma}_{\rm gr}) =
\{ \tilde{u} \in {\rm tDer}(A); u(\omega)=0\}, 
$$
and the kernel is given by
$$
\ker(\tilde{\sigma}_{\rm gr}) = \K {\bf 1}.
$$
\end{prop}

\begin{proof}
See Appendix \ref{subsec:bra_kappa_gr}.
\end{proof}

Next we can state similar results on ${\rm ker}(\tilde{\sigma})$ and ${\rm im}(\tilde{\sigma})$:

\begin{prop} \label{prop:ker_sigma}
The image of the map $\tilde{\sigma}\colon |\widehat{\K \pi}| \to {\rm tDer}(\widehat{\K \pi})$ is given by
\[
{\rm im} (\tilde{\sigma}) = 
\{ \tilde{u} \in {\rm tDer}(\widehat{\K \pi}) ; u(\xi) = 0 \}, 
\]
and its kernel is given by  
\[
\ker (\tilde{\sigma}) = \K {\bf 1}.
\]
The kernel of the map $\sigma \colon |\widehat{\K \pi}| \to {\rm Der}(\widehat{\K \pi})$ is given by 
\[
\ker (\sigma) = \K {\bf 1} \oplus 
\sum_{j=1}^n |\K [[ \log \gamma_j]]_{\ge 1} |.
\]
\end{prop}

\begin{proof}
Using some special expansion, identify $|\widehat{\K\pi}| \cong |A|$ and
${\rm tDer}(\widehat{\K\pi}) \cong {\rm tDer}(A)$. Under this identification,
$\xi$ maps to $\omega$ and $\log \gamma_j$ to a conjugate of $z_j$.
Now the statements follow from Proposition \ref{prop:ker_sigma_gr} and Proposition \ref{prop:im_sigma_gr}. 
\end{proof}

Finally we complete our presentation with the following theorem:

\begin{thm}[\cite{AKKN_new, KK14, KK16, MT13, MTpre}] 
\label{thm:toMT}
Let $\theta \colon \widehat{\K\pi} \to A$ be a group-like expansion. Then the following statements are equivalent:
\begin{enumerate}
\item[$(i)$]
$\theta$ is weakly special;

\item[$(ii)$]
$\theta$ induces an isomorphism of filtered Lie algebras 
$(|\widehat{\K \pi}|, [\cdot, \cdot]_{\rm Goldman})  \to (|A|, [\cdot, \cdot]_\gr)$.
\end{enumerate}
\end{thm}

\begin{proof}
Theorem \ref{thm:toMT} summarizes several results. The assertion $(i) \Rightarrow (ii)$ was established in \cite{KK14, KK16, MT13, MTpre}. The assertion $(ii) \Rightarrow (i)$ is the result by the authors \cite{AKKN_new}.
\end{proof}

In particular, if $\theta = F\circ \theta_{\exp}$ is a group-like expansion which solves the formality problem for the Goldman Lie algebra, then it follows that $\theta$ is tangential and there is a lift of $F$ to a tangential automorphism $\tilde{F}$.

\subsection{Homomorphic expansions} \label{subsec:homoexp}
In this subsection, we define and study {\em homomorphic expansions} for the Goldman-Turaev Lie bialgebra. 
Similar to special expansions, their properties can be encoded in those of automorphsims $\tilde{F} \in {\rm tAut}^+(L)$.

Let $f$ be a framing on $\Sigma$. Recall that the Lie algebras $(|\widehat{\mathbb{K}\pi}|, [\cdot, \cdot]_{\rm Goldman})$ and $(|A|, [\cdot,  \cdot]_{\rm gr})$ carry the Lie cobrackets $\delta^f_{\rm Turaev}$ and $\delta^f_{\rm gr}$, respectively, which promote them to involutive Lie bialgebras.

\begin{dfn}
A group-like expansion $\theta$ is called {\em homomorphic} (with respect to the framing $f$) if it induces an isomorphism of involutive Lie bialgebras
$$
\theta \colon (|\widehat{\K \pi}|, [\cdot, \cdot]_{\rm Goldman}, \delta^f_{\rm Turaev}) \to (|A|, [\cdot, \cdot]_\gr, \delta^f_\gr).
$$
\end{dfn}

Let ${\delta}^f_{\theta}$ be the induced map of the framed Turaev cobracket by the expansion $\theta$:
\[
\xymatrix{
|\widehat{\K \pi}| 
\ar[r]^{\delta^f} \ar[d]_{\theta}
&
|\widehat{\K \pi}| \otimes |\widehat{\K \pi}|
\ar[d]^{\theta^{\otimes 2}} \\
A 
\ar[r]^{\delta^f_{\theta}}
&
A \otimes A.
}
\]
Then $\theta$ is homomorphic if and only if the induced map of the Goldman bracket by $\theta$ equals $[\cdot, \cdot]_{\rm gr}$ and
\[
\delta^f_{\theta} = \delta^f_{\rm gr}.
\]

The existence problem for homomorphic expansions is also called the Goldman-Turaev (GT) formality problem. This is the central question addressed in this paper. Its positive or negative solution may depend on the framing $f$.

\begin{rem}
If the GT formality problem admits a positive solution for some choice of framing, this implies a positive solution of the GT formality problem for the canonical GT Lie bialgebra $|\widehat{\K \pi}|/\K {\bf 1}$.
\end{rem}

\begin{rem}  \label{rem:inner}
If $\theta$ is a group-like homomorphic expansion and $f_0 \in \exp(L)$, then the expansion $\theta' = {\rm Ad}_{f_0} \circ \theta$ is also group-like and homomorphic.
Indeed, the inner automorphism ${\rm Ad}_{f_0}$ is an automorphism of the free Lie algebra $L$. Hence, it preserves the group-like property. Furthermore, ${\rm Ad}_{f_0}$ acts trivially on $|A|$. Therefore, the induced maps
$\theta, \theta' \colon |\widehat{\mathbb{K}\pi}| \to |A|$ coincide.
\end{rem}

The exponential expansion $\theta_{\exp}$ associated with the generating system $\{\alpha_i, \beta_i, \gamma_j \}$ identifies $\widehat{\K \pi}$ and $A = \widehat{T}({\rm gr}^{\rm wt}\, H)$, through which we regard elements ${\bf p}^f$ and ${\bf r}$ defined in \eqref{eq:def_pf} and \eqref{dfn:bfr} as elements in $|A|$.
Also, we regard the $1$-cocycle ${\sf div}^f$ as a map ${\rm tDer}(L) \to |A|$.
By Propositions~\ref{prop:div^f}~and~\ref{prop:c^f_gr}, we have
\[
{\sf div}^f_{\rm gr}(\tilde{u}) - {\sf div}^f(\tilde{u})
= u({\bf p}^f - {\bf r}).
\]
By applying equation~\eqref{eq:D^f_first} to cocycles 
${\sf div}^f$ and ${\sf div}^f_{\rm gr}$, we get
\begin{align}
R^f(\tilde{F}):= R_{{\sf div}^f, {\sf div}^f_{\rm gr}}(\tilde{F})  & = {\sf j}^f_{\rm gr}(\tilde{F}) + F({\bf r}-{\bf p}^f) \nonumber \\
& = {\sf j}_{x,y,z}(F) - {\sf c}^f(\tilde{F}) + F({\bf r}-{\bf p}^f) \label{eq:R^f_def} \\
& = {\sf j}^f(\tilde{F}) + {\bf r} - {\bf p}^f. \nonumber
\end{align}

\begin{rem}
When $g = 0$, we have $R^f(\tilde{F}) = {\sf j}^f_{\rm gr}(\tilde{F}) = {\sf j}^f(\tilde{F}) = {\sf j}_{x,y,z}(F) - {\sf c}^f(\tilde{F})$.
\end{rem}

The map $R^f\colon {\rm tAut}^+(L) \to |A|$ will play a 
central role in what follows.
\begin{prop} \label{prop:transform_R^f}
The map $R^f$ has the following property: for any $\tilde{F}, \tilde{G} \in {\rm tAut}^+(L)$, 
\begin{equation*}
R^f(\tilde{F} \tilde{G}) = 
{\sf j}^f_{\rm gr}(\tilde{F}) + F(R^f(\tilde{G})) =
R^f(\tilde{F}) + F({\sf j}^f(\tilde{G})).
\end{equation*}
\end{prop}
\begin{proof}
This is a direct consequence of Proposition~\ref{prop:transform_R}.
\end{proof}

Let $\theta = F \circ \theta_{\rm exp}$ be a tangential expansion.
Then we obtain the induced map ${\sf Div}^f_{\theta}$.
Namely, the diagram
\[
\xymatrix{
{\rm tDer}(\widehat{\K \pi})
\ar[r]^{{\sf Div}^f}
\ar[d]_{\theta}
&
|\widehat{\K\pi}| \otimes |\widehat{\K \pi}|
\ar[d]^{\theta^{\otimes 2}}
\\
{\rm tDer}(A)
\ar[r]_{{\sf Div}^f_{\theta}}
&
|A| \otimes |A|
}
\]
commutes. 
In a similar way, we obtain the induced map ${\sf div}^f_{\theta}\colon {\rm tDer}(L) \to |A|$.

\begin{prop}  \label{prop:c_transfer}
Let $\theta = \theta_F = F \circ \theta_{\rm exp}$ be a tangential expansion determined by $\tilde{F} \in {\rm tAut}^+(L)$. Then, for all $\tilde{u} \in {\rm tDer}(A)$, we have
$$
{\sf Div}^f_\theta(\tilde{u})={\sf Div}^f_{\rm gr}(\tilde{u}) + u(\tilde{\Delta}(R^f(\tilde{F}))),
$$
and for all $\tilde{u} \in {\rm tDer}(L)$ we have
\[
{\sf div}^f_{\theta}(\tilde{u}) = {\sf div}^f_{\rm gr}(\tilde{u}) + u(R^f(\tilde{F})).
\]
\end{prop}

\begin{proof}
Since $\tilde{\Delta}$ is injective and commutes with elements in ${\rm tDer}(L)$, the assertion for ${\sf div}^f$ follows from the assertion for ${\sf Div}^f|_{{\rm tDer}(L)} = \tilde{\Delta} \circ {\sf div}^f$.

For the cocycle ${\sf Div}^f$, we first consider the induced map by the exponential expansion $\theta_{\rm exp}$.
Let ${\sf Div}^f_{\exp} \colon {\rm tDer}(A) \to |A| \otimes |A|$ be the resulting map.
By \eqref{eq:c^f_details2}, we obtain
$$
{\sf Div}^f_{\rm exp}(\tilde{u}) = {\sf Div}_{x,y,z}(u) + {\bf 1} \wedge {\sf b}^f(\tilde{u}) + u(\tilde{\Delta}({\bf r} - {\bf p}^f)).
$$

Next we apply the automorphism $\tilde{F}$. The transformation property of 1-cocycles (Proposition~\ref{prop:bFu}) yields that for any $\tilde{u} \in {\rm tDer}(A)$
\begin{align*}
{\sf Div}^f_\theta(\tilde{u}) 
& = {\sf Div}_{x,y,z}(u) + {\bf 1} \wedge {\sf b}^f(\tilde{u}) + u(\tilde{\Delta}({\bf r} - {\bf p}^f)) \\
& \hspace{1em} + u({\sf J}_{x,y,z}(F) + {\bf 1} \wedge {\sf c}^f(\tilde{F}) + F(\tilde{\Delta}({\bf r} - {\bf p}^f)) - \tilde{\Delta}({\bf r} - {\bf p}^f) ) \\
& = {\sf Div}_{x,y,z}(u) + {\bf 1} \wedge {\sf b}^f(\tilde{u}) + 
u(\tilde{\Delta}({\sf j}_{x,y,z}(F)- {\sf c}^f(\tilde{F})+ F({\bf r} -{\bf p}^f))) \\
& = {\sf Div}^f_{\rm gr}(\tilde{u}) + u(\tilde{\Delta}(R^f(\tilde{F}))).
\end{align*}
Here we have used the fact that for $\tilde{F} \in {\rm tAut}^+(L)$ the element ${\sf c}^f(\tilde{F})$ is in the span of $|x_i|, |y_i|$ and $|z_j|$, and therefore ${\bf 1} \wedge {\sf c}^f(\tilde{F}) = - \tilde{\Delta}({\sf c}^f(\tilde{F}))$. We have also used that by Proposition~\ref{prop:J_and_j}~(i) it holds that ${\sf J}_{x,y,z}(F)=\tilde{\Delta}({\sf j}_{x,y,z}(F))$.
\end{proof}

As for the operations $\mu^f_r, \mu^f_l$, a tangential expansion $\theta = F \circ \theta_{\rm exp}$ induces the following maps:
$$
\mu^f_{r, \theta} \colon A \to |A| \otimes A, \hskip 0.3cm
\mu^f_{l, \theta} \colon A \to A \otimes |A|.
$$
\begin{prop} \label{prop:mu_transfer}
Let $\theta = F \circ \theta_{\rm exp}$ be a special expansion. Then, for any $a\in A$
\begin{equation*}  
\begin{array}{lll}
\mu^f_{r, \theta}(a) & = & \mu^f_{r, {\rm gr}}(a) + 
R' \otimes \{ a,  R''\}_{\rm gr}
+ |\phi''| \otimes a \phi'  - |\phi'' a| \otimes \phi', \\
\mu^f_{l, \theta}(a) & = & \mu^f_{l, {\rm gr}}(a) + \{ a, R' \}_{\rm gr}
\otimes  R'' + \phi'' a \otimes |\phi'| - \phi'' \otimes |a \phi'|.
\end{array}
\end{equation*}
where
$\tilde{\Delta}(R^f(\tilde{F})) =
R' \otimes R''$, and 
$\phi=\phi' \otimes \phi''$ is given by equation \eqref{eq:phi}.
\end{prop}

\begin{proof}
Since $\mu^f_r = {\sf Div}^f_r \circ \widetilde{{\rm Ham}^{\kappa}}$ and $\mu^f_l = {\sf Div}^f_l \circ \widetilde{{\rm Ham}^\kappa}$, the induced maps $\mu^f_{r,\theta}$ and $\mu^f_{l,\theta}$ are given as follows:
\begin{align*}
\mu^f_{r, \theta} &= {\sf Div}^f_{r, \theta} \circ \widetilde{{\rm Ham}^{\kappa_\theta}} =
{\sf Div}^f_{r, \theta} \circ (\widetilde{{\rm Ham}^{\kappa_{\rm gr}}}+ \widetilde{{\rm Ham}^{\kappa_\phi}}), \\
\mu^f_{l, \theta} &= {\sf Div}^f_{l, \theta} \circ \widetilde{{\rm Ham}^{\kappa_\theta}} =
{\sf Div}^f_{l, \theta} \circ (\widetilde{{\rm Ham}^{\kappa_{\rm gr}}}+ \widetilde{{\rm Ham}^{\kappa_\phi}}).
\end{align*}
The first terms in the right hand side yields
\begin{align*}
{\sf Div}^f_{r, \theta} \circ \widetilde{{\rm Ham}^{\kappa_{\rm gr}}}(a) &=
 \mu^f_{r, {\rm gr}}(a) + R' \otimes \{ a, R''\}_{\rm gr}, \\
{\sf Div}^f_{l, \theta} \circ \widetilde{{\rm Ham}^{\kappa_{\rm gr}}}(a) &=
 \mu^f_{l, \rm gr}(a) + \{ a, R' \}_{\rm gr} \otimes R''.
\end{align*} 
Here we have used Proposition \ref{prop:c_transfer} and \eqref{eq:mu=ch_gr}.
We have also used Lemma \ref{lem:Hama|b|} to compute
\begin{align*}
{\rm Ham}^{\kappa_{\rm gr}}_a(\tilde{\Delta}(R^f(\tilde{F}))) 
& = {\rm Ham}^{\kappa_{\rm gr}}_a(R') \otimes R'' + R' \otimes {\rm Ham}^{\kappa_{\rm gr}}_a(R'') \\
& = |s\{ a, R'\}_{\rm gr}| \otimes R'' + R' \otimes |s\{ a, R''\}_{\rm gr}|.
\end{align*}

Next we compute
$$
{\sf Div}^f_{\theta} \circ  \widetilde{{\rm Ham}^{\kappa_\phi}}(a) =
{\sf Div}^f_{\rm gr} \circ \widetilde{{\rm Ham}^{\kappa_\phi}}(a) + {\rm Ham}^{\kappa_\phi}_a (\tilde{\Delta}(R^f(\tilde{F}))).
$$
As we saw in the proof of Proposition \ref{prop:sigma=sigma}, the map ${\rm Ham}^{\kappa_\phi}_a$ acts as an inner derivation with generator $\phi''[s,a]\phi'$ on $x_i,y_i,z_j$.
Hence it acts trivially on the trace space $|A|$, and the second term on the right hand side in the formula above vanishes: ${\rm Ham}^{\kappa_\phi}_a (\tilde{\Delta}(R^f(\tilde{F})))=0$.
For the first term, we have
$$
{\sf Div}^f_{\rm gr} \circ \widetilde{{\rm Ham}^{\kappa_\phi}}(a) = {\sf Div}_{x,y,z}({\rm Ham}^{\kappa_\phi}_a) + {\bf 1} \wedge {\sf b}^f(\widetilde{{\rm Ham}^{\kappa_\phi}_a}).
$$
The contribution of the cocycle ${\sf b}^f$ vanishes:
$$
{\sf b}^f(\widetilde{{\rm Ham}^{\kappa_\phi}_a}) = 
\sum_j {\rm rot}^f(\gamma_j)  |\phi''[s,a]\phi'|
= \sum_j {\rm rot}^f(\gamma_j)  |s[a,\phi'\phi'']| = 0.
$$
Here we have used the fact that $\phi'\phi'' = -1/2$ and $[a, \phi'\phi'']=0$ for all $a \in A$.
To compute the divergence term, let us consider the derivation $u_{\phi,a}$ of $A\langle s \rangle$ which vanishes on $x_i, y_i, z_j$ and maps $s$ to $[s, \phi''[s,a]\phi']$.
Then ${\rm Ham}^{\kappa_{\phi}}_a + u_{\phi,a}$ is the inner derivation of $A\langle s \rangle$ with generator $\phi''[s,a]\phi'$.
By Example~\ref{ex:innerderivation}, the divergence of this inner derivation is $(2g + n)\, {\bf 1} \wedge |\phi''[s,a]\phi'| =0$.
Hence 
\begin{align*}
{\sf Div}_{x,y,z}({\rm Ham}^{\kappa_{\phi}}_a) & =
- | \partial_s(u_{\phi,a}(s)) | \\
& = - | \partial_s([s,\phi''[s,a]\phi']) | \\
& = 
|\phi''| \otimes |a\phi's| - |\phi''a| \otimes |\phi's|
+ |s\phi''a| \otimes |\phi'| - |s \phi''| \otimes |a\phi'|
\end{align*}
and we obtain
\begin{align*}
    {\sf Div}^f_{r, {\rm gr}}\circ \widetilde{{\rm Ham}^{\kappa_\phi}}(a)
    &= |\phi''| \otimes a \phi' - |\phi'' a| \otimes \phi', \\
{\sf Div}^f_{l, {\rm gr}}\circ \widetilde{{\rm Ham}^{\kappa_\phi}}(a)
    &= \phi''a \otimes |\phi'| - \phi'' \otimes |a\phi'|.
\end{align*}
Putting things together, we obtain the desired result.
\end{proof}

The following proposition is one of the main results of this subsection:

\begin{prop}  \label{prop:homomorphic=center}
A special expansion $\theta = F \circ \theta_{\rm exp}$ is homomorphic if and only if
\begin{equation*}    
R^f(\tilde{F}) \in Z(|A|, [\cdot, \cdot]_{\rm gr}).
\end{equation*}
\end{prop}

\begin{proof}
Recall from equation \eqref{eq:bul**bul} that 
$$
\delta^f_{\rm Turaev} = ({\rm id} \otimes |\cdot|) \mu^f_r + (|\cdot| \otimes {\rm id}) \mu^f_l.
$$
Therefore, the induced map $\delta^f_{\theta}$ of the framed Turaev cobracket by $\theta$ is given by 
\begin{align*}
\delta^f_\theta(|a|) & = ({\rm id} \otimes |\cdot|) \mu^f_{r, \theta}(a) + (|\cdot| \otimes {\rm id}) \mu^f_{l, \theta}(a) \\
& = ({\rm id} \otimes |\cdot|) \mu^f_{r, {\rm gr}}(a) + (|\cdot| \otimes {\rm id}) \mu^f_{l, {\rm gr}}(a) + [|a|, \tilde{\Delta}(R^f(\tilde{F}))]_{\rm gr} \\
& \hspace{1em} + |\phi''| \otimes |a \phi'| - |\phi'' a| \otimes |\phi'| + |\phi'' a| \otimes |\phi'| - |\phi''| \otimes |a\phi'| \\
& = \delta^f_{\rm gr}(|a|) + [|a|, \tilde{\Delta}(R^f(\tilde{F}))]_{\rm gr}.
\end{align*}
Here we have used Proposition~\ref{prop:mu_transfer} in the second line.

Assume that $R^f(\tilde{F}) \in Z(|A|, [\cdot, \cdot]_{\rm gr})$.
Then, by Proposition~\ref{prop:tiDelta_center} we have
$
\tilde{\Delta}(R^f(\tilde{F})) \in 
Z(|A|, [\cdot, \cdot]_{\rm gr})^{\otimes 2}
$
and
$$
\delta^f_\theta(|a|) = \delta^f_{\rm gr}(|a|) + [|a|, \tilde{\Delta}(R^f(\tilde{F}))]_{\rm gr} = \delta^f_{\rm gr}(|a|)
$$
for all $|a| \in |A|$, as required. 

In the other direction, assume that $\delta^f_\theta=\delta^f_{\rm gr}$. Then,
$$
[|a|, \tilde{\Delta}(R^f(\tilde{F}))]_{\rm gr} =
 \delta^f_\theta(|a|) - \delta^f_{\rm gr}(|a|)=0
$$
for all $|a| \in |A|$. Assume that the degree of $a$ is greater than or equal to 2. Then,
\begin{align*}
[|a|, \tilde{\Delta}(R^f(\tilde{F}))]_{\rm gr} & =  [|a|, R']_{\rm gr} \otimes R'' 
+ R' \otimes [|a|, R'']_{\rm gr} \\
& = [|a|, R^f(\tilde{F})]_{\rm gr} \otimes {\bf 1}
+ (\text{terms in $|A| \otimes |A|_{\geq 1}$}).
\end{align*}
Here we have used the fact that $[|a|, R'']_{\rm gr}$ is of degree at least 1 if $|a|$ is of degree at least 2.
We conclude that $[|a|, R^f(\tilde{F}))]_{\rm gr} =0$ for all $|a|$ of degree at least 2. By Proposition~\ref{prop:inner_derivation}, this implies that
$R^f(\tilde{F}) \in Z(|A|, [\cdot, \cdot]_{\rm gr})$, as required.
\end{proof}

\subsection{Kashiwara-Vergne problems in higher genera}
\label{subsec:KV}

In this subsection, we introduce Kashiwara-Vergne (KV) problems associated to framed oriented surfaces. 

Recall the elements $\omega, \xi \in A$ and $R^f(\tilde{F}) \in |A|$ (see \eqref{eq:xi_def}, \eqref{eq:recall_omega} and \eqref{eq:R^f_def}).

\begin{dfn}[KV Problem of type $(g, n+1)$ with framing $f$]      \label{dfn:KV_problem}
Find an element $\tilde{F} \in {\rm tAut}^+(L)$ satisfying the following two conditions:
$$
\begin{array}{ll}
{\rm KV}\, {\rm I}_{\gn} :& F(\xi) =  \omega,  \\
{\rm KV}\, {\rm II}_{\gn}:&  R^f(\tilde{F})= \sum_{j=1}^n |h_j(z_j)|  - |h (\omega)| \\
& \text{for some $h_j \in s\mathbb{K}[[s]]$, $j= 1,\ldots, n$ and $h \in s^2\mathbb{K}[[s]]$}.
\end{array}
$$
\end{dfn}

This definition is motivated by the following result:
\begin{thm} \label{thm:GThomomorphic}
Let $\Sigma$ be a surface of genus $g$ with $n+1$ boundary components and a framing $f$.
Let $\theta'= F' \circ \theta_{\rm exp}$ be a group-like expansion determined by $F' \in {\rm Aut}^+(L)$.
Then $\theta'$ is homomorphic with respect to the framing $f$ if and only if there is an  element $f_0 \in \exp(L)$ and a lift $\tilde{F}' \in {\rm tAut}^+(L)$ of $F'$ such that $\tilde{F}={\rm Ad}_{f_0} \circ \tilde{F}' \in {\rm tAut}^+(L)$ is a solution of the KV problem of type $(g,n+1)$ with framing $f$.
Here ${\rm Ad}_{f_0}$ is lifted to a tangential automorphism with all the tangential components being $f_0^{-1}$.
\end{thm}

\begin{proof}
Let $\theta' = F' \circ \theta_{\rm exp}$ be a group-like homomorphic expansion. Then, by Theorem~\ref{thm:toMT}, $\theta'$ is a weakly special expansion.
Thus, $F'$ admits a tangential lift $\tilde{F}'$ and there is an element $f_0 \in \exp (L)$ such that $\theta'(\gamma_0) = f_0^{-1} \exp(\omega) f_0$.
Therefore, the expansion
$\theta={\rm Ad}_{f_0} \circ \theta'$ is special. Indeed,
$$
\theta(\gamma_0)={\rm Ad}_{f_0}(\theta'(\gamma_0)) =
{\rm Ad}_{f_0}(f_0^{-1} \exp(\omega) f_0)=\exp(\omega).
$$
Set $\tilde{F}:= {\rm Ad}_{f_0} \circ \tilde{F}' \in {\rm tAut}^+(L)$.
The special expansion $\theta = F \circ \theta_{\exp}$ is group-like and homomorphic since these properties are not affected by composition with inner automorphisms.
By Proposition \ref{prop:homomorphic=center} we have $R^f(\tilde{F}) \in Z(|A|, [\cdot, \cdot]_{\rm gr})$. By Theorem \ref{thm:center}, this expression must be of the form
$$
R^f(\tilde{F}) = \sum_{j=1}^n |h_j(z_j)| - |h(\omega)|
$$
for some formal power series $h_j, h \in \mathbb{K}[[s]]$.
Since $R^f(\tilde{F})$ has no constant term, 
we can choose $h_j, h \in s\mathbb{K}[[s]]$. Furthermore, since 
$|\omega| = | \sum_i [x_i, y_i] + \sum_j z_j | = \sum_j |z_j|$, we can also choose $h \in s^2\mathbb{K}[[s]]$.
In conclusion, $\tilde{F}$ is a solution of the KV problem, as required.

In the other direction, let $\theta = F \circ \theta_{\rm exp}$ be a group-like expansion with $\tilde{F}$ being a solution of the KV problem. By Proposition~\ref{prop:properties_expansions}, the expansion $\theta$ is special, and by Proposition~\ref{prop:homomorphic=center} this expansion is homomorphic. The properties of an expansion being group-like and homomorphic are not affected by inner automorphisms with $f_0 \in \exp(L)$. Hence the expansion $\theta'={\rm Ad}_{f_0}^{-1} \circ \theta$ is group-like and homomorphic, as required.
\end{proof}

Sometimes it turns out to be more convenient to state the generalized KV problem in terms of the element $\tilde{F}^{-1}$ (instead of $\tilde{F}$):

\begin{prop}  \label{prop:F^-1}
The (generalized) KV problem is equivalent to the following set of equations for the element $\tilde{F}^{-1} \in {\rm tAut}^+(L)$:
\begin{equation*}
\begin{array}{ll}
(i) & F^{-1}(\omega) = \xi, \\
(ii) & {\sf j}^f_{\rm gr}(\tilde{F}^{-1})= {\bf r} - {\bf p}^f + |h(\xi)| - \sum_j |h_j(z_j)| \\
& \text{for some $h_j \in s\mathbb{K}[[s]]$, $j=1,\ldots, n$ and $h \in s^2\mathbb{K}[[s]]$.}
\end{array}
\end{equation*}
\end{prop}

\begin{proof}
First the first KV equation $F(\xi)=\omega$ implies $F^{-1}(\omega) = \xi$.

Next we apply $F^{-1}$ to the both sides of the second KV equation. On the left hand side, we obtain
$$
F^{-1}(R^f(\tilde{F})) = 
F^{-1}({\sf j}^f_{\rm gr}(\tilde{F})) + {\bf r} - {\bf p}^f = 
-{\sf j}^f_{\rm gr}(\tilde{F}^{-1}) + {\bf r} - {\bf p}^f.
$$
On the right hand side, we get
$$
F^{-1}(\sum_j |h_j(z_j)| - |h(\omega)|)  = 
\sum_j |h_j({f'_j}^{-1} z_j f'_j)| - |h(\xi)| 
 = 
\sum_j |h_j(z_j)| - |h(\xi)|, 
$$
where we write $\tilde{F}^{-1} = (F^{-1}, f'_1, \ldots, f'_n)$.
(Remark that $f'_j = (F^{-1}(f_j))^{-1}$ if $\tilde{F} = (F, f_1,\ldots, f_n)$.)
Putting things together yields the desired result. The argument in the opposite direction works in a similar way.
\end{proof}

We continue with several remarks on the generalized KV problem.

\begin{rem}  \label{rem:Duflo_genus_zero}
The formal power series $h_1,\ldots,h_n$ and $h$ are called {\em Duflo functions}. In the case of $g=0$, they always agree modulo linear parts with even part given by 
\begin{equation}       \label{eq:h_even}
h_{\rm even}(s) = \frac{1}{2} \log \left( \frac{e^{s/2} - e^{-s/2}}{s}\right)
\end{equation}
(\cite[Theorem 8.7, Proposition 8.5 and 8.11]{genus0}).
At present, we do not know whether this holds in the case of positive genus.
\end{rem}

\begin{rem}  \label{rem:framing_genus_0}
In the case of $g=0$, KV problems with different framings are all equivalent to each other. Indeed, the cocycle ${\sf c}^f(\tilde{F})$ takes values in the span of $|z_j|$ for $j=1, \dots, n$. All these terms can be absorbed in linear parts of Duflo functions $|h_j(z_j)|$.
Note that we consider framing as part of the input data of a KV problem while the Duflo functions are viewed as an output together with the tangential automorphism $\tilde{F} \in {\rm tAut}^+(L)$.
\end{rem}

\begin{rem}    \label{rem:KV02}
The simplest and somewhat special example is the KV problem of type $(0,2)$. In that case, the free Lie algebra in one variable $L(z)$ is abelian, $f_1=\exp(az)$ for some $a \in \mathbb{K}$, and $F={\rm id}$. The first KV equation is automatically satisfied since $\xi = \omega = z$. The second KV equation reads
$$
{\sf j}^f_{\rm gr}(\tilde{F})={\sf j}_z(F) - {\sf c}^f(\tilde{F}) = - {\rm rot}^f(\gamma_1) a|z| = |h_1(z) - h(z)|.
$$
Hence,  $h(s) \in s^2\mathbb{K}[[s]]$ is an arbitrary formal power series starting in degree 2, and $h_1(s)$ coincides with $h(s)$ up to a linear term.
\end{rem}

\subsection{The Kashiawar-Vergne problem of type $(0,3)$.}

In this subsection, we focus on the KV problem of type $(0,3)$ which plays a crucial role in our considerations.
First recall the relation between the generalized KV problem and the classical KV problem:

\begin{rem}  \label{rem:KVnorm}
Let $\Sigma$ be a sphere with three boundary components, $\gamma=\{ \gamma_1, \gamma_2\}$ a standard generating system of $\pi$ and $f$ the adapted framing (that is, ${\rm rot}^f(\gamma_1) = {\rm rot}^f(\gamma_2) = -1$). Then the corresponding generalized KV problem coincides with the classical KV problem in the formulation of \cite{AT12}. 
Furthermore consider a tangential derivation
$\bar{t} = (-{\rm ad}_{z_1 + z_2}, z_2,z_1) \in {\rm tDer}^+(L)$, 
and
let $\tilde{F}$ be a solution of the KV problem of type $(0,3)$ in the adapted framing. Then, for any $m\in \K$, the tangential automorphism $\exp(m \bar{t})\tilde{F}$ is also a solution in the adapted framing with the same Duflo functions. Indeed,
$$
{\sf div}^f_{\rm gr}(\bar{t}) = {\sf div}_z( - {\rm ad}_{z_1 + z_2}) - {\sf b}^f(\bar{t}) = -|z_1 + z_2| + |z_1 +z_2|=0.
$$
\end{rem}

The classical KV problem is known to admit solutions:

\begin{thm}[Proposition 6.2, Theorem 7.5 and Theorem 9.6 in \cite{AT12}] \label{thm:KVclassical}
The KV problem of type $(0,3)$ in the adapted framing admits solutions.
The Duflo functions $h_1, h_2, h$ of any solution agree modulo the linear part, and the even part is given by \eqref{eq:h_even}. 
Conversely, any functions $h_1, h_2, h$ satisfying these conditions are realized as Duflo functions of a solution of the KV problem of type $(0,3)$ in the adapted framing. 
\end{thm}

We will need the following more detailed properties of the generalized KV problem of type $(0,3)$ in different framings.
In fact, for any framing we can determine which functions appear as the Duflo functions of the corresponding KV problem. The difference between different framings is reflected in linear parts of Duflo functions.

To state the result, we introduce the following terminology.
Fixing a framing $f$, we say that a triad of functions $h_1, h_2 \in s\K[[s]]$, $h \in s^2\K[[s]]$ is {\em admissible} if they arise as Duflo functions of a solution for the KV problem of type $(0,3)$ in the framing $f$.
\begin{prop}  \label{prop:KV03fh}
For any framing, the KV problem of type $(0,3)$ in the corresponding framing have solutions.
Any admissible triad of functions satisfies the following condition:
\begin{align} 
& \text{The functions $h, h_1$ and $h_2$ agree modulo the linear part,} \nonumber \\
& \text{and the even part is given by equation \eqref{eq:h_even}}. \label{Dufloadm}
\end{align}
Moreover, we have the following:
\begin{enumerate}
\item[$(i)$] If $({\rm rot}^f(\gamma_1), {\rm rot}^f(\gamma_2)) =(0, -1)$, then $(h_1, h_2, h)$ satisfying \eqref{Dufloadm} is admissible if and only if the linear part of $h_1$ is zero.
\item[$(ii)$] If $({\rm rot}^f(\gamma_1), {\rm rot}^f(\gamma_2)) =(-1, 0)$, then $(h_1, h_2, h)$ satisfying \eqref{Dufloadm} is admissible if and only if the linear part of $h_2$ is zero.
\item[$(iii)$] If $({\rm rot}^f(\gamma_1), {\rm rot}^f(\gamma_2)) =(0,0)$, then $(h_1, h_2, h)$ satisfying \eqref{Dufloadm} is admissible if and only if $h_2(s) - h_1(s) = (1/2)s$. 
\item[$(iv)$] For all the other values of ${\rm rot}^f(\gamma_1)$ and ${\rm rot}^f(\gamma_2)$, any $(h_1, h_2, h)$ satisfying \eqref{Dufloadm} is admissible.
\end{enumerate}
\end{prop}

\begin{proof}
Let $\tilde{F}=(F, f_1, f_2)$ be a solution of the KV problem of type $(0,3)$ in framing $f$ with Duflo functions $h_1, h_2, h$.
As in Remark~\ref{rem:framing_genus_0}, we can view $\tilde{F}$ as a solution of the KV problem of type $(0,3)$ in the adapted framing by changing linear parts of $h_1, h_2, h$. By the second statement of Theorem \ref{thm:KVclassical}, $(h_1, h_2, h)$ satisfies the condition \eqref{Dufloadm}.

Let us analyze the linear parts of $h_1, h_2, h$.
We denote 
$$
\log(f_1) = az_1 + bz_2 + O(z^2), \hskip 0.3cm
\log(f_2)= cz_1 + dz_2 + O(z^2),
$$
and consider the first KV equation modulo terms of degree 3:
\begin{align*}
z_1 + z_2 & =  F(\log(e^{z_1}e^{z_2}))   \\
& = \log(f_1^{-1} e^{z_1} f_1 f_2^{-1} e^{z_2} f_2) \\
& \equiv_3 z_1 + z_2 + \frac{1}{2}[z_1, z_2] + [z_1, az_1 +bz_2]
+ [z_2, cz_1 + dz_2] \\
& = z_1 + z_2 + \big( \frac{1}{2} + b - c \big)[z_1, z_2].
\end{align*}
Hence $c-b=1/2$. We now consider the second KV equation
$$
{\sf j}^f_{\rm gr}(\tilde{F}) = {\sf j}_z(F) - {\sf c}^f(\tilde{F}) = |h_1(z_1) + h_2(z_2) - h(z_1 + z_2)|
$$
modulo terms of degree 2.
From the expression of $\log (f_1)$ and $\log (f_2)$ above, we have
$$
{\sf j}_z(F) \equiv_2 |d_1([z_1, az_1 + b z_2]) + d_2([z_2, cz_1 + dz_2])| = - b|z_2| - c|z_1|,
$$
and
$$
{\sf c}^f(\tilde{F})
= {\rm rot}^f(\gamma_1)|az_1+bz_2| + {\rm rot}^f(\gamma_2)|cz_1 +dz_2|.
$$
We denote $h_1(s)-h(s)=ks, h_2(s)-h(s)=ls$, and obtain the following equality:
$$
-(({\rm rot}^f(\gamma_2) +1) c + {\rm rot}^f(\gamma_1)a)|z_1|
-({\rm rot}^f(\gamma_2) d + ({\rm rot}^f(\gamma_1)+1)b)|z_2|=
k|z_1| + l|z_2|,
$$
which implies
\begin{equation}  \label{eq:k,l}
\begin{array}{lll}
  k & =   &  -{\rm rot}^f(\gamma_1)a - ({\rm rot}^f(\gamma_2) +1) c,  \\
  l & =   & - ({\rm rot}^f(\gamma_1)+1)b -{\rm rot}^f(\gamma_2) d. 
\end{array}
\end{equation}
If ${\rm rot}^f(\gamma_1) = 0, {\rm rot}^f(\gamma_2)=-1$, we conclude that $k=0$. If ${\rm rot}^f(\gamma_1) = -1, {\rm rot}^f(\gamma_2)=0$, we have $l=0$. If ${\rm rot}^f(\gamma_1)={\rm rot}^f(\gamma_2)=0$, we obtain
$$
l-k = c-b = \frac{1}{2}.
$$
For all other values of $({\rm rot}^f(\gamma_1), {\rm rot}^f(\gamma_2))$, equations 
\eqref{eq:k,l} define a full rank linear map, and the image of this map is $\mathbb{K}^2$.

In the other direction, let $\tilde{F}=(F, f_1, f_2)$ be a solution of the KV problem in the adapted framing with $h_1=h_2=h \in s^2\mathbb{K}[[s]]$. The latter condition implies $k=l=0$. Since in the adapted framing ${\rm rot}^f(\gamma_1) = {\rm rot}^f(\gamma_2) = -1$, we have $a=k=0, d=l=0$ and 
$$
\log(f_1) = bz_2 + O(z^2), \hskip 0.3cm 
\log(f_2)=cz_1 + O(z^2)
$$
with $c-b=1/2$.
Then $\tilde{F}' = \exp(m\bar{t})\tilde{F}$ is also a solution of the KV problem in the adapted framing with the same Duflo functions (see Remark \ref{rem:KVnorm}) and with $b'=b+m, c'=c+m$. Furthermore define $\tilde{F}''=(F', \exp(pz_1)f'_1,
\exp(qz_2)f'_2)$. This tangential automorphism is a solution of the first KV equation since $F''(\xi)= F'(\xi) =\omega$. Also note that ${\sf j}_z(F'')={\sf j}_z(F')={\sf j}_z(F)$. For the second KV equation, we compute
\begin{align*}
{\sf j}^f_{\rm gr}(\tilde{F}'') & =  {\sf j}_z(F'') - {\sf c}^f(\tilde{F}'') \\
& =  
{\sf j}_z(F') - {\rm rot}^f(\gamma_1)|\log(f''_1)| -
{\rm rot}^f(\gamma_2) |\log(f''_2)| \\
& = |h(z_1) + h(z_2) - h(z_1+z_2)| - (b+m)|z_2| - (c+m)|z_1| \\
& \hspace{1em} - {\rm rot}^f(\gamma_1)(p|z_1|+(b+m)|z_2|) - 
{\rm rot}^f(\gamma_2)((c+m)|z_1| + q|z_2|) \\
& = |h(z_1) + h(z_2) - h(z_1+z_2)| - (({\rm rot}^f(\gamma_2) +1)(c+m) + {\rm rot}^f(\gamma_1)p)|z_1| \\
& \hspace{1em} - (({\rm rot}^f(\gamma_1)+1)(b+m) + {\rm rot}^f(\gamma_2)q)|z_2|.
\end{align*}
If ${\rm rot}^f(\gamma_1) = 0, {\rm rot}^f(\gamma_2)=-1$, the extra linear part on the right hand side reads $(q-b-m)|z_2|$, where $(q-b-m)$ is an arbitrary element of $\K$ since $q$ and $m$ are. Similarly,
if ${\rm rot}^f(\gamma_1) = -1, {\rm rot}^f(\gamma_2)=0$, the linear term is given by  $(p-c-m)|z_1|$ with arbitrary $(p-c-m) \in \K$.
If ${\rm rot}^f(\gamma_1)={\rm rot}^f(\gamma_2)=0$, we have
$-((c+m)|z_1| -(b+m)|z_2|)$, and the difference between coefficients in front of $|z_2|$ and $|z_1|$ is $c-b=1/2$. Finally, if ${\rm rot}^f(\gamma_1)$ and ${\rm rot}^f(\gamma_2)$ take some other values, the extra linear term is in the image of a full rank linear map, and hence it is an arbitrary linear combination of $|z_1|$ and $|z_2|$.
\end{proof}

\subsection{Comparison of different framings} \label{subsec:comparison_diff_framings}
In this subsection, we compare the generalized KV problems in different framings. It turns out that with the exception of genus one, the KV problem in an arbitrary framing is always equivalent to the KV problem in the adapted framing.

We introduce an element
$$
{\bf q}^f=\sum_{j=1}^n q^f(z_j) |z_j|=\sum_{j=1}^n ({\rm rot}^f(\gamma_j) +1) |z_j| \in |A|.
$$
The pair $\chi^f=({\bf p}^f, {\bf q}^f)$ characterizes the framing $f$ with respect to the adapted framing associated to the generating system $\alpha_i, \beta_i, \gamma_j$. In particular, the adapted framing corresponds to $(0,0)$.

We will need the following notation.
We denote by 
$$
|A|_1=\mathbb{K}\langle |x_i|, |y_i|; i=1, \dots, g\rangle
$$
the vector space of degree one elements in $|A|$.
For any ${\bf x} \in |A|$, we denote by $[{\bf x}]_1 \in |A|_1$ the degree $1$ part of ${\bf x}$.

\begin{prop} \label{prop:KVcase1}
For $g \geq 2$, the KV problem of type $\gn$ for any framing $f$ is equivalent to the KV problem for the adapted framing.
\end{prop}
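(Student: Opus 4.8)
The plan is to show that $\Sol\KV^\gn_{(p,q)}$ and $\Sol\KV^\gn_{(0,0)}$ are simultaneously (non)empty by translating any solution of one problem into a solution of the other. Given $F_0 \in \Sol\KV^\gn_{(0,0)}$ I will produce a symplectic automorphism $G \in \saut^\gn$ — the integration of $\sder^\gn = \{u\in\tder^\gn \mid u(\omega)=0\}$, allowed to depend on $F_0$ — and set $F := F_0\circ G$. Composing on the right with a $G$ that fixes $\omega$ preserves equation $(\KV\,{\rm I}^\gn)$, since $F(\omega) = F_0(G(\omega)) = F_0(\omega) = \xi$, and it leaves $\xi$ (and hence the admissible central terms $|h(\xi)|$ and $|h_j(z_j)|$) unchanged; so the whole problem reduces to arranging $(\KV\,{\rm II}^\gn)$.

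For the divergence side I would expand $(\KV\,{\rm II}^\gn)$ using the cocycle property of $j$ and $C_q$ established around Corollary \ref{cor:F*gdiv}. Writing $j_q = j - C_q$ and invoking the cocycle relation $j_q(F_0G) = j_q(F_0) + F_0\cdot j_q(G)$ (with the action twisted by $F_0$), together with $j(F_0)=\mathbf{r}+\sum_j|h_j(z_j)|-|h(\xi)|$ since $F_0$ solves the adapted problem, the requirement that $F = F_0 G$ solve the $(p,q)$-problem becomes, modulo admissible central terms, the single equation
\[
F_0 \cdot j_q(G) \;\equiv\; |p| + C_q(F_0) \pmod{F_0\!\left(Z(|A|,\{\cdot,\cdot\}_{\Pi_{\gr}})\right)}.
\]
Here I use that $F_0$ carries the center $Z = |\K[[\omega]]| \oplus \bigoplus_j|\K[[z_j]]_{\ge1}|$ of Theorem \ref{thm:center} exactly onto the space of admissible right-hand sides $-|h(\xi)| + \sum_j|h_j(z_j)|$, so a residual central term merely redefines the Duflo functions. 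Equivalently, pulling back by $F_0$, it suffices to realize the prescribed element $F_0^{-1}(|p| + C_q(F_0)) \in |A|$ as $j_q(G) \bmod Z$ for some $G\in\saut^\gn$.

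The construction of $G$ I would carry out degree by degree with respect to the weight filtration: since $j_q(G) = \tdiv(u) + (\text{higher order})$ for $G = \exp(u)$, this reduces to the surjectivity of the divergence $\tdiv : \sder^\gn \to |A|/Z$ in each degree, after which one integrates and corrects the higher-order (and the $C_q(G)$) errors recursively, converging in the pro-nilpotent topology of $\saut^\gn$. The reverse passage, from $(p,q)$ to $(0,0)$, is identical with $(p,q)$ replaced by $(-p,-q)$, which establishes the claimed equivalence.

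The main obstacle, and the exact point where $g\ge2$ is used, is the degree-one case of this surjectivity: the degree-one part of the target is $|p|$ with $p\in H_{xy}$, and one must find a degree-one symplectic derivation with $\tdiv(u)=|p|$. For $g\ge2$ such derivations are plentiful — for instance a derivation with $x_i\mapsto[x_i,x_k]$ already contributes $\pm|x_k|$ to $\tdiv$, and these can be combined and completed to elements of $\sder^\gn$ so that $\tdiv$ surjects onto $H_{xy}$. By contrast, for $g=1$, $n=0$ the only degree-two bracket on $\{x,y\}$ is $[x,y]$, and $u(\omega)=u([x,y])=0$ forces every degree-one symplectic derivation to vanish; hence $\tdiv$ has vanishing image in degree one and $|p|$ is unreachable unless $p=0$. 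I therefore expect the verification of the degree-one (and the inductive higher-degree) surjectivity of $\tdiv|_{\sder^\gn}$ onto $|A|/Z$ to be the technical heart of the proof, and the failure of the degree-one case is precisely why genus one is restricted to the adapted framing.
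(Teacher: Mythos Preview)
Your framework is correct—compose on the right with $G\in\saut^\gn$ and arrange $j_q(G)\equiv F_0^{-1}\bigl(|p|+C_q(F_0)\bigr)\pmod Z$—and you correctly locate the role of $g\ge2$ in the degree-one surjectivity of $\tdiv_q|_{\sder^\gn}$ onto $H_{xy}$. But your reduction to ``surjectivity of $\tdiv:\sder^\gn\to|A|/Z$ in \emph{each} degree'' followed by an infinite recursion is a genuine gap: you never establish this surjectivity beyond degree one, and there is no reason to expect it.

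The missing observation, which collapses the argument to a single step, is that the target $|p|+C_q(F_0)$ is already concentrated in degree~$1$ modulo $Z$. Indeed $C_q(F_0)=\sum_j q(z_j)\,|f_j|$ with each $f_j\in L$; since $|[a,b]|=0$ in $|A|$, the projection $|\ell|$ of any Lie element $\ell$ retains only its $H$-linear part, so $|f_j|\in H_{xy}\oplus\bigoplus_k\K|z_k|$ and the second summand lies in $Z$. The same argument shows $F_0^{-1}|v|\equiv|v|\pmod Z$ for $v\in H_{xy}$. Thus modulo $Z$ one only needs to hit the single degree-one element $|p|+[C_q(F_0)]_1\in H_{xy}$, which a degree-one $u\in\sder^\gn$ already accomplishes for $g\ge2$; the ``higher-order errors'' you anticipate are automatically linear combinations of the $|z_k|$ (again by $|[\cdot,\cdot]|=0$) and hence absorbed into the Duflo functions. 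This is exactly the paper's proof: an explicit degree-one $u$ (for instance $x_1\mapsto[x_2,x_1]$, $y_1\mapsto[x_2,y_1]$, $y_2\mapsto[y_1,x_1]$) exhibits $H_{xy}\subset\tdiv_q(\sder^\gn)$, and then $F\mapsto Fe^u$ gives the bijection between the two solution sets.
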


\begin{proof}
Let $\tilde{F} \in {\rm tAut}^+(L)$ be a solution of the KV problem for the framing $f$ characterized by $\chi^f=({\bf p}^f, {\bf q}^f)$.  We will show that there is a solution $\tilde{F}'$ of the KV problem for the adapted framing.
It will be convenient to use the KV problem as stated in Proposition~\ref{prop:F^-1}.

First consider a tangential derivation $\tilde{u} = (u,u_1,\ldots,u_n) \in {\rm tDer}^+(L)$ with $u_j=0$ for all $j$ and with non-vanishing components:
$$
u\colon x_1 \mapsto [x_2, x_1], \quad y_1 \mapsto [x_2, y_1], \quad
y_2 \mapsto [y_1, x_1].
$$
Observe that we have $u(\omega) = 0$ since
$$
u(\omega) = [x_1, [x_2, y_1]] + [[x_2, x_1], y_1] + [x_2, [y_1, x_1]]=0.
$$
Furthermore
$$
{\sf div}_{x,y,z}(u) = |d_{x_1}([x_2, x_1]) + d_{y_1}([x_2, y_1]) + d_{y_2}([y_1, x_1])| = 2|x_2|.
$$
By permuting labels, we conclude that for any ${\bf a} \in |A|_1$ there exists some $\tilde{u} \in {\rm tDer}^+(L)$ with vanishing tangential components such that $u(\omega) = 0$ and ${\sf div}_{x,y,z}(u) = {\bf a}$.
Then ${\sf b}^f(\tilde{u}) = {\sf b}^{\rm adp}(\tilde{u}) = 0$ and hence ${\sf c}^f(e^{\tilde{u}}) = {\sf c}^{\rm adp}(e^{\tilde{u}}) = 0$.
Moreover 
$$
{\sf j}^f_{\rm gr}(e^{\tilde{u}}) = 
{\sf j}_{x,y,z}(e^u)=\frac{e^u-1}{u}\, {\sf div}_{x,y,z}(u) = {\sf div}_{x,y,z}(u) = {\bf a}.
$$
Here we have used the facts that ${\sf div}_{x,y,z}(u)= {\bf a} \in |A|_1$ and that the images of all $x_i$ and $y_i$ generators under the action of $u$ are in $[L, L]$ and hence vanish under the $|\cdot |$-sign.

Let $\tilde{F} \in {\rm tAut}^+(L)$ be a solution of the KV problem for framing $f$. 
Applying the above construction, take $\tilde{u}\in {\rm tDer}^+(L)$ such that $u_j=0$ for all $j$, $u(\omega) = 0$ and
\[
{\sf div}_{x,y,z}(u) = {\bf a} := [{\sf c}^{\rm adp}(\tilde{F}^{-1})-
{\sf c}^f(\tilde{F}^{-1})]_1 + {\bf p}^f \in |A|_1.
\]
Set $\tilde{F}' = e^{-\tilde{u}} \tilde{F} \in {\rm tAut}^+(L)$. Since $u(\omega)=0$, $\tilde{F}'$ satisfies the first KV equation:
$$
F'(\xi) = e^{-u}(F(\xi)) = e^{-u}(\omega) = \omega.
$$
For the second KV equation, we compute 
%
\begin{align*}
{\sf j}^{\rm adp}_{\rm gr}(\tilde{F}'^{-1}) & =  {\sf j}^f_{\rm gr}(\tilde{F}'^{-1}) - ({\sf c}^{\rm adp}(\tilde{F}'^{-1}) - {\sf c}^f(\tilde{F}'^{-1})) \\
& = {\sf j}^f_{\rm gr}(\tilde{F}^{-1}) + F^{-1}({\sf j}^f_{\rm gr}(e^{\tilde{u}})) - ({\sf c}^{\rm adp}(\tilde{F}^{-1}) - {\sf c}^f(\tilde{F}^{-1})) \\
& = {\bf r} - {\bf p}^f + |h(\xi)| - \textstyle\sum_j |h_j(z_j)| + {\bf a} + \textstyle\sum_j a_j |z_j| - ({\bf a} - {\bf p}^f + \textstyle\sum_j \lambda_j |z_j|) \\
& = {\bf r} + |h(\xi)| - \textstyle\sum_j |h_j(z_j) + ( \lambda_j - a_j) z_j|.
\end{align*}
Here we have used the fact that ${\sf c}^f(e^{\tilde{u}}) = {\sf c}^{\rm adp}(e^{\tilde{u}}) = 0$ in the second line.
Also $\sum_j \lambda_j |z_j|$ is the degree $2$ part of the expression ${\sf c}^{\rm adp}(\tilde{F}^{-1})-
{\sf c}^f(\tilde{F}^{-1})$, and note that one can write $F^{-1}({\sf j}^f_{\rm gr}(e^{\tilde{u}})) = F^{-1}({\bf a}) = {\bf a} + \sum_j a_j |z_j|$ for some $a_j \in \K$.
Hence $\tilde{F}'$ is a solution of the KV problem for adapted framing, as required. 

Similarly one can run the argument in the other direction and show that a solution $\tilde{F}'$ of the KV problem with adapted framing can be modified to a solution $\tilde{F}=e^{\tilde{u}} \tilde{F}'$ of the KV problem with framing $f$.
\end{proof}

\begin{prop} \label{prop:KVcase2}
If $g=1$, $n\geq 1$ and ${\bf q}^f \neq 0$,
then the KV problem with framing $f$ is equivalent to the KV problem with the adapted framing.
\end{prop}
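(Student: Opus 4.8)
The plan is to follow the strategy of Proposition~\ref{prop:KVcase1}, passing from a solution $F$ of the adapted problem to a solution of the problem with framing $(p,q)$ by right multiplication with $e^u$ for a suitable degree one element $u\in\sder$. For any $u\in\sder$ one has $(Fe^u)(\omega)=F(e^u(\omega))=F(\omega)=\xi$, so the normalization equation $(\KV\,\mathrm{I}^\gn)$ is automatically preserved. For the divergence equation, the same cocycle manipulation as in Proposition~\ref{prop:KVcase1}, using that $F$ fixes the degree one part of $|A|$, gives $j_q(Fe^u)=j_q(F)+\tdiv_q(u)$. Everything therefore reduces to realizing the degree one element $[C_q(F)]_1+|p|$ inside the image of $\tdiv_q$ restricted to the degree one part of $\sder$; note that $|p|\in H_{xy}$ by hypothesis and that the weight one part of each $f_j$ lies in $H_{xy}$, so this target lies in $H_{xy}=\K|x_1|\oplus\K|y_1|$.

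The only genus dependent step, and the heart of the proof, is to show that $\tdiv_q$ maps the degree one part of $\sder$ onto $H_{xy}$. For $g\ge 2$ this was achieved using a second handle, which is unavailable here; instead I would exploit the hypothesis $q\neq 0$ by building divergence free special derivations supported on a single boundary loop. Concretely, choose $j_0$ with $q(z_{j_0})\neq 0$ and let $u$ be the tangential derivation with $x_1\mapsto 0$, $y_1\mapsto z_{j_0}$, $z_{j_0}\mapsto[z_{j_0},x_1]$, $u_{j_0}=x_1$, and all remaining generators fixed. One checks $u(\omega)=[x_1,z_{j_0}]+[z_{j_0},x_1]=0$, so $u\in\sder$, while the divergence formula \eqref{eq:tdiv} gives $\tdiv(u)=0$ since none of $u(x_1)$, $u(y_1)=z_{j_0}$, $u_{j_0}=x_1$ contributes a nonzero partial derivative. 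Hence $\tdiv_q(u)=-c_q(u)=-q(z_{j_0})\,|x_1|$, a nonzero multiple of $|x_1|$. The mirror derivation $x_1\mapsto -z_{j_0}$, $y_1\mapsto 0$, $z_{j_0}\mapsto[z_{j_0},y_1]$, $u_{j_0}=y_1$ similarly lies in $\sder$, has vanishing divergence, and gives $-q(z_{j_0})\,|y_1|$. Together these span $H_{xy}$.

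With surjectivity established, the argument closes exactly as in Proposition~\ref{prop:KVcase1}: choosing $u\in\sder$ of degree one with $\tdiv_q(u)=[C_q(F)]_1+|p|$ gives $j_q(Fe^u)=j(F)+|p|-[C_q(F)]_2$, and because commutators vanish in $|A|$ the degree two part satisfies $[C_q(F)]_2=\sum_j\lambda_j|z_j|$ for scalars $\lambda_j$, which is absorbed by replacing the Duflo functions $h_j(s)$ with $h_j(s)-\lambda_j s$. Thus $F$ solves the adapted problem if and only if $Fe^u$ solves the problem with framing $(p,q)$, and the correspondence is manifestly invertible, proving the equivalence. I expect the genuine obstacle to be precisely this surjectivity step: absent a second handle one must produce a degree one special derivation that is divergence free yet has $c_q\neq 0$, and it is exactly here that the hypotheses $n\ge 1$ and $q\neq 0$ are forced, since a boundary loop carrying nonzero $q$ is what allows $\tdiv_q$ to reach $H_{xy}$.
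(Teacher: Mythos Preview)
Your proof is correct and follows essentially the same approach as the paper. The paper exhibits the identical derivation (written for $j=1$ as $y\mapsto z_1$, $z_1\mapsto[z_1,x]$) and then says ``by permuting labels'' to obtain all of $H_{xy}$; you simply make both the choice of $j_0$ with $q(z_{j_0})\neq 0$ and the mirror derivation producing $|y_1|$ explicit, which is a welcome clarification.
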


\begin{proof}
Without loss of generality, we may assume that $q^f(z_1) = {\rm rot}^f(\gamma_1) + 1 \neq 0$.
Denote $x = x_1$ and $y = y_1$ for simplicity, and consider the derivation $u$ of degree one of the form:
$$
u\colon x \mapsto  - l z_1, \hskip 0.3cm
y \mapsto k z_1, \hskip 0.3cm 
z_1 \mapsto [z_1, kx + ly]
$$
with $k,l \in \mathbb{K}$.
It lifts to a tangential derivation $\tilde{u} = (u, u_1, \ldots, u_n)$ with $u_1=kx+ly$ and $u_j=0$ for $j\geq 2$. This derivation has a property
$$
u(\omega)=u([x,y] + \sum_j z_j) = 
[-l z_1, y] + [x, k z_1] + [z_1, kx + ly] = 0.
$$
Furthermore
${\sf div}_{x,y,z}(u) = -|kx+ly|$, ${\sf b}^f(\tilde{u})= {\rm rot}^f(\gamma_1) |kx+ly|$
and
$$
{\sf div}^f_{\rm gr}(\tilde{u})={\sf div}_{x,y,z}(u) - {\sf b}^f(\tilde{u})= - ({\rm rot}^f(\gamma_1) +1) |kx +ly|
= - q^f(z_1) |kx + ly|
$$
which is an arbitrary element of $|A|_1$. 
Using the fact that
$
u(kx+ly) = 0,
$
we conclude that
$$
{\sf j}^f_{\rm gr}(e^{\tilde{u}}) = 
\frac{e^u -1}{u} ({\sf div}_{\rm gr}^f(\tilde{u}) )
= - q^f(z_1) |kx +ly|.
$$
Starting from this point, the proof of the previous proposition applies verbatim.
\end{proof}

\begin{prop}  \label{prop:KVcase3}
If $g=1$ and ${\bf q}^f = 0$, then the KV problem has a solution for at most one value of ${\bf p}^f$.
\end{prop}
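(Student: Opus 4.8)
The plan is to argue by contradiction at the level of the degree-one parts of the two defining equations. Suppose that $F, F' \in \taut^{(1,n+1)}$ solve the KV problem for the framings $(p,0)$ and $(p',0)$ respectively. Since equation $(\KV\,{\rm I}^{(1,n+1)})$ does not involve the framing, both $F$ and $F'$ send $\omega = [x,y] + \sum_j z_j$ to the same element $\xi$; hence $G := F^{-1}F'$ fixes $\omega$ and so lies in the pro-unipotent subgroup of $\taut^{(1,n+1)}$ integrating $\sder^+$. The goal is to extract from this the equality $|p| = |p'|$, which forces $p = p'$ because $p \mapsto |p|$ is injective on the weight-one part $H_{xy}$ of $|A|$.

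First I would use the group cocycle property of $j$ together with the fact that $q=0$ (so that $j_q = j$ and $C_q$ plays no role). From $j(F') = j(F) + F.j(G)$ and the two instances of $(\KV\,{\rm II}^{(1,n+1)})$, the terms $\mathbf{r}$ cancel and one obtains
$$
F.j(G) = |p'| - |p| + \sum_j \bigl( |h'_j(z_j)| - |h_j(z_j)| \bigr) - \bigl( |h'(\xi)| - |h(\xi)| \bigr).
$$
Next I would compare weight-one components. On the right-hand side the summands $|h_j(z_j)|$ and $|h(\xi)|$ have lowest weight two modulo multiples of $\mathbf{1}$, since $z_j$ has weight two and $\xi \equiv \omega$ has weight two; thus the only weight-one contribution is $|p'| - |p|$. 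On the left, since $F = \mathrm{id} + (\text{positive weight})$ acts on $|A|$, the weight-one part of $F.j(G)$ equals the weight-one part of $j(G)$, and because $j$ integrates $\tdiv$ this equals $\tdiv(u_1)$, where $u_1 \in \sder_1$ is the weight-one component of $\log G$.

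The heart of the matter, and the step I expect to be the main obstacle, is the explicit computation showing that $\tdiv$ vanishes on $\sder_1$ when $g=1$. Solving $u(\omega)=0$ in weight one, a weight-one tangential derivation sends $x$ and $y$ into the weight-two space $\mathrm{span}\{[x,y], z_1, \dots, z_n\}$ and has each $u_j \in \mathrm{span}\{x,y\}$; the equation $[u(x),y] + [x,u(y)] + \sum_j [z_j,u_j] = 0$ then forces the coefficients of $[x,y]$ in both $u(x)$ and $u(y)$ to vanish. Consequently $\partial u(x)/\partial x = \partial u(y)/\partial y = 0$ and $\partial u_j/\partial z_j = 0$, so $\tdiv(u_1) = 0$. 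This vanishing is genuinely special to a single handle: for $g \ge 2$ one can write down special derivations with nonzero divergence, as in the proof of Proposition \ref{prop:KVcase1}, which is exactly why more than one value of $p$ is admissible there. Combining the above, the weight-one part of $F.j(G)$ is zero, whence $|p'| - |p| = 0$ and therefore $p = p'$.

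I do not anticipate difficulty with the cocycle bookkeeping or the weight estimates; the only genuinely delicate point is pinning down $\sder_1$ and verifying the divergence computation, and there one must be careful that no weight-one term survives from the $u_j$-contributions (which it does not, since the $u_j$ are $z_j$-independent).
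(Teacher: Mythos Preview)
Your proposal is correct and follows essentially the same strategy as the paper: form $G=F^{-1}F'$, use the cocycle identity for $j$ (with $q=0$) to isolate the weight-one part, and reduce to the vanishing of $\tdiv$ on $\sder_1^{(1,n+1)}$. The paper computes $j(F^{-1}F')$ directly rather than $F.j(G)$, but since $j(G)=j(F^{-1}F')$ this is the same quantity, and your observation that $F$ acts trivially on weight one recovers the paper's degree-one extraction; your explicit verification that $\tdiv$ vanishes on $\sder_1$ is more detailed than the paper's, which simply refers back to the computation in the proof of Proposition~\ref{prop:KVcase2}.
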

\begin{proof}
Let $\tilde{F}$ be a solution of the KV problem with framing $\chi = ({\bf p}, 0)$ and $\tilde{F}'$ be a solution of the KV problem with framing $\chi' =({\bf p}', 0)$. Denote by $\tilde{u}$ the unique tangential derivation such that
$e^{\tilde{u}} =\tilde{F} \tilde{F}'^{-1}$. Note that
$$
e^u(\omega)=F(F'^{-1}(\omega))=F(\xi)=\omega
$$
which implies $u(\omega)=0$.

Let $\tilde{u}^1$ be the degree $1$ part of the derivation $\tilde{u}$.
It is of the form
$$
u^1(x) = k [x,y] + \sum_j \mu_j z_j, \hskip 0.3cm
u^1(y)= l[x,y] + \sum_j \nu_j z_j, \hskip 0.3cm
u^1(z_j) = [z_j, u^1_j],
$$
where $u^1_j$'s are linear combinations of $x$ and $y$. The derivation $u^1$ satisfies the condition $u^1(\omega)=0$ which reads
$$
0 = u^1(\omega) = k[[x,y],y] + l[x, [x,y]] + (\text{terms including $z_j$'s}).
$$
Therefore $k=l=0$.

Next we compute the following expression:
$$
[{\sf j}^f_{\rm gr}(e^{\tilde{u}})]_1 =
{\sf div}_{x,y,z}(u^1) - {\sf b}^f(\tilde{u}^1) = - \sum_j |u^1_j| + \sum_j |u^1_j| =0,
$$
where we have used fact $q^f(z_j)={\rm rot}^f(\gamma_j) + 1 =0$ and
$
{\sf b}^f(\tilde{u}_1) = \sum_j {\rm rot}^f(\gamma_j) |u^1_j| = - \sum_j |u^1_j|
$.
Using the second KV equation, we can transform the left hand side as follows:
\begin{align*}
[{\sf j}^f_{\rm gr}(e^{\tilde{u}})]_1 
& = [{\sf j}^f_{\rm gr}(\tilde{F} \tilde{F}'^{-1})]_1 \\ 
& = [F({\sf j}^f_{\rm gr}(\tilde{F}'^{-1}) - {\sf j}^f_{\rm gr}(\tilde{F}^{-1}))]_1 \\
& = [F((|h'(\xi)| - \textstyle\sum_j |h'_j(z_j)| - {\bf p}') -(|h(\xi)| - \textstyle\sum_j |h_j(z_j)| - {\bf p})) ]_1 \\
& = {\bf p} - {\bf p}'.
\end{align*}
Here we have used the fact that variables $z_j$ are of degree 2 and cannot contribute in the degree 1 part. The same argument applies to the expression $\xi$.

By comparing two displayed formulas above, we conclude that ${\bf p} - {\bf p}'=0$, as required.
\end{proof}

\section{Solving KV}
\label{sec:solveKV}
In this section, we show that in many cases the higher genus KV problems admit solutions.

We denote by  ${\rm SolKV}_{(g,n+1)}^f$ the space of solutions of the KV problem of type $(g, n+1)$ with framing $f$.
Furthermore, denote by ${\rm SolKV}_\gn$ the space of solutions of the KV problem with the adapted framing, i.e.,
${\rm SolKV}_\gn = {\rm SolKV}_\gn^{f^{\rm adp}}$.

\begin{thm}
\label{thm:KVsolve}
The KV problem admits solutions in the following cases:
\begin{itemize}
    \item ${\rm SolKV}_{(g,n+1)}^f \neq \emptyset$ for $g\geq 2$ and for any framing $f$.
    \item ${\rm SolKV}_{(1,n+1)}^f \neq \emptyset$ if and only if ${\bf q}^f \neq 0$ or 
    ${\bf q}^f=0$ and ${\bf p}^f=0$.
\end{itemize}
\end{thm}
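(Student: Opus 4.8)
The plan is to deduce the whole statement from a single existence result for the adapted framing, namely $\Sol\KV^{\gn}_{(0,0)} \neq \varnothing$ for all $g \geq 1$ and $n \geq 0$, organising the remaining framings through Propositions \ref{prop:KVcase1}--\ref{prop:KVcase3}. Adapted existence in turn I would obtain by gluing together two building blocks: the pair of pants ($g=0$, $n=2$), whose KV problem is the classical one solved in \cite{AT12}, and the genus-one case, solved via elliptic associators following \cite{Enriquez}.

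First I would assemble the reductions. For $g \geq 2$ and an arbitrary framing $\chi=(p,q)$, Proposition \ref{prop:KVcase1} identifies $\Sol\KV^{\gn}_\chi$ with $\Sol\KV^{\gn}=\Sol\KV^{\gn}_{(0,0)}$, so it suffices to treat the adapted framing. For $g=1$ with $q \neq 0$ (which forces $n \geq 1$), Proposition \ref{prop:KVcase2} gives the same reduction. The case $g=1$, $q=0$ is where the ``only if'' enters: Proposition \ref{prop:KVcase3} asserts that a solution can exist for at most one value of $p$, so once adapted existence supplies a solution at $p=0$ it follows that $\Sol\KV^{(1,n+1)}_{(p,0)}=\varnothing$ for every $p \neq 0$, which is exactly the forward implication, while $p=0$ is covered by adapted existence. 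Thus the entire theorem reduces to $\Sol\KV^{\gn}_{(0,0)} \neq \varnothing$ for $g \geq 1$.

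To produce adapted solutions I would use the equivalence recorded after Theorem \ref{thm:homexp}: writing $\theta=F^{-1}\circ\theta_{\exp}$, one has $F \in \Sol\KV^{\gn}_{(0,0)}$ precisely when $\theta$ is a homomorphic expansion for the adapted framing. I would realise $\Sigma_{g,n+1}$ by capping $g$ of the boundary components of $\Sigma_{0,g+n+1}$ with one-holed tori as in \S\ref{subsec:2def}, fix a homomorphic expansion $\theta_0$ on $\Sigma_{0,g+n+1}$ (which exists by genus-zero formality, ultimately \cite{AT12}) together with genus-one homomorphic expansions on the capping tori (\cite{Enriquez}), and glue them by the construction of Proposition \ref{prop:gluexp} (applied to the capped boundaries only) to a \emph{special} expansion $\overline{\theta}$ on $\Sigma_{g,n+1}$ with $\tilde{i}_*\circ\theta_0=\overline{\theta}\circ i_*$. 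Since $\overline{\theta}$ is special, $\KV\,{\rm I}^{\gn}$ holds automatically (speciality of $\theta$ being equivalent to $\KV\,{\rm I}^{\gn}$); it then remains to verify $\KV\,{\rm II}^{\gn}$, that is, that $\delta^{\rm adp}_{\overline{\theta}}$ is graded, which by Proposition \ref{prop:prodmu}(ii) amounts to gradedness of $\mu^{\rm adp}_{\overline{\theta}}$.

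The hard part is exactly this propagation of gradedness across the gluing. The strategy would be to use that $\overline{\theta}$ is special, so that $\kappa_{\overline{\theta}}=\Pi_{\mathrm{add}}$ is already graded by Theorem \ref{thm:MT}, and then to invoke the product formula of Corollary \ref{cor:muuuu} to reduce gradedness of $\mu^{\rm adp}_{\overline{\theta}}$ to its values on the free generators $x_i,y_i,z_j$. On the generators $z_j$ inherited from the genus-zero base the intertwining $\tilde{i}_*\circ\theta_0=\overline{\theta}\circ i_*$ together with the compatibility of $\mu^f$ with $i_*$ (from the proof of Proposition \ref{prop:wekmu}) transports gradedness from $\theta_0$, while on the handle generators $x_i,y_i$ it is transported from the chosen genus-one solutions. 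Two technical points carry the weight of the argument: matching the adapted framing on $\Sigma_{g,n+1}$ with the framings induced on the base and on the tori under capping, so that the rotation-number bookkeeping behind $\mu^{\rm adp}$ is consistent across the decomposition; and controlling the mixed $\kappa$-terms in Corollary \ref{cor:muuuu} coupling base generators to handle generators, for which the graded normal form of Theorem \ref{thm:MT} and the vanishing relations of Lemma \ref{lem:formulas} supply the needed input. The construction of the genus-one building block from an elliptic associator, and the verification that it solves $\KV\,{\rm II}^{(1,1)}$, is the remaining and most substantial ingredient, carried out following \cite{Enriquez}.
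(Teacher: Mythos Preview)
Your reduction to the adapted framing via Propositions \ref{prop:KVcase1}--\ref{prop:KVcase3} is correct and matches the paper exactly.

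The existence argument, however, has a genuine gap. Your claim that ``$\delta^{\rm adp}_{\overline{\theta}}$ is graded, which by Proposition \ref{prop:prodmu}(ii) amounts to gradedness of $\mu^{\rm adp}_{\overline{\theta}}$'' is false: Proposition \ref{prop:prodmu}(ii) only says $\delta^f$ is \emph{determined} by $\mu^f$, not that their gradedness is equivalent. In fact, for a homomorphic expansion the transported map $\mu^f_\theta$ is \emph{not} graded --- it carries explicit correction terms coming from $\Pi_s$ and the Duflo function (see the paper's computation of $\mu^f_\theta$ in terms of $\mu^f_{\gr}$, $s(\omega)$, and $g(\omega)$). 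Those corrections cancel only after passing to $\delta^f_\theta$. So your plan to verify gradedness of $\mu$ on generators via the product formula cannot succeed: the $\kappa$-terms in Corollary \ref{cor:muuuu} involve $\kappa_\theta=\Pi_{\rm add}=\Pi_{\gr}+\Pi_s$, which is itself not graded, and the non-graded pieces do not vanish at the level of $\mu$.

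More structurally, Proposition \ref{prop:gluexp} only guarantees that the glued expansion is \emph{special}; there is no mechanism in that proposition for propagating homomorphicity, and your two ``technical points'' (framing compatibility across the capping, control of mixed $\kappa$-terms) are exactly where the real work lies. The paper bypasses this entirely by working algebraically on the $\taut$ side rather than topologically with expansions: it builds an explicit Lie homomorphism $\mathcal{P}:\tder^{(0,3)}\to\tder^{(g_1+g_2,n_1+n_2+1)}$ acting by inner automorphisms with coefficients $u_k(\omega_1,\omega_2)$, computes $\tdiv(\mathcal{P}(u))=\tdiv_q(u)|_{z_k=\omega_k}$ directly, and thereby shows that $(F_1\times F_2)\circ\mathcal{P}(F)\in\Sol\KV^{(g_1+g_2,n_1+n_2+1)}$ whenever the Duflo functions match. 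The genus-one base case is handled by a separate explicit map $F\mapsto F^{\rm ell}\circ\phi$ from $\taut^{(0,3)}$ to $\taut^{(1,1)}$, again with a direct divergence computation. These algebraic gluing maps are the missing ingredient in your approach.
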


In view of Propositions \ref{prop:KVcase1}, \ref{prop:KVcase2} and \ref{prop:KVcase3}, this is equivalent to showing that the KV problem of type $\gn$ admits solutions for the adapted framing. 

In order to emphasize the genus and the number of boundary components of the surface under consideration, we will use the following notation:
for a surface $\Sigma_{g,n+1}$ of genus $g$ with $n+1$ boundary components, we write
\[
{\rm tDer}_{(g,n+1)}= {\rm tDer}_{\{ z_1, \ldots, z_n \}}(L(x_i, y_i, z_j))
\]
where $i$ and $j$ run through $1, \dots, g$ and $1, \ldots, n$, respectively, and we denote by ${\rm tDer}^+_{(g,n+1)}$ the Lie subalgebra of ${\rm tDer}_{(g,n+1)}$ that consists of derivations of positive degree.
In a similar fashion, we write
\[
{\rm tAut}^+_{(g,n+1)} := {\rm tAut}^+_{\{ z_1, \ldots, z_n \}}(L(x_i, y_i, z_j)).
\]

\subsection{Elliptic case} \label{subsec:elliptic}
 Figure \ref{fig:elliptic} shows that one can construct a surface $\Sigma_{1,1}$ by gluing an annulus to the first and second boundaries of $\Sigma_{0,3}$.
 If we consider the framing on $\Sigma_{0,3}$ given by ${\rm rot}^f(\gamma_1) = {\rm rot}^f(\gamma_2) = 0$ (that is, ${\bf q}^f = |z_1| + |z_2|$), then it extends to the adapted framing on $\Sigma_{1,1}$.
 Inspired by this observation and by algebraic constructions of \cite{Enriquez}, we will build solutions of the KV problem of type $(1,1)$ starting from solutions of the KV problem of type $(0,3)$.

\begin{figure}
\begin{center}
\input{fig_elliptic.tex}
\end{center}
\caption{Elliptic case}
\label{fig:elliptic}
\end{figure}

For the surface $\Sigma_{1,1}$, we simply denote $\alpha = \alpha_1$ and $\beta = \beta_1$ for the standard generators, and $x = x_1$ and $y = y_1$ for the exponential generators.

In Figure \ref{fig:elliptic}, the inclusion homomorphism $\pi_1(\Sigma_{0,3}) \to \pi_1(\Sigma_{1,1})$ sends  $\gamma_1$ to $\alpha \beta \alpha^{-1}$ and $\gamma_2$ to $\beta^{-1}$, respectively.
With this in mind, we introduce a Lie algebra homomorphism $\psi \colon L(z_1, z_2) \to L(x,y)$ defined as follows: 
$$
\psi \colon
\begin{cases}
z_1 \mapsto \psi_1(x,y) = e^x y e^{-x}=
y + [x,y] + \frac{1}{2}[x,[x,y]] + \dots, \\ 
z_2 \mapsto \psi_2(x,y)=-y.
\end{cases}
$$
Furthermore define a map
$\mathcal{E} \colon {\rm tAut}^+_{(0,3)} \to {\rm tAut}^+_{(1,1)}$ given by formula
$$
    \tilde{F} = (F, f_1, f_2) \longmapsto \mathcal{E}(\tilde{F}) = F^\text{ell} \colon \begin{cases}
    \alpha \mapsto
\psi(f_1)^{-1} \alpha \psi(f_2), \\
    \beta \mapsto
\psi(f_2)^{-1} \beta \psi(f_2),
    \end{cases}
$$
where $\alpha=e^x$ and $\beta=e^y$, and by abuse of notation we denote by $\psi$ the lift of the Lie algebra homomorphism $\psi$ to the corresponding enveloping algebras.
Note that the transformation property of $\beta$ implies
$$
F^{\rm ell}(y)=\psi(f_2)^{-1} y \psi(f_2).
$$

The associated Lie algebra map $\mathcal{E}\colon {\rm tDer}^+_{(0,3)} \rightarrow {\rm tDer}^+_{(1,1)}$ is of the form
$$
    \tilde{u} = (u, u_1, u_2) \longmapsto u^\text{ell} \colon \begin{cases}
    \alpha \mapsto \alpha \psi(u_2) - \psi(u_1) \alpha = [\alpha, \psi(u_2)] + \psi(u_2-u_1) \alpha, \\
    \beta \mapsto [\beta, \psi(u_2)].
    \end{cases}
$$
It is instructive to write $u^{\rm ell}$ in terms of the exponential generators $x,y$:
\begin{equation} \label{eq:u^ellxy}
u^{\rm ell}(x) = \frac{{\rm ad}_x}{1 - e^{-{\rm ad}_x}} (\psi(u_2 - u_1)) + [x, \psi(u_1)],
\hskip 0.3cm
u^{\rm ell}(y)=[y, \psi(u_2)].
\end{equation}

\begin{prop} \label{prop:Fellhom}
The map $\mathcal{E}\colon \tilde{F} \mapsto F^\text{ell}$ is a group homomorphism, and we have $F^{\rm ell} \circ \psi = \psi \circ F$ for any $\tilde{F} \in {\rm tAut}^+_{(0,3)}$. 
Furthermore the map $\mathcal{E}\colon \tilde{u} \mapsto u^\text{ell}$ is a Lie algebra homomorphism, and $u^{\rm ell} \circ \psi = \psi \circ \tilde{u}$ for any $\tilde{u} \in {\rm tDer}^+_{(0,3)}$.
\end{prop}

\begin{proof}
First we prove $F^{\rm ell} \circ \psi = \psi \circ F$.
Observe that
\begin{align*}
F^{\rm ell}(\psi_1) = F^{\rm ell}(\alpha y \alpha^{-1})
 & = (\psi(f_1)^{-1} \alpha\,  \psi(f_2))(\psi(f_2)^{-1} y\,  \psi(f_2))(\psi(f_2)^{-1} \alpha^{-1} \psi(f_1)) \\
 & = \psi(f_1)^{-1} \alpha y \alpha^{-1} \psi(f_1) \\
 & = \psi({f_1}^{-1} z_1 f_1) = \psi(F(z_1)),
\end{align*}
\[
 F^{\rm ell}(\psi_2) = F^{\rm ell}(-y) = \psi(f_2)^{-1} (-y) \psi(f_2) 
 = \psi({f_2}^{-1} z_2 f_2) = \psi(F(z_2)).
\]
Since $z_1$ and $z_2$ generate the free Lie algebra $L(z_1, z_2)$,
we conclude that $F^{\rm ell} \circ \psi = \psi \circ F$. 

Let $\tilde{F}, \tilde{G} \in {\rm tAut}^+_{(0,3)}$.
By the group law for ${\rm tAut}^+_{(0,3)}$, the tangential components of $\tilde{K} = \tilde{F} \circ \tilde{G}$ is of the form
$k_j = f_j F(g_j)$.
We compute
\begin{align*}
  F^{\rm ell} \circ G^{\rm ell}(\alpha)& = F^{\rm ell}(\psi(g_1)^{-1} \alpha\, \psi(g_2)) \\
  & = \psi(F(g_1))^{-1}
  \psi(f_1)^{-1} \alpha \, \psi(f_2)
  \psi(F(g_2)) \\
  & = \psi(k_1)^{-1} \alpha \, \psi(k_2) = K^{\rm ell}(\alpha), \\
  F^{\rm ell} \circ G^{\rm ell}(\beta) & = F^{\rm ell}(\psi(g_2)^{-1}
  \beta\, \psi(g_2)) \\
  & = \psi(F(g_2))^{-1}
  \psi(f_2)^{-1} \beta\, \psi(f_2)
  \psi(F(g_2)) \\
  & = \psi(k_2)^{-1} \beta\, \psi(k_2) = K^{\rm ell}(\beta).
\end{align*}
This completes the proof of the group homomorphism property of the map $\tilde{F} \mapsto F^{\rm ell}$. The corresponding statements for Lie algebras follow by differentiation.
\end{proof}

\begin{rem} \label{rem:kerI}
The kernel of the map $\mathcal{E}\colon \tilde{F} \mapsto F^{\rm ell}$ is equal to
\[
\{ ({\rm id}, e^{\lambda z_1}, e^{-\lambda z_2}) \in {\rm tAut}^+_{(0,3)} ; \lambda \in \K \} \cong \K.
\]
To see this, suppose that $\tilde{F} = (F, f_1, f_2) \in {\rm tAut}^+_{(0,3)}$ satisfies $F^{\rm ell} = {\rm id}$.
For any $a\in A(z_1,z_2)$, we have
$
\psi( F(a) ) = F^{\rm ell}( \psi(a) ) = \psi(a)
$.
Since $\psi$ is injective, we conclude that $F = {\rm id}$.
Therefore, since $F(z_1) = f_1^{-1} z_1 f_1$ and $F(z_2) = f_2^{-1} z_2 f_2$, there exist scalars $\lambda_1, \lambda_2 \in \K$ such that
\[
f_1 = \exp(\lambda_1 z_1), \qquad
f_2 = \exp(\lambda_2 z_2).
\]
Hence $\psi(f_1) = \exp (e^x (\lambda_1y) e^{-x}) = e^x e^{\lambda_1y} e^{-x}$ and $\psi(f_2) = \exp( -\lambda_2 y)$.
Getting these equations back to the defining formula of $F^{\rm ell}$, we obtain
\[
e^x = F^{\rm ell}(e^x) = \psi(f_1)^{-1} e^x \psi(f_2) = 
(e^x e^{ -\lambda_1 y} e^{-x}) e^x e^{-c_2y} = e^x e^{-(\lambda_1+\lambda_2)y}.
\]
Therefore we obtain $\lambda_1 + \lambda_2 = 0$.
Conversely it is easy to check that elements of the form $({\rm id}, \exp(\lambda z_1), \exp(-\lambda z_2))$ are in the kernel.
\end{rem}

The next proposition describes the relation between non-commutative divergence under the Lie homomorphism $\mathcal{E}\colon \tilde{u} \mapsto u^{\rm ell}$.

\begin{prop} \label{prop:ell_Lie}
For all $\tilde{u} \in {\rm tDer}^+_{(0,3)}$, we have
\[
\psi ({\sf div}_{z}(\tilde{u})) = {\sf div}_{x,y}(u^{\rm ell}) + 
u^{\rm ell}(|r(x)|).
\]
Here, ${\sf div}_z$ and ${\sf div}_{x,y}$ are divergence cocycles on ${\rm tDer}^+_{(0,3)}$ and ${\rm tDer}^+_{(1,1)} = {\rm Der}^+_{(1,1)}$, respectively.
\end{prop}

\begin{rem}
Since $u^{\rm ell}(|r(y)|) = 0$, one can write the right hand side of the formula above as
${\sf div}_{x,y}(u^{\rm ell}) + u^{\rm ell}(|r(x) + r(y)|)$, which looks more symmetric in $x$ and $y$.
\end{rem}
\begin{proof}[Proof of Proposition \ref{prop:ell_Lie}]
Let $\tilde{u} = (u,u_1,u_2) \in {\rm tDer}^+_{(0,3)}$.
We compute
\[
{\sf div}_{z}(\tilde{u}) = 
| d_1([z_1, u_1]) + d_2([z_2,u_2]) |
= | z_1 (d_1 u_1) - u_1 + z_2 (d_2u_2) - u_2 |.
\]
Next we compute ${\sf div}_{x,y}(u^{\rm ell})$.
Referring to formula~\eqref{eq:u^ellxy}, we have
$
{\sf div}_{x,y}(u^{\rm ell}) = 
\Psi_1 + \Psi_2,
$
where
\[
\Psi_1 = | d_x \big( \textstyle\frac{{\rm ad}_x}{1 - e^{-{\rm ad}_x}} (\psi(u_2 - u_1) ) \big) |,
\quad
\Psi_2 =  |d_x ( [x, \psi(u_1)] + d_y( [y, \psi(u_2)])|.
\]
Since $\psi(z_1) = e^x y e^{-x}$ and $\psi_2 = -y$, we have
\[
d_x(\psi(z_1)) = e^x y ( \textstyle\frac{e^{-x} - 1}{x} ),
\quad
d_x(\psi(z_2)) = 0,
\quad
d_y(\psi(z_1)) = e^x,
\quad
d_y(\psi(z_2)) = -1.
\]
Therefore, using the chain rule formula (see Lemma \ref{ex:chainrule_Lie}), for any $a\in L(z_1, z_2)$ we have
\begin{align}
& d_x(\psi(a)) = \psi(d_1 a) d_x ( \psi(z_1)) + \psi(d_2 a) d_x (\psi(z_2))
= \psi( d_1 a) e^x y ( \textstyle\frac{e^{-x} - 1}{x} ), \nonumber \\
& d_y(\psi(a)) = \psi(d_1 a) d_y( \psi(z_1)) + \psi(d_2 a) d_y(\psi(z_2))
= \psi( d_1 a) e^x - \psi(d_2 a). \label{lem:dxypsiu}
\end{align}
Using Lemma~\ref{lem:dsfadxa} and \eqref{lem:dxypsiu} and the fact that $s/(1-e^{-s}) = s r'(s) + 1$, we compute
\begin{align*}
\Psi_1 
& = | \textstyle\frac{x}{1 - e^{-x}} d_x ( \psi(u_2 - u_1)) - r'(x) \psi(u_2 - u_1) | \\
& = - | \psi(d_1(u_2 - u_1)) e^x y + r'(x) \psi(u_2 - u_1) |.
\end{align*}
Using \eqref{lem:dxypsiu}, we compute
\begin{align*}
\Psi_2 & = 
| x d_x(\psi(u_1)) - \psi(u_1) + y d_y (\psi(u_2)) - \psi(u_2) | \\
& = | x \psi(d_1 u_1) e^x y( \textstyle\frac{e^{-x} - 1}{x}) - \psi(u_1) | + | y(\psi(d_1 u_2) e^x - \psi(d_2 u_2)) - \psi(u_2) |.
\end{align*}
Therefore we conclude that
\begin{align*}
{\sf div}_{x,y}(u^{\rm ell})  & = 
- | r'(x) \psi ( u_2 - u_1) | 
+ |\psi(d_1 u_1) e^x ye^{-x} - \psi(u_1) - y \psi(d_2 u_2) - \psi(u_2) | \\
& = - | r'(x) \psi ( u_2 - u_1) |  + \psi( {\sf div}_{z_1,z_2}(\tilde{u})).
\end{align*}
Finally, using formula~\eqref{lem:dxypsiu} again, we compute
\[
u^{\rm ell}(| r(x) |) = 
| u^{\rm ell}(x) r'(x)  |
= |\psi(u_2 - u_1) r'(x) |.
\]
Therefore
$
{\sf div}_{x,y}(u^{\rm ell}) = - u^{\rm ell}(|r(x)|) + \psi({\sf div}_{z}(\tilde{u}))$, as was to be shown.
\end{proof}

As a corollary of Proposition~\ref{prop:ell_Lie}, for any $\tilde{F} \in {\rm tAut}^+_{(0,3)}$ we obtain
\begin{equation}
\label{eq:psijF}
\psi({\sf j}_{z_1, z_2}(F)) =  {\sf j}_{x,y}(F^{\rm ell}) + F^{\rm ell}(|r(x)|) -
|r(x)|. 
\end{equation}

Define an automorphism of $\zeta \in {\rm Aut}^+_{(1,1)}$ by 
\begin{equation} \label{eq:auto_zeta}
\zeta \colon x \mapsto x, \hskip 0.3cm y \mapsto \frac{{\rm ad}_x}{e^{\ad_x}-1}\, y.
\end{equation}
\begin{lem} \label{lem:zeta_properties}
We have $\zeta(\psi(z_1 + z_2)) = [x,y]$ and ${\sf j}_{x,y}(\zeta) = - |r(x)|$.
\end{lem}
\begin{proof}
We compute  
\[
\zeta(\psi(z_1+z_2)) = \zeta(e^x ye^{-x} -y) = \frac{{\rm ad}_x}{e^{\ad_x}-1}\, (e^{{\rm ad}_x}-1)\, y = [x,y],
\]
which proves the first equality.
To prove the second equality, notice that the derivation $u:= \log(\zeta)$ is given by $u(x) = 0$ and $u(y) = \log(\frac{{\rm ad}_x}{e^{\ad_x} -1 }) y=- r(\ad_x) y$, and it holds that ${\sf div}_{x,y}(u) = -|r(x)|$. 
Hence
\[
{\sf j}_{x,y}(\zeta) = \frac{e^{u} - 1}{u} {\sf div}_{x,y}(u) = -|r(x)|.
\]
\end{proof}

\begin{thm}  \label{thm:KVelliptic}
Let $\tilde{F} = (F, f_1, f_2) \in {\rm tAut}^+_{(0,3)}$ be a solution of the KV problem of type $(0,3)$ with Duflo functions $h_1, h_2, h$ associated to the framing $f$ with ${\rm rot}^f(\gamma_1) = {\rm rot}^f(\gamma_2) = 0$.
Then $\hat{F} = \zeta F^{\rm ell} \in {\rm tAut}^+_{(1,1)}$ is a solution of the KV problem of type $(1,1)$ with Duflo function $h$ associated to the adapted framing ${\rm rot}^{\rm adp}(\alpha) = {\rm rot}^{\rm adp}(\beta)=0$.
\end{thm}

\begin{proof}
First we show that $\hat{F}$ verifies the first KV equation:
\begin{align*}
\hat{F}(\log(e^xe^ye^{-x}e^{-y})) 
& = \zeta(F^{\rm ell}(\log(e^{\psi_1}e^{\psi_2}))) \\ 
& =  \zeta(\psi(F(\log(e^{z_1}e^{z_2})))) \\
& = \zeta(\psi(z_1+z_2)) = [x,y].
\end{align*}
Here we have used Lemma~\ref{lem:zeta_properties} in the last line.

Next we proceed with establishing the second KV equation for $\hat{F}$.
By Proposition~\ref{prop:KV03fh}, the second KV equation for $\tilde{F}$ reads 
\[
{\sf j}_{z_1, z_2}(F) = |h(z_1) + h(z_2) - h(z_1+z_2)| + a |z_1| + b |z_2|
\]
for some $a, b \in \K$ with $b - a = 1/2$.
Also, we have $h(s) + h(-s)=2 h_{\rm even}(s) = \frac{1}{2}s - r(s)$.

Now we compute $\zeta(\psi({\sf j}_{z_1, z_2}(F)))$ in two ways.
First, using the results in the preceding paragraph, we compute
\begin{align*}
\zeta(\psi({\sf j}_{z_1, z_2}(F))) & =  \zeta(\psi(|h(z_1) + h(z_2) - h(z_1 + z_2)| + a|z_1| + b|z_2|))\\
& = \zeta(|h(y) + h(-y) + (a - b)y|) - |h([x, y])| \\
& = - \zeta(|r(y)|) - |h([x, y])| \\
& = - \hat{F}(|r(y)|) - |h([x, y])|.
\end{align*}
Here we have used that $\hat{F}(|r(y)|) = \zeta(F^{\rm ell}(|r(y)|)) = \zeta(|r(y)|)$.
Second, using equation~\eqref{eq:psijF} and Lemma~\ref{lem:zeta_properties}, we compute 
\begin{align*}
\zeta(\psi({\sf j}_{z_1, z_2}(F))) & =  \zeta \big( {\sf j}_{x,y}(F^{\rm ell}) + F^{\rm ell}(|r(x)|) -|r(x)| \big) \\
& = {\sf j}_{x,y}(\zeta F^{\rm ell}) - {\sf j}_{x,y}(\zeta) + \hat{F}(|r(x)|) - |r(x)| \\
& = {\sf j}_{x,y}(\hat{F}) + \hat{F}(|r(x)|).
\end{align*}

Putting things together, we obtain the second KV equation for the element $\hat{F} \in {\rm Aut}^+_{(1,1)}$:
$$
{\sf j}_{x,y}(\hat{F}) + \hat{F}(|r(x)+r(y)|)=-|h([x, y])|.
$$
This completes the proof.
\end{proof}

\subsection{Gluing}

\begin{figure}
\begin{center}
\input{fig_gluing.tex}
\end{center}
\caption{Gluing}
\label{fig:glue}
\end{figure}

As shown on Figure \ref{fig:glue}, one can glue surfaces $\Sigma_{g_1,n_1+1}$, $\Sigma_{g_2,n_2+1}$ and $\Sigma_{0,3}$ to get a surface $\Sigma$ of genus $g_1+g_2$ with $n_1+n_2+1$ boundary components.
The purpose of this subsection is to establish an algebraic counterpart of this procedure: to construct solutions of the KV problem of type $(g_1+g_2, n_1+n_2+1)$ starting from a triple of solutions of types $(g_1, n_1+1), (g_2, n_2+1)$ and $(0,3)$.

\begin{rem} \label{rem:glue_rotation}
The framings $f_1$ and $f_2$ on $\Sigma_1$ and $\Sigma_2$  uniquely determine the framing $f_0$ on $\Sigma_{0,3}$ and the framing $f$ on the glued surface $\Sigma=\Sigma_{g_1+g_2,n_1+n_2+1}$. The framing $f_0$ is characterized by rotation numbers of the boundary loops $\gamma_{10}$ and $\gamma_{20}$:
\begin{equation} \label{eq:rot_gluing}
{\rm rot}^{f_0}(\gamma_{k0})=2g_k - 1 +n_k + \sum_{j=1}^{n_k} {\rm rot}^{f_k}(\gamma_{kj}), 
\qquad
k = 1, 2.
\end{equation}
If $f_1$ and $f_2$ are adapted framings,
we have 
 $\rot^{f_0}(\gamma_{10})=2g_1-1$ and $\rot^{f_0}(\gamma_{20})=2g_2-1$.
\end{rem}

Inspired by the gluing of Figure \ref{fig:glue},  we denote the generators of the free Lie algebra associated to $\Sigma=\Sigma_{g_1 + g_2, n_1 + n_2 +1}$by  $x_{1i},y_{1i},z_{1j}$ with $i=1, \dots, g_1, j=1, \dots, n_1$ and $x_{2i},y_{2i},z_{2j}$ with $i=1, \dots, g_2, j=1, \dots, n_2$, respectively. We define a Lie algebra homomorphism
\begin{align*}
{\rm tDer}_{(g_1,n_1+1)} \oplus {\rm tDer}_{(g_2,n_2+1)} &\longrightarrow {\rm tDer}_{(g_1+g_2, n_1+n_2+1)},
\quad
(\tilde{u}, \tilde{v}) \mapsto \tilde{u} + \tilde{v}
\end{align*}
by
$$
(u+v) \colon
\begin{cases}
x_{1i} \mapsto u(x_{1i}), \quad y_{1i} \mapsto u(y_{1i}), \quad z_{1j} \mapsto u(z_{1j}), \\
x_{2i} \mapsto v(x_{2i}), \quad y_{2i} \mapsto v(y_{2i}), \quad z_{2j} \mapsto v(z_{2j}),
\end{cases}
$$
and for the tangential components, we have
$$
(\tilde{u} + \tilde{v})_{1j}=u_j, \hskip 0.3cm
(\tilde{u} + \tilde{v})_{2j} = v_j.
$$
This Lie algebra homomorphism lifts to a group homomorphism
\[
{\rm tAut}^+_{(g_1,n_1+1)} \times {\rm tAut}^+_{(g_2,n_2+1)} \to {\rm tAut}^+_{(g_1+g_2,n_1+n_2+1)}, \quad 
(\tilde{F}_1, \tilde{F}_2) \mapsto \tilde{F}_1 \times \tilde{F}_2.
\]
Naturally the cocycles ${\sf div}$ and ${\sf b}^f$ have the property
$$
{\sf div}(u+v) = {\sf div}(u) + {\sf div}(v), \hskip 0.3cm
{\sf b}^f(\tilde{u}+\tilde{v}) = {\sf b}^{f_1}(\tilde{u}) + {\sf b}^{f_2}(\tilde{v}).
$$
These relations between the Lie algebra 1-cocycles lift to similar relations between the group 1-cocycles ${\sf j}$ and ${\sf c}^f$.

Furthermore we introduce a map
\[
\mathcal{P}\colon {\rm tDer}_{(0,3)} \rightarrow {\rm tDer}_{(g_1+g_2, n_1+n_2+1)}
\]
as follows: for $\tilde{u}=(u, u_1, u_2)\in \tDer_{(0,3)}$, $\mathcal{P}(\tilde{u})$ is the derivation which sends
\[
\mathcal{P}(\tilde{u}) \colon
\begin{cases}
x_{1i} \mapsto [x_{1i},u_1(\omega_1,\omega_2)], \quad y_{1i} \mapsto [y_{1i},u_1(\omega_1,\omega_2)], \quad
z_{1j} \mapsto [z_{1j}, u_1(\omega_1,\omega_2)], \\
x_{2i} \mapsto [x_{2i},u_2(\omega_1,\omega_2)], \quad y_{2i} \mapsto [y_{2i},u_2(\omega_1,\omega_2)], \quad
z_{2j} \mapsto [z_{2j}, u_2(\omega_1,\omega_2)],
\end{cases}
\]
where
\[
\omega_1 = \sum_{i=1}^{g_1} [x_{1i},y_{1i}] + \sum_{j=1}^{n_1} z_{1j}, \quad \omega_2 = \sum_{i=1}^{g_2} [x_{2i},y_{2i}] + \sum_{j=1}^{n_2} z_{2j}.
\]
The tangential components of $\mathcal{P}(\tilde{u})$ corresponding to $z_{1j}$ are equal to $u_1(\omega_1, \omega_2)$, and the tangential components corresponding to $z_{2j}$ are equal of $u_2(\omega_1, \omega_2)$.

\begin{prop}
The map $\mathcal{P}$ is a Lie algebra homomorphism.
\end{prop}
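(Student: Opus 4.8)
The plan is to verify directly that $\mathcal{P}$ respects brackets; its $\K$-linearity in $u$ is immediate, and $\mathcal{P}(u)$ does land in $\tder^{(g_1+g_2,n_1+n_2+1)}$ because by construction it sends each boundary generator $z_j^k$ to $[z_j^k,u_k(\omega_1,\omega_2)]$, hence is tangential with tangential data the elements $u_k(\omega_1,\omega_2)$. Recall that the bracket on $\tder$ is the commutator of derivations and that the tangential datum of $[u,v]$ at $z_j$ is $[u,v]_j=[u_j,v_j]+u(v_j)-v(u_j)$, which follows from $u(z_j)=[z_j,u_j]$, $v(z_j)=[z_j,v_j]$ and the Jacobi identity. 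Throughout I will write $\Phi:\mathbb{L}(z_1,z_2)\to L$ for the Lie algebra homomorphism with $\Phi(z_1)=\omega_1$, $\Phi(z_2)=\omega_2$, where $L$ is the free Lie algebra of the glued surface, so that $u_k(\omega_1,\omega_2)=\Phi(u_k)$.

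The heart of the argument is the intertwining relation $\mathcal{P}(u)\circ\Phi=\Phi\circ u$. To establish it I first compute $\mathcal{P}(u)(\omega_k)$. Since $\mathcal{P}(u)$ acts on \emph{every} side-$k$ generator $g$ by $g\mapsto[g,\Phi(u_k)]$, applying the Leibniz rule to $\omega_k=\sum_i[x_i^k,y_i^k]+\sum_j z_j^k$ together with the Jacobi identity in the form $[[a,c],b]+[a,[b,c]]=[[a,b],c]$ collapses each summand and yields $\mathcal{P}(u)(\omega_k)=[\omega_k,\Phi(u_k)]$. Consequently $\mathcal{P}(u)\Phi(z_k)=[\omega_k,\Phi(u_k)]=\Phi([z_k,u_k])=\Phi(u(z_k))$. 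Now both $\mathcal{P}(u)\circ\Phi$ and $\Phi\circ u$ are $\Phi$-derivations $\mathbb{L}(z_1,z_2)\to L$, meaning $D[a,b]=[Da,\Phi b]+[\Phi a,Db]$, so agreement on the generators $z_1,z_2$ forces agreement everywhere, giving the claim.

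With the intertwining in hand the verification is a short computation on generators. Writing $U_k=\Phi(u_k)$ and $V_k=\Phi(v_k)$, for a side-$1$ generator I would expand
\[
[\mathcal{P}(u),\mathcal{P}(v)](x_i^1)=\mathcal{P}(u)\bigl([x_i^1,V_1]\bigr)-\mathcal{P}(v)\bigl([x_i^1,U_1]\bigr),
\]
apply the Leibniz rule together with $\mathcal{P}(u)(V_1)=\Phi(u(v_1))$ and $\mathcal{P}(v)(U_1)=\Phi(v(u_1))$ from the intertwining, and use the Jacobi identity $[[a,U_1],V_1]-[[a,V_1],U_1]=[a,[U_1,V_1]]$. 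Since $[U_1,V_1]=\Phi([u_1,v_1])$, this reduces to $[x_i^1,\Phi([u_1,v_1]+u(v_1)-v(u_1))]=[x_i^1,\Phi([u,v]_1)]=\mathcal{P}([u,v])(x_i^1)$. The identical computation applies verbatim to $y_i^1,z_j^1$ and, with the index $1$ replaced by $2$, to the side-$2$ generators, so the two derivations agree on all generators of $L$ and therefore coincide. The only genuinely delicate point is the intertwining relation of the second paragraph; once it holds everything else is formal. I expect the computation of $\mathcal{P}(u)(\omega_k)$ to be the crux — in particular the observation that $\mathcal{P}(u)$ acts by the \emph{same} adjoint $\ad_{\Phi(u_k)}$ on all three families $x_i^k,y_i^k,z_j^k$ is exactly what makes $\omega_k$ transform cleanly.
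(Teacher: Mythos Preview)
Your proof is correct and follows essentially the same route as the paper. The paper's proof is a one-line remark: ``This follows by the direct computation using that for $k=1,2$, $\mathcal{P}(u)(\omega_k) = [\omega_k, u_k(\omega_1,\omega_2)]$.'' You have identified exactly this as the crux, derived it, and then packaged the remaining computation via the intertwining relation $\mathcal{P}(u)\circ\Phi=\Phi\circ u$; this is a clean conceptual wrapper for the same direct check the paper leaves to the reader.
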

\begin{proof}
This follows from a direct computation using that for $k=1,2$ we have $\mathcal{P}(\tilde{u})(\omega_k) = [\omega_k, u_k(\omega_1,\omega_2)]$.
\end{proof}

The map $\mathcal{P}$ lifts to a group homomorphism
$\mathcal{P}\colon {\rm tAut}^+_{(0,3)} \to {\rm tAut}^+_{(g_1 + g_2, n_1 + n_2 +1)}$.
For any $a = a(z_1, z_2) \in L(z_1, z_2)$, it holds that $\mathcal{P}(\tilde{F})(a(\omega_1, \omega_2))
= (F(a))(\omega_1, \omega_2)$.

In what follows, ${\sf div}$ stands for ${\sf div}_{x,y,z}$, where $x_i, y_i, z_j$ are the exponential generators.

\begin{prop}  \label{prop:divP}
For all $\tilde{u} \in {\rm tDer}_{(0,3)}$, we have
$$
{\sf div}(\mathcal{P}(\tilde{u})) = \left({\sf div}(u) + (1 - 2g_1 - n_1)|u_1| + (1 - 2g_2 - n_2)|u_2|\right)_{z_k = \omega_k}.
$$
Moreover, for any $\tilde{F} \in {\rm tAut}^+_{(0,3)}$ we have
\[
{\sf j}(\mathcal{P}(\tilde{F})) = 
\big( {\sf j}(F) + (1 - 2g_1 - n_1) {\sf c}_1(\tilde{F}) + (1 -2g_2 - n_2) {\sf c}_2(\tilde{F}) \big)_{z_k = \omega_k}.
\]
\end{prop}

\begin{proof}
Note that $d_{x_{ki}} \omega_k = - y_{ki}$, $d_{y_{ki}} \omega_k = x_{ki}$, $d_{z_{kj}} \omega_k = 1$, and $d_{x_{ki}} \omega_{k'} = d_{y_{ki}} \omega_{k'} = d_{z_{kj}} \omega_{k'} = 0$ if $k \neq k'$. 
Using the chain rule formula of partial derivatives (Lemma~\ref{ex:chainrule_Lie}), we compute
\begin{align*}
{\sf div}(\mathcal{P}(\tilde{u})) & = 
\textstyle\sum_{k=1}^2 \big( \textstyle\sum_{i=1}^{g_k} |x_{ki}(d_k u_k)(\omega_1, \omega_2)(-y_{ki}) - u_k | \\
& \hspace{1em} + \textstyle\sum_{i=1}^{g_k}
| y_{ki} (d_k u_k)(\omega_1, \omega_2) x_{ki} - u_k| + 
\textstyle\sum_{j=1}^{n_k} |z_{kj} (d_k u_k)(\omega_1, \omega_2) - u_k| \big) \\
& = 
|\omega_1 (d_1 u_1)(\omega_1, \omega_2) + \omega_2 (d_2 u_2)(\omega_1, \omega_2)| -
(2g_1 + n_1)|u_1| - (2g_2 + n_2)|u_2|.
\end{align*}
Notice that
\[
{\sf div}(u) = | d_1([z_1, u_1]) + d_2([z_2, u_2]) | =
| z_1(d_1 u_1) + z_2(d_2 u_2)| - |u_1| - |u_2|.
\]
Hence the first assertion follows.
The second assertion follows by integration.
\end{proof}

\begin{prop}\label{prop:BfunderP}
For $\tilde{F} \in {\rm tAut}^+_{(0,3)}$, we have
$
{\sf j}^f_{\rm gr}(\mathcal{P}(\tilde{F})) = {\sf j}^{f_0}_{\rm gr}(\tilde{F})|_{z_k = \omega_k}
$.
\end{prop}

\begin{proof}
Observe that
$$
{\sf b}^f(\mathcal{P}(\tilde{u})) = \sum_{k=1}^2 |u_k(\omega_1, \omega_2)| \sum_{j=1}^{n_k} \, {\rm rot}(\gamma_{kj}) =  
\big( \sum_{j=1}^{n_1} {\rm rot}^f(\gamma_{1j}) |u_1| + \sum_{j=1}^{n_2} {\rm rot}^f(\gamma_{2j}) |u_2|\big)_{z_k = \omega_k}.
$$
Then
$$
{\sf c}^f(\mathcal{P}(\tilde{F}))  =  \big( \sum_{j=1}^{n_1} {\rm rot}^f(\gamma_{1j}) {\sf c}_1(\tilde{F}) +
\sum_{j=1}^{n_2} ({\rm rot}^f(\gamma_{2j}) {\sf c}_2(\tilde{F}) \big)_{z_k = \omega_k}.
$$
Using Proposition~\ref{prop:divP} and equation~\eqref{eq:rot_gluing}, we obtain
$$
{\sf j}^f_{\rm gr}(\mathcal{P}(\tilde{F})) = {\sf j}(\mathcal{P}(\tilde{F})) - {\sf c}^f(\mathcal{P}(\tilde{F})) =
({\sf j}(F) - {\sf c}^{f_0}(\tilde{F}))_{z_k = \omega_k} =  {\sf j}^{f_0}_{\rm gr}(\tilde{F})|_{z_k = \omega_k},
$$
as required.
\end{proof}

We are now ready to state the relation between solutions of the KV problem associated to the surface $\Sigma$ and to its pieces $\Sigma_{g_1, n_1+1}, \Sigma_{g_2, n_2+1}$ and $\Sigma_{0,3}$.

\begin{thm}   \label{thm:KVgluing}
Suppose $n_1=0$ or $g_2=0$.
Let
$
\tilde{F}_1 \in {\rm SolKV}_{(g_1, n_1+1)}
$
and
$
\tilde{F}_2 \in {\rm SolKV}_{(g_2, n_2+1)}
$
be solutions of KV problems in the adapted framing with Duflo functions 
$h^{(k)}_1,\ldots,h^{(k)}_{n_k}$, $h^{(k)}$ for $\tilde{F}_k$ ($k=1,2$), and 
let 
$
\tilde{F} \in {\rm SolKV}_{(0,3)}^{f_0}
$
be a solution of the KV problem with framing $f_0$ with ${\bf q}^{f_0} =2g_1 |z_1| + 2g_2 |z_2|$ and the Duflo functions $h^{(1)}, h^{(2)}, h$.
Then
\[
\mathcal{P}(\tilde{F}) (\tilde{F}_1 \times \tilde{F}_2)
\]
is a solution of the KV problem of type $(g_1 + g_2, n_1+n_2 +1)$ in the adapted framing with Duflo functions $h^{(1)}_1,\ldots,h^{(1)}_{n_1},h^{(2)}_1,\ldots,h^{(2)}_{n_2},h$.
\end{thm}

\begin{proof}
First observe that both $\tilde{F}_1 \times \tilde{F}_2$ and $\mathcal{P}(\tilde{F})$ 
are elements of ${\rm tAut}^+_{(g_1 + g_2, n_1 +n_2 +1)}$. Hence so is their product $\mathcal{P}(\tilde{F}) (\tilde{F}_1 \times \tilde{F}_2)$.

Since by assumption either $n_1=0$ or $g_2=0$, we observe that 
\[
\xi = \log \big(  \prod_{i=1}^{g_1} (e^{x_{1i}} e^{y_{1i}} e^{-x_{1i}} e^{-y_{1i}}) \, \prod_{j=1}^{n_1} e^{z_{1j}}  \, \prod_{i=1}^{g_2} (e^{x_{2i}} e^{y_{2i}} e^{-x_{2i}} e^{-y_{2i}})  \, \prod_{j=1}^{n_2} e^{z_{2j}} \big)
= \log(e^{\xi_{1}} e^{\xi_{2}}),
\]
where $\xi_k$ is the element \eqref{eq:xi_def} for the surface $\Sigma_{g_k, n_k+1}$ ($k = 1,2$).
Hence
$$
\mathcal{P}(\tilde{F}) (F_1 \times F_2)(\xi)=\mathcal{P}(\tilde{F})(\log(e^{\omega_1}e^{\omega_2})) = F(\log(e^{z_1}e^{z_2}))|_{z_k = \omega_k}
= \omega_1 + \omega_2 = \omega
$$
which proves the first KV equation.

For the second KV equation, note that
$$
\begin{array}{rll}
R^{\rm adp}(\tilde{F}_1) & = & 
\sum_j |h_j^{(1)}(z_{1j})| - |h^{(1)}(\omega_1)|, \\
R^{\rm adp}(\tilde{F}_2) & = & 
\sum_j |h_j^{(2)}(z_{2j})| - |h^{(2)}(\omega_2)|, \\
R^{f_0}(\tilde{F}) = {\sf j}^{f_0}_{\rm gr}(\tilde{F}) & = & |h^{(1)}(z_1)| + |h^{(2)}(z_2)| - |h(z_1 + z_2)|.
\end{array}
$$
We compute
\begin{align*}
& R^{\rm adp}(\mathcal{P}(\tilde{F})(\tilde{F}_1 \times \tilde{F}_2)) \\
=\, &  
{\sf j}^{\rm adp}_{\rm gr}(\mathcal{P}(\tilde{F})) 
+ \mathcal{P}(\tilde{F})(R^{\rm adp}(\tilde{F}_1 \times \tilde{F}_2)) \\
=\, &  
{\sf j}^{f_0}_{\rm gr}(\tilde{F})|_{z_k = \omega_k} 
+ \mathcal{P}(\tilde{F})\big( \textstyle\sum_j |h_j^{(1)}(z_{1j})| - |h^{(1)}(\omega_1)| + \textstyle\sum_j |h_j^{(2)}(z_{2j})| - |h^{(2)}(\omega_2)| \big) \\
=\, & 
|h^{(1)}(\omega_1)| + |h^{(2)}(\omega_2)| - |h(\omega_1 + \omega_2)| \\
& + \textstyle\sum_j |h_j^{(1)}(z_{1j})| - |h^{(1)}(\omega_1)| + \textstyle\sum_j |h_j^{(2)}(z_{2j})| - |h^{(2)}(\omega_2)| \\
=\, &  
\textstyle\sum_j |h_j^{(1)}(z_{1j})| + \textstyle\sum_j |h_j^{(2)}(z_{2j})| - |h(\omega)|.
\end{align*}
Here we have used Proposition~\ref{prop:transform_R^f} in the second line and Proposition~\ref{prop:BfunderP} in the third line.
Furthermore, we have used the facts that ${\bf r}={\bf r}_1 + {\bf r}_2$ which means $R^{\rm adp}(\tilde{F}_1 \times \tilde{F}_2) = R^{\rm adp}(\tilde{F}_1) + R^{\rm adp}(\tilde{F}_2)$ and that the action of $\mathcal{P}(\tilde{F})$ preserves $|h^{(1)}(\omega_1)|$ and $|h^{(2)}(\omega_2)|$.
This calculation completes the proof.
\end{proof}

\subsection{Proof of Theorem \ref{thm:KVsolve}}

We are now ready to prove the main theorem of this section:

\vskip 0.2cm

\begin{proof}[Proof of Theorem \ref{thm:KVsolve}]

We first treat the case of adapted framing and then proceed to general framings.

\vskip 0.5em
{\em Step 1}.
Let $h \in s^2\mathbb{K}[[s]]$ be a formal power series with even part given by equation \eqref{eq:h_even}.
Then, by Theorem \ref{thm:KVclassical}, there is a normalized solution of the KV problem of type $(0,3)$ in adapted framing  with Duflo functions $h_1=h_2=h$. By Theorem \ref{thm:KVelliptic}, it gives rise to a solution of the KV problem of type $(1,1)$ in adapted framing with the same Duflo function $h$.

Theorem \ref{thm:KVgluing} allows us to construct genus zero solutions of KV problems in adapted framing  using the initial solution of the KV problem of type $(0,3)$ with Duflo functions $h_1=h_2=h$ by successive gluing. This observation gives rise to solutions of KV problems of type $(0, n+1)$ for any $n\geq 2$ with Duflo functions $h_1=\dots=h_n=h$. Note that this result was also established in \cite{genus0}, Lemma 7.3.

Similarly Theorem \ref{thm:KVgluing} allows us to construct solutions of KV problems of type $(g,1)$ for any $g$ by successive gluing of solutions of type $(1,1)$ and $(g-1, 1)$. Note that in order to make this gluing possible, one needs a solution of the KV problem of type $(0,3)$ in the framing with ${\rm rot}^f(\gamma_1) = 2 -1 =1, {\rm rot}^f(\gamma_2)=2(g-1)-1 = 2g-3$ with Duflo functions $h_1=h_2=h$. Since both these rotation numbers are non vanishing, such solutions exist by Proposition  \ref{prop:KV03fh}. 

Again by Theorem \ref{thm:KVgluing}, one can glue a solution of the KV problem of type $(0, n+1)$ and a solution of the KV problem of type $(g,1)$ to obtain a solution of the KV problem of type $(g, n+1)$ for any $g$ and $n$. The existence of a solution of type $(0,3)$ needed for gluing is guaranteed by  Proposition  \ref{prop:KV03fh}, as in the previous paragraph.

We conclude that the KV problem of type $(g,n+1)$ in adapted framing admits solutions for any $g$ and $n$.

\vskip 0.5em
{\em Step 2}.
Proposition \ref{prop:KVcase1} shows that for $g\geq 2$ the KV problems in different framings are all equivalent to each other. Hence, in this case, the KV problem admits solutions for any framing, as required.

In the case of $g=1$ and ${\bf q}^f \neq 0$, by Proposition \ref{prop:KVcase2}, the KV problem of type $(1, n+1)$ in framing $f$ is equivalent to the KV problem in adapted framing. Hence, by the arguments above, it admits solutions.

Finally, in the case of $g=1$ and ${\bf q}^f=0$ by Proposition \ref{prop:KVcase3}, the KV problem admits solutions for at most one value of ${\bf p}^f$. Since the KV problem always admits solutions in adapted framing, we conclude that this value of ${\bf p}^f$ is zero. Hence, in this case, the KV problem does not admit solutions for ${\bf p}^f\neq 0$. 
 
This completes the proof.
\end{proof}

\section{Uniqueness} \label{sec:uniqueness}

In this section, we discuss the uniqueness issue for solutions of the KV problems.

\subsection{Kasiwara-Vergne groups}
The purpose of this section is to introduce and study symmetry groups for the higher genus KV problems.
Recall the definition of the cocycles ${\sf j}^f, {\sf j}^f_{\rm gr} \colon {\rm tAut}^+(L) \to |A|$:
$$
\begin{array}{lll}
{\sf j}^f(\tilde{G}) & = & {\sf j}_{x,y,z}(G) - {\sf c}^f(\tilde{G}) + G({\bf r} -{\bf p}^f) - ({\bf r}-{\bf p}^f), \\
{\sf j}^f_{\rm gr}(\tilde{G}) & = & {\sf j}_{x,y,z}(G) - {\sf c}^f(\tilde{G}).
\end{array}
$$
Using these cocycles, we define the following subsets of ${\rm tAut}^+(L)$:
\begin{dfn}      \label{dfn:KV_KRV}
For a framing $f$ on a surface of genus $g$ with $n+1$ boundary components, let
\begin{align*}
{\rm KV}^f_{(g, n+1)} & := \big\{ \tilde{G} \in {\rm tAut}^+(L);
\text{$G(\xi) = \xi$ and} \\ 
    & \hspace{2em} \text{${\sf j}^f(\tilde{G}) = \textstyle\sum_j |h_j(z_j)| - |h(\xi)|$ for some $h_j \in s\mathbb{K}[[s]]$, $h\in s^2\mathbb{K}[[s]]$} \big\}, \\
{\rm KRV}_{(g, n+1)}^f & := \big\{ \tilde{G} \in {\rm tAut}^+(L);
\text{$G(\omega) = \omega$ and} \\
 & \hspace{2em} \text{${\sf j}_{\rm gr}^f(\tilde{G}) = \textstyle\sum_j |h_j(z_j)| - |h(\omega)|$ for some $h_j \in s\mathbb{K}[[s]]$, $h\in s^2\mathbb{K}[[s]]$} \big\}.
\end{align*}
\end{dfn}

The functions $h_j$ ($j=1,\ldots,n$) and $h$ are uniquely determined by $\tilde{G}$ and called the Duflo functions for $\tilde{G}$, for which we use the simplified notation $h_j^G$ and $h^G$ (dropping $\tilde{\, }$).
When it is clear from the context, we will drop the superscript $(g, n+1)$ in the notation.

In fact, these subsets are subgroups of ${\rm tAut}^+(L)$:
\begin{prop} \label{prop:KVKRVsubgroup}
The sets ${\rm KV}^f$ and ${\rm KRV}^f$ are subgroups of ${\rm tAut}^+(L)$.
The maps $h_j\colon {\rm KV}^f, {\rm KRV}^f \to s\mathbb{K}[[s]]$ and $h\colon {\rm KV}^f, {\rm KRV}^f \to s^2\mathbb{K}[[s]]$ are group homomorphisms under the additive group structure on the target.
\end{prop}

\begin{proof}
For $\tilde{F}, \tilde{G} \in {\rm KV}^f$, we compute
$$
(FG)(\xi) = F(G(\xi)) = F(\xi) = \xi,
$$
and
\begin{align*}
{\sf j}^f(\tilde{F}\tilde{G}) & = {\sf j}^f(\tilde{F}) + F({\sf j}^f(\tilde{G})) \\
& = \textstyle\sum_j |h^F_j(z_j)| - |h^F(\xi)| + F(\textstyle\sum_j |h^G_j(z_j)| - |h^G(\xi)|) \\
& = \textstyle\sum_j |h^F_j(z_j) + h^G_j(z_j)| - |h^F(\xi) + h^G(\xi)|.
\end{align*}
Here we have used the facts that $F(|h^G(z_j)|) = |h^G(z_j)|$ since $\tilde{F}$ is tangential and that $F(|h^G(\xi)|) = |h^G(F(\xi))| = |h^G(\xi)|$.
Hence $\tilde{F}\tilde{G} \in {\rm KV}^f$.
Similarly one can prove that $\tilde{F} \in {\rm KV}^f$ implies $\tilde{F}^{-1} \in {\rm KV}^f$.
The last displayed formula shows that the maps $h_j$ and $h$ are indeed group homomorphisms under the additive group structure on $\mathbb{K}[[s]]$.
The proof for ${\rm KRV}^f$ is similar.
\end{proof}

\begin{rem}       \label{rem:f_independent}
There are several interesting situations when the Kashiwara-Vergne groups ${\rm KV}^f$ and ${\rm KRV}^f$ do not depend on the choice of framing. First, in the genus zero case (similar to Remark~\ref{rem:framing_genus_0}), the groups ${\rm KV}^f, {\rm KRV}^f$ are independent of the framing $f$. Indeed, the framing dependent term 
$$
{\sf c}^f(\tilde{G}) = \sum_{j=1}^n {\rm rot}^f(\gamma_j) {\sf c}_j(\tilde{G}) \in \mathbb{K}\langle |z_j|; j=1, \dots, n\rangle
$$
takes values in the span of $|z_j|$'s, and can be eliminated by changing linear parts of $h_j$'s.
Next the groups ${\rm KV}^f, {\rm KRV}^f$ are independent of framing in the case of surfaces of genus $g$ with one boundary component (that is, $n=0$). For ${\rm KRV}^f$ groups, this is obvious since ${\sf c}^f=0$. For the groups ${\rm KV}^f$, this follows from the fact that $L$ has no generators of weight 2, and $G({\bf p}^f)={\bf p}^f$ for all $G \in {\rm Aut}^+(L)$.
\end{rem}

The importance of the groups ${\rm KV}^f$ and ${\rm KRV}^f$ stems from the following theorem:
\begin{thm}   \label{thm:KV_KRV}
Assume that the set ${\rm SolKV}_{(g, n+1)}^f$ is nonempty. Then it carries a free and transitive action of the group ${\rm KV}_{(g, n+1)}^f$ by right translations and of the group ${\rm KRV}_{(g, n+1)}^f$ by left translations:
\[
{\rm KRV}^f_{(g,n+1)}
\curvearrowright
{\rm SolKV}_{(g, n+1)}^f
\curvearrowleft
{\rm KV}^f_{(g,n+1)}.
\]
Every element $\tilde{F} \in {\rm SolKV}_{(g, n+1)}^f$ defines a group isomorphism ${\rm KRV}^f_{(g,n+1)} \to {\rm KV}^f_{(g,n+1)}$ 
by 
$\tilde{G} \mapsto \tilde{H}=\tilde{F}^{-1} \tilde{G} \tilde{F}$. This isomorphism has the property $h^H=h^G, h^H_j = h^G_j$ for $j=1, \dots, n$.
\end{thm}

\begin{proof}
Assume that ${\rm SolKV}^f = {\rm SolKV}^f_{(g,n+1)} \neq \emptyset$, and let $\tilde{F} \in {\rm SolKV}^f$ and $\tilde{G} \in {\rm KV}^f$. Then
$\tilde{F} \tilde{G} \in {\rm SolKV}^f$. Indeed,
$$
(FG)(\xi) = F(G(\xi)) = F(\xi) = \omega,
$$
and
\begin{align*}
R^f(\tilde{F} \tilde{G}) & =  R^f(\tilde{F}) + F({\sf j}^f(\tilde{G})) \\
& = \textstyle\sum_j |h^F_j(z_j)| - |h^F(\omega)| + F(\textstyle\sum_j |h^G_j(z_j)| - |h^G(\xi)|)  \\
& = \textstyle\sum_j |h^F_j(z_j) + h^G_j(z_j)| - |h^F(\omega) + h^G(\omega)|.
\end{align*}
Here we have used Proposition \ref{prop:transform_R^f} and the facts that 
$\tilde{F}$ is tangential and that $F(\xi) = \omega$.

In a similar fashion, let $\tilde{F} \in {\rm SolKV}^f$ and $\tilde{K} \in {\rm KRV}^f$. In order to show that $\tilde{K} \tilde{F} \in {\rm SolKV}^f$, we compute
$$
(KF)(\xi) = K(F(\xi)) = K(\omega)= \omega,
$$
and
\begin{align*}
R^f(\tilde{K} \tilde{F}) & =  {\sf j}^f_{\rm gr}(\tilde{K}) + K(R^f(\tilde{F})) \\
& = \textstyle\sum_j |h^K_j(z_j)| - |h^K(\omega)| + K(\textstyle\sum_j |h^F(z_j)| - |h^F(\omega)|)  \\
& = \textstyle\sum_j |h^K_j(z_j) + h^F_j(z_j)| - |h^K(\omega) + h^F(\omega)|.
\end{align*}

Hence ${\rm KV}^f$ acts on ${\rm SolKV}^f$ by right translations and ${\rm KRV}^f$ by left translations, as required. These actions are free since so are the actions of ${\rm tAut}^+(L)$ on itself by left and right translations. 

These actions are transitive. Indeed, let $\tilde{F}_1, \tilde{F}_2 \in {\rm SolKV}^f$.
Then, $\tilde{G}=\tilde{F}_1^{-1} \tilde{F}_2$ has the following properties:
$$
G(\xi) = F_1^{-1}(F_2(\xi)) = F_1^{-1}(\omega) = \xi,
$$
and
\begin{align*}
{\sf j}^f(\tilde{G}) & = {\sf j}^f(F_1^{-1}) + F_1^{-1}({\sf j}^f(F_2))  \\
& = F_1^{-1}({\sf j}^f(F_2) - {\sf j}^f(F_1)) \\ 
& = F_1^{-1} \big( (R^f(\tilde{F}_2) +{\bf p}^f -{\bf r}) - (R^f(\tilde{F}_1) +{\bf p}^f -{\bf r}) \big) \\
& = F_1^{-1}(\textstyle\sum_j|h^{F_2}_j(z_j) - h^{F_1}_j(z_j)| - |h^{F_2}(\omega) - h^{F_1}(\omega)|) \\
& = \textstyle\sum_j|h^{F_2}_j(z_j) - h^{F_1}_j(z_j)| - |h^{F_2}(\xi) - h^{F_1}(\xi)|.
\end{align*}
Therefore $\tilde{G} \in {\rm KV}^f$ as required. 
Similarly, one can prove that $\tilde{K} = \tilde{F}_2\tilde{F}_1^{-1}$ belongs to ${\rm KRV}^f$.

Let $\tilde{F} \in {\rm SolKV}^f$. It defines a group isomorphism ${\rm KRV}^f \to {\rm KV}^f$ 
by 
$\tilde{K} \mapsto \tilde{G} =\tilde{F}^{-1} \tilde{K} \tilde{F}$. Indeed, $\tilde{G}$ is the unique solution of the equation
$\tilde{K} \tilde{F} = \tilde{F} \tilde{G}$. Since $\tilde{K} \in {\rm KRV}^f, \tilde{F} \in {\rm SolKV}^f$, the product $\tilde{K} \tilde{F} \in {\rm SolKV}^f$. Therefore $\tilde{G} = \tilde{F}^{-1}(\tilde{K} \tilde{F}) \in {\rm KV}^f$. The map $\tilde{K} \mapsto \tilde{G}$ is conjugation by $\tilde{F}$. Therefore it is an injective group homomorphism. A similar argument shows that the inverse map $\tilde{G} \mapsto \tilde{K} = \tilde{F} \tilde{G} \tilde{F}^{-1}$ is an injective group homomorphism ${\rm KV}^f \to {\rm KRV}^f$. Hence both maps are bijections, as required.

The equality $\tilde{K} \tilde{F} = \tilde{F} \tilde{G}$ and calculations above imply
$$
h^{K}_j + h^{F}_j = h^{F}_j + h^{G}_j, \hskip 0.3cm
h^{K} + h^{F} = h^{F} + h^{G}.
$$
Hence we conclude that $h_j^G=h_j^K$ for all $j=1, \dots, n$ and $h^H = h^G$.
\end{proof}

For future use, we define the following subgroups of the KV groups:
$$
{\rm KV}_0^f=\{ \tilde{F} \in {\rm KV}^f; h^F=0\},
\hskip 0.3cm
{\rm KRV}_0^f=\{ \tilde{F} \in {\rm KRV}^f; h^F=0\}.
$$
Since the maps $h \colon {\rm KV}^f, {\rm KRV}^f \to s^2\mathbb{K}[[s]]$ are group homomorphisms, the subsets ${\rm KV}_0^f \subset {\rm KV}^f, {\rm KRV}_0^f \subset {\rm KRV}^f$ are indeed subgroups. By Theorem \ref{thm:KV_KRV}, every group isomorphism ${\rm KRV}^f \to {\rm KV}^f$ induced by an element $\tilde{F} \in {\rm SolKV}^f$ restricts to a group isomorphism ${\rm KRV}_0^f \to {\rm KV}_0^f$.

\begin{rem} \label{rem:Duflo_g0_KVgroups}
Let $\tilde{F} \in {\rm SolKV}^f$ for a KV problem in genus zero. Recall that in this case by Remark~\ref{rem:Duflo_genus_zero} all Duflo functions
$h_j, h$ agree modulo linear terms with even part given by \eqref{eq:h_even}.
This implies that Duflo functions of elements $\tilde{G} \in {\rm KRV}^f, \tilde{H} \in {\rm KV}^f$ also agree modulo linear terms and have vanishing even parts. Indeed, we can represent $\tilde{G} = \tilde{F}_2 \tilde{F}_1^{-1}$ for some $\tilde{F}_1, \tilde{F}_2 \in {\rm SolKV}^f$. 
Then,
$$
h^G_j = h^{F_2}_j - h^{F_1}_j, \hskip 0.3cm
h^G=h^{F_2} - h^{F_1},
$$
and the conclusion follows. The argument for $\tilde{H}$ is similar. 
\end{rem}

\begin{rem}
Note that the groups ${\rm KRV}^f$ depend only on rotations numbers
${\rm rot}^f(\gamma_j)$ corresponding to boundary components of the surface, and they do not depend on the element ${\bf p}^f$ which encodes all other rotation numbers. 
Assuming ${\rm SolKV}^f \neq \emptyset$, Theorem~\ref{thm:KV_KRV} implies ${\rm KV}^f \cong {\rm KRV}^f$, and the same consideration applies to the groups ${\rm KV}^f$ even though ${\bf p}^f$ enters in their definition.
\end{rem}

\begin{rem}
Note that elements $\tilde{F}=(F, f_1, \dots, f_n) \in {\rm tAut}^+(L)$ with $F = {\rm id}, f_j=e^{\lambda_jz_j}$ for all $\lambda_j \in \mathbb{K}$ always belong to ${\rm KRV}^f$ and to ${\rm KRV}_0^f$. Indeed, $F(\omega) =\omega$ since $F = {\rm id}$, and
$$
{\sf j}^f_{\rm gr}(\tilde{F}) = {\sf j}_{x,y,z}(F) - \sum_{j=1}^n {\rm rot}^f(\gamma_j) {\sf c}_j(\tilde{F}) = -\sum_{j=1}^n \lambda_j\, {\rm rot}^f(\gamma_j) |z_j|,
$$
as required.
\end{rem}

\subsection{Gluing of surfaces}
The purpose of this subsection is to establish relations between the KV groups under gluing of surfaces, similar to the relations between solutions of the KV problems.
We start with the relation between the KV groups in genus zero and genus one:
\begin{prop}  \label{prop:KRV_genus0genus1}
The maps $\mathcal{E}\colon \tilde{G} \mapsto G^{\rm ell}$ and ${\rm Ad}_{\zeta} \circ \mathcal{E} \colon \tilde{G} \mapsto \zeta G^{\rm ell} \zeta^{-1}$ define group homomorphisms preserving Duflo functions ${\rm KV}_{(0,3)} \to {\rm KV}_{(1,1)}$ and ${\rm KRV}_{(0, 3)} \to {\rm KRV}_{(1,1)}$, respectively.
Here $\zeta \in {\rm Aut}^+_{(1,1)}$ is the element defined in \eqref{eq:auto_zeta}.
\end{prop}
\begin{proof}
The map $\mathcal{E}\colon \tilde{G} \mapsto G^{\rm ell}$ is a group homomorphism by Proposition~\ref{prop:Fellhom}.
In order to show that it maps ${\rm KV}_{(0,3)}$ to ${\rm KV}_{(1,1)}$, let $\tilde{G} \in {\rm KV}_{(0,3)}$, and represent it in the form
$\tilde{G}=\tilde{F}_1^{-1} \tilde{F}_2$ with $\tilde{F}_1, \tilde{F}_2 \in {\rm SolKV}_{(0,3)}^f$, where the framing $f$ is specified by ${\rm rot}^f(\gamma_1) = {\rm rot}^f(\gamma_2) = 0$.
Here we have used the fact that the group ${\rm KV}_{(0,3)}$ is independent of framing (see Remark~\ref{rem:f_independent}).
By the proof of Theorem \ref{thm:KV_KRV}, the Duflo function of $\tilde{G}$ is given by $h^G=h^{F_2}-h^{F_1}$, where $h^{F_1}$ and $h^{F_2}$ are the Duflo functions of $\tilde{F}_1$ and $\tilde{F}_2$, respectively.

We can decompose $\mathcal{E}(\tilde{G})$ into a product of two factors:
$$
\mathcal{E}(\tilde{G}) = G^{\rm ell} = 
(\zeta F_1^{\rm ell})^{-1} (\zeta F_2^{\rm ell}).
$$
By Theorem~\ref{thm:KVelliptic}, we have $\zeta F_1^{\rm ell}, \zeta F_2^{\rm ell} \in {\rm SolKV}_{(1,1)}^{\rm adp}$ with the same Duflo functions as $\tilde{F}_1$ and $\tilde{F}_2$, respectively.
Then Theorem~\ref{thm:KV_KRV} implies that $\mathcal{E}(\tilde{G})$ is an element of ${\rm KV}_{(1,1)}$ with the Duflo function $h^{\mathcal{E}(\tilde{G})} = h^{F_2} - h^{F_1} = h^G$, as required.

Next consider the map ${\rm Ad}_{\zeta} \circ \mathcal{E} \colon \tilde{G} \mapsto \zeta G^{\rm ell} \zeta^{-1}$.
It is a group homomorphism because so is the map $\tilde{G} \mapsto G^{\rm ell}$.
To show that it maps ${\rm KRV}_{(0,3)}$ to ${\rm KRV}_{(1,1)}$, let $\tilde{G} \in {\rm KRV}_{(0.3)}$ and represent it in the form $\tilde{G} = \tilde{F}_2 \tilde{F}_1^{-1}$ with $\tilde{F}_1, \tilde{F}_2 \in {\rm SolKV}_{(0,3)}^f$.
Then, $\zeta F_1^{\rm ell}, \zeta F_2^{\rm ell} \in {\rm SolKV}_{(1,1)}^{\rm adp}$ and 
\[
{\rm Ad}_{\zeta}\circ \mathcal{E} (\tilde{G})
= \zeta G^{\rm ell} \zeta^{-1} = (\zeta F_2^{\rm ell})(\zeta F_1^{\rm ell})^{-1} \in {\rm KRV}_{(1,1)},
\]
as required.
The argument for Duflo functions works exactly as in the case of ${\rm KV}$ groups.
\end{proof}

We now consider gluing of surfaces.  For any non-negative integers $g_1,n_1, g_2, n_2$ and framings $f_1, f_2, f_0$ on the surfaces $\Sigma_1, \Sigma_2, \Sigma_{0,3}$ as in Remark~\ref{rem:glue_rotation}, we define a subgroup in the product of ${\rm KRV}$ groups
$$
{\rm KRV}^{\rm prod} = \{ (\tilde{G}_1, \tilde{G}_2, \tilde{G}) \in {\rm KRV}^{f_1}_{(g_1, n_1+1)}\times {\rm KRV}_{(g_2, n_2+1)}^{f_2} \times  {\rm KRV}^{f_0}_{(0,3)}; \, h^{G_1}=h^{G_2}=h^{G}=h \} .
$$
We denote the framing of the glued surface by $f$. Its restrictions to the three components are $f_0$, $f_1$ and $f_2$, respectively.
The following proposition gives a gluing construction for ${\rm KRV}$ groups:

\begin{prop}  \label{prop:KRV_gluing}
There is a group homomorphism 
${\rm KRV}^{\rm prod} \to {\rm KRV}_{(g_1+g_2, n_1+n_2+1)}^f$ defined by formula
$$
(\tilde{G}_1, \tilde{G}_2, \tilde{G}) \mapsto (\tilde{G}_1 \times \tilde{G}_2) \mathcal{P}(\tilde{G}).
$$
Here $\mathcal{P}$ is the map in Proposition~\ref{prop:BfunderP}.
\end{prop}

\begin{proof}
For the first equation in the definition of ${\rm KRV}_{(g_1+g_2, n_1+n_2+1)}^f$ observe that $G_1, G_2$ and $\mathcal{P}(G)$ preserve $\omega=\omega_1+\omega_2$. For the second equation, we compute
\begin{align*}
{\sf j}^f_{\rm gr}((\tilde{G}_1 \times \tilde{G}_2) \mathcal{P}(\tilde{G})) & = 
{\sf j}^{f}_{\rm gr}(\tilde{G}_1) + {\sf j}^{f}_{\rm gr}(\tilde{G}_2) +
(G_1 \times G_2)({\sf j}^{f}_{\rm gr}(\mathcal{P}(\tilde{G}))) \\
& = {\sf j}^{f_1}_{\rm gr}(\tilde{G}_1) + {\sf j}^{f_2}_{\rm gr}(\tilde{G}_2) +
(G_1 \times G_2)({\sf j}^{f_0}_{\rm gr}(\tilde{G})|_{z_k=\omega_k}) \\
& = \textstyle\sum_{j=1}^{n_1} |h^{G_1}_j(z_{1j})| - |h(\omega_1)| +
\textstyle\sum_{j=1}^{n_2} |h^{G_2}_j(z_{2j})| - |h(\omega_2)| \\
& \hspace{1em} + 
(G_1 \times G_2)(|h^G_1(\omega_1) + h^G_2(\omega_2) - h(\omega)|)   \\
& = \textstyle\sum_{j=1}^{n_1} |h^1_j(z_{1j})| + \textstyle\sum_{j=1}^{n_2} |h^2_j(z_{2j})| - |h(\omega)|.
\end{align*}
Here we have used Proposition \ref{prop:BfunderP} in the second line,
the assumption that $h^{G_1}=h^{G_2}=h^{G}$,  and the fact that 
$|h^{G}_1(\omega_1) - h(\omega_1)|$ and $|h^{G}_2(\omega_2) - h(\omega_2)|$ 
are linear combinations of expressions $|z_{1j}|$ and $|z_{2j}|$. The new Duflo functions $h^1_j$ and $h^2_j$ are obtained by adding these linear terms to $h^{G_1}_j$ and $h^{G_2}_j$, respectively.

To show that the map $(\tilde{G}_1, \tilde{G}_2, \tilde{G}) \mapsto (\tilde{G}_1 \times \tilde{G}_2) \mathcal{P}(\tilde{G})$ is a group homomorphism recall that $\mathcal{P}$ is a group homomorphism and note that the factors $(\tilde{G}_1 \times \tilde{G}_2)$ and $\mathcal{P}(\tilde{G})$ commute since $G_1(\omega_1) = \omega_1$ and $G_2(\omega_2) = \omega_2$.
\end{proof}

There is also a gluing construction for ${\rm KV}$ groups. In order to describe it, we need the following pair-of-paints map
\[
\mathcal{P}^{\diamond}
\colon \taut^+_{(0,3)} \rightarrow \taut^+_{(g_1+g_2, n_1+n_2+1)}
\]
that sends an element $\tilde{F} = (F,f_1,f_2) \in \taut^+_{(0,3)}$ to the automorphism
\[
\mathcal{P}^{\diamond}(\tilde{F}) \colon
  \begin{cases}   
         x_{ki} \mapsto f_k(\xi_1, \xi_2)^{-1} x_{ki} f_k(\xi_1, \xi_2) \\
        y_{ki} \mapsto f_k(\xi_1, \xi_2)^{-1} y_{ki} f_k(\xi_1, \xi_2) \\
        z_{kj} \mapsto f_k(\xi_1, \xi_2)^{-1} z_{kj} f_k(\xi_1, \xi_2)
  \end{cases}
  \qquad k = 1,2,
\]
where
\begin{align*}
    \xi_1 &= \log \big(\prod_{i=1}^{g_1} (e^{x_{1i}} e^{y_{1i}} e^{-x_{1i}} e^{-y_{1i}})  \, \prod_{j=1}^{n_1} e^{z_{1j}} \big) ,
    &\xi_2 &= \log \big(\prod_{i=1}^{g_2} (e^{x_{2i}} e^{y_{2i}} e^{-x_{2i}} e^{-y_{2i}})  \, \prod_{j=1}^{n_2} e^{z_{2j}} \big).
\end{align*}
Let $\tilde{F}_1$ and $\tilde{F}_2$ be solutions of the equations $\KV {\rm I}_{(g_1, n_1+1)}$ and  $\KV {\rm I}_{(g_2, n_2+1)}$, respectively. Then we have the identity
\begin{align} \label{eq:pantsintertwine}
    (\tilde{F}_1 \times \tilde{F}_2) \circ \mathcal{P}^{\diamond}(\tilde{G}) = \mathcal{P}(\tilde{G}) \circ (\tilde{F}_1 \times \tilde{F}_2)
\end{align}
for all $\tilde{G} \in \taut^+_{(0,3)}$.
Keeping the same setting as the definition of the group ${\rm KRV}^{\rm prod}$, we consider the group
$$
{\rm KV}^{\rm prod} = \{ (\tilde{G}_1, \tilde{G}_2, \tilde{G}) \in {\rm KV}^{f_1}_{(g_1, n_1+1)}\times {\rm KV}_{(g_2, n_2+1)}^{f_2} \times  {\rm KV}_{(0,3)}^{f_0}; \, h^{G_1}=h^{G_2}=h^{G}=h\} .
$$

\begin{prop}      \label{prop:glue_kv}
Assume that $n_1=0$ or $g_2=0$. Then there is a group homomorphism ${\rm KV}^{\rm prod} \to {\rm KV}^{f}_{(g_1+g_2, n_1+n_2 +1)}$ given by formula
$$
(\tilde{G}_1, \tilde{G}_2, \tilde{G}) \mapsto (\tilde{G}_1 \times \tilde{G}_2) \mathcal{P}^{\diamond}(\tilde{G}).
$$
\end{prop}

\begin{proof}
First recall that if $n_1=0$ or $g_2=0$ we have $\xi = \log(e^{\xi_1} e^{\xi_2})$. 
This implies
\begin{align*}
(G_1 \times G_2) \mathcal{P}^{\diamond}(G)(\xi)
& = (G_1 \times G_2) \mathcal{P}^{\diamond}(G)(\log(e^{\xi_1} e^{\xi_2})) \\
& = 
(G_1 \times G_2)(\log(e^{\xi_1} e^{\xi_2})) \\
& = 
\log(e^{G_1(\xi_1)}e^{G_2(\xi_2)}) \\
& = \log(e^{\xi_1} e^{\xi_2}) = \xi.
\end{align*}
Here we have used that $G_1(\xi_1)=\xi_1, G_2(\xi_2) = \xi_2,
G(\log(e^{z_1}e^{z_2})) = \log(e^{z_1}e^{z_2})$.

In order to confirm the second defining condition of ${\rm KV}^f_{(g_1 + g_2, n_1 + n_2 + 1)}$, we use the following relation: for any $\tilde{G} \in {\rm tAut}^+_{(0,3)}$ we have
\begin{equation} \label{eq:BfunderP}
{\sf j}^f(\mathcal{P}^{\diamond}(\tilde{G})) = 
{\sf j}^{f_0}(\tilde{G})|_{z_k = \xi_k}.
\end{equation}
To see this, let $\tilde{F}_1$ and $\tilde{F}_2$ be (any) solutions of equations $\KV {\rm I}_{(g_1, n_1+1)}$ and  $\KV {\rm I}_{(g_2, n_2+1)}$, respectively. 
Applying ${\sf j}^f$ to the both sides of \eqref{eq:pantsintertwine}, we obtain
\[
{\sf j}^f(\tilde{F}_1 \times \tilde{F}_2) + 
(\tilde{F}_1 \times \tilde{F}_2) ({\sf j}^f(\mathcal{P}^{\diamond}(\tilde{G}))) 
= {\sf j}^f(\mathcal{P}(\tilde{G})) + \mathcal{P}(\tilde{G}) ({\sf j}^f(\tilde{F}_1 \times \tilde{F}_2)).
\]
Since for each $k = 1,2$ the map $\mathcal{P}(\tilde{G})$ preserves the trace of any formal power series in $x_{ki}, y_{ki}, z_{kj}$, it preserves ${\sf j}^f(\tilde{F}_1 \times \tilde{F}_2) = {\sf j}^{f_1}(\tilde{F}_1) + {\sf j}^{f_2}(\tilde{F}_2)$ and ${\bf r} - {\bf p}^f$.
Therefore we obtain $(\tilde{F}_1 \times \tilde{F}_2) ({\sf j}^f(\mathcal{P}^{\diamond}(\tilde{G}))) = {\sf j}^f(\mathcal{P}(\tilde{G})) = {\sf j}^f_{\rm gr}(\mathcal{P}(\tilde{G}))$.
Thus 
\[
{\sf j}^f(\mathcal{P}^{\diamond}(\tilde{G})) = (\tilde{F}_1 \times \tilde{F}_2)^{-1}({\sf j}^f_{\rm gr} (\mathcal{P}(\tilde{G}))) = 
(\tilde{F}_1 \times \tilde{F}_2)^{-1}({\sf j}^{f_0}_{\rm gr}(\tilde{G})|_{z_k = \omega_k})
= {\sf j}^{f_0}(\tilde{G})|_{z_k = \xi_k}.
\]
Here we have used Proposition~\ref{prop:BfunderP} and the fact that $\tilde{F}_k^{-1}(\omega_k) = \xi_k$.
Also, note that ${\sf j}^{f_0}_{\rm gr} = {\sf j}^{f_0}$ since the genus is zero.
This proves \eqref{eq:BfunderP}.

Now using equation \eqref{eq:BfunderP} we compute
\begin{align*}
{\sf j}^f((\tilde{G}_1 \times \tilde{G}_2) \mathcal{P}^{\diamond}(\tilde{G})) & =  
{\sf j}^f(\tilde{G}_1) + {\sf j}^f(\tilde{G}_2) + (G_1 \times G_2)({\sf j}^f(\mathcal{P}^{\diamond}(\tilde{G}))) \\
& = {\sf j}^{f_1}(\tilde{G}_1) + {\sf j}^{f_2}(\tilde{G}_2) + (G_1 \times G_2)({\sf j}^{f_0}(\tilde{G})|_{z_k=\xi_k})\\
& = \textstyle\sum_{j=1}^{n_1} |h^{G_1}_j(z_{1j})| - h(\xi_1) + \textstyle\sum_{j=1}^{n_2} |h^{G_2}_j(z_{2j})| - h(\xi_2) \\
& \hspace{1em} + (G_1 \times G_2)(|h^G_1(\xi_1)| + |h^G_2(\xi_2)| - |h(\xi)|) \\
& = \textstyle\sum_{j=1}^{n_1} |h^1_j(z_{1j})| + \textstyle\sum_{j=1}^{n_2} |h^2_j(z_{2j})| - |h(\xi)|.
\end{align*}
Here we have used the assumption $h^{G_1}=h^{G_2}=h^G=h$ and the fact that 
the expressions $h^{G}_1-h, h^{G}_2-h$ are linear functions in $z_1, z_2$.

This completes the proof.
\end{proof}

\subsection{Kashiwara-Vergne Lie algebras}

The groups ${\rm KV}^f_{(g, n+1)}, {\rm KRV}^f_{(g, n+1)}$ are pro-unipotent. The corresponding Lie algebras are given by
\begin{align*}
\mathfrak{kv}^f_{(g, n+1)} & :=  
\big\{ \tilde{u} \in {\rm tDer}^+(L); \text{$u(\xi)=0$ and} \\ 
&  \hspace{2em} \text{${\sf div}^f(\tilde{u}) =
\textstyle\sum_j|h_j(z_j)| - |h(\xi)|$ for some $h_j  \in s\mathbb{K}[[s]]$, $h \in s^2\K[[s]]$} \big\}. \\
\mathfrak{krv}^f_{(g, n+1)} & := 
\big\{ \tilde{u} \in {\rm tDer}^+(L); \text{$u(\omega)=0$ and} \\
& \hspace{2em} \text{${\sf div}^f_{\rm gr}(\tilde{u}) =
\textstyle\sum_j|h_j(z_j)| - |h(\omega)|$ for some $h_j \in s\mathbb{K}[[s]]$, $h \in s^2\K[[s]]$} \big\}.
\end{align*}
More precisely, this correspondence is established by the following proposition:

\begin{prop} \label{prop:KVlog}
Let $\tilde{G} \in {\rm tAut}^+(L)$.
Then, $\tilde{G} \in {\rm KV}^f$ if and only if $\tilde{u}=\log(\tilde{G}) \in \mathfrak{kv}^f$, and $\tilde{G} \in {\rm KRV}^f$ if and only if $\tilde{u}=\log(\tilde{G}) \in \mathfrak{krv}^f$.
\end{prop}

\begin{proof}
Let $\tilde{u} \in \mathfrak{kv}^f$. Then  $\tilde{G}=e^{\tilde{u}}$ has the following property: $\tilde{G}(\xi) = \exp(\tilde{u})(\xi) = \xi$ since
$\tilde{u}(\xi) =0$. Furthermore
$$
{\sf j}^f(\tilde{G}) = \frac{e^{\tilde{u}} - 1}{\tilde{u}} \, {\sf div}^f(\tilde{u}) =
\frac{e^{\tilde{u}} - 1}{\tilde{u}}(\sum_j |h_j(z_j)| - |h(\xi)|) =
\sum_j |h_j(z_j)| - |h(\xi)|.
$$
Here we have used the facts that $\tilde{u}$ acts by zero on $|h_j(z_j)|$ (because $\tilde{u}$ is a tangential derivation) and on $|h(\xi)|$ (because $\tilde{u}$ vanishes on $\xi$).

In the other direction, assume that $\tilde{G} \in {\rm KV}^f$, and define
$$
\tilde{u} = \log(\tilde{G}) = \log(1 + (\tilde{G}-1)).
$$
Since $(\tilde{G}-1)(\xi)=0$, we conclude that $\tilde{u}(\xi)=0$. Furthermore
$\tilde{G}=e^{\tilde{u}}$ implies
$$
{\sf div}^f(\tilde{u}) =\frac{\tilde{u}}{e^{\tilde{u}} -1} {\sf j}^f(\tilde{G}) =
\frac{\tilde{u}}{e^{\tilde{u}} -1}(\sum_j |h_j(z_j)| - |h(\xi)|) =
\sum_j |h_j(z_j)| - |h(\xi)|,
$$
as required. This completes the proof of the relation between ${\rm KV}^f$ and $\mathfrak{kv}^f$. The proof for ${\rm KRV}^f$ and $\mathfrak{krv}^f$ is similar.
\end{proof}

Similarly to the case of KV groups, we introduce the following Lie subalgebras of Kashiwara-Vergne Lie algebras:
\[
\mathfrak{kv}^f_0 = 
\{ \tilde{u} \in \mathfrak{kv}^f ; h^{u} = 0 \},
\qquad
\mathfrak{krv}^f_0 = 
\{ \tilde{u} \in \mathfrak{krv}^f ;
h^{u} = 0 \}.
\]
Under the correspondence shown in Proposition~\ref{prop:KVlog}, these are Lie algebras of the groups ${\rm KV}^f_0$ and ${\rm KRV}^f_0$, respectively.

In the genus zero case, the Lie algebras $\mathfrak{kv}^f_{(0, n+1)}, \mathfrak{krv}^f_{(0, n+1)}$ for different framings are isomorphic. 
In this case, we will drop the subscript $f$ which indicates the framing. 
The Lie algebras $\mathfrak{kv}^f$ are filtered by the weight. The Lie algebras $\mathfrak{krv}^f$ are their associated graded. This follows from the fact that under the weight filtration ${\rm gr}(\xi)=\omega$.
In what follows, we consider two important examples: the graded Lie algebras 
$\mathfrak{krv}_{(0,3)}$ and $\mathfrak{krv}_{(1,1)}$. 

In more explicit terms, 
we define 
the Lie algebra $\mathfrak{krv}_{(0,3)}$ 
by 
\begin{align*}
\mathfrak{krv}_{(0,3)} & = \big\{ (u_1, u_2) \in L(z_1, z_2)^{2};
\text{$[z_1, u_1] + [z_2, u_2]=0$ and $\exists h \in s^2\K[[s]]$ s.t.} \\
& \hspace{2em} \text{$|z_1 (d_1 u_1) + z_2 (d_2 u_2)| \equiv |h(z_1) +h(z_2)-
h(z_1 +z_2)|$ modulo $\K |z_1| \oplus \K |z_2|$} \big\}.
\end{align*}

\begin{exple} \label{ex:t_another}
In weight 2 (that is, linear in $z_1, z_2$) the tangential derivation
$\tilde{u}=(u, u_1, u_2)$ with
$$
u_1 = az_1 + bz_2, \hskip 0.3cm
u_2 = cz_1 + dz_2
$$
is in $\mathfrak{krv}_{(0,3)}$ if and only if $b=c$. Such tangential derivations form a 3-dimensional abelian Lie algebra.
In particular, we have an inner derivation
\begin{equation*}
\tilde{t} = (-{\rm ad}_{z_1 + z_2}, z_1 + z_2, z_1 + z_2) \in \mathfrak{krv}_{(0,3)}.
\end{equation*}
(Note that tangential derivations $\tilde{t}$ and $\bar{t}$ (see Remark~\ref{rem:KVnorm}) correspond to the same derivation $t = -{\rm ad}_{z_1 + z_2}$ but they have different tangential components.)
\end{exple}

The following result was established in \cite{AT12} (Theorem 4.6):
\begin{thm}  \label{thm:grt&krv}
There is an injective Lie homomorphism
$$
\nu_{(0,3)}\colon \mathfrak{grt}_1 \to \mathfrak{krv}_{(0,3)},
$$
from the Grothendieck-Teichm\"{u}ller Lie algebra $\mathfrak{grt}_1$ to the Kashiwara-Vergne Lie algebra $\mathfrak{krv}_{(0,3)}$, where the image of $\psi \in \mathfrak{grt}_1 \subset L(z_1, z_2)$
is given by
\[
\nu_{(0,3)}(\psi) = (u, u_1, u_2),
\]
where $u_1 = \psi(z_1, -z_1-z_2)$, $u_2=\psi(z_2, -z_1-z_2)$ and $u(z_j) = [z_j,u_j]$ ($j= 1,2$).
\end{thm}

\begin{rem} \label{rem:grtkrv}
Conjecturally, the map $\nu_{(0,3)}$ is an isomorphism starting from weight 3 (that is, starting from degree 2). By the results of \cite{Brown}, the Lie algebra $\mathfrak{grt}_1$ contains a free Lie algebra with the Drinfeld-Ihara generators $\sigma_3, \sigma_5, \dots$ in all odd degrees starting from 3. Also conjecturally, this inclusion is actually surjective. If both conjectures hold true, then
we have 
$$
\mathfrak{krv}_{(0,3)}/{\rm linear \,\, terms} \cong  L(\sigma_3, \sigma_5, \dots).
$$
\end{rem}

In the rest of this section, we will consider the Lie algebra $\mathfrak{krv}_{(1,1)}$ which corresponds to the second nontrivial case (besides $g=0, n=2$) in the existence proof. In this case, the definition above can be written in a more explicit form: 
\begin{align*}
\mathfrak{krv}_{(1,1)} & =
\big\{ u \in {\rm Der}^+(L(x,y)); \text{$u([x,y]) = 0$ and $\exists h \in s^2\K[[s]]$ s.t.} \\
& \hspace{2em} \text{$|d_x(u(x))+d_y(u(y))|=-|h([x,y])|$} \big\}.
\end{align*}
The following proposition establishes a link between the Lie algebras
$\mathfrak{krv}_{(0,3)}$ and $\mathfrak{krv}_{(1,1)}$:

\begin{prop} \label{prop:krv_genus0genus1}
The map $I = {\rm Ad}_{\zeta} \circ \mathcal{E}\colon \tilde{u} \mapsto {\rm Ad}_{\zeta}(u^\text{ell})$ defines a Lie homomorphism
$\mathfrak{krv}_{(0,3)} \to \mathfrak{krv}_{(1,1)}$. Furthermore the Duflo functions of $\tilde{u}$ and $I(u)$ coincide.
\end{prop}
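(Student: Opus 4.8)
The plan is to treat this proposition as the infinitesimal shadow of Proposition~\ref{prop:KRV_genus0_genus1} and to deduce it from that result together with the $\exp/\log$ correspondence between the pro-nilpotent Lie algebra $\mathfrak{krv}$ and the pro-unipotent group ${\rm KRV}$. First I would record that $I$ is a Lie homomorphism for purely formal reasons: it is the composite of the Lie algebra morphism $u \mapsto u^{\rm ell}$ (established above) with the inner automorphism ${\rm Ad}_{\phi^{-1}}$ of $\der(\mathbb{L}(x,y))$, hence a Lie homomorphism; it is injective because $u \mapsto u^{\rm ell}$ is injective (it integrates to the injective group map $F \mapsto F^{\rm ell}$) and ${\rm Ad}_{\phi^{-1}}$ is an automorphism. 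This step uses nothing about the divergence conditions.

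It then remains to check that $I$ maps $\mathfrak{krv}^{(0,3)}$ into $\mathfrak{krv}^{(1,1)}$ with matching Duflo functions. Here I would exploit the compatibility $\exp(u)^{\rm ell} = \exp(u^{\rm ell})$, which follows because $u \mapsto u^{\rm ell}$ is the differential at the identity of $F \mapsto F^{\rm ell}$ (a one-line check: the $t$-linear term of $e^{-f_1}\alpha e^{f_2}$ is $\alpha u_2 - u_1 \alpha = u^{\rm ell}(\alpha)$, and similarly for $\beta$). Conjugation by $\phi$ intertwines $\exp$ with ${\rm Ad}_{\phi^{-1}}$, so for $u \in \mathfrak{krv}^{(0,3)}$ one gets $\phi^{-1}(\exp u)^{\rm ell}\phi = \exp(I(u))$. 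Since $\exp u \in {\rm KRV}^{(0,3)}$, Proposition~\ref{prop:KRV_genus0_genus1} gives $\exp(I(u)) \in {\rm KRV}^{(1,1)}$; as $\exp$ is a bijection onto ${\rm KRV}^{(1,1)}$ whose inverse lands in $\mathfrak{krv}^{(1,1)}$, this forces $I(u) \in \mathfrak{krv}^{(1,1)}$, and the Duflo-function statement of Proposition~\ref{prop:KRV_genus0_genus1} yields that $u$ and $I(u)$ share the same Duflo function.

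Alternatively, one can verify the two defining conditions of $\mathfrak{krv}^{(1,1)}$ directly, which is where the only real computation lies. The condition $I(u)([x,y]) = 0$ is immediate: $\phi([x,y]) = \psi_1 + \psi_2$ and $u^{\rm ell}(\psi_i) = [\psi_i, u_i]$, so $u^{\rm ell}(\phi([x,y])) = [\psi_1,u_1] + [\psi_2,u_2] = (u(\omega))|_{z_1 = \psi_1,\, z_2 = \psi_2} = 0$ because $u(\omega)=0$. For the divergence one combines the cocycle relation $\tdiv({\rm Ad}_{\phi^{-1}} v) = \phi^{-1}.(\tdiv(v) + v.j(\phi))$ with $j(\phi) = |r(x)|$ (equation~\eqref{eq:jphi}), the identity $\gdiv = \tdiv + \partial \mathbf{r}$ (Proposition~\ref{prop:div_dr}, with $\mathbf{r} = |r(x)+r(y)|$ in type $(1,1)$), and the divergence formula $\gdiv(u^{\rm ell}) = \tdiv_q(u)|_{z_1=\psi_1,\, z_2=\psi_2}$ for $q=q^{\rm ell}$. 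This gives $\tdiv(I(u)) = \phi^{-1}.\big(\tdiv_q(u)|_{z_1=\psi_1,\, z_2=\psi_2} - u^{\rm ell}.|r(y)|\big)$, and the infinitesimal analogue of Lemma~\ref{lem:KVinspect} (using that $|h_1(\psi_1)| = |h_1(y)|$ and $|h_2(\psi_2)| = |h_2(-y)|$ by conjugation invariance, together with $h(\omega)|_{z\mapsto\psi} = h(\psi_1+\psi_2) = \phi(h([x,y]))$) reduces the right-hand side to $-|h([x,y])|$ after the $r(y)$-terms cancel exactly as in the proof of Theorem~\ref{thm:genus0_genus1}.

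The main obstacle is the bookkeeping in this direct route: one must set up the infinitesimal version of Lemma~\ref{lem:KVinspect} carefully and verify that all the $r(x)$- and $r(y)$-contributions cancel, leaving a pure power series in $[x,y]$. The differentiation route avoids this entirely, at the cost of stating the (routine but worth recording) facts that $\exp(u)^{\rm ell}=\exp(u^{\rm ell})$ and that membership in $\mathfrak{krv}^{(1,1)}$ is detected by $\exp$. I would take that route as the primary argument and relegate the explicit divergence computation to a remark.
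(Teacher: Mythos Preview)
Your proposal is correct and takes essentially the same approach as the paper: the paper's proof consists of the single sentence ``This is a direct consequence of Proposition~\ref{prop:KRV_genus0_genus1},'' and your primary route---deducing the Lie-algebra statement from the group-level one via the $\exp/\log$ correspondence---is exactly that. You spell out a few more details (such as $\exp(u)^{\rm ell}=\exp(u^{\rm ell})$) and offer an optional direct divergence computation, but the core argument is the same.
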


\begin{proof}
This is a direct consequence of Proposition \ref{prop:KRV_genus0genus1}. 
\end{proof}

\begin{exple} \label{exple:tildet}
It is instructive to determine the image under the map $I$ of the element 
$\tilde{t} \in \mathfrak{krv}_{(0,3)}$. We have
$$
t^{\rm ell} \colon x \mapsto [x, (e^{{\rm ad}_x} -1 )y], \hskip 0.3cm
y \mapsto [y, (e^{{\rm ad}_x} -1 )y],
$$
where we have used that $\psi(z_1 + z_2)=(e^{{\rm ad}_x} -1 )y$.
We compute
$$
I(\tilde{t}) \colon x  \mapsto \zeta(t^{\rm ell}(\zeta^{-1}(x))) 
= \zeta(t^{\rm ell}(x))
= \zeta([x, (e^{{\rm ad}_x} -1 )y])
= [x, [x,y]],
$$
where we have used the fact that $\zeta((e^{{\rm ad}_x} -1 )y)=[x,y]$.
Similarly we compute:
$$
I(\tilde{t}) \colon y  \mapsto \zeta(t^{\rm ell}(\zeta^{-1}(y)))
= \zeta([\zeta^{-1}(y), (e^{{\rm ad}_x} -1 )y])
= [y, [x,y]].
$$
Hence $I(\tilde{t})$ is an inner derivation with generator $\omega=[x,y]$.
\end{exple}

By Remarks~\ref{rem:kerI} and \ref{rem:grtkrv}, the intersection of $\nu_{(0,3)}(\mathfrak{grt}_1)$ and ${\rm Ker}(\mathcal{E})$ is trivial. 
Proposition~\ref{prop:krv_genus0genus1} together with Theorem~\ref{thm:grt&krv} imply that there is an injective Lie algebra homomorphism
$$
\nu_{(1,1)}=I \circ \nu_{(0,3)} \colon 
 \mathfrak{grt}_1 \to \mathfrak{krv}_{(0,3)} \to \mathfrak{krv}_{(1,1)}.
$$

Recall from \cite{Enriquez, Pollack} the definition of symplectic derivation $\delta_{2n} \in {\rm Der}^+(L(x,y))$: for $n\geq 1$, $\delta_{2n}$ is the unique derivation of $L(x,y)$ of degree $2n$ such that 
$$
\delta_{2n}(x) = {\rm ad}_x^{2n}(y), \hskip 0.3cm
\delta_{2n}([x,y])=0.
$$
These conditions uniquely determine the value of $\delta_{2n}(y)$:
$$
\delta_{2n}(y)= \sum_{i=0}^{n-1} (-1)^{i} [{\rm ad}_x^i(y), {\rm ad}_x^{2n-1-i}(y)].
$$

\begin{rem}
There is another expression of $\delta_{2n}$ in terms of the map $\sigma_{\rm gr}$:
\[
\delta_{2n} = - \frac{1}{2}\sigma_{\rm gr}(|y\, {\rm ad}_x^{2n}(y)|).
\]
\end{rem}

\begin{prop}  \label{prop:delta_2n}
For all $n \geq 1$, we have $\delta_{2n} \in \mathfrak{krv}_{(1,1)}$.
\end{prop}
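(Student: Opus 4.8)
The plan is to verify directly that $\delta_{2n}$ satisfies the two defining conditions of $\mathfrak{krv}^{(1,1)}$. Since $\delta_{2n}$ has degree $2n>0$ it lies in $\der^+(\mathbb{L}(x,y))$, and the condition $\delta_{2n}([x,y])=0$ holds by the very definition of $\delta_{2n}$. Thus the only thing left to check is the divergence condition, and I claim in fact that $\tdiv(\delta_{2n})=0$, so that the Duflo function $h$ can be taken to be $0$.

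To compute $\tdiv(\delta_{2n})$ I would specialize formula \eqref{eq:tdiv} to the case $g=1$, $n=0$: there are no $z_j$, so only the $x$- and $y$-terms survive, and under the identification $\tde(|A|)\cong|A|$ the divergence becomes a sum of two cyclic contractions,
\[
\tdiv(\delta_{2n}) = c_x\bigl(\delta_{2n}(x)\bigr) + c_y\bigl(\delta_{2n}(y)\bigr),
\]
where $c_x(a)$ (resp. $c_y(a)$) is obtained from each monomial of $a$ by deleting, in turn, every occurrence of $x$ (resp. $y$) and reading the remaining letters as a cyclic word in $|A|$. The key input is the bidegree bookkeeping in $(x,y)$: one has $\delta_{2n}(x)=\ad_x^{2n}(y)$, of $x$-degree $2n$ and $y$-degree $1$, and since both $\delta_{2n}(x)$ and $\delta_{2n}(y)$ are forced to have the same bidegree shift $(2n-1,+1)$, the element $\delta_{2n}(y)$ is bihomogeneous of $x$-degree $2n-1$ and $y$-degree $2$.

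Now every cyclic word containing exactly one $y$ and $2n-1$ letters $x$ equals $|x^{2n-1}y|$. Deleting one $x$ from a monomial of $\delta_{2n}(x)$, or one $y$ from a monomial of $\delta_{2n}(y)$, always leaves such a word. Hence both $c_x(\delta_{2n}(x))$ and $c_y(\delta_{2n}(y))$ are scalar multiples of $|x^{2n-1}y|$, the scalars being (up to the fixed multiplicities $2n$ and $2$ of the deleted letters) the images of $\delta_{2n}(x)$ and $\delta_{2n}(y)$ under the algebra homomorphism $\epsilon_1\colon A\to\K$ determined by $\epsilon_1(x)=\epsilon_1(y)=1$. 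Since $\epsilon_1$ takes values in the commutative ring $\K$, it annihilates every commutator; and both $\delta_{2n}(x)$ and $\delta_{2n}(y)$ lie in $\mathbb{L}(x,y)$ in positive degree. Therefore $\epsilon_1(\delta_{2n}(x))=\epsilon_1(\delta_{2n}(y))=0$, whence $\tdiv(\delta_{2n})=0$ and $\delta_{2n}\in\mathfrak{krv}^{(1,1)}$ with vanishing Duflo function.

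The only delicate point is fixing the precise contraction convention in \eqref{eq:tdiv} under the isomorphism $\tde(|A|)\cong|A|$ (in particular the multiplication order $ab$ versus $ba$); the argument above is, however, insensitive to these choices, since the cyclic word produced is $|x^{2n-1}y|$ in every case and the accompanying scalar is governed by $\epsilon_1$. As an independent consistency check I would record the cocycle identity for $\tdiv$ applied to the Cartan derivation $h_0\colon x\mapsto x,\ y\mapsto -y$: here $\tdiv(h_0)=0$ and $[h_0,\delta_{2n}]=(2n-2)\delta_{2n}$, so $\tdiv(\delta_{2n})$ is an eigenvector of weight $2n-2$; since $|[x,y]^{n}|$ has weight $0$, for $n\ge 2$ this already forces $\tdiv(\delta_{2n})=0$, in agreement with the direct computation, which in addition settles the remaining case $n=1$.
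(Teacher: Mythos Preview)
Your computation has a fatal gap: the map you call the divergence is \emph{not} $\tdiv$. Your operation $c_x$ is cyclic in its argument --- for any monomials $a,b$ one has $c_x(ab)=c_x(ba)$, because deleting one letter from a word and reading the result cyclically depends only on the cyclic class of the word. Hence $c_x$ (and likewise $c_y$) vanishes on every commutator, and therefore on all of $L_{\ge 2}$. Consequently your formula $c_x(u(x))+c_y(u(y))$ is identically zero on $\der^+(L)$, and the vanishing you obtain for $\delta_{2n}$ carries no information. The genuine divergence does \emph{not} multiply the two tensor factors of $\partial_x u(x)\in A\otimes A$; under the identification $\tde(|A|)\cong |A|$ it is recovered by projecting to the component with \emph{constant} second tensor factor. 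The paper carries this out and finds that the $x$- and $y$-contributions are $-|x^{2n-1}y|$ and $+|x^{2n-1}y|$ respectively --- each nonzero --- which then cancel. Your $\epsilon_1$ shortcut cannot see this, precisely because it is tailored to the cyclic contraction that annihilates commutators.

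Your back-up argument via the Cartan weight is also incomplete. The cocycle identity with $h_0$ correctly shows that $\tdiv(\delta_{2n})$ is an $h_0$-eigenvector of weight $2n-2$; but nothing forces $\tdiv(\delta_{2n})$ to lie in $|\K[[[x,y]]]|$ a priori --- that is exactly the $\mathfrak{krv}$ condition you are trying to prove. The weight space of weight $2n-2$ in $|A|_{2n}$ is $\K\cdot|x^{2n-1}y|$, so the eigenvalue argument only narrows $\tdiv(\delta_{2n})$ down to a scalar multiple of $|x^{2n-1}y|$; it does not determine the scalar, and in particular does not show it is zero.
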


\begin{proof}
By definition, we have $\delta_{2n}([x,y])=0$ which is the first defining property of elements in $\mathfrak{krv}_{(1,1)}$. We need to compute
$$
{\sf div}(\delta_{2n}) = 
|d_x \delta_{2n}(x) + d_y \delta_{2n}(y)|.
$$
For the first term, using Lemma~\ref{lem:dsfadxa} we compute
$$
|d_x \delta_{2n}(x)| = | d_x ({\rm ad}_x^{2n}(y))| = |x^{2n} (d_x y) - x^{2n-1}y| = -|x^{2n-1}y|.
$$
For the second term, the direct computation shows
$$
|d_y \delta_{2n}(y)| =
\sum_{i=0}^{n-1} (-1)^i |{\rm ad}_x^i(y) x^{2n-1-i} - {\rm ad}_x^{2n-1-i}(y) x^i| = |y x^{2n-1}|.
$$
In conclusion, ${\sf div}(\delta_{2n}) =|yx^{2n-1}| - |x^{2n-1} y| =0$ which together with $\delta_{2n}([x,y])=0$ implies that $\delta_{2n} \in \mathfrak{krv}_{(1,1)}$.
\end{proof}

\subsection{${\rm GRT}_1$ and ${\rm GT}_1$ subgroups} \label{subsec:GRTGT}

In this subsection, we construct group homomorphisms from the Grothendieck-Teichm\"{u}ller groups ${\rm GRT}_1$ and ${\rm GT}_1$ to the Kashiwara-Vergne groups.

Recall that the injective Lie algebra homomorphism $\nu_{(0,3)} \colon \mathfrak{grt}_1 \to \mathfrak{krv}_{(0,3)}$ integrates to an injective group homomorphism
$$
\nu_{(0,3)} \colon {\rm GRT}_1 \to {\rm KRV}_{(0,3)}.
$$
Furthermore, by Theorem 9 in \cite{AET}, there is a natural injective group homomorphism
$$
\tilde{\nu}_{(0,3)} \colon {\rm GT}_1 \to {\rm KV}_{(0, 3)}.
$$

Let $\Sigma$ be a surface of genus $g$ with $n+1$ boundary components equipped with a framing $f$. We will consider pair of pants decompositions of $\Sigma$ parametrized by the set ${\bf T}$ of planar binary oriented (in the direction of the root) rooted trees 
$T \in {\bf T}$ with $g+n$ leaves labeled by symbols $[x_i, y_i]$ for $i=1, \dots, g$ and $z_j$ for $j=1, \dots, n$.
We give a linear ordering to the leaves of $T$ by arranging them and the root in the clockwise manner: the leaf just after the root becomes the first leaf.
Furthermore we consider the subset ${\bf T}^{\rm ord} \subset {\bf T}$ which consists of trees with the leaves labeled by $[x_1,y_1], \ldots, [x_g, y_g], z_1, \ldots, z_n$ in this order.
In Figure \ref{fig:trees}, the tree on the left hand side is an element in ${\bf T}$ but not in ${\bf T}^{\rm ord}$, while the tree on the right hand side is an element in ${\bf T}^{\rm ord}$.

\begin{figure}
\begin{center}
{\unitlength 0.1in%
\begin{picture}(39.0000,9.8000)(3.0000,-14.0000)%
%
\special{pn 13}%
\special{pa 1000 1400}%
\special{pa 1000 1200}%
\special{fp}%
%
\special{pn 13}%
\special{pa 1000 1200}%
\special{pa 400 600}%
\special{fp}%
%
\special{pn 13}%
\special{pa 1000 1200}%
\special{pa 1600 600}%
\special{fp}%
%
\special{pn 13}%
\special{pa 1200 1000}%
\special{pa 800 600}%
\special{fp}%
%
\special{pn 13}%
\special{pa 1000 800}%
\special{pa 1200 600}%
\special{fp}%
%
\special{pn 4}%
\special{sh 1}%
\special{ar 1000 1400 16 16 0 6.2831853}%
%
\special{pn 13}%
\special{pa 3600 1400}%
\special{pa 3600 1200}%
\special{fp}%
%
\special{pn 13}%
\special{pa 3600 1200}%
\special{pa 3000 600}%
\special{fp}%
%
\special{pn 13}%
\special{pa 3600 1200}%
\special{pa 4200 600}%
\special{fp}%
%
\special{pn 13}%
\special{pa 3800 1000}%
\special{pa 3400 600}%
\special{fp}%
%
\special{pn 13}%
\special{pa 3600 800}%
\special{pa 3800 600}%
\special{fp}%
%
\special{pn 4}%
\special{sh 1}%
\special{ar 3600 1400 16 16 0 6.2831853}%
\put(5.5000,-5.5000){\makebox(0,0)[lb]{$[x_1,y_1]$}}%
\put(10.5000,-5.5000){\makebox(0,0)[lb]{$[x_2,y_2]$}}%
\put(3.0000,-5.5000){\makebox(0,0)[lb]{$z_2$}}%
\put(16.0000,-5.5000){\makebox(0,0)[lb]{$z_1$}}%
\put(27.5000,-5.5000){\makebox(0,0)[lb]{$[x_1,y_1]$}}%
\put(32.5000,-5.5000){\makebox(0,0)[lb]{$[x_2,y_2]$}}%
\put(38.0000,-5.5000){\makebox(0,0)[lb]{$z_1$}}%
\put(41.0000,-5.5000){\makebox(0,0)[lb]{$z_2$}}%
\end{picture}}%
\end{center}
\caption{Trees ($g = 2, n = 2$)}
\label{fig:trees}
\end{figure}

The main result of this subsection is the following theorem:
\begin{thm} \label{thm:treeGTKV}
    For each tree $T \in {\bf T}$ there is an injective group homomorphism 
    $$
\nu^T \colon {\rm GRT}_1 \to {\rm KRV}^f_{(g, n+1)}.
    $$
    Furthermore, for each $T \in {\bf T}^{\rm ord}$, there is an injective group homomorphism
    $$
\tilde{\nu}^T \colon {\rm GT}_1 \to {\rm KV}^f_{(g, n+1)}.
    $$
\end{thm}

\begin{proof}
For a tree $T \in {\bf T}$, we label the inner vertices on $T$ by sums of expressions corresponding to the incoming branches, so that the root is labeled by the element $\omega$. We associate to $T$ a group homomorphism
$$
a^T \colon \left({\rm KRV}_{(0,3)}\right)^{g+n}_h \to {\rm KRV}_{(g, n+1)}^f.
$$
Here $\left({\rm KRV}_{(0,3)}\right)^{g+n}_h$  is the subgroup of the direct product which consists of elements with the same Duflo function $h$. The group homomorphism $a^T$  is constructed using the group homomorphism $I \colon {\rm KRV}_{(0,3)} \to {\rm KRV}_{(1,1)}$ of Proposition \ref{prop:KRV_genus0genus1} for each leaf labeled by $[x_i, y_i]$ and then using the group homomorphisms of Proposition \ref{prop:KRV_gluing} for each inner vertex of the tree $T$. 
Here we are using the facts that the groups ${\rm KRV}_{(0,3)}$ and ${\rm KRV}_{(1,1)}$ are independent of the choice of framing. 
Finally we define $\nu^T$ as a composition 
$$
\nu^T = a^T \circ \nu_{(0,3)}^{g+n}.
$$
Here we have used the fact that the map $\nu_{(0,3)}^{g+n}$ lands in the subgroup 
$\left({\rm KRV}_{(0,3)}\right)^{g+n}_h$ since all the Duflo functions are the same. 

    The construction of $\tilde{\nu}^T$ is similar. In more detail, 
    $$
\tilde{\nu}^T = \tilde{a}^T \circ \tilde{\nu}_{(0,3)}^{g+n},
    $$
    where $\tilde{a}^T \colon ({\rm KV}_{(0,3)})^{g+n}_h \to {\rm KV}^f_{(g, n+1)}$ is constructed using Propositions~\ref{prop:KRV_genus0genus1} and \ref{prop:glue_kv}. 
    Note that the gluing of Proposition \ref{prop:glue_kv} works only if $n_1=0$ or $g_2=0$, and this implies restrictions on the choice of labeled trees 
    $T \in {\bf T}^{\rm ord} \subset {\bf T}$.
\end{proof}

\begin{rem}
Let ${\rm KV}^{\rm small}_{(g,n+1)}$ be the set of elements $\tilde{G} = (G, g_1, \ldots, g_n) \in {\rm tAut}^+(L)$ such that
\begin{itemize}
\item $G(\xi) = \xi$,
\item $G(x_i)$ and $G(y_i)$ do not contain terms linear in $z_j$,
\item $\log g_j$ does not contain terms linear in $x_i, y_i$, and
\item ${\sf j}_{x,y,z}(G) + G({\bf r}) - {\bf r} = \sum_j |h_j(z_j)| - |h(\xi)|$ for some $h_j \in s \K[[s]], h \in s^2 \K[[s]]$.
\end{itemize}
Similarly to Proposition \ref{prop:KVKRVsubgroup}, one can check that ${\rm KV}^{\rm small}_{(g,n+1)}$ is a subgroup of ${\rm tAut}^+(L)$.
Any $\tilde{G} \in {\rm KV}^{\rm small}_{(g,n+1)}$ preserves ${\bf p}^f$ and $|\log g_i|$ is in the linear span of $z_j$'s.
Therefore ${\rm KV}^{\rm small}_{(g,n+1)}$ is a subgroup of ${\rm KV}^f_{(g,n+1)}$ for any framing $f$.
Similarly let ${\rm KRV}^{\rm small}_{(g,n+1)}$ be the set of elements $\tilde{G} \in {\rm tAut}^+(L)$ such that
\begin{itemize}
\item $G(\omega) = \omega$,
\item $\log g_j$ does not contain terms linear in $x_i, y_i$, and
\item ${\sf j}_{x,y,z}(G) = \sum_j |h_j(z_j)| - |h(\omega)|$ for some $h_j \in s\K[[s]], h \in s^2 \K[[s]]$.
\end{itemize}
Then ${\rm KRV}^{\rm small}_{(g,n+1)}$ is a subgroup of ${\rm KRV}^f_{(g,n+1)}$ for any framing $f$.

Inspecting the proof of Theorem \ref{thm:treeGTKV}, we see that the maps $a^T$ and $\tilde{a}^T$ (and hence $\nu^T$ and $\tilde{\nu}^T$) take values in these small version of the KV groups.
\end{rem}

\subsection{Kashiwara-Vergne groups as automorphism groups}

Recall that the completed group algebra $\widehat{\K\pi}$ carries the double bracket $\kappa$ and coaction maps $\mu^f_r, \mu^f_l$ (which depend on the framing $f$). 
The (completed) space of cyclic words $|\widehat{\K\pi}|$ carries a natural Lie bialgebra structure
$$
\hat{\mathfrak{g}}(\Sigma, f) = (|\widehat{\K\pi}|, [\cdot, \cdot], \delta^f),
$$
where $[\cdot, \cdot]$ is the Goldman bracket and $\delta^f$ is the Turaev cobracket for the framing $f$. 
Similarly the associated graded $A={\rm gr} \, \widehat{\K\pi}$ carries the graded double bracket $\kappa_{\rm gr}$ and the coaction maps $\mu^f_{r, {\rm gr}}, \mu^f_{l, {\rm gr}}$, and
$|A| = {\rm gr} \, |\widehat{\K\pi}|$ carries a graded Lie bialgebra structure
$$
{\rm gr}\, \widehat{\mathfrak{g}}(\Sigma, f)=
(|A|, [\cdot, \cdot]_{\rm gr}, \delta^f_{\rm gr}).
$$

We will be interested in several automorphism groups of these objects. In particular, we will consider the group
$
{\rm tAut^+}(\widehat{\K\pi}, \Delta, \kappa, \delta^f).
$
This is a group of Hopf automorphisms of the completed group algebra $\widehat{\K\pi}$ whose associated graded is the identity which have two additional properties. First they preserve the double bracket $\kappa$, and second, the induced endomorphism of $|\widehat{\K\pi}|$ preserves the cobracket $\delta^f$. In a similar fashion, we define a group ${\rm tAut}^+(L, \kappa_{\rm gr}, \delta^f_{\rm gr})$.
We will also consider the group 
$
{\rm tAut}^+(\widehat{\K\pi}, \Delta, \kappa, \mu^f_r, \mu^f_l)
$
which preserves the natural structures on $\widehat{\K\pi}$, and the group
${\rm tAut}^+(L, \kappa_{\rm gr}, \mu^f_{r, {\rm gr}}, \mu^f_{l, {\rm gr}})$.

The KV groups are related to these automorphism groups by the following result:
\begin{thm}   \label{thm:KRV_acts}
There are natural group isomorphisms
$$
\begin{array}{ll}
{\rm tAut}^+(\widehat{\K\pi}, \Delta, \kappa, \delta^f) \cong {\rm KV}^f, &
{\rm tAut}^+(L, \kappa_{\rm gr}, \delta^f_{\rm gr}) \cong {\rm KRV}^f, \\
{\rm tAut}^+(\widehat{\K\pi}, \Delta, \kappa, \mu^f_r, \mu^f_l) \cong {\rm KV}_0^f, &
{\rm tAut}^+(L, \kappa_{\rm gr}, \mu^f_{r, {\rm gr}}, \mu^f_{l, {\rm gr}}) \cong {\rm KRV}_0^f. 
\end{array}
$$
Here, for the $KV$ groups we fix the isomorphism $\widehat{\K\pi} \cong A$ determined by the exponential expansion $\theta_{\exp}$ associated to a standard generating system $\{ \alpha_i, \beta_i, \gamma_j \}$.
\end{thm}

\begin{proof}
Recall that by definition
${\rm KV}_0^f \subset {\rm KV}^f \subset {\rm tAut}^+(\widehat{\K\pi}, \Delta)$ and 
${\rm KRV}_0^f \subset {\rm KRV}^f \subset {\rm tAut}^+(L)$.
Hence one can directly compare the groups in question as subgroups of 
${\rm tAut}^+(L)$ or of ${\rm tAut}^+(\widehat{\K\pi}, \Delta)$.
Furthermore recall that by Theorem \ref{thm:Florian}
$$
\begin{array}{l}
{\rm tAut}^+(\widehat{\K\pi}, \Delta, \kappa) =
\{ \tilde{F} \in {\rm tAut}^+(\widehat{\K\pi}, \Delta); F(\xi)=\xi\}, \\
{\rm tAut}^+(L, \kappa_{\rm gr}) =
\{ \tilde{F} \in {\rm tAut}^+(L, \kappa_{\rm gr}); F(\omega)=\omega\}.
\end{array}
$$
Hence it remains to consider the behavior of coaction and cobracket maps under the action of tangential automorphisms.

Let $\tilde{G} \in {\rm tAut}^+(L, \kappa_{\rm gr})$
and apply it to the graded cobracket $\delta^f_{\rm gr} = {\sf Div}^f_{\rm gr} \circ \tilde{\sigma}_{\gr}$
(see equation \eqref{eq:delta=c_sigma_gr}).
Since $\tilde{G}$ preserves $\kappa_{\rm gr}$, it also preserves the map $\sigma_{\rm gr}$.
Moreover the map ${\sf Div}^f_{\rm gr}$ transforms as a $1$-cocycle as follows:
$$
\tilde{G}: {\sf Div}^f_{\rm gr} \mapsto {\sf Div}^f_{\rm gr} + d({\sf J}^f_{\rm gr}(\tilde{G})) = {\sf Div}^f_{\rm gr} + d(\tilde{\Delta}({\sf j}^f_{\rm gr}(\tilde{G}))).
$$
Similar to the proof of Proposition \ref{prop:homomorphic=center}, $\delta^f_{\rm gr}$ is preserved by $\tilde{G}$ if and only if
$$
{\sf j}^f_{\rm gr}(\tilde{G}) \in Z(|A|, [\cdot, \cdot]_{\rm gr}),
$$
and this is equivalent to 
$$
{\sf j}^f_{\rm gr}(\tilde{G}) = \sum_{j=1}^n |h_j(z_j)| - |h(\omega)|,
$$
as required. Hence we have proved the isomorphism ${\rm tAut}^+(L, \kappa_{\rm gr}, \delta^f_{\rm gr}) \cong {\rm KRV}^f$. The proof of the isomorphism ${\rm tAut}^+(\widehat{\K\pi}, \Delta, \kappa, \delta^f) \cong {\rm KV}^f$ is similar.

Next we consider the behavior under the action of $\tilde{G}$ of the map
$\mu^f_{l, {\rm gr}} = {\sf Div}^f_{l, {\rm gr}} \circ \widetilde{{\rm Ham}^{\kappa_{\rm gr}}}$. Again, we use the fact that $\tilde{G}$ preserves the double bracket $\kappa_{\rm gr}$, and hence the map $\widetilde{{\rm Ham}^{\kappa_{\rm gr}}}$. By using the transformation property of ${\sf Div}^f_{\rm gr}$, we compute for $a \in A$:
\begin{equation} \label{eq:mu_tildeG}
\begin{array}{lll}
(\mu^f_{l, {\rm gr}})^{\tilde{G}}(a) & = & \big( {\sf Div}^f_{\rm gr}(\{ |sa|, \cdot\}_{\rm gr}) +
\{ |sa|, {\sf J}^f_{\rm gr}(\tilde{G})\}_{\rm gr} \big)_{A \otimes |A|} \\
& = & \big( {\sf Div}^f_{\rm gr}(\{ |sa|, \cdot\}_{\rm gr}) +
\{ |sa|, \tilde{\Delta}({\sf j}^f_{\rm gr}(\tilde{G}))\}_{\rm gr} \big)_{A\otimes |A|} \\
& = & 
\mu^f_{l, {\rm gr}}(a) + \{ a, B'\}_{\rm gr} \otimes B''.
\end{array}
\end{equation}
Here the subscript $A\otimes |A|$ stands for taking the projection onto $|sA| \otimes |A| \cong A \otimes |A|$ and $B=\tilde{\Delta}({\sf j}^f_{\rm gr}(\tilde{G})) = B' \otimes B''$.
Therefore $\tilde{G}$ preserves the coaction map $\mu^f_{l, {\rm gr}}$ if and only if the expression
$\{ a, B'\}_{\rm gr} \otimes B''$ vanishes. Taking its component in $A \otimes {\bf 1}$, we conclude that $\{a, {\sf j}^f_{\rm gr}(\tilde{G})\}_{\rm gr}=0$ for all $a \in A$. This implies ${\sf j}^f_{\rm gr}(\tilde{G}) \in \ker(\sigma_{\rm gr})$.
By Proposition \ref{prop:ker_sigma_gr}, this is equivalent to
\begin{equation}  \label{eq:B=sum_h}
{\sf j}^f_{\rm gr}(\tilde{G}) = \sum_{j=1}^n |h_j(z_j)|,
\end{equation}
as required.

In the other direction, if ${\sf j}^f_{\rm gr}(\tilde{G}) \in \ker(\sigma_{\rm gr})$, then ${\sf j}^f_{\rm gr}(\tilde{G})$ is of the form \eqref{eq:B=sum_h}. This implies
$B=\tilde{\Delta}({\sf j}^f_{\rm gr}(\tilde{G})) \in \ker(\sigma_{\rm gr}) \otimes \ker(\sigma_{\rm gr})$ and $\{ a, B'\}_{\rm gr} \otimes B'' =0$ for all $a \in A$. By the transformation property \eqref{eq:mu_tildeG}, this implies
$(\mu^f_{l, {\rm gr}})^{\tilde{G}} = \mu^f_{l, {\rm gr}}$, as required.
The argument for $\mu^f_{r, {\rm gr}}$ is similar to the above.

In conclusion, we have established an isomorphism ${\rm tAut}^+(L, \kappa_{\rm gr}, \mu^f_{r, {\rm gr}}, \mu^f_{l, {\rm gr}}) \cong {\rm KRV}_0^f$.
The proof of the isomorphism ${\rm tAut}^+(\widehat{\K\pi}, \Delta, \kappa, \mu^f_r, \mu^f_l) \cong {\rm KV}_0^f$ is similar.
\end{proof}

\begin{cor}
For each tree $T \in {\bf T}$ there is an action of the group ${\rm GRT}_1$ by automorphisms of the graded Lie bialgebra ${\rm gr} \, \widehat{\mathfrak{g}}(\Sigma, f)$.
Furthermore, for each tree $T \in {\bf T}^{\rm ord}$, there is an action of the group ${\rm GT}_1$ by automorphims of the Lie bialgebra $\widehat{\mathfrak{g}}(\Sigma, f)$.
\end{cor}

\subsection{The mapping class group orbits of framings and KV problems} \label{subsec:orbitsKV}

Notice that the mapping class group $\mathcal{M} = \mathcal{M}(\Sigma)$ acts on the set of homotopy classes of framings on $\Sigma$ in a natural way.
In this section, we continue the discussion in Section~\ref{subsec:comparison_diff_framings} and compare the KV groups and the KV problems for different $\mathcal{M}$-orbits of framings.

\begin{prop} \label{prop:ff'equiv}
Suppose that two framings $f$ and $f'$ on $\Sigma$ are in the same $\mathcal{M}$-orbit.
Then there are isomorphisms of the KV groups, ${\rm KV}^f \cong {\rm KV}^{f'}$ and ${\rm KRV}^f \cong {\rm KRV}^{f'}$, and the KV problems for $f$ and for $f'$ are equivalent.
\end{prop}

\begin{proof}
Let $\varphi \in \mathcal{M}$ be a mapping class sending $f$ to $f'$.
Since $\varphi$ preserves the conjugacy class of $\gamma_j$ for each $j$, there exist elements $\delta_1, \ldots, \delta_n \in \pi$ such that $\varphi(\gamma_j) = \delta_j^{-1} \gamma_j \delta_j$.
Extend $\varphi$ to a Hopf algebra automorphism of $\widehat{\K \pi}$ and take its tangential lift $\tilde{\varphi} = (\varphi, \delta_1,\ldots,\delta_n) \in {\rm tAut}(\widehat{\K \pi}, \Delta)$.
(See Remark~\ref{rem:tAut(A)} (i) for the definition of the group ${\rm tAut}(\widehat{\K \pi}, \Delta)$.)
Since $\varphi$ induces an isomorphism
\[
(\widehat{\K \pi}, \Delta, \kappa, \delta^f) 
\overset{\varphi}{\to}
(\widehat{\K \pi}, \Delta, \kappa, \delta^{f'}),
\]
it follows that for any $\tilde{G} \in {\rm KV}^f$ the composition $\tilde{\varphi} \circ \tilde{G} \circ \tilde{\varphi}^{-1}$ induces an automorphism of $(\widehat{\K \pi}, \Delta, \kappa, \delta^{f'})$ and thus is an element of ${\rm KV}^{f'}$ by Theorem~\ref{thm:KRV_acts}.
In this way, we obtain a group isomorphism
\[
{\rm KV}^f \cong {\rm KV}^{f'}, \quad
\tilde{G} \mapsto \tilde{\varphi} \circ \tilde{G} \circ \tilde{\varphi}^{-1},
\]
and similarly
\[
{\rm KRV}^f \cong {\rm KRV}^{f'}, \quad
\tilde{G} \mapsto {\rm gr}\, \tilde{\varphi} \circ \tilde{G} \circ {\rm gr}\, (\tilde{\varphi})^{-1}.
\]
Finally the following bijective map
\[
{\rm tAut}^+(\widehat{\K \pi}, \Delta) \to 
{\rm tAut}^+(\widehat{\K \pi}, \Delta), \quad
\tilde{F} \mapsto  \tilde{F}_{\tilde{\varphi}} :={\rm gr}\, \tilde{\varphi} \circ \tilde{F} \circ \tilde{\varphi}^{-1}
\]
restricts to a bijective correspondence from ${\rm SolKV}^f$ to ${\rm SolKV}^{f'}$.
\end{proof}

In some cases (in fact, in almost all cases), framings in different $\mathcal{M}$-orbits have the isomorphic KV groups and the equivalent KV problems.
For the case of $g = 0$, this is always the case: see Remarks~\ref{rem:framing_genus_0} and \ref{rem:f_independent}.

In \cite{Ka17} the $\mathcal{M}$-orbits of the homotopy classes of framings were computed.
With this result in mind, let us examine the case of $g \ge 1$.
First note that the action of $\mathcal{M}$ on homotopy classes of framings preserves the rotation numbers along boundary components, so it preserves the element ${\bf q}^f=\sum_j ({\rm rot}^f(\gamma_j) + 1) |z_j|$ as well.
If $g \ge 2$ or $g = 1$ and ${\bf q}^f \neq 0$, the arguments used in the proof of Propositions~\ref{prop:KVcase1} and \ref{prop:KVcase2} show that any framing (with ${\bf q}^f \neq 0$ when $g=1$) produces KV groups isomorphic to those associated with the adapted framing, and a similar statement holds for KV problems.

The remaining is the case of $g = 1$ and ${\bf q}^f = 0$.
Recall from Theorem~\ref{thm:KVsolve} that the KV problem admits solutions only for the adapted framing.
As was shown in \cite[Theorem 1.3]{Ka17}, 
the set of $\mathcal{M}$-orbits of framings with $g = 1$ and ${\bf q}^f = 0$ 
has a bijective correspondence with 
the infinite set $\mathbb{Z}_{\ge 0}$, which is given by taking the non-negative integer which generates the ideal of $\mathbb{Z}$ spanned by the rotation numbers of all non-separating simple closed curves.
The adapted framing corresponds to $0 \in \mathbb{Z}_{\ge 0}$, and it is the unique framing whose rotation numbers vanish on all non-separating simple closed curves.
Hence it is preserved by the action of $\mathcal{M}(\Sigma)$.
In summary, when $g = 1$ and ${\bf q}^f = 0$, the orbit $\{ f^{\rm adp} \}$ is the unique $\mathcal{M}$-orbit which produces KV problems admitting solutions.

\section{Application to Johnson obstructions}
\label{sec:Johnson_obstruction}

In this section, we give an application of the formality of the Turaev cobracket to the study of the Johnson homomorphism for the mapping class group.
As explained in Remark~\ref{rem:Torelli_n=0}, we restrict our attention to the case of a surface of genus $g$ with one boundary component, $\Sigma = \Sigma_{g,1}$. 
Recall that $L = L(H)$ is the free Lie algebra generated by $H = H_1(\Sigma, \K)$.

\subsection{Johnson homomorphism}      \label{sec:Johnson}

Let us briefly recall some basic facts about the Johnson homomorphism. 
For more detail, see, for example \cite{Joh80, Joh83, MoAQ, KK16}. 
As defined in Section~\ref{subsec:mcgauto}, we have the Johnson filtration $\{ \mathcal{I}(k) \}_k$ of the Torelli group $\mathcal{I} = \mathcal{I}(\Sigma)$ and the associated graded Lie algebra 
\[
\Lie_{\rm gr}(\mathcal{I}) = \bigoplus_{k=1}^{\infty} \mathcal{I}(k)/\mathcal{I}(k+1).
\]
Let $\{ \Gamma_m \pi \}_{m \ge 1}$ be the lower central series of $\pi$.
It is a central filtration of $\pi$ defined inductively by $\Gamma_1 \pi = \pi$ and $\Gamma_{m+1} \pi = [\Gamma_m \pi, \pi]$ for $m\ge 1$.
The associated graded $\mathbb{Z}$-module 
\[
\Lie_{\rm gr} (\pi) = \bigoplus_{m=1}^{\infty} \Gamma_m \pi/ \Gamma_{m+1} \pi
\]
is a graded Lie algebra where the Lie bracket is induced from the group commutator $[\alpha, \beta] = \alpha \beta \alpha^{-1} \beta^{-1}$.
It is freely generated by the degree one part $\Gamma_1 \pi/\Gamma_2 \pi \cong H_1(\Sigma, \mathbb{Z})$, and there is a graded Lie algebra isomorphism
\begin{equation} \label{eq:Liegrpi}
\Lie_{\rm gr}(\pi) \otimes_{\mathbb{Z}} \K \cong L
\end{equation}
whose degree $1$ part coincides with the canonical identification $(\Gamma_1 \pi / \Gamma_2 \pi) \otimes_{\Z} \K \cong H$.
The boundary loop $\gamma_0 \in \Gamma_2 \pi$ defines an element of degree $2$, and it corresponds to $\omega$.

\begin{rem} \label{rem:onLiegrpi}
For any $m \ge 1$ and $\gamma \in \pi$, the condition $\gamma \in \Gamma_m \pi$ is equivalent to $\gamma - 1 \in \K \pi (m)$.
The map $\Gamma_m \pi \ni \gamma \mapsto (\gamma-1) \in \K \pi(m)$ induces a canonical embedding ${\rm Lie}_{\rm gr}(\pi) \subset {\rm gr}\, \K \pi$ which is compatible with the isomorphism \eqref{eq:Liegrpi}.
\end{rem}

The Dehn-Nielsen action of the mapping class group $\mathcal{M} = \mathcal{M}(\Sigma)$ on $\pi$ induces a Lie action of the graded Lie algebra ${\rm Lie}_{\rm gr}(\mathcal{I})$ on ${\rm Lie}_{\rm gr} (\pi)$.
The {\em Johnson homomorphism} (in the formulation of Morita \cite{MoAQ} refining the original definition by Johnson \cite{Joh80, Joh83}) describes it as an injective homomorphism of graded Lie algebras
\begin{equation} \label{eq:tau}
\tau^{\mathbb{Z}} \colon 
\Lie_{\rm gr}(\mathcal{I}) \rightarrow 
{\rm Der}^+_{[\gamma_0]} (\Lie_{\rm gr}(\pi)), 
\end{equation} 
where the target is the Lie algebra of positive derivations of $\Lie_{\rm gr} (\pi)$ that annihilates $[\gamma_0] \in \Gamma_2 \pi/\Gamma_3 \pi$, the class of $\gamma_0$. 
The degree $k$ part of $\tau^{\mathbb{Z}}$ is defined as follows. 
Note that every $\varphi \in \mathcal{I}(k)$ satisfies $\varphi(\gamma) \gamma^{-1} \in \Gamma_{k+l} \pi$ for any $l \ge 1$ and $\gamma \in \Gamma_l \pi$. 
With this in mind, the derivation $\tau^{\mathbb{Z}}([\varphi])$ is given by the following formula:  
\[
\Gamma_l\pi / \Gamma_{l+1} \pi \ni [\gamma] \mapsto [\varphi(\gamma)\gamma^{-1}] \in 
\Gamma_{k+l} \pi / \Gamma_{k+l+1} \pi.
\]

The map $\tau^{\mathbb{Z}}$ is equivariant under the action of the integral symplectic group ${\rm Sp}_{2g}(\mathbb{Z}) \cong {\rm Aut}(H_1(\Sigma, \Z), \langle \cdot, \cdot \rangle) \cong \mathcal{M}/\mathcal{I}$. 
To make use of representation theory of the symplectic group, one usually works over a field of characteristic $0$. 
By applying $\otimes_{\mathbb{Z}} \K$ to \eqref{eq:tau} we obtain the following injective homomorphism of graded Lie algebras over $\K$, which we also call the Johnson homomorphism:  
\begin{equation} \label{eq:tauK}
\tau = (\tau_k)_k \colon
\Lie_{\rm gr}(\mathcal{I}) \otimes_{\mathbb{Z}} \K \rightarrow
{\rm Der}^+_{\omega}(L)
= \bigoplus_{k=1}^{\infty} {\rm Der}^k_{\omega} (L).
\end{equation}
Here, the target is the Lie subalgebra of ${\rm Der}^+(L)$ that consist of derivations annihilating $\omega$, and we identify it with ${\rm Der}^+_{[\gamma_0]} (\Lie_{\rm gr}(\pi)) \otimes_{\Z} \K$ by using 
\eqref{eq:Liegrpi}.

The central problem concerning the Johnson homomorphism is to determine its image and cokernel. 
On the one hand, there is a deep result of Hain:

\begin{thm}[\cite{Hain97}] \label{thm:HainJohnson}
The image of $\tau$ is generated by the degree $1$ part ${\rm Im}\, \tau_1 = {\rm Der}^1_{\omega}(L)$. 
\end{thm}

On the other hand, the structure of the cokernel is rather mysterious, and its study is an active area of research.
In \cite{MoAQ}, Morita showed that the cokernel is nontrivial. 
More precisely, he introduced a surjective homomorphism 
${\rm Tr}_k \colon {\rm Der}^k_{\omega}(L) \to S^k H$ such that ${\rm Tr}_k \circ \tau_k=0$ 
for  each odd $k\ge 3$.
Here, $S^k H$ is the $k$th symmetric product of $H$.
After the work of Morita, other obstructions for surjectivity of the Johnson homomorphisms have been found. 
For a recent survey, see \cite{Hain_survey}.

\begin{prob}[Johnson image problem]
For each $k\ge 1$,
find an $\mathfrak{sp}_{2g}$-module $M_k$ (with an explicit decomposition into irreducible $\mathfrak{sp}_{2g}$-modules) and a surjective $\mathfrak{sp}_{2g}$-homomorphism ${\rm Der}^k_{\omega}(L) \to M_k$ with kernel ${\rm Im}\, \tau_k$.
\end{prob}

In \cite{ES}, Enomoto and Satoh introduced a refinement of Morita's map. 
They defined a homomorphism
${\sf ES}_k \colon {\rm Der}^k_{\omega}(L) \to |A|_k$ such that ${\sf ES}_k \circ \tau_k=0$ for each $k\ge 2$. 
It factors through the map ${\rm Tr}_k$ via the natural map $|A|_k = H^{\otimes k}/\mathbb{Z}_k \to S^k H$. 
The definition of the {\em Enomoto-Satoh trace} ${\sf ES}_k$ is as follows: 
\[
{\sf ES}_k \colon 
{\rm Der}^k_{\omega}(L)
\hookrightarrow {\rm Hom}(H, L_{k+1})
\hookrightarrow H^* \otimes H^{\otimes k+1}
\xrightarrow{\text{cont}}
H^{\otimes k}
\xrightarrow{\text{projection}}
|A|_k.
\]
Here, the first map sends the derivation $u$ to its restriction to the degree one part $u|_H$, the second map uses the inclusion $L_{k+1} \subset H^{\otimes (k+1)}$, and the third map is the contraction map $f \otimes a_1 \otimes a_2 \otimes \cdots \otimes a_{k+1} \mapsto f(a_1) a_2 \otimes \cdots \otimes a_{k+1}$. 

\begin{rem}
Using Hain's result mentioned above, Morita, Sakasai and Suzuki \cite{MSS} determined the structure of $M_k$ for $k=6$.
For $k\le 5$, see the references in \cite{MSS}.
\end{rem}

\subsection{Filtered version and geometric Johnson homomorphism}

In this section, we consider the filtered version of the Johnson homomorphism, and we recall a more geometric interpretation of its target by using the Goldman bracket. 

For any element $\varphi$ in the Torelli group $\mathcal{I}$, one can consider its logarithm $\log \varphi$ as a derivation of $\widehat{\K \pi}$. This derivation has a  positive degree with respect to the weight filtration.  
The map $\log \varphi$ is a Hopf derivation, and it annihilates $\log \gamma_0$ since $\varphi$ preserves $\gamma_0$.
This construction gives rise to the following filtration-preserving injective group homomorphism
\[
\tau^{\rm filt} \colon
\mathcal{I} \rightarrow 
{\rm Der}^+_{\log \gamma_0} (\widehat{\K \pi}, \Delta),
\quad
\varphi \mapsto \log \varphi,  
\]
where the group structure of the target is given by the BCH series.
Recall that the associated graded of ${\rm Der}_{\log \gamma_0}^+ (\widehat{\K \pi}, \Delta)$ is canonically isomorphic to ${\rm Der}_{\omega}^+(L) = {\rm Der}_{\omega}^+(A, \Delta)$. Hence, one can define the associated graded of the map $\tau^{\rm filt}$ as a map
$$
(\tau^{\rm filt})_{\rm gr}: {\rm Lie}_{\rm gr}(\mathcal{I}) \to {\rm Der}^+_{\omega}(L).
$$
In more detail, for $\varphi \in \mathcal{I}(k)$ denote by $[\varphi] \in {\rm Lie}_{\rm gr}(\mathcal{I})$ the corresponding element in the
graded Lie algebra ${\rm Lie}_{\rm gr}(\mathcal{I})$. Note that for
$\varphi \in \mathcal{I}(k)$ and $\psi \in \mathcal{I}(k+1)$ the filtration degree of the element
\[
\log(\varphi \psi) - \log(\varphi) = \log \psi + \frac{1}{2}[\log \varphi, \log \psi] + \cdots
\]
is at least $k+1$.  Hence, the formula
$$
(\tau^{\rm filt})_{\rm gr}([\varphi])
= {\rm gr}(\log \varphi)
$$
defines a map
$$
(\tau^{\rm filt})_{\rm gr}: \mathcal{I}(k)/\mathcal{I}(k+1)
\to {\textstyle \frac{{\rm Der}_{\log \gamma_0}^+ (\widehat{\K \pi}, \Delta)(k)}{{\rm Der}_{\log \gamma_0}^+ (\widehat{\K \pi}, \Delta)(k+1)}}
= {\rm Der}_{\omega}^k(L).
$$

\begin{prop} \label{prop:grtauun}
The map $(\tau^{\rm filt})_{\rm gr}$ coincides with the Johnson homomorphism $\tau$.
\end{prop}

\begin{proof}
For any $k \ge 1$, $\varphi \in \mathcal{I}(k)$ and $\gamma \in \pi$, we have
\[
(\log \varphi) (\gamma - 1)  \equiv_{k+2} 
(\varphi - {\rm id})(\gamma - 1) 
  = (\varphi(\gamma)\gamma^{-1} - 1) \gamma 
  \equiv_{k+2}  (\varphi(\gamma)\gamma^{-1} - 1).
\]
Here, $\equiv_{k+2}$ denotes equivalence modulo $\widehat{\K \pi}(k+2)$, and we use the fact that $\varphi(\gamma)\gamma^{-1} - 1 \in \K \pi(k+1)$. 
We conclude that 
$$
\left( (\tau^{\rm filt})_{\rm gr} ([\varphi]) \right) ([\gamma]) = [\varphi(\gamma)\gamma^{-1}] = \left( \tau([\varphi]) \right)([\gamma]),
$$
as desired (see also Remark \ref{rem:onLiegrpi}). 
\end{proof}

In \cite{KK15, KK16}, a more geometric description of the map $\tau^{\rm filt}$ was introduced. 
By Proposition~\ref{prop:ker_sigma}, the kernel of the map $\sigma \colon |\widehat{\K \pi}| \to {\rm Der}(\widehat{\K \pi})$ is $\K {\bf 1}$. 
Thus the restriction of $\sigma$ to $|\widehat{\K \pi}(1)|$ is a linear isomorphism. (Note that the space $|\widehat{\K\pi}(1)|$ is not closed under the Goldman bracket: for instance, $[|\log(\alpha_i)|, |\log(\beta_i)|]_{\rm gr} = {\bf 1} \notin |\widehat{\K \pi}(1)|$.)
We consider a pro-nilpotent Lie subalgebra of $|\widehat{\K \pi}|$ defined by 
\[
\mathcal{L}^+ = \mathcal{L}^+(\Sigma) := |\widehat{\K \pi}(1)| \cap \sigma^{-1}({\rm Der}_{\log \gamma_0}^+ (\widehat{\K \pi}, \Delta )) \subset |\widehat{\K \pi}(3)|.
\]
The map $\sigma$ restricts to a Lie algebra isomorphism from $\mathcal{L}^+$ to ${\rm Der}^+_{\log \gamma_0}(\widehat{\K \pi}, \Delta)$.
Hence we obtain a map
\[
\tau^{\rm geom}\colon \mathcal{I} \rightarrow 
\mathcal{L}^+, 
\quad
\varphi \mapsto \sigma^{-1}(\log \varphi)
\]
called the {\em geometric Johnson homomorphism}.
The maps $\tau^{\rm filt}$ and $\tau^{\rm geom}$  fit into the following commutative diagram:
\begin{equation} \label{eq:ungeom}
\xymatrix@R=1em{
 & {\rm Der}^+_{\log \gamma_0}(\widehat{\K \pi}, \Delta) \\
 \mathcal{I} \ar[ur]^{\tau^{\rm filt} \hspace{1em}} \ar[dr]_{\tau^{\rm geom}} &  \\
  & \mathcal{L}^+ \ar[uu]^{\cong}_{\sigma}
}
\end{equation}

\begin{exple}[\cite{KK14, MT13}]
Let $C$ be a null-homologous simple closed curve on $\Sigma$, $\gamma \in \pi$ a based loop homotopic to $C$ and $t_C \in \mathcal{I}(2)$ the right handed Dehn twist along $C$.
Then, $\tau^{\rm geom}(t_C) = |(1/2)(\log \gamma)^2|$.
\end{exple}

\begin{rem}
When $n > 0$, the Dehn-Nielsen action $\mathcal{M} \to {\rm Aut}(\pi)$ is not injective. 
In fact, the Dehn twist along the $j$th boundary component ($1\le j \le n$) is a nontrivial element in the kernel. 
In order to obtain a faithful action of $\mathcal{M}$, one needs to consider the fundamental groupoid of $\Sigma$ where the basepoints are chosen from each boundary component. 
Accordingly, one needs a groupoid version of the operations $\kappa$ and $\sigma$ to introduce the geometric Johnson homomorphism in the case of $n>0$.  
For more details, see \cite{KK15, KK16}. 
\end{rem}

\subsection{Enomoto-Satoh trace and framed Turaev cobracket}

In this section, we fix a framing $f$ on $\Sigma$ and consider the framed Torelli group $\mathcal{I}^f$.
Recall that we have a Lie subalgebra of ${\rm Lie}_{\rm gr}(\mathcal{I})$ defined by 
\[
\Lie_{\rm gr}(\mathcal{I}^f) = \bigoplus_{k=1}^{\infty} \mathcal{I}^f(k)/\mathcal{I}^f(k+1). 
\]
We will study the restriction of the Johnson homomorphism to $\Lie_{\rm gr}(\mathcal{I}^f)$. 
First, we consider the filtered version. 
Recall from Proposition \ref{prop:div^f} the cocycle ${\sf div}^f \colon {\rm Der}(\widehat{\K \pi}, \Delta) \to |\widehat{\K \pi}|$ which depends only on the framing $f$ and is given by the formula
\[
{\sf div}^f(u) = {\sf div}_{x,y}(u) + u({\bf r} - {\bf p}^f). 
\]
Here, ${\rm tDer}(\widehat{\K \pi}, \Delta) = {\rm Der}(\widehat{\K \pi}, \Delta)$ and ${\sf b}^f = 0$ as $n=0$, and we simply denote ${\sf div}_{x,y}$ instead of ${\sf div}_{x,y,z}$.
By Proposition~\ref{prop:div^f} and Theorem~\ref{prop:delta=c_sigma}, there is a commutative diagram which relates the map ${\sf div}^f$ to the framed Turaev cobracket:
\begin{equation} \label{eq:divcob}
\xymatrix{
{\rm Der}^+_{\log \gamma_0}(\widehat{\K \pi}, \Delta) \ar[r]^{\hspace{2em} {\sf div}^f} & |\widehat{\K \pi}| \ar[d]^{\tilde{\Delta}} \\
\mathcal{L}^+ \ar[u]^{\sigma}_{\cong} \ar[r]^{\delta^f} & 
|\widehat{\K \pi}|^{\otimes 2}
}
\end{equation}

\begin{thm} \label{thm:deltataugeom}
For any $\varphi \in \mathcal{I}^f$, we have ${\sf div}^f(\tau^{\rm filt}(\varphi)) = 0$ and  $\delta^f(\tau^{\rm geom}(\varphi)) = 0$.
\end{thm}

\begin{proof}
By \eqref{eq:Torellimap} and Theorem~\ref{thm:KRV_acts}, the framed Torelli group $\mathcal{I}^f$ is a subgroup of ${\rm KV}^f_0$. 
Therefore, for any $\varphi \in \mathcal{I}^f$, we have $\log \varphi \in \mathfrak{kv}_0^f$ which implies
\[
{\sf div}^f (\tau^{\rm filt}(\varphi)) = 
{\sf div}^f (\log \varphi) =0.
\]
By commutativity of the diagrams~\eqref{eq:ungeom} and \eqref{eq:divcob}, we also have $\delta^f(\tau^{\rm geom}(\varphi))=0$.
\end{proof}

\begin{rem}
The fact that $\delta^f$ vanishes on $\tau^{\rm geom}(\mathcal{I}^f)$ can also be proved by the same argument as \cite[Theorem 6.2.1]{KK15}, where a similar statement for the unframed version of the Turaev cobracket was proved.  
\end{rem}

We investigate implications of Theorem \ref{thm:deltataugeom} to the classical Johnson homomorphism.
In general, suppose that $\lambda \colon V \to W$ is a linear map of some filtration degree between filtered $\K$-vector spaces.
Then, we have 
\[
{\rm gr}\, {\rm Ker}(\lambda) \subset {\rm Ker}(\lambda_{\gr})
\]
in ${\rm gr}\, V$.
The two subspaces are not equal in general.
The apparently bigger space ${\rm Ker}(\lambda_{\gr})$ is more ``computable'' than ${\rm gr}\, {\rm Ker}(\lambda)$ but only ``captures'' the lowest order term of $\lambda$.

\begin{lem} \label{lem:formal_generality}
Suppose that $\lambda$ is formal, i.e., there exist filtration-preserving isomorphisms $\theta_V \colon V \to {\rm gr}\, V$ and $\theta_W \colon W \to {\rm gr}\, W$ such that the following diagram commutes.
\[
\xymatrix{
V \ar[r]^{\lambda} \ar[d]^{\cong}_{\theta_V} & W \ar[d]^{\theta_W}_{\cong} \\
{\rm gr}\, V \ar[r]^{\lambda_{\gr}} & {\rm gr}\, W 
}
\]
Then, it holds that ${\rm gr}\, {\rm Ker}(\lambda) = {\rm Ker}(\lambda_{\gr})$ and $\theta_V$ restricts to a filtration-preserving isomorphism ${\rm Ker}(\lambda) \cong {\rm gr}\, {\rm Ker}(\lambda)$.
\end{lem}

\begin{proof}
The map $\theta_V$ restricts to a filtration-preserving isomorphism ${\rm Ker}(\lambda) \cong {\rm Ker}(\lambda_{\gr})$.
Taking the associated graded, we obtain ${\rm gr}\, {\rm Ker}(\lambda) = {\rm gr}\, {\rm Ker}(\lambda_{\rm gr}) = {\rm Ker}(\lambda_{\rm gr})$. 
\end{proof}

We apply the above generalities to our setting.
Since $\tau^{\rm filt}$ and $\tau^{\rm geom}$ are essentially equivalent (see \eqref{eq:ungeom}), we first deal with $\tau^{\rm filt}$ and then translate it to $\tau^{\rm geom}$.

\begin{prop}
Suppose that $g\ge 2$ or $g=1$ and $f$ is adapted.
Then, the map ${\sf div}^f \colon {\rm Der}^+_{\log \gamma_0}(\widehat{\K \pi}, \Delta) \to |\widehat{\K \pi}|$ is formal.
\end{prop}

\begin{proof}
Let $\theta = F \circ \theta_{\exp}$ be a special expansion defined by some $F \in {\rm SolKV}^f_{(g,1)}$.
We have the isomorphism ${\rm Der}^+_{\log \gamma_0}(\widehat{\K \pi}, \Delta) \cong {\rm Der}^+_{\omega}(L)$ induced by $\theta$, and 
by Proposition~\ref{prop:c_transfer}, we have 
\[
{\sf div}^f_{\theta}(\tilde{u})
= {\sf div}^f_{\rm gr}(\tilde{u}) + u(R^f(\tilde{F}))
= {\sf div}^f_{\rm gr}(\tilde{u})
\]
for any $u \in {\rm Der}^+_{\omega}(L)$.
Here, we use the fact that $R^f(\tilde{F}) \in |\K[[\omega]]|$.
Therefore ${\sf div}^f_{\theta} = {\sf div}^f_{\gr}$, as desired.
\end{proof}

Applying Lemma \ref{lem:formal_generality} to ${\sf div}^f$ and taking the associated graded of Theorem \ref{thm:deltataugeom}, we obtain the following:
\begin{align*}
\tau({\rm Lie}_{\rm gr}(\mathcal{I}^f))
& \subset   
{\rm gr}\, {\rm Ker}\big( {\sf div}^f\colon
{\rm Der}^+_{\log \gamma_0}(\widehat{\K \pi}, \Delta) \to |\widehat{\K \pi}| \big) \\
& =  
{\rm Ker} \big( {\sf div}^f_{\rm gr} \colon {\rm Der}^+_{\omega}(L) \to |A| \big).
\end{align*}
Next, we identify the associated graded map of the divergence map ${\sf div}^f$ with the Enomoto-Satoh trace.

\begin{prop} \label{prop:grdiv=ES}
We have ${\sf div}^f_{\gr} = {\sf div}_{x,y}$ on ${\rm gr}\, {\rm Der}(\widehat{\K \pi}, \Delta) \cong {\rm Der}(L)$.
Furthermore, we have ${\sf div}_{x,y} = - {\sf ES}$ on ${\rm Der}_{\omega}^+(L)$. 
\end{prop}

\begin{proof}
The first assertion is clear from the defining formula of ${\sf div}^f$. 
To prove the second assertion, let $\{ x_i^*, y_i^* \}_i$ be the dual basis of the symplectic basis $\{ x_i, y_i \}_i$. 
By using intersection pairing on $H$, we can write $x_i^* = \langle -y_i, \cdot \rangle$ and $y_i^* = \langle x_i, \cdot \rangle$. 
Let $u \in {\rm Der}_{\omega}^k(L)$.
Then, 
\[
u|_H = \sum_{i=1}^g \left( x_i^* \otimes u(x_i) + y_i^* \otimes u(y_i) \right) \in H^* \otimes H^{\otimes k+1}
\]
corresponds to
\[
u^{\sharp} = 
\sum_{i=1}^g \left( -y_i \otimes u(x_i) + x_i \otimes u(y_i) \right)
\in H\otimes H^{\otimes k+1} = H^{\otimes k+2}
\]
through the Poincar\'e duality $H \cong H^*, a \mapsto \langle a, \cdot \rangle$. 
The condition $u(\omega) = 0$ is equivalent to 
\[
\sum_{i=1}^g \left([u(x_i), y_i] + [x_i, u(y_i)] \right) = 0,
\]
which is equivalent to the fact that $u^{\sharp}$ is cyclically invariant.

On the one hand, we have ${\sf div}_{x,y}(u) = \sum_{i=1}^g |d_{x_i}u(x_i) + d_{y_i}u(y_i)| = -|C_{n1}(u^{\sharp})|$, where $C_{n1}$ is the contraction of the last and first components by the intersection form. 
On the other hand, we have ${\sf ES}(u) = |C_{12}(u^{\sharp})|$, where $C_{12}$ is the contraction of the first and second components by the intersection form. 
Since $u^{\sharp}$ is cyclically invariant, we have $C_{n1}(u^{\sharp}) = C_{12}(u^{\sharp})$. 
This proves ${\sf ES}(u) = - {\sf div}_{x,y}(u)$. 
\end{proof}

As a summary of the discussion above, we obtain the following result.

\begin{thm} \label{thm:JohnsonES}
Suppose that $g \ge 2$ or $g = 1$ and $f$ is adapted.
Then we have 
    \[
  \tau({\rm Lie}_{\rm gr}(\mathcal{I}^f))
  \subset {\rm Ker}({\sf ES}).
    \]
Moreover, the space ${\rm Ker}({\sf ES})$, the kernel of the Enomoto-Satoh trace, is isomorphic to the kernel of ${\sf div}^f$ restricted to ${\rm Der}^+_{\log \gamma_0}(\widehat{\K \pi}, \Delta)$.
\end{thm}

In \cite{ES}, Enomoto and Satoh  proved the first statement of Theorem \ref{thm:JohnsonES} by using Hain's Theorem \ref{thm:HainJohnson}.
The proof presented here is more conceptual, and together with the second statement it clarifies the topological meaning of the Enomoto-Satoh trace. 

Finally, let us rephrase our result from a more geometric viewpoint.
Note that the framed Turaev cobracket $\delta^f$ is formal by Theorems \ref{thm:GThomomorphic} and \ref{thm:KVsolve}.
By taking the associated graded of the diagram \eqref{eq:divcob}, we have the following commutative diagram:
\[
\xymatrix{
{\rm Der}^+_{\omega}(L) \ar[r]^{\hspace{0.5em} \sf ES} & |A| \ar[d]^{\tilde{\Delta}} \\
{\rm gr}\, \mathcal{L}^+ \ar[u]^{\sigma_{\gr}}_{\cong} \ar[r]^{\hspace{0.5em} \delta^f_{\rm gr}} & 
|A|^{\otimes 2}
}
\]
The map $\sigma_{\rm gr}$ restricts to a Lie algebra isomorphism ${\rm Ker}({\delta^f_{\gr}}|_{{\rm gr}\, \mathcal{L}^+}) \cong {\rm Ker}({\sf ES})$.

\begin{thm} \label{thm:grKer=Kergr}
Suppose that $g\ge 2$ or $g=1$ and $f$ is adapted.
Then we have 
\[
(\tau^{\rm geom})_{\rm gr}({\rm Lie}_{\rm gr}(\mathcal{I}^f)) \subset
{\rm Ker}({\delta^f_{\gr}}|_{{\rm gr}\, \mathcal{L}^+}),
\]
and the space ${\rm Ker}({\delta^f_{\gr}}|_{{\rm gr}\, \mathcal{L}^+})$ is isomorphic to ${\rm Ker}(\delta^f|_{\mathcal{L}^+})$.
\end{thm}

Theorems~\ref{thm:JohnsonES} and \ref{thm:grKer=Kergr} show that the obstruction for $\tau$ coming from the framed Turaev cobracket coincides exactly  with the Enomoto-Satoh obstruction. 

\begin{rem}
The fact that $\delta^f_{\gr}$ vanishes on the image of the Johnson homomorphisms (in degree at least two) is a refinement of Corollary 6.3.3 in \cite{KK15}, where the unframed version of the Turaev cobracket was used. 
The two versions, framed and unframed, of the Turaev cobracket have explicit differences in constraints for the Johnson image; see \cite{EKS}. 
\end{rem}

\begin{rem}
In genus 0, the following fact is analogous to Theorem~\ref{thm:JohnsonES} above.
Let $\mathfrak{t}_n$ be the Drinfeld-Kohno Lie algebra of infinitesimal pure braids viewed as a Lie sublagebra of the Lie algebra
\[
\mathfrak{sder} = \mathfrak{sder}_{(0,n+1)}
= \{ \tilde{u} \in {\rm tDer}_{(0,n+1)} ; u(z_1 + \cdots + z_n) = 0\} .
\] 
Then, $\mathfrak{t}_n$ is in the kernel of the divergence map.

Moreover, \v{S}evera and Willwacher \cite{SeW} define a decreasing filtration $\mathfrak{sder} \supset {\rm Ker}({\sf div}) \supset \cdots \supset \mathfrak{t}_n$ by using a certain spectral sequence related to the Kontsevich graph complex
(see also \cite{Felder}.)
This construction can be thought of as a solution of the genus 0 analogue of the Johnson image problem.
\end{rem}

\begin{rem}
Tsuji \cite{Tsu1, Tsu2, Tsu3, Tsu4} introduced two skein analogues of the group 
homomorphism $\tau$ using the Kauffman bracket skein algebra and the HOMFLY-PT skein algebra of the product $\Sigma\times [0,1]$, respectively.
Both of them give rise to new constructions of finite type invariants of integral 
homology $3$-spheres. 
\end{rem}

As we saw in Section~\ref{subsec:GRTGT}, for a certain binary rooted trees there are embeddings of the Grothendieck--Teichm\"uller group ${\rm GT}_1$ into the higher genus Kashiwara-Vergne groups ${\rm KV}^f$.
In view of the Johnson image problem, it would be interesting to clarify how the images of ${\rm GT}_1$ under these embeddings and the framed Torelli group $\mathcal{I}^f$ are related in ${\rm KV}^f$.

\appendix

\section{Appendix} \label{sec:Appendix}

In this section, we prove some algebraic results used in the main body of the text.

\subsection{On the weight filtration on $\K \Gamma$} \label{subsec:alt_wt_filt}

In this section, we give a proof of Proposition~\ref{prop:KGamma(m)}.

\begin{proof}[Proof of Proposition \ref{prop:KGamma(m)}]
Let $\langle \mathcal{Z}^{(m)} \rangle$ be the two-sided ideal of $\K\Gamma$ generated by the set $\mathcal{Z}^{(m)}$.
Since $\mathcal{Z}^{(m)}\subset \K\Gamma(m)$, we have $\langle \mathcal{Z}^{(m)} \rangle \subset \K\Gamma(m)$.
Consider the following morphism of short exact sequences
\[
\xymatrix{
0 \ar[r] & \langle \mathcal{Z}^{(m+1)} \rangle \ar[r] \ar[d] & 
\langle \mathcal{Z}^{(m)} \rangle \ar[r] \ar[d] & 
\langle \mathcal{Z}^{(m)} \rangle/ \langle \mathcal{Z}^{(m+1)} \rangle \ar[r] \ar[d] & 0
\\
0 \ar[r] & \K \Gamma(m+1) \ar[r] & 
\K \Gamma(m) \ar[r] & 
\K \Gamma(m)/ \K \Gamma(m+1) \ar[r] & 0. 
}
\]
By the following lemma, the right vertical map is an isomorphism for all $m\ge 0$.
Since $\langle \mathcal{Z}^{(0)} \rangle = \K \Gamma(0) = \K \Gamma$, one conclude that $\langle \mathcal{Z}^{(m)} \rangle = \K\Gamma(m)$ for all $m\ge 0$ by induction on $m$ and application of the five lemma.
Furthermore, the fact that $\K\Gamma(1) = I\Gamma$ follows from the first statement of the proposition and the fact that $I\Gamma$ is generated by elements $Z_j = \gamma_j-1$ as a two-sided ideal.
This completes the proof.
\end{proof}

\begin{lem} \label{lem:BGamma}
The natural map
$
\langle \mathcal{Z}^{(m)} \rangle / \langle \mathcal{Z}^{(m+1)} \rangle \to \K \Gamma(m) / \K \Gamma(m+1)
$
is a $\K$-linear isomorphism for all $m\ge 0$.
\end{lem}

\begin{proof}
As we saw in a paragraph preceding Proposition~\ref{prop:KGamma(m)}, the map $\K \langle J ; {\rm wt}(Z_J) = m \rangle \to \K \Gamma(m) / \K \Gamma(m+1), J \mapsto Z_J \mod \K\Gamma(m+1)$, is a $\K$-linear isomorphism.
It factors through the natural map in the lemma:
\[
\xymatrix{
\K \langle J ; {\rm wt}(Z_J) = m \rangle \ar[r]^{\hspace{-1em} \cong} \ar[d] & \K \Gamma(m) / \K \Gamma(m+1) \\
\langle \mathcal{Z}^{(m)} \rangle / \langle \mathcal{Z}^{(m+1)} \rangle  \ar[ur] &  
}
\]
Therefore, to prove the lemma, it is sufficient to show that the vertical map 
\[
\K \langle J ; {\rm wt}(Z_J) = m \rangle \to \langle \mathcal{Z}^{(m)} \rangle / \langle \mathcal{Z}^{(m+1)} \rangle,
\quad
J \mapsto Z_J \ {\rm mod}\ \langle \mathcal{Z}^{(m+1)} \rangle
\]
is surjective.
It is enough to prove that $xZ_Jy \equiv Z_J \ {\rm mod}\ \langle \mathcal{Z}^{(m+1)} \rangle$ for every $x,y \in \Gamma$ and multi-index $J$ such that ${\rm wt}(Z_J) =m$.
We use induction on $l(x) + l(y)$, where $l(x)$ is the word length of $x$ with respect to the free generators $\{\gamma_j \}_j$.

The case where $l(x) + l(y) =0$ is clear.
Let $l(x) + l(y) >0$.
We have either (i) $x=x'\gamma_j^{\pm 1}$ with $l(x')<l(x)$ or (ii) $y=\gamma_j^{\pm 1}y'$ with $l(y') < l(y)$.
We only consider the case where $x=x'\gamma_j$, the other cases being similar.
If $x=x'\gamma_j$, we compute
\[
xZ_Jy - x'Z_J y = x'(\gamma_j-1)Z_J y \in \langle \mathcal{Z}^{(m+1)} \rangle.
\]
Thus $xZ_Jy \equiv x'Z_J y \equiv Z_J \ {\rm mod}\ \langle \mathcal{Z}^{(m+1)} \rangle$, where in the second equality we have used the induction hypothesis.
This completes the proof of the lemma.
\end{proof}

\subsection{Inner derivation theorem} \label{subse:inn_der_thm}

Let $V$ be a finite-dimensional $\K$-vector space and let $A = \widehat{T}(V)$ be the completed tensor algebra generated by $V$.
We work with the polynomial degree of $A$.

The following statement is a refinement of \cite[Theorem A.1]{genus0}.

\begin{thm} \label{thm:idt} 
Let $u\in {\rm Der}(A)$ and suppose that there is a positive integer $N$ such that $|u(a)| = 0$ for any $a \in A_{\ge N}$. Then, $u$ is an inner derivation.
\end{thm}

The proof we present here basically follows the original argument.
In particular, we will use the following two assertions from \cite[Appendix A]{genus0}.

\begin{lem}[\cite{genus0}, Proposition A.2] \label{lem:A2}
Let $x$ be a non-zero element in $V$ and let $a\in A$ be a homogeneous element of degree $m\ge 1$.
If $|ax^l| = 0$ for some $l\ge m-1$, then $a\in [x,A]$.
\end{lem}

\begin{lem}[\cite{genus0}, Lemma A.5] \label{lem:A5}
Let $x$ and $y \in V$ be linearly independent.
Suppose that $a\in A$ is homogeneous of degree $d\ge 1$.
If $[x^d, a] \in [y, A]$ and $[y^d, a] \in [x, A]$, then $a \in \K\, x^d \oplus \K\, y^d$.
\end{lem}

In fact, the statement of \cite[Lemma A.5]{genus0} is slightly different, but its proof actually shows the statement above.

\begin{proof}[Proof of Theorem \ref{thm:idt}]
Without loss of generality, we may assume that $u$ is homogeneous.
Let $\{ x_i \}_i$ be a basis for $V$ and let $N$ be a positive integer as in the assumption of the theorem.
Pick an $l$ such that $l \ge \max(N-1, 0, \deg u)$.
Then, for any $i$ we compute
\begin{equation} \label{eq:uxl+1}
0 = |u({x_i}^{l+1})|
= (l + 1) | u(x_i) {x_i}^l |.
\end{equation}
If $\deg u \le 0$, then this shows that $u(x_i) = 0$ and hence $u = 0$.

Assume that $d = \deg u > 0$.
Then, by Lemma \ref{lem:A2} and \eqref{eq:uxl+1}, there is an element $a_i \in A$ of degree $d$ such that $u(x_i) = [x_i, a_i]$.

Let $i\neq j$.
If $p + q \ge N$, then
\[
0 = |u({x_i}^p {x_j}^q) |
= | [{x_i}^p, a_i] {x_j}^q + {x_i}^p [{x_j}^q, a_j] |
= 
| [{x_i}^p, a_i -a_j]{x_j}^q |.
\]
Letting $p = d$ and $q \ge \max(2d-1, N-d)$ and applying Lemma \ref{lem:A2}, we conclude that $[{x_i}^d, a_i - a_j] \in [x_j, A]$.
Similarly, we obtain $[{x_j}^d, a_i - a_j] \in [x_i, A]$.
By Lemma \ref{lem:A5}, we obtain $a_i - a_j \in \K\, {x_i}^d \oplus \K\, {x_j}^d$.

We have shown the following: for any $i, j$ with $i \neq j$, there are constants $c_{ij}, c_{ji} \in \K$ such that $a_i - a_j = c_{ij}{x_i}^d - c_{ji}{x_j}^d$.
It is easy to see that $w:= a_i - c_{ij} {x_i}^d$ is independent of the choice of $i$ and $j$.
Then we have $u(x_i) = [x_i, w]$ for any $i$, and conclude that $u$ is an inner derivation by $w$.
\end{proof}

\subsection{Centralizers in tensor algebra}
\label{subsec:center|A|}

This section is a preliminary for the next section.
For the moment, let $V$ be a finite dimensional $\K$-vector space.

\begin{lem}[Euclidean algorithm]\label{lem:Euclid}
Let $l, m$ be non-negative integers with $l+m>0$.
Assume that non-zero vectors $u\in V^{\otimes l}$ and $v\in V^{\otimes m}$ are commutative, i.e.,
\[
uv = vu \in V^{\otimes (l+m)}.
\]
Then there exist some $w \in V^{\otimes \gcd(l,m)}$ and $c, d \in \K$ such that $u = c w^{l'}$ and $v = d w^{m'}$, where
$l' = l/\gcd(l,m)$ and $m' = m/\gcd(l,m)$.
\end{lem}
\begin{proof}
The lemma is trivial when $l+m = 1$ or $lm = 0$.
In order to use induction on $l+m$, 
let $k>1$ and assume that the lemma holds if $l+m<k$.

Suppose $l+m=k$.
We may assume that $0 < l \leq m$. 
Take a $\K$-basis $\{u_i\}_{1 \leq i \leq (\dim V)^l}$
of $V^{\otimes l}$ with $u_1 = u$. Then we have 
$
v = \sum^{(\dim V)^l}_{i=1} u_iv'_i
$
for some $v'_i \in V^{\otimes(m-l)}$, and so 
$$
\sum^{(\dim V)^l}_{i=1} u_1u_iv'_i
= \sum^{(\dim V)^l}_{i=1} u_iv'_iu_1.
$$
Since $\{u_i\}_{1 \leq i \leq (\dim V)^l}$ are linearly independent, we have
$v'_i = 0$ for any $i \neq 1$. This means $v = uv'_1$, 
$uuv'_1 = uv'_1u$ and so $uv'_1 = v'_1u$. Since $l+(m-l) = m < l+m = k$, 
we can apply the inductive assumption to $uv'_1 = v'_1u$, that is,
there exist some $w \in V^{\otimes \gcd(l,m)}$ and $c, d'
\in \K$ such that $u = c w^{l'}$ and $v'_1 = d' w^{m'-l'}$.
Hence we have $v = uv'_1 = c d' w^{m'}$.
This completes the induction. 
\end{proof}

A non-zero homogeneous element 
$u_0 \in V^{\otimes l}$, $l \geq 1$, is {\it reduced}
if the equation $u_0 = \lambda {v_0}^d$ with 
$\lambda \in \K$ and $v_0 \in 
V^{\otimes (l/d)}$ implies $d=1$. 
For example, 
any non-zero element of $V$ or $\wedge^2 V$  
is reduced.
\begin{prop}\label{prop:centralizer}
Let $u_0 \in V^{\otimes l}\setminus\{0\}$ be reduced.
Then we have 
$$
\{v \in \widehat{T}(V) ; v{u_0} = {u_0}v\} = \K[[u_0]].
$$
\end{prop}
\begin{proof}
Let $v\in \widehat{T}(V) \setminus \{ 0\}$ satisfy $vu_0 = u_0v$.
We may assume that $v$ is homogeneous of degree $m \geq 1$.
By Lemme~\ref{lem:Euclid}, 
there exist some $w \in V^{\otimes \gcd(l,m)}$ and $c, d
\in \K$ such that $u_0 = c w^{l'}$ and $v = d w^{m'}$.
Since $u_0$ is reduced, we have $l' = 1$, i.e., $v \in \K[[u_0]]$.
This completes the proof.
\end{proof}

By a similar method we obtain the following strengthened version, which is used in our paper \cite[the proof of Lemma 3.6]{AKKN_new}.

\begin{prop}\label{prop:normalizer}
Let $u_0 \in V^{\otimes a}\setminus\{0\}$ be reduced. Then we have 
$$
\{v \in \widehat{T}(V) ; [{u_0}, v] \in \K[[u_0]]\}
= \K[[u_0]].
$$
\end{prop}
\begin{proof}
Let $v$ be a homogeneous element satisfying $[u_0,v]\in \K[[u_0]]$.
If $a$ does not divide $\deg v$, then $[u_0,v] = 0$.
Hence, by Proposition \ref{prop:centralizer}, we obtain $v \in \K[[u_0]]$. 
So it suffices to consider the case $\deg v = al$, $l \geq 0$. 
We prove $v\in \K[[u_0]]$ by induction on $l \geq 0$.
If $l=0$, it is trivial. Assume $l \geq 1$. 
Then we have $vu_0 = u_0(v - \lambda{u_0}^l)$ for some $\lambda \in \K$. 
By a similar way to the proof of Lemma \ref{lem:Euclid}, we have some $v_0 \in V^{\otimes a(l-1)}$ such that $v = u_0v_0$.
Then $u_0v_0u_0 = u_0(u_0v_0 - \lambda{u_0}^l)$, so 
$v_0u_0 = u_0v_0 - \lambda{u_0}^l$. Applying the inductive assumption to $v_0$, we obtain $v_0 \in \K[[u_0]]$, which implies $v \in \K[[u_0]]$.
This completes the proof.
\end{proof}

Let us go back to the case of $V = {\rm gr}^{\rm wt}\, H$, where $H = H_1(\Sigma, \K)$.

\begin{lem}\label{lem:cent-omega}
We have
$
\{a\in A ; a\omega = \omega a\}
= \K[[\omega]]
$.
\end{lem}
\begin{proof}
Recall from Example~\ref{ex:capping_grded} an injective algebra homomorphism $i_*\colon A = {\rm gr}^{\rm wt}\, \K \pi \to \overline{A} = {\rm gr}^{\rm wt}\, \K \bar{\pi}$ obtained by the capping trick.
It maps $\omega$ to $\overline{\omega} = \sum_{i=1}^{g+n} [\bar{x}_i, \bar{y}_i]$.
The lemma follows from Proposition \ref{prop:centralizer} for $\overline{\omega}$.
\end{proof}

\subsection{The bracket operations associated with $\kappa_{\rm gr}$} \label{subsec:bra_kappa_gr}

In this section, we turn our attention to the bracket operations associated with $\kappa_{\gr}$ and prove Proposition~\ref{prop:im_sigma_gr}, Proposition~\ref{prop:ker_sigma_gr} and Theorem~\ref{thm:center}.

\begin{lem} \label{lem:formulas2}
Let $a \in A$ and $h(z) \in \K[[z]]$. 
\begin{enumerate}
\item[$(i)$]
We have $\kappa_{\rm gr}( \omega, a) = 
\kappa_{\rm gr}(a, \omega) =
a \otimes 1 - 1 \otimes a$.
\item[$(ii)$]
For any $j\in \{ 1,\ldots, n\}$, we have
$\{ |h(z_j)|, a\}_{\rm gr} = 0$.
\item[$(iii)$]
We have $\{ |h(\omega)|, a \}_{\rm gr} = [\dot{h}(\omega), a]$. Here, $\dot{h}(z)$ is the derivative of $h(z)$.
\end{enumerate}
\end{lem}

\begin{proof}
(i)
By inspection, the formula $\kappa_{\rm gr}(\omega, a) = a \otimes 1 - 1\otimes a$ is verified when $a$ is any element of generators $x_i, y_i$ and $z_j$.
If the formula is valid for $a,b \in A$, it is also valid for the product $ab$:
\begin{align*}
\kappa_{\rm gr}( \omega, ab) &= 
(a \otimes 1) \kappa_{\rm gr}( \omega, b) + \kappa_{\rm gr}( \omega, a) (1 \otimes b) \\
& =  (a \otimes 1)(b\otimes 1 - 1 \otimes b) + (a\otimes 1 - 1 \otimes a)(1 \otimes b) \\
& =  ab \otimes 1 - 1 \otimes ab.
\end{align*}
Hence, $\kappa_{\rm gr}(\omega, a) = a\otimes 1 - 1\otimes a$ holds for all $a\in A$.

By taking the associated graded of equation \eqref{eq:yxxy}, we obtain $\kappa_{\rm gr}(v,u) = - \kappa_{\rm gr}(u,v)^{\circ}$ for any $u,v \in A$.
Therefore, $\kappa_{\rm gr}(a,\omega) = -\kappa_{\rm gr}(\omega, a)^{\circ} = a\otimes 1 - 1 \otimes a$.

(ii)
It is sufficient to prove that $\{ |{z_j}^k|, a \}_{\rm gr} = 0$ for any element $a$ of the generators.
This is clear for $a \neq z_j$.
For $a = z_j$, we compute
$$
\kappa_{\rm gr}({z_j}^k, z_j) = \sum_{l=0}^{k-1} (1 \otimes {z_j}^l)(z_j \otimes 1 - 1 \otimes z_j)({z_j}^{k-l-1} \otimes 1) = {z_j}^k \otimes 1 - 1 \otimes {z_j}^k
$$
by using Lemma~\ref{lem:kgruv}.
This shows that $\{ |{z_j}^k|, z_j \}_{\rm gr} = 0$, as required.

(iii) Using (i) we compute
$$
\kappa_{\rm gr}(\omega^k, a) = \sum_{l=0}^{k-1} (1 \otimes \omega^l)(a \otimes 1 - 1 \otimes a)(\omega^{k-l-1} \otimes 1),
$$
and hence $\{ |\omega^k|, a \}_{\rm gr} = \kappa_{\rm gr}(\omega^k, a)' \kappa_{\rm gr}(\omega^k, a)'' = k(a\omega^{k-1} - \omega^{k-1}a) = [k\omega^{k-1},a]$.
This proves the assertion.
\end{proof}

\begin{proof}[Proof of Proposition \ref{prop:im_sigma_gr}]
Recall from Section~\ref{subsec:filtonKpi} the symmetric bilinear operation $\mathfrak{z}$ defined on ${\rm gr}^{\rm wt}\, H$.
We introduce the following notation:
for $1\le j \le n$ and $u\in {\rm gr}^{\rm wt}\, H$, we write $\mathfrak{z}(u,z_j) = \mathfrak{z}_j(u) z_j$.
Namely, $\mathfrak{z}_j(u) \in \K$ reads the coefficient of $z_j$ in $u$.

Let $|a|=|a_1\cdots a_m| \in |A|$, where $a_1,\ldots,a_m \in {\rm gr}^{\rm wt}\, H$. 
By Proposition \ref{prop:kgr} and an argument similar to the proof of Proposition \ref{prop:lifthk}, we see that the derivation $\tilde{\sigma}_{\rm gr}(|a|)$ is given as follows:
\begin{equation} \label{eq:sigmagr|a|}
\sigma_{\rm gr}(|a|)\colon
\begin{cases}
x_i \mapsto
\sum_k \langle a_k, x_i \rangle a_{k+1}\cdots a_m a_1 \cdots a_{k-1} \\
y_i \mapsto 
\sum_k \langle a_k, y_i \rangle a_{k+1}\cdots a_m a_1 \cdots a_{k-1} \\
z_j \mapsto 
[z_j, \sum_k \mathfrak{z}_j(a_k) a_{k+1}\cdots a_m a_1 \cdots a_{k-1}],
\end{cases}
\end{equation}
and the $j$th tangential component of $\tilde{\sigma}_{\rm gr}(|a|)$ is equal to $\sum_k \mathfrak{z}_j(a_k) a_{k+1}\cdots a_m a_1 \cdots a_{k-1}$.
Since $\kappa_{\rm gr}(a, \omega) = a \otimes 1 - 1\otimes a$ by Lemma~\ref{lem:formulas2} (i), we have $\sigma_{\rm gr}(|a|)(\omega) = \{|a|, \omega \}_{\rm gr} = 0$.

Conversely, let $\tilde{u} =(u, u_1, \dots, u_n) \in {\rm tDer}(A)$ be a tangential derivation such that $u(\omega) = 0$. 
We may assume that $u(x_i)$, $u(y_i)$, and $u_j$ are homogeneous of polynomial degree $m$.
Consider the element
\[
a_u:= \sum_i \left( x_i u(y_i) - y_i u(x_i) \right) + \sum_j z_j u_j \in A.
\]
Then, the assumption $u(\omega) = 0$ implies that $a_u$ is invariant under the cyclic permutation of components. Indeed, we have
\begin{align*}
0 = u(\omega) & =  \textstyle\sum_i([x_i, u(y_i)] + [u(x_i), y_i]) + \textstyle\sum_j [z_j, u_j] \\
& = \textstyle\sum_i \left( x_i u(y_i) - y_i u(x_i) \right) + \textstyle\sum_j z_j u_j \\
& \hspace{1em} - \textstyle\sum_i \left( u(y_i) x_i - u(x_i) y_i \right) - \textstyle\sum_j u_j z_j \\
& = a_u - {\rm shift}(a_u)
\end{align*}
where ${\rm shift}(a_u)$ is the cyclic permutation of $a_u$ by one step.

Hence, we can compute the brackets of $|a_u|$ with generators:
\[
\sigma_{\rm gr}\left( \frac{1}{m+1} \, |a_u| \right)\colon
\begin{cases}
x_i \mapsto \langle -y_i, x_i \rangle u(x_i) = u(x_i), \\
y_i \mapsto \langle x_i, y_i \rangle u(y_i) = u(y_i), \\
z_j \mapsto [z_j, u_j] = u(z_j),
\end{cases}
\]
and the $j$th tangential component of $\tilde{\sigma}_{\rm gr} \left( \frac{1}{m+1}|a_u| \right)$ is equal to $(\mathfrak{z}_j \otimes ({\rm id}_{{\rm gr}^{\rm wt}\, H})^{\otimes m}) (a_u) = u_j$.
We conclude that $\tilde{\sigma}_{\rm gr}\left( \frac{1}{m+1} \, |a_u| \right) = \tilde{u}$. 
This completes the proof of the assertion on ${\rm im}(\tilde{\sigma}_{\rm gr})$.

Let $N\colon |A|_{\ge 1} \to A$ be the injective linear map defined by
\[
N(|a|) = \sum_k a_k \cdots a_m a_1\cdots a_{k-1},
\]
where $a = a_1\cdots a_m$ with $a_1,\ldots, a_m \in {\rm gr}^{\rm wt}\, H$.
Equation~\eqref{eq:sigmagr|a|} shows that for any $|a| \in |A|_{\ge 1}$ one has
\begin{equation} \label{eq:N|a|}
N(|a|) = \sum_{i=1}^g \left( x_i (\sigma_{\rm gr}(|a|)(y_i)) - y_i (\sigma_{\rm gr}(|a|)(x_i)) \right) + \sum_{j=1}^n z_j \sigma_{\rm gr}(|a|)_j,
\end{equation}
where $\sigma_{\rm gr}(|a|)_j$ is the $j$th tangential component of $\tilde{\sigma}_{\rm gr}(|a|)$.

To compute $\ker(\tilde{\sigma}_{\rm gr})$, first notice that $\K {\bf 1}$ is clearly in the kernel of $\tilde{\sigma}_{\rm gr}$.
Let $|a| \in |A|_{\ge 1}$ and assume that $|a| \in \ker(\tilde{\sigma}_{\rm gr})$.
By equation~\eqref{eq:N|a|}, we obtain $N(|a|) = 0$ and hence $|a| = 0$.
This proves that $\ker{\tilde{\sigma}_{\rm gr}} = \K {\bf 1}$.
\end{proof}

\begin{proof}[Proof of Proposition \ref{prop:ker_sigma_gr}]
The assertion on ${\rm im}(\sigma_{\rm gr})$ follows from the assertion on ${\rm im}(\tilde{\sigma}_{\rm gr})$ in Proposition~\ref{prop:im_sigma_gr}.

Lemma~\ref{lem:formulas2}~(i) shows that for any $j$ and $m\ge 1$, $|{z_j}^m |$ is in the kernel of $\sigma_{\rm gr}$.
Let $|a| \in |A|_{\ge 1}$ and assume that $|a| \in \ker(\sigma_{\rm gr})$.
We have $\sigma_{\rm gr}(|a|)(x_i) = \sigma_{\rm gr}(|a|)(y_i) = 0$.
Furthermore, since
\[
0 = \sigma_{\rm gr}(|a|)(z_j) = [z_j, \sigma_{\rm gr}(|a|)_j],
\]
we have $\sigma_{\rm gr}(|a|)_j \in \K [[z_j]]$ by Proposition~\ref{prop:centralizer}.
From \eqref{eq:N|a|} we obtain $N(|a|) \in \bigoplus_{j=1}^n \K [[z_j]]$ and hence $|a| \in \bigoplus_{j=1}^n |\K [[z_j]]|$.
This proves the assertion on $\ker(\sigma_{\rm gr})$. 
\end{proof}

\begin{proof}[Proof of Theorem \ref{thm:center}]
Lemma~\ref{lem:formulas2} (ii)(iii) shows that the elements $| \omega^m |$ and $| {z_j}^m|$ are in the center of the Lie bracket $[ \cdot, \cdot ]_{\gr}$.

Suppose that $|a| \in Z(|A|, [\cdot, \cdot ]_{\gr})$. 
Since $|\{|a|, b\}_{\gr}| = 0$ for any $b \in A$, Theorem~\ref{thm:idt} implies that there exists some $u_0 \in A$ such that 
$\{|a|, b\}_{\gr} = [u_0, b]$ for any $b \in A$.
By Proposition~\ref{prop:im_sigma_gr}, $[u_0, \omega] = \{|a|, \omega\}_{\gr}
= 0$. Hence, from Lemma \ref{lem:cent-omega}, we have $u_0= f(\omega)$
for some $f(z) \in \K[[z]]$.
Set $\hat f(z) := \int_0f(z)dz$.
Then, by Lemma~\ref{lem:formulas2} (iii), we have 
\[
\{|a-\hat f(\omega)|, b\}_{\gr} = \{|a|, b\}_{\gr} - [f(\omega), b] = \{|a|, b\}_{\gr} - [u_0, b] = 0
\]
for any $b \in A$.
Applying Proposition~\ref{prop:ker_sigma_gr} to $|a-\hat{f}(\omega)|$, we conclude that $|a|$ has the desired form.
\end{proof}

\subsection{Integration of Lie algebra $1$-cocycles} \label{subsec:App_integration}

We give a more detail on the integration of Lie algebra $1$-cocycles described in Section~\ref{sec:cocycles}.

\subsubsection*{Integration and differentiation}

Let $\mathfrak{g}$ be a Lie algebra, and $M$ a $\mathfrak{g}$-module.
Any $1$-cocycle of $\mathfrak{g}$ with values in $M$ has two aspects: it defines a section of the projection of the semi-direct product $\mathfrak{g} \ltimes M \to \mathfrak{g}$ and an extension cocycle of an extension 
\begin{equation}
\label{eq:exntension_M}
0 \to \mathbb{K} \overset{i}\to M' \to M \to 0
\end{equation}
of $\mathfrak{g}$-modules.
Here $\mathbb{K}$ means the trivial $\mathfrak{g}$-module. 
In what follows, we will give two interpretations of the integration of a Lie algebra $1$-cocycle of $\mathfrak{g}$ with values in $M$ along each of these two aspects.

We begin by considering the semi-direct product aspect.
Let $\mathfrak{g} \ltimes M$ be the semi-direct product associated with 
a $\mathfrak{g}$-module $M$. Its bracket is defined by 
$$
[(u, m), (v, l)] = ([u,v], u(l)- v(m))
$$
for any  $(u, m), (v, l) \in \mathfrak{g} \ltimes M$.
It admits a natural Lie algebra surjective homomorphism
$\mathfrak{g} \ltimes M \to \mathfrak{g}$, $(u, m) \mapsto u$.
Any $\K$-linear section of this surjection is of the form $({\rm id}_{\mathfrak{g}}, {\sf b})\colon \mathfrak{g} 
\to \mathfrak{g} \ltimes M$, where ${\sf b}\colon \mathfrak{g} \to M$ is a $\K$-linear map.
The section $({\rm id}_{\mathfrak{g}},{\sf b})$ is a Lie algebra homomorphism 
if and only if ${\sf b}\colon \mathfrak{g} \to M$ is a Lie algebra $1$-cocycle.

Similarly, one can define the semi-direct product $\mathcal{G} \ltimes M$
associated with any $\mathcal{G}$-module $M$ of a group $\mathcal{G}$.
The group law is given by 
$$
(F, m)(G, l) = (FG, m+F(l))
$$
for any $(F, m), (G, l) \in \mathcal{G} \ltimes M$.
A map ${\sf c}\colon \mathcal{G} \to M$ is a group $1$-cocycle 
if and only if the map $({\rm id}_{\mathcal{G}},{\sf c})\colon \mathcal{G} \to 
\mathcal{G} \ltimes M$ is a group homomorphism.

Suppose that $\mathfrak{g}$ is pro-nilpotent and equipped with a complete filtration, 
and that $M$ is equipped a complete filtration compatible with the filtration of $\mathfrak{g}$.
Let $\mathcal{G} = \exp(\mathfrak{g})$ be the exponention of the Lie algebra $\mathfrak{g}$. 
Then the semi-direct product $\mathfrak{g} \ltimes M$ is also 
pro-nilpotent and equipped with a complete filtration. 
Hence we can take the exponentaion of the semi-direct product $\mathfrak{g} \ltimes M$, which we denote by
the same symbol $\mathfrak{g} \ltimes M$.

\begin{prop} \label{prop:exp(gltimesM)}
There is a group isomorphism
\[
\Upsilon \colon \mathfrak{g} \ltimes M \overset{\cong}{\to} \mathcal{G} \ltimes M, \quad
(u,m) \mapsto ( \exp(u),  (\textstyle\frac{e^u - 1}{u})m ).
\]
\end{prop}

\begin{proof}
In the free Lie algebra on two indeterminates $U$ and $V$, the linear term in V of the BCH series $(U+V)\ast(-U)$ is given by $\big(\frac{e^{\ad U} - 1}{\ad U}\big)V$ (see e.g., \cite[ch.2., \S6, Proposition~5]{Bou71}).
In other words, we have $\left. \frac{d}{dt}\, (U+tV)\ast(-U) \right|_{t=0} = \big(\frac{e^{\ad U} - 1}{\ad U}\big)V$. 
This implies that 
$$
(u, m)\ast (-u, 0) = \big( 0, (\textstyle\frac{{e^u-1}}{u})m \big)
$$
in our situation. Hence we have 
$$
(u,m) = \big(0, (\textstyle\frac{e^u-1}{u})m \big) \ast (u,0),
$$
and so
\begin{align*}
(u, m)\ast (v, l) & = (0, \big( \textstyle\frac{e^u-1}{u})m \big) \ast (u, 0) \ast
\big( 0, (\textstyle\frac{e^v-1}{v})l \big) \ast (v, 0) \\
& = \big( 0, (\textstyle\frac{e^u-1}{u})m \big) \ast
\big( 0, e^u(\textstyle\frac{e^v-1}{v})l \big) \ast (u, 0)\ast (v, 0)\\
& = \big( 0, (\textstyle\frac{e^u-1}{u})m + e^u(\textstyle\frac{e^v-1}{v})l \big) \ast (u\ast v, 0).
\end{align*}
This formula implies that the map in the statement of the lemma is a group homomorphism from $\mathfrak{g} \ltimes M$ to $\mathcal{G} \ltimes M$, whose inverse is given by $(\exp(u), m) \mapsto \big( u, (\textstyle\frac{u}{e^u - 1})m \big)$.
\end{proof}

Any continuous Lie algebra $1$-cocycle ${\sf b}\colon \mathfrak{g} \to M$
defines a continuous Lie algebra homomorphism $({\rm id}_{\mathfrak{g}}, {\sf b})\colon \mathfrak{g} \to \mathfrak{g} \ltimes M$, which can be regarded as a continuous group homomorphism with respect to the BCH series.
Through the isomorphism $\Upsilon$ in Proposition~\ref{prop:exp(gltimesM)}, it corresponds to a continuous group $1$-cocycle 
${\sf c}\colon \mathcal{G} \to M, \exp(u) \mapsto (\frac{e^u-1}{u})b(u)$, 
which is exactly the integration of the Lie algebra cocycle ${\sf b}$. 
In other words, the diagram 
$$
\xymatrix{
\mathfrak{g} \ar[r]^{\hspace{1em} ({\rm id}_{\mathfrak{g}}, {\sf b}) \hspace{2em}} \ar[d]_{\exp} & 
\mathfrak{g} \ltimes M \ar[d]^{\Upsilon} \\
\mathcal{G} \ar[r]^{\hspace{1em} ({\rm id}_{\mathcal{G}},{\sf c}) \hspace{2em}} & \mathcal{G} \ltimes M
}
$$
commutes, and characterizes the integration ${\sf c}$ of the Lie algebra $1$-cocycle
${\sf b}$.
Conversely, any continuous group cocycle ${\sf c}\colon \mathcal{G} \to M$ defines a continuous Lie algebra homomorphism $\mathfrak{g} \to \mathfrak{g} \ltimes M, 
u \mapsto \big( u, (\textstyle\frac{u}{e^u-1}){\sf c}(\exp u) \big)$ through the isomorphism $\Upsilon$.
Hence the differentiation of the cocycle ${\sf c}$ is given by 
${\sf b}(u) = (\textstyle\frac{u}{e^u-1}){\sf c}(\exp u)$. 

Now we consider another aspect of Lie algebra $1$-cocycles, that is, an extension cocycle of the extension \eqref{eq:exntension_M}.  
Since $\mathbb{K}$ is a field, the extension~\eqref{eq:exntension_M} splits as $\mathbb{K}$-modules. Hence we identify $M' = \mathbb{K}p\oplus M$, where $p = i(1) \in M'$ is the image of $1 \in \K$ by $i$.
Since the action of $\mathfrak{g}$ on $\mathbb{K}$ is trivial, we have $u(p) \in M$ for any $u \in \mathfrak{g}$. Then the linear map ${\sf b}\colon \mathfrak{g} \to M$, $u\mapsto u(p)$, is a Lie algebra $1$-cocycle, which is called the extension cocycle of the extension~\eqref{eq:exntension_M}. 
Conversely, any Lie algebra $1$-cocycle ${\sf b}\colon \mathfrak{g} \to M$ defines an extension~\eqref{eq:exntension_M} of $\mathfrak{g}$-modules
by defining $u(p) = {\sf b}(u) \in M$ for any $u \in \mathfrak{g}$. \par
Suppose that $\mathfrak{g}$ is pro-nilpotent and equipped with a complete filtration, 
and that $M$ is equipped a complete filtration compatible with the filtration of $\mathfrak{g}$. Then we can take the exponentation $\mathcal{G}$ of the Lie algebra $\mathfrak{g}$, and regard
the extension~\eqref{eq:exntension_M} as an extension of $\mathcal{G}$-modules.
Its extension cocycle ${\sf c}$ given by 
\[
{\sf c}(\exp(u)) = \exp(u)(p) - p
= \left(\frac{e^u-1}{u}\right)(u(p))
= \left(\frac{e^u-1}{u}\right){\sf b}(u)
\]
is nothing but the integration of the Lie algebra $1$-cocycle ${\sf b}$. 
Conversely, if an extension~\eqref{eq:exntension_M} of $\mathcal{G}$-modules is given by a group $1$-cocycle ${\sf c}\colon \mathcal{G} \to M$, then the induced Lie action of $\mathfrak{g}$ make it an extension of $\mathfrak{g}$-modules. 
Then its extension cocycle ${\sf b}\colon \mathfrak{g} \to M$ is given by 
$$
{\sf b}(u) = u(p) =  \left(\frac{u}{e^u-1}\right)(e^u-1)(p) = 
\left(\frac{u}{e^u-1}\right){\sf c}(\exp(u))
$$
for any $u \in \mathfrak{g}$. 

Since the constant term of $\left(\frac{e^u-1}{u}\right)$ equals $1$, the formula ${\sf c}(\exp(u)) = \textstyle\frac{e^u - 1}{u}({\sf b}(u))$ yields 
$$
\left. \frac{d}{dt}\, {\sf c}(\exp(tu)) \right|_{t=0} 
= \left. \frac{d}{dt}\left( \frac{e^{tu}-1}{tu} \right) \big( {\sf b}(tu) \big) \right|_{t=0}
= \left. \frac{e^{tu}-1}{tu} \right|_{t=0} \big( {\sf b}(u) \big)
= {\sf b}(u).
$$

\begin{rem}
The discussion above establishes the following statement: the integration of a Lie algebra $1$-cocycle ${\sf b}\colon \mathfrak{g} \to M$ is a unique group $1$-cocycle ${\sf c} \colon \mathcal{G} \to M$ satisfying $\left. \frac{d}{dt}\, {\sf c}(\exp(tu)) \right|_{t=0} = {\sf b}(u)$.
Indeed, let ${\sf c}'\colon \mathcal{G} \to M$ be a group $1$-cocycle satisfying $\left. \frac{d}{dt}\, {\sf c}'(\exp(tu)) \right|_{t=0} = {\sf b}(u)$.
There is a Lie algebra $1$-cocycle ${\sf b}'\colon \mathfrak{g} \to M$ which integrates to ${\sf c}'$.
Since $\left. \frac{d}{dt}\, {\sf c}'(\exp(tu)) \right|_{t=0} = {\sf b}'(u)$ as well, we conclude that ${\sf b}' = {\sf b}$ and ${\sf c}'$ is the integration of ${\sf b}$.
\end{rem}

\subsubsection*{Proof of Proposition~\ref{prop:bFu}}

The Lie algebra $\mathfrak{g}$ acts on $Z^1(\mathfrak{g},M)$ in a natural way: for $v\in \mathfrak{g}$ and ${\sf b}\in Z^1(\mathfrak{g},M)$, we have $(v\cdot {\sf b})(u) = v({\sf b}(u)) - {\sf b}([v,u])$.
By the $1$-cocycle property of ${\sf b}$, the right hand side is equal to $u({\sf b}(v)) = d({\sf b}(v))(u)$.
Namely, we have $v\cdot {\sf b} = d({\sf b}(v))$.
In particular, we have $v\cdot dm = d(dm(v)) = d(v(m))$ for any $v \in \mathfrak{g}$ and $m \in M$.
Restricted to the Lie subalgebra $\mathfrak{g}_+$, this action integrates to the group action ${\sf b} \mapsto {\sf b}^F$ of the group $\mathcal{G}_+$ on $Z^1(\mathfrak{g},M)$.

Returning to the situation of Proposition~\ref{prop:bFu}, we have $F = \exp(v)$ for some $v \in \mathfrak{g}_+$.
Applying $\big( \frac{e^v -1}{v} \big)$ to the both sides of the equation $v \cdot {\sf b} = d({\sf b}(v))$, we compute
\[
{\sf b}^F - {\sf b} = \frac{e^v -1}{v} v\cdot {\sf b} = \frac{e^v -1}{v} d({\sf b}(v)) = d\left(\frac{e^v -1}{v} ({\sf b}(v)) \right)
= d \big( {\sf c}(\exp(v)) \big) = d({\sf c}(F)).
\]
This completes the proof. \qed

\subsection{Tips for non-commutative calculus} \label{subsec:tnc}

In this section, we collect formulas about calculus with non-commutative algebras used in the main body of the text.

Let $\mathfrak{A}$ be a (topological) augmented associative $\K$-algebra and $\mathfrak{B}$ an $\mathfrak{A} \otimes \mathfrak{A}^{\rm op}$-module.
From time to time, we use the following shorthand notation for the action of $\mathfrak{A} \otimes \mathfrak{A}^{\rm op}$:
\[
(a \otimes a')(b) = aba', \quad
\text{where $a, a' \in \mathfrak{A}$ and $b \in \mathfrak{B}$}.
\] 
A $\K$-linear map $u \colon \mathfrak{A} \to \mathfrak{B}$ is called a {\em derivation} if 
\[
u(aa') = (1 \otimes a')(u(a)) + (a\otimes 1)(u(a'))
= u(a) a' + a u(a')
\]
for any $a, a' \in \mathfrak{A}$.
For instance, if $\mathfrak{B} = \mathfrak{A}$ is regarded as an $\mathfrak{A} \otimes \mathfrak{A}^{\rm op}$-module in the natural way (namely $(a \otimes a')(b) := aba'$, where we use the multiplication of $\mathfrak{A}$ in the right hand side), then this definition coincides with the usual definition of derivations of $\mathfrak{A}$.

\subsubsection*{Differentiation under the $|\cdot |$-sign}

Let $|\mathfrak{A}| = \mathfrak{A}/[\mathfrak{A}, \mathfrak{A}]$ be the trace space of $\mathfrak{A}$.

\begin{lem} \label{lem:ufaga}
Let $u \in {\rm Der}(\mathfrak{A})$, and let $f(s), g(s) \in \K[[s]]$. For any $a \in \mathfrak{A}_{\ge 1}$, one has
\[
|u(f(a)) g(a)| = |u(a) f'(a) g(a) |.
\]
\end{lem}

\begin{proof}
We may assume that $f(s) = s^m$.
Since $u(f(a)) = \sum_{j=1}^m a^{j-1} u(a) a^{m-j}$, 
\[
|u(f(a))g(a)|
=  \sum_{j=1}^m | a^{j-1} u(a) a^{m-j} g(a)|
=  | u(a) m a^{m-1} g(a) |
=  | u(a) f'(a) g(a) |.
\]
Here we have used the fact that $a$ and $g(a)$ commute.
\end{proof}

Any derivation $u\in {\rm Der}(\mathfrak{A})$ induces a $\K$-linear map $u\colon |\mathfrak{A}| \to |\mathfrak{A}|$ defined by $u(|a|) := |u(a)|$, since $u([a,a']) = [u(a),a'] + [a,u(a')] \in [\mathfrak{A}, \mathfrak{A}]$ for any $a,a' \in \mathfrak{A}$.

\begin{exple} \label{ex:ulog}
Put $f(s) = \log (1+s) = \sum_{k=1}^{\infty} \frac{(-1)^{k-1}}{k} s^k$ and $g(s) = 1$.
Then, $f'(s) = \sum_{k=0}^{\infty} (-1)^k s^k =(1+s)^{-1}$.
Lemma~\ref{lem:ufaga} yields the following: for any $u \in {\rm Der}(\mathfrak{A})$ and $a \in \mathfrak{A}_{\ge 1}$, 
\[
u \left( |\log(1+a) | \right) = |u(a) (1+a)^{-1}|.
\]
\end{exple}

\subsubsection*{Partial differential operator}

In what follows, we consider the case $A = \K \langle \langle z_1, \ldots, z_n \rangle \rangle$.
We regard $A \otimes A$ as an $A \otimes A^{\rm op}$-module by  formula:
\[
(a\otimes a').(b \otimes b') := ab \otimes b'a',
\quad
\text{where $a, a', b, b' \in A$}.
\]

For each $j \in \{ 1,\ldots, n\}$, define the operator 
\[
\partial_j = \partial_{z_j} \colon A \to A \otimes A
\]
as the unique derivation defined by $\partial_j z_k = \delta_{jk} (1 \otimes 1)$.
Using the notation of Sweedler type, we denote
\[
\partial_j a = (\partial_j^{'}a) \otimes (\partial''_j a).
\]

\begin{lem}
Let $B$ be an $A\otimes A^{\rm op}$-module and let $u \colon A \to B$ be a derivation.
Then, for any $a\in A$,  we have
\[
u(a) = \sum_j (\partial_j a).u(z_j) = 
\sum_j (\partial_j^{'}a) u(z_j) (\partial''_j a).
\]
\end{lem}

\begin{proof}
First one checks the formula on the generators $\{ z_j\}_j$.
Second one notices that the map sending $a \in A$ to the right hand side of the formula is a derivation.
Hence the formula holds for any $a\in A$.
\end{proof}

\begin{exple} \label{ex:a11a}
For any $a \in A$, 
\[
a \otimes 1 - 1 \otimes a = \sum_j
\left( (\partial'_j a)z_j \otimes (\partial''_j a)
- (\partial'_j a) \otimes z_j (\partial''_j a) \right).
\]
To see this, note that the map $A \to A \otimes A, a \mapsto a \otimes 1 - 1 \otimes a$ is a derivation.
Hence $a \otimes 1 - 1 \otimes a =
\sum_j \partial_j(a).(z_j \otimes 1 - 1 \otimes z_j)$.
\end{exple}

The following is a non-commutative version of the chain rule formula.

\begin{exple} \label{ex:chainrule}
Let $\psi\colon A \to B$ be a $\K$-algebra homomorphism and $\partial\colon B \to B \otimes B$ a derivation.
Then, for any $a \in A$,
\[
\partial (\psi(a)) = \sum_j
\psi(\partial'_j(a)) \partial(\psi(z_j)) \psi(\partial''_j(a)).
\]
To see this, notice that $\partial \circ \psi \colon A \to B \otimes B$ is a derivation, where we regard $B \otimes B$ as an $A\otimes A^{\rm op}$-module through $\psi$: the action is given by $(a \otimes a').(b \otimes b') = \psi(a)b \otimes b'\psi(a')$.
Hence 
\[
\partial (\psi(a)) = 
\sum_j (\partial_j a).(\partial \psi(z_j))
= \sum_j
\psi(\partial'_j(a)) \partial(\psi(z_j)) \psi(\partial''_j(a)).
\]
\end{exple}

As in Section~\ref{subsec:div_der_Lie}, there is also a free Lie algebra version of partial derivatives, for which we have the following chain rule formula:
\begin{lem} \label{ex:chainrule_Lie}
Let $\psi \colon L(z_1,\ldots,z_n) \to L(w_1, \ldots, w_m)$ be a Lie algebra homomorphism between free Lie algebras.
For any $u \in L(z_1, \ldots, z_n)$ and $1 \le j \le m$, we have
\[
\partial_{w_j} \psi(u)
= \sum_{i=1}^n \psi (d_{z_i}u) d_{w_j}(\psi(z_i)).
\]
\end{lem}
\begin{proof}
The map $\psi$ naturally extends to a $\K$-algebra homomorphism $A(z_1,\ldots,z_n) \to A(w_1,\ldots, w_m)$ between free associative algebras.
Since $u = \sum_{i=1}^n (d_{z_i}u)z_i$, we have
\[
\psi(u) = \sum_{i=1}^n \psi(d_{z_i} u) \psi(z_i)
= \sum_{i=1}^n \psi(d_{z_i} u) \sum_{j=1}^m d_{w_j}(\psi(z_i)) w_j.
\]
The result follows from this equation.
\end{proof}

In the proof of Propositions~\ref{prop:ell_Lie} and \ref{prop:delta_2n}, we use the following lemma.
\begin{lem} \label{lem:dsfadxa}
Let $a \in A(z_1,\ldots,z_n)$ and $f(s) \in s\K[[s]]$.
For any $1\le j\le n$, we have
\[
| d_{z_j} (f({\rm ad}_{z_j})(a)) | = |f(z_j) d_{z_j}(a) - \textstyle\frac{f(z_j)}{z_j} a |. 
\]
\end{lem}
\begin{proof}
It is sufficient to prove the case where $f(s) = s^m$, $m\ge 1$.
To simplify notation, we denote $z = z_j$.
Since ${{\rm ad}_{z}}^m (a) = \sum_{i=0}^m (-1)^i \binom{m}{i} z^{m-i} a z^i$, we compute
\begin{align*}
| d_z( {{\rm ad}_z}^m(a) ) |
& = | z^m d_z(a) + \textstyle\sum_{i=1}^m (-1)^i \binom{m}{i} z^{m-i} a z^{i-1} | \\ 
& = | z^m d_z(a) + \textstyle\sum_{i=1}^m (-1)^i \binom{m}{i} z^{m-1} a | \\
& = |z^m d_z(a) - z^{m-1} a |,
\end{align*}
which proves the assertion.    
\end{proof}

\subsection{Details on tangential derivations and automorphisms} \label{subsec:tgtl}

In this subsection, we prove several statements on the structure of tangential derivations and automorphisms.

Let $A = \K \langle \langle z_1, \ldots, z_n \rangle \rangle$ be the free associative algebra endowed with the weight filtration and $\mathcal{T} = \{ t_1, \ldots, t_k \}$ a finite subset of $A$.
As in \S 4.10, we consider the Lie algebra of tangential derivations of $A$:
\[
{\rm tDer}_{\mathcal{T}}(A)
= \{ (u, u_1, \ldots, u_k) \in {\rm Der}(A) \times A^k ; u(t_j) = [t_j, u_j] \}.
\]
We have the injective Lie algebra homomorphism
\[
{\rm tDer}_{\mathcal{T}}(A)
\longrightarrow 
{\rm Der}(A) \ltimes A^k,
\quad
\tilde{u} = (u,u_1,\ldots,u_k) \mapsto (u, (u_1, \ldots, u_k))
\]
to the semi-direct product of ${\rm Der}(A)$ and $A^k$, where $A^k$ is viewed as a Lie algebra under the algebra commutator of components.

Let $L = L(z_1, \ldots, z_n)$ be the (completed) free Lie algebra.
If all elements $t_j$ belong to $L$, one can define the Lie algebra of tangential derivations of $L$:
\[
{\rm tDer}_{\mathcal{T}}(L) = 
\{ (u, u_1, \ldots, u_k) \in {\rm Der}(L) \times L^k ; u(t_j) = [t_j, u_j] \}.
\]
It is a Lie subalgebra of ${\rm tDer}_{\mathcal{T}}(L)$ and injects to the semi-direct product ${\rm Der}(L) \ltimes L^k$.

In what follows, we use the following convention:
\begin{itemize}
    \item we use the shorthand notation $\vec{u} = (u_1, \ldots, u_k)$.
    \item
In ${\rm Der}(A) \ltimes A^k$, we sometime express elements additively: we simply denote $(u, \vec{b}) = u + \vec{b}$.
For instance, one can write elements of ${\rm tDer}_{\mathcal{T}}(A)$ as $\tilde{u} = u + \vec{u}$.
Note that for $u \in {\rm Der}(A)$ and $\vec{b} \in A^k$, their Lie brackets in ${\rm Der}(A) \ltimes A^k$ is given by 
\begin{equation} \label{eq:uvecb}
[u, \vec{b}] = u(\vec{b}) = (u(b_1), \ldots, u(b_k)).
\end{equation}
\end{itemize}

\subsubsection*{The exponentiation of tangential derivations}

We give an explicit description of the exponentiation of positive tangential derivations.
For simplicity, we restrict ourselves to the case of free Lie algebra. 
Suppose that we assign positive integral weights $w_j = {\rm wt}(z_j)$, and assume that all elements $t_j$ belong to $L$.
Let ${\rm Der}^+(L)$ be the space of positive derivations with respect to the weight filtration.
Then the BCH series converges in ${\rm Der}^+(L) \ltimes L^k$, and one can define the exponentiation $\exp({\rm Der}^+(L) \ltimes L^k)$.
Let
\[
{\rm tDer}^+_{\mathcal{T}}(L) : = {\rm tDer}_{\mathcal{T}}(L) \cap ({\rm Der}^+(L) \ltimes L^k)
\]
and
\[
{\rm tAut}_{\mathcal{T}}^+(L) := \exp({\rm tDer}^+_{\mathcal{T}}(L)).
\]
Any element of ${\rm tAut}_{\mathcal{T}}^+(L)$ is denoted by $\exp(\tilde{u})$ for some $\tilde{u} \in {\rm tDer}^+_{\mathcal{T}}(L)$, and the group structure is given by the BCH series: $\exp(\tilde{u}) \exp(\tilde{v}) = \exp(\bch(\tilde{u}, \tilde{v}))$.
There is a natural group homomorphism
\[
\rho \colon {\rm tAut}^+_{\mathcal{T}}(L) \to {\rm Aut}^+(L), 
\quad 
\exp(\tilde{u}) \mapsto \exp(u) = \sum_{m=0}^{\infty} \frac{1}{m!} u^m.
\]

Recall that ${\rm Aut}^+(L)$ is the subgroup of ${\rm Aut}(L)$ consisting of automorphisms $F$ such that $F(z_j) \in z_j + L_{> w_j}$ for all $j$.
We introduce the following notation:
\[
\widetilde{{\rm Aut}}^+(L)
:=  \{ (F, f_1, \ldots, f_k) \in {\rm Aut}^+(L) \times \exp(L)^k ; F(t_j) = f_j^{-1} t_j f_j \}.
\]
The group structure of $\widetilde{{\rm Aut}}^+(L)$ is given as follows: for $\tilde{F} = (F, f_1, \ldots, f_k), \tilde{G} = (G, g_1, \ldots, g_k)$,
\[
\tilde{F} \cdot \tilde{G} = (FG, f_1F(g_1), \ldots, f_kF(g_k)).
\]
There is a natural group homomorphism
\[
\rho' \colon \widetilde{{\rm Aut}}^+(L) \to {\rm Aut}^+(L), 
\quad 
\tilde{F} \to F.
\]

\begin{prop} \label{prop:tAut_2des}
There is a group isomorphism
\[
{\rm tAut}_{\mathcal{T}}^+(L) \cong
\widetilde{{\rm Aut}}^+(L)
\]
compatible with the two maps to ${\rm Aut}^+(L)$, $\rho$ and $\rho'$.
\end{prop}

\begin{rem} \label{rem:tAut(A)}
\begin{enumerate}
\item[(i)] 
Based on this observation, one can define the group of all tangential automorphisms to be 
\[
{\rm tAut}_{\mathcal{T}}(L) :=
\{ (F, f_1, \ldots, f_k) \in {\rm Aut}(L) \times \exp(L)^k ; F(t_j) = f_j^{-1} t_j f_j \}
\]
with the same group structure as that of $\widetilde{{\rm Aut}}^+(L)$.
\item[(ii)]
In the case of the free associative algebra $A$, one can similarly define
\[
{\rm tDer}^+_{\mathcal{T}}(A) : = {\rm tDer}_{\mathcal{T}}(A) \cap ({\rm Der}^+(A) \ltimes A^k)
\]
which exponentiates to 
\[
{\rm tAut}_{\mathcal{T}}^+(A) := \exp({\rm tDer}^+_{\mathcal{T}}(A)).
\]
This group is isomorphic to the direct product of the group 
\[
\{ (F, f_1, \ldots, f_k) \in {\rm Aut}^+(A) \times \exp(A_+)^k ; F(t_j) = f_j^{-1} t_j f_j \}
\]    
with the abelian factor $\K^k$ corresponding to the constant terms in $A^k$.
\end{enumerate}
\end{rem}

\begin{proof}[Proof of Proposition \ref{prop:tAut_2des}]
The proof consists of three steps.

{\em Step 1.}
First we describe the map $\rho$ in terms of tangential components. Let $\tilde{u} \in {\rm tDer}^+_{\mathcal{T}}(L)$. 
We define $\vec{f} = (f_1, \ldots, f_k) \in \exp(L)^k$ by 
\begin{equation} \label{eq:logf}
\log \vec{f} = (\log f_1, \ldots, \log f_k):=
\bch(u + \vec{u}, -u).
\end{equation}
By \eqref{eq:uvecb}, the right hand side converges in $L^k \subset {\rm Der}^+(L) \ltimes L^k$.
More explicitly
\begin{equation} \label{eq:fu}
\log \vec{f} = \vec{u} + \frac{1}{2} u(\vec{u}) 
+ \frac{1}{12} \big( 2 u( u(\vec{u}) ) + [\vec{u}, u(\vec{u})] \big) + \cdots.
\end{equation}

\begin{lem} \label{lem:expuai}
For all $1 \le j \le k$, we have $\rho(\tilde{u})(t_j) = \exp(u)(t_j) = f_j^{-1} t_j f_j$.
\end{lem}

\begin{proof}
Since $u(t_j) = [t_j, u_j]$, we have 
$
[ u + \vec{u}, \vec{t} ] = u(\vec{t}) + [\vec{u}, \vec{t}] = 0
$
in ${\rm Der}^+(L) \ltimes L^k$.
Therefore $\exp( u + \vec{u})$ and $\exp( \vec{t})$ commute in the exponentiation. 
Using this fact, we compute
\begin{align*}
\exp(u)(\vec{t}) = \exp({\rm ad}_u)(\vec{t})
& = u * \vec{t} * (-u) \\
& = \left( u* (-u - \vec{u}) \right) * \vec{t} * \left( (u + \vec{u} ) * (-u) \right) \\ 
& = (- \log \vec{f}) * \vec{t} * (\log \vec{f}) \\
& = (f_1^{-1}t_1f_1, \ldots, f_k^{-1}t_k f_k).
\end{align*}
Here we have used \eqref{eq:uvecb} in the first line.
This completes the proof.
\end{proof}

{\it Step 2.}
We construct a map
$\Psi \colon {\rm tAut}_{\mathcal{T}}^+(L) \to \widetilde{{\rm Aut}}^+(L)$ as follows.
For $\exp(\tilde{u}) \in {\rm tAut}_{\mathcal{T}}^+(L)$, we set 
\[
\Psi(\exp(\tilde{u})) := 
\left( (\exp(u), f_1,\ldots, f_k), \vec{u}_0 \right),
\]
where $\log( \vec{f} )$ is defined by \eqref{eq:logf}.
By Lemma \ref{lem:expuai}, it holds that $\rho = \rho' \circ \Psi$. 
It is straightforward to show that $\Psi$ is a group homomorphism.

{\it Step 3.}
Finally we give the inverse map of $\Psi$.
Let $\tilde{F} = (F, f_1, \ldots, f_k) \in \widetilde{{\rm Aut}}^+(L)$.
Then the derivation $\log F = \sum_{m=1}^{\infty} ((-1)^{m-1}/m )(F-{\rm id})^m$ converges and is an element of ${\rm Der}^+(L)$.
We define $\Phi(\tilde{F}) \in L^k$ by
\[
\Phi(\tilde{F}) := \log (\vec{f}) * \log F - \log F. 
\]
By \eqref{eq:uvecb}, the right hand side is a well-defined element in $L^k \subset {\rm Der}^+(L) \ltimes L^k$.
Then, using the same letter, we have the following map:
\[
\Phi \colon \widetilde{{\rm Aut}}^+(L) \to {\rm tAut}^+_{\mathcal{T}}(L), \quad
\tilde{F} \mapsto \exp( (\log F, \Phi(\tilde{F}))).
\]
We can check that the map $\Phi$ is the inverse of $\Psi$.

This completes the proof of Proposition \ref{prop:tAut_2des}.
\end{proof}

\subsubsection*{The adjoint action}

The group ${\rm tAut}^+_{\mathcal{T}}(L)$ is a subgroup of ${\rm tAut}^+_{\mathcal{T}}(A)$ (see Remark~\ref{rem:tAut(A)}), which acts on ${\rm tDer}_{\mathcal{T}}(A)$ by the adjoint action.
The formula is as follows: 
for $\tilde{F} = \exp(\tilde{v}) \in {\rm tAut}^+_{\mathcal{T}}(A)$ and $\tilde{u} \in {\rm tDer}_{\mathcal{T}}(A)$, 
\[
{\rm Ad}_{\tilde{F}}(\tilde{u}) := 
\exp({\rm ad}_{\tilde{v}})(\tilde{u}) 
= \tilde{v} * \tilde{u} * (- \tilde{v}).
\]
Restricting ourselves to ${\rm tAut}^+_{\mathcal{T}}(L)$, we give an explicit formula for this action.
With a suitable modification the result extends to ${\rm tAut}^+_{\mathcal{T}}(A)$ as well, but we omit it.

\begin{prop} \label{prop:tad}
Let $\tilde{F} \in {\rm tAut}^+(L)$ and write it $\tilde{F} = (F, f_1,\ldots, f_k)$ using the perspective of Proposition~\ref{prop:tAut_2des}, and let $\tilde{u} = (u, u_1,\ldots,u_k) \in {\rm tDer}_{\mathcal{T}}(A)$.
Then ${\rm Ad}_{\tilde{F}}(\tilde{u}) = ({\rm Ad}_{F}(u), u'_1,\ldots,u'_k)$, where 
\[
u'_j = f_j \cdot {\rm Ad}_F(u)(f_j^{-1}) + f_j F(u_j) f_j^{-1}.
\]
\end{prop}

To prove Proposition \ref{prop:tad}, we need the following lemma.

\begin{lem} \label{lem:bu-b}
In the semi-direct product ${\rm Der}(A) \ltimes A_+$, the following formula holds: 
for any $u\in {\rm Der}(A)$ and $b \in A_+$,
\[
b * u * (-b) = \exp({\rm ad}_b)(u) = e^b\cdot u(e^{-b}).
\]
\end{lem}

\begin{proof}
Since ${\rm ad}_b(u) = [b,u] = -u(b)$ in ${\rm Der}(A) \ltimes A_+$, we have
\[
\exp({\rm ad}_b)(u) = - \frac{\exp({\rm ad}_b) - 1}{{\rm ad}_b} u(b).
\]
Moreover, this is equal to $e^b \cdot u(e^{-b})$ (see \cite[Lemma 6.4]{genus0} for a similar formula).
\end{proof}

\begin{proof}[Proof of Proposition \ref{prop:tad}]
Since the natural map ${\rm tDer}_{\mathcal{T}}(A) \to {\rm Der}(A)$ is a Lie algebra homomorphism, the first component of ${\rm Ad}_{\tilde{F}}(\tilde{u})$ is ${\rm Ad}_F(u)$.
To look at the tangential components $u'_j$, write $\tilde{F} = \exp(\tilde{v})$, where $\tilde{v} \in {\rm tDer}^+_{\mathcal{T}}(L)$.
Since ${\rm Ad}_{\tilde{F}}$ is linear, we have ${\rm Ad}_{\tilde{F}}(\tilde{u}) = {\rm Ad}_{\tilde{F}}(u) + {\rm Ad}_{\tilde{F}}(\vec{u})$.
Note that $\tilde{v} = v + \vec{v} = ((v + \vec{v})*(-v)) * v = \log \vec{f} * v$.
For the first term, we compute
\begin{align*}
{\rm Ad}_{\tilde{F}}(u) & = \log \vec{f} * (v * u * (-v) ) * ( -\log \vec{f}) \\
& = \log \vec{f} * {\rm Ad}_F(u) * (-\log \vec{f}) \\
& = (f_1\cdot {\rm Ad}_F(u) (f_1^{-1}), \ldots, f_k \cdot {\rm Ad}_F(u) (f_k^{-1})).
\end{align*}
Here we have used Lemma \ref{lem:bu-b} in the last line.
For the second term, we compute
\begin{align*}
{\rm Ad}_{\tilde{F}}(\vec{u})
& = \log \vec{f} * ( v * \vec{u} * (-v) ) * ( - \log \vec{f}) \\
& = \log \vec{f} * F(\vec{u}) * ( - \log \vec{f}) \\
& = (f_1 F(u_1) f_1^{-1}, \ldots, f_k F(u_k) f_k^{-1}).
\end{align*}
This completes the proof.
\end{proof}


\end{document}